\pgfplotsset{compat=1.6}
\theoremstyle{plain}%
\newtheorem{theorem}{Theorem}[section]
\newtheorem{lemma}[theorem]{Lemma}
\newtheorem{proposition}[theorem]{Proposition}
\newtheorem{corollary}[theorem]{Corollary}
\newtheorem*{conjecture*}{Conjecture} 
\newtheorem{sublemma}[theorem]{\normalfont Claim}
 \numberwithin{equation}{section}
\theoremstyle{definition}
\newtheorem{definition}[theorem]{Definition}
\newtheorem{example}[theorem]{Example}
\theoremstyle{remark}
\newtheorem{remark}[theorem]{Remark}
\newenvironment{subproof}[1][\proofname]{%
  \proof[Proof of claim]%
}{\endproof}
 \let \leq \leqslant
 \let \geq \geqslant
\DeclareMathOperator{\sgn}{sgn}
\DeclareMathOperator{\Cov}{Cov}
\DeclareMathOperator{\Var}{Var}
\DeclareMathOperator{\support}{supp}
\DeclareMathOperator{\dist}{dist}
\definecolor{detailcolor00}{rgb}{0.4405, 0.204, 0.343}
\definecolor{detailcolor01}{rgb}{0.546, 0.215, 0.352}
\definecolor{detailcolor02}{rgb}{0.675, 0.247, 0.387} 
\definecolor{detailcolor03}{rgb}{0.775, 0.317, 0.455}
\definecolor{detailcolor04}{rgb}{0.830, 0.421, 0.553} 
\definecolor{detailcolor05}{rgb}{0.831, 0.533, 0.663}
\definecolor{detailcolor06}{rgb}{0.779, 0.619, 0.775}
\definecolor{detailcolor07}{rgb}{0.724, 0.694, 0.827}
\definecolor{detailcolor08}{rgb}{0.687, 0.770, 0.880}
\definecolor{detailcolor09}{rgb}{0.671, 0.839, 0.904}
\definecolor{detailcolor10}{rgb}{0.659, 0.872, 0.882}
\newcommand\pig[1]{\scalerel*[5.5pt]{\Big#1}{%
  \ensurestackMath{\addstackgap[1.5pt]{\big#1}}}}
\newcommand\pigl[1]{\mathopen{\pig{#1}}}
\newcommand\pigr[1]{\mathclose{\pig{#1}}}
    \pgfpathrectanglecorners{\pgfpointorigin}{\pgfpoint{3cm}{3cm}}%
\newcommand{\hathat}[1]{%
\begingroup%
  \let\macc@kerna\z@%
  \let\macc@kernb\z@%
  \let\macc@nucleus\@empty%
  \hat{\raisebox{.3ex}{\vphantom{\ensuremath{#1}}}\smash{\hat{#1}}}%
\endgroup%
}
\newcommand{\smallhathat}[1]{%
\begingroup%
  \let\macc@kerna\z@%
  \let\macc@kernb\z@%
  \let\macc@nucleus\@empty%
  \hat{\raisebox{.05ex}{\vphantom{\ensuremath{#1}}}\smash{\hat{#1}}}%
\endgroup%
}
\newcommand{\smallsmallhathat}[1]{%
\begingroup%
  \let\macc@kerna\z@%
  \let\macc@kernb\z@%
  \let\macc@nucleus\@empty%
  \hat{\raisebox{-.2ex}{\vphantom{\ensuremath{#1}}}\smash{\hat{#1}}}%
\endgroup%
}
\title{Wilson lines in the Abelian lattice Higgs model}
\author{Malin P. Forsstr\"om}
\address[Malin P. Forsstr\"om]{Department of Mathematics, KTH Royal Institute of Technology, 100 44 Stockholm, Sweden.}
\email{malinpf@kth.se}
\begin{document}

\maketitle

\begin{abstract}
     Lattice gauge theories are lattice approximations of the Yang-Mills theory in physics. The abelian lattice Higgs model is one of the simplest examples of a lattice gauge theory interacting with an external field. In a previous paper~\cite{flv2021}, we calculated the leading order term of the expected value of Wilson loop observables in the low-temperature regime of the abelian lattice Higgs model on \( \mathbb{Z}^4 ,\) with structure group \( G = \mathbb{Z}_n \) for some \( n \geq 2. \) In the absence of a Higgs field, these are important observables since they exhibit a phase transition which can be interpreted as distinguishing between regions with and without quark confinement. However, in the presence of a Higgs field, this is no longer the case, and a more relevant family of observables are so-called open Wilson lines. In this paper, we extend and refine the ideas introduced in~\cite{flv2021} to calculate the leading order term of the expected value of the more general Wilson line observables. Using our main result, we then calculate the leading order term of several natural ratios of expected values and confirm the behavior predicted by physicists.
\end{abstract}


\section{Introduction}

\subsection{Background}

Lattice gauge theories are spin models which describe the interaction of elementary particles. These were first introduced by Wilson~\cite{w1974} as lattice approximations of the quantum field theories that appear in the standard model, known as Yang-Mills theory. Since then, lattice gauge theories have been successfully used to understand the corresponding continuous models, and several of the predictions made using these lattice approximations have been verified experimentally.
At about the same time as lattice gauge theories were introduced in the physics literature by Wilson,  Wegner~\cite{w1971} introduced what he then called \emph{generalized Ising models} as an example of a family of models with a phase transition without a local order parameter. In special cases, these generalized Ising models are lattice gauge theories, and as such, they have been used extensively as toy models for the lattice gauge theories that are more relevant for physics.

In the last couple of years, there has been a renewed interest in the rigorous analysis of four-dimensional lattice gauge theories in the mathematical community, see, e.g.,~\cite{c2019, sc2019, flv2020,c2021,f2021, gs2021}. Most relevant for this work are the papers~\cite{c2019, sc2019, flv2020}, in which the leading order term for the expectation of Wilson loop observables was computed for lattice gauge theories with Wilson action and finite structure groups. 

Pure gauge theories model only the gauge field itself, and to advance towards physically relevant theories; it is necessary also to understand models that include external fields interacting with the gauge field, see, e.g.,~\cite{fs1979,s1988}. 
In this paper, we consider a lattice gauge theory that models a gauge field coupled to a scalar Bosonic field with a quartic Higgs potential. The resulting model is called the \emph{lattice Higgs model}.
This model has received significant attention in the physics community. Some examples are the works~\cite{b1974,b1975II,b1975III}, where calculations to obtain critical parameters of these models were performed, and~\cite{jsj1980, ks1984}, in which phase diagrams were sketched. For further background, as well as more references, we refer the reader to~\cite{fs1979} and~\cite{s1988}.

In a recent paper~\cite{flv2021}, we extended the theory developed in~\cite{flv2020, c2019, sc2019} in order to describe the leading order term for the expectation of Wilson loop observables in the fixed length and low-temperature regime of the abelian Higgs model.
Wilson loop expectations are natural observables in lattice gauge theories and were introduced by Wilson as a means to detect whether quark confinement occurs, see~\cite{w1974}. In lattice gauge theories without matter fields, one can show that the expected value of large Wilson loops undergo a phase transition, where it changes from following a so-called area law to following a so-called perimeter law.
However, as discussed in, e.g.,~\cite{m}, in gauge theories with matter fields, the Wilson loop observable obeys a perimeter law for all parameters, and hence one cannot see a relevant phase transition using only the Wilson loop observable. For this reason, alternative observables have been suggested for studying the lattice Higgs model.
One such observable is the open Wilson line observable, in which the loop in the Wilson loop observable is replaced by an open path that is saturated at the end-points by the Higgs field. This type of observable has been relatively well studied in the physics literature (see, e.g., \cite{bf1983, hgjjkn1987, fmf1986, m, s1988, bf1987, g2006, gr2002}). Moreover, the asymptotic behavior of such observables has been argued to be related to, e.g., the absence of bound states of the charged particle in the presence of an external source \cite{hgjjkn1987}, confinement versus deconfinement in lattice gauge theories with matter fields \cite{bf1987}, and binding versus unbinding of dynamical quarks in the field of a static color source \cite{bf1987}. Hence the Wilson line observables are of physical relevance.

\subsection{Preliminary notation}
For \( m \geq 2 \), the graph naturally associated to $\mathbb{Z}^m$ has a vertex at each point \( x \in \mathbb{Z}^m \) with integer coordinates and a non-oriented edge between nearest neighbors. We will work with oriented edges throughout this paper, and for this reason we associate to each non-oriented edge \( \bar e \)  two oriented edges \( e_1 \) and \( e_2 = -e_1 \) with the same endpoints as \( \bar e \) and opposite orientations. 

Let \( d\mathbf{e}_1 \coloneqq (1,0,0,\ldots,0)\), \( d\mathbf{e}_2 \coloneqq (0,1,0,\ldots, 0) \), \ldots, \( d\mathbf{e}_m \coloneqq (0,\ldots,0,1) \) be oriented edges corresponding to the unit vectors in \( \mathbb{Z}^m \). We say that an oriented edge \( e \) is \emph{positively oriented} if it is equal to a translation of one of these unit vectors, i.e.,\ if there is a \( v \in \mathbb{Z}^m \) and a \( j \in \{ 1,2, \ldots, m\} \) such that \( e = v + d{\mathbf{e}}_j \). 
If \( v \in \mathbb{Z}^m \) and \( j_1 <   j_2 \), then \( p = (v +  d\mathbf{e}_{j_1}) \land  (v+ d\mathbf{e}_{j_2}) \) is a positively oriented 2-cell, also known as a  \emph{positively oriented plaquette}. We let \( C_0(\mathbb{Z}^4) \), \( C_1(\mathbb{Z}^4)\), and \( C_2(\mathbb{Z}^4) \) denote the sets of oriented vertices, edges, and plaquettes.
Next, we let \( B_N \) denote the set \(   [-N,N]^m \subseteq \mathbb{Z}^m \), and we let \( C_0(B_N) \), \( C_1(B_N)\), and \( C_2(B_N) \) denote the sets of oriented vertices, edges, and plaquettes, respectively, whose end-points are all in \( B_N \).

Whenever we talk about a lattice gauge theory we do so with respect to some (abelian) group \( (G,+)  \), referred to as the \emph{structure group}. We also fix a unitary and faithful representation \( \rho \) of \( (G,+) \). In this paper, we will always assume that \( G = \mathbb{Z}_n \) for some \( n \geq 2 \) with the group operation \( + \) given by standard addition modulo \( n \). Also, we will assume that \( \rho \) is a one-dimensional representation of \( G \). We note that a natural such representation is given by \( j\mapsto e^{j \cdot 2 \pi i/n} \).

Now assume that a structure group \( (G,+) \), a one-dimensional unitary representation \( \rho \) of \( (G,+) \), and an integer \( N\geq 1 \) are given.
We let \( \Omega^1(B_N,G) \) denote the set of all  \( G \)-valued  1-forms \( \sigma \) on \( C_1(B_N) \), i.e., the set of all \( G \)-valued functions \(\sigma \colon  e \mapsto \sigma(e) \) on \( C_1(B_N) \) such that \( \sigma(e) =  -\sigma(-e) \) for all \( e \in C_1(B_N) \).
Similarly, we let \( \Omega^0(B_N,G) \) denote the set of all \( G\)-valued functions \( \phi \colon x \mapsto \phi(x)\) on \( C_0(B_N) \) which are such that \( \phi(x) = - \phi(-x) \) for all \( x \in C_1(B_N). \) 
When \( \sigma \in \Omega^1(B_N,G) \) and \( p \in C_2(B_N) \), we let \( \partial p \) denote the formal sum of the four edges \( e_1,\) \( e_2,\) \( e_3,\) and \( e_4 \) in the oriented boundary of \( p \) (see Section~\ref{sec: cell boundary}), and define
\begin{equation*}
    d\sigma(p) \coloneqq \sigma(\partial p) \coloneqq \sum_{e \in \partial p} \sigma(e) \coloneqq \sigma(e_1) + \sigma(e_2) + \sigma(e_3) + \sigma(e_4).
\end{equation*} 
Similarly, when \( \phi \in \Omega^0(B_N,G) \) and \( e \in C_1(B_N) \) is an edge from \( x_1 \) to \( x_2 \), we let \( \partial e \) denote the formal sum \( x_2-x_1, \) and define \( d\phi(e) \coloneqq \phi(\partial e) \coloneqq \phi(x_2) - \phi(x_1). \)

\subsection{The abelian lattice Higgs model}

Given \( \beta, \kappa ,\zeta \geq 0 \), the action \( S_{N,\beta,\kappa, \zeta} \) for lattice gauge theory with Wilson action coupled to a Higgs field on \( B_N \) is, for \( \sigma \in \Omega^1(E_N,G), \) \( \phi \in \Omega^0(B_N,G), \) and a symmetric function \( r \colon C_0(B_N) \to \mathbb{R}_+ \),  defined by
\begin{equation}\label{eq: general action}
    \begin{split}
        S_{N,\beta,\kappa,\zeta}(\sigma, \phi,r)  &\coloneqq 
        -\beta \sum_{p \in C_2(B_N)}  \rho\bigl( d\sigma(p)\bigr) 
        - \kappa\sum_{\substack{e  \in C_1(B_N)\mathrlap{\colon}\\ \partial e = y-x}}  r(x)r(y) \rho\bigl( \sigma(e)-\phi(\partial e)\bigr) 
        \\&\qquad\qquad + \zeta \sum_{x \in C_0(B_N)}  \bigl(r(x)^2-1 \bigr)^2 + \sum_{x \in C_0(B_N)}  r(x)^2. 
    \end{split}
\end{equation}
The first term on the right hand side of~\eqref{eq: general action} is referred to as the \emph{Wilson action functional} for pure gauge theory (see, e.g.,~\cite{w1974}), the second term on the right hand side of~\eqref{eq: general action} is referred to as the interaction term, and the third and fourth term on the right hand side of~\eqref{eq: general action} together are referred to as a \emph{sombrero potential}.
Since \( \phi \in \Omega^0(B_N,G) \) and \( \sigma \in \Omega^1(B_N,G)  \), the action \( S_{N,\beta,\kappa,\zeta}(\sigma, \phi,r) \) is real for all \( \sigma,\) \( \phi, \) and \( r \).
Elements \( \sigma \in \Omega^1(B_N,G) \) will be referred to as \emph{gauge field configurations}, and pairs \( (\phi,r), \) with \( \psi \in \Omega^0(B_N,G) \) and \( r \colon C_0(B_N) \to \mathbb{R}_+ \) symmetric, will be referred to as \emph{Higgs field configurations}.
The quantity \( \beta \) is known as the \emph{gauge coupling constant},  \( \kappa \) is known as the \emph{hopping parameter}, and \( \zeta \) is known as the \emph{quartic Higgs self coupling}.

The Gibbs measure corresponding to the action \( S_{N,\beta,\kappa,\zeta} \) is given by
\begin{equation*}
    d\mu_{N,\beta, \kappa, \zeta}(\sigma, \phi,r) = Z^{-1}_{N,\beta,\kappa, \zeta} e^{-S_{N,\beta,\kappa,\zeta}(\sigma, \phi,r)}
    \prod_{e \in C_1(B_N)^+} d\mu_G\bigl(\sigma(e)\bigr) 
    \prod_{x \in C_0(B_N)^+} d\mu_{G}\bigl(\phi(x)\bigr) \,  d\mu_{\mathbb{R}_+}\! \bigl( r(x)\bigr), 
\end{equation*}
where \( C_1(B_N)^+ \) denotes the set of positively oriented edges in \( C_1(B_N) \), \( d\mu_G \) is the uniform measure on \( G \), and \( \mu_{\mathbb{R}_+} \) is the Lebesgue measure on \( \mathbb{R}_+ \).
We refer to this lattice gauge theory as the \emph{abelian lattice Higgs model}.

We will work with the model obtained from this action in the \emph{fixed length limit} \( \zeta \to \infty\), in which the radial component of the Higgs field concentrates at one. In the physics literature, this is sometimes called the \emph{London limit}. We do not discuss the limit of the Gibbs measure corresponding to \(S_{N,\beta, \kappa, \zeta}\) as \(\zeta \to \infty\) here, but simply from the outset adopt the action resulting from only considering \( r \colon C_0(B_N) \to \mathbb{R}_+\) with \( r(x) = 1 \) for all \( x \in C_0(B_N) .\) In this case, for \( \sigma \in \Omega^1(B_N,G),\) and \( \phi \in \Omega^0(B_N,G)\), we obtain the action
\begin{equation*}
    \begin{split}
        &S_{N,\beta,\kappa, \infty}(\sigma, \phi) \coloneqq   -\beta \sum_{p \in C_2(B_N)}  \rho\bigl( d\sigma(p)\bigr) - \kappa\sum_{\substack{e\in C_1(B_N)\mathrlap{\colon}\\ \partial e = y-x}} 
        \rho\bigl(\sigma(e)-\phi(\partial e)\bigr).
    \end{split}
\end{equation*} 
We then consider a corresponding probability measure \(\mu_{N,\beta, \kappa, \infty}\) on \(\Omega^1(B_N,G) \times \Omega^0(B_N,G)\) given by
\begin{equation*}
    \mu_{N,\beta, \kappa, \infty}(\sigma, \phi)  \coloneqq
    Z_{N,\beta,\kappa, \infty}^{-1} e^{-S_{N,\beta,\kappa, \infty}(\sigma, \phi)} , \qquad \sigma \in \Omega^1(B_N,G) ,\, \phi \in \Omega^0(B_N,G),
\end{equation*}
where \( Z_{N,\beta,\kappa, \infty}\) is a normalizing constant. This is the \emph{fixed length lattice Higgs model}. We let \( \mathbb{E}_{N,\beta,\kappa,\infty} \) denote the corresponding expectation. Whenever \( f \colon \Omega^1(B_M,G) \times \Omega^0(B_M,G) \to \mathbb{R}\) for some \( M \geq 1,\) then, as a consequence of the Ginibre inequalities (see Section~\ref{sec: ginibre}), the infinite volume limit 
\begin{equation*}
    \bigl\langle f(\sigma,\phi) \bigr\rangle_{\beta,\kappa,\infty} \coloneqq \lim_{N \to \infty} \mathbb{E}_{N,\beta,\kappa,\infty} \bigl[f(\sigma,\phi) \bigr]
\end{equation*}
exists, and it is this limit that we will use in our main result.

\subsection{Wilson loops and Wilson lines}

For \( k \in \{ 0,1,\dots, m,\) a \( k \)-chain is a formal sum of positively oriented k-cells with integer coefficients, see Section~\ref{sec: chains} below. The support of a \(1\)-chain \(\gamma\), written \(\support \gamma\), is the set of directed edges with non-zero coefficient in \( \gamma.\) 
We say that a \(1\)-chain with finite support is a \emph{generalized loop} if it has coefficients in \(\{-1,0,1\}\) and empty boundary, see Definition~\ref{def: generalized loop}. Roughly speaking, this means that a generalized loop is a disjoint union of a finite number of closed loops, where each closed loop is a nearest-neighbor path in the graph \( \mathbb{Z}^4  \) starting and ending at the same vertex. For example, any rectangular loop, as well as any finite disjoint union of such loops, is a generalized loop.
We say that a \(1\)-chain with finite support is an \emph{open path} from \( x_1\in \Omega_0^+(B_N) \) to \( x_2 \in \Omega_2^+(B_N)\) if it has coefficients in \(\{-1,0,1\}\) and boundary \( \partial \gamma \coloneqq x_2 - x_1. \)
If \( \gamma \) is either an open path or a generalized loop, we refer to \( \gamma \) as a \emph{path}.
%

Given a path \( \gamma \), the \emph{Wilson line observable} \( L_\gamma(\sigma,\phi) \) is defined by 
\begin{equation*}
    L_\gamma(\sigma,\phi)
    \coloneqq  \rho \bigl( \sigma(\gamma) - \phi(\partial \gamma) \bigr),\qquad \sigma\in \Omega^1(B_N,G),\, \phi\in \Omega^0(B_N,G),
\end{equation*}
where \( \sigma(\gamma) \coloneqq \sum_{e \in  \gamma} \sigma(e), \) and \( \phi(\partial \gamma) = \phi(x_2) - \phi(x_1) \) if \( \gamma \) is an open path from \( x_1 \) to \( x_2 \), and \(\phi(\partial \gamma) = 0 \) if the boundary of \( \gamma \) is empty. If \( \gamma \) is a generalized loop, then \(  W_\gamma(\sigma) \coloneqq L_\gamma(\sigma,\phi)\) is referred to as a \emph{Wilson loop observable}.

\subsection{Main results}

\begin{theorem}\label{theorem: main result Z2}
    Consider the fixed length lattice Higgs model on \( \mathbb{Z}^4 \), with structure group \(G = \mathbb{Z}_2\), and representation \( \rho\colon G \to \mathbb{C} \) given by \( \rho(0) = 1 \) and \( \rho(1)=-1 \). 
    Let \( \beta,\kappa \geq 0 \) be such that \( 18^2 e^{-4\kappa}(2 + e^{-4\kappa})< 1\) and \( 6\beta > \kappa . \) Further, let \( \gamma \) be a path along the boundary of a rectangle with side lengths \( \ell_1,\ell_2 \geq 8, \) and assume that \( |\support \gamma| \geq 24.\)
    Finally, let \( e \in C_1(\mathbb{Z}^4) \) be arbitrary.
    Then
    \begin{equation}\label{eq: main result Z2}
        \pigl| \bigl\langle L_\gamma(\sigma,\phi) \bigr\rangle_{\beta,\kappa,\infty} - \Theta'_{\beta,\kappa}(\gamma) H_\kappa(\gamma) \pigr| \leq K_0 \pigl( e^{-4(\beta+\kappa/6)} + |\support \gamma|^{-1/2} \pigr)^{\frac{1}{4}},
    \end{equation}
    where
    \begin{equation*}
        \Theta'_{\beta,\kappa}(\gamma) \coloneqq e^{-2|\support \gamma|e^{-24\beta-4\kappa}\bigl(1+(e^{8\kappa}-1) \langle L_{e}(\sigma,\phi) \rangle_{\infty,\kappa,\infty}\bigr)},
    \end{equation*}
    \begin{equation*}
        H_\kappa(\gamma) \coloneqq  \bigl\langle L_\gamma(\sigma,\phi) \bigr\rangle_{\infty,\kappa,\infty},
    \end{equation*}
    and \( K_0 = K_0(\kappa,\beta,\ell_1,\ell_2,\gamma) \) is a non-negative function with
    \begin{equation*}
        K_0 \leq  2 \cdot 18^3 +|\support \gamma|^{1/2} e^{-4\kappa}\bigl( 18^2 (2+e^{-4\kappa}) \bigr)^{\min(\ell_1,\ell_2)}  + o_\kappa(1).
    \end{equation*} 
\end{theorem}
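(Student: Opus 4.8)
The plan is to adapt and sharpen the low-temperature expansion of~\cite{flv2021}.

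\emph{Reduction.} Since $\gamma$ is a closed rectangular loop, $\partial\gamma=0$, so $L_\gamma(\sigma,\phi)=\rho(\sigma(\gamma))$ does not involve $\phi$; for each fixed $\phi$ the substitution $\sigma\mapsto\sigma-d\phi$ is a bijection of $\Omega^1(B_N,G)$ leaving the action invariant (because $d\circ d=0$), so it decouples the Higgs field and identifies $\mathbb{E}_{N,\beta,\kappa,\infty}[L_\gamma]$ with the expectation of $\rho(\sigma(\gamma))$ under the ``gauge field with an edge mass'' measure proportional to $\exp\bigl(\beta\sum_p\rho(d\sigma(p))+\kappa\sum_e\rho(\sigma(e))\bigr)$. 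The same substitution expresses $\langle L_e\rangle_{\infty,\kappa,\infty}$ as a nearest-neighbour correlation, and $H_\kappa(\gamma)$ as a two-point function, of the Ising-type model obtained from this measure at $\beta=\infty$ (for the rectangular loop the latter equals $1$). The Ginibre inequalities let us take $N\to\infty$ throughout, reducing everything to the analysis of $\langle\rho(\sigma(\gamma))\rangle$ in this model on $\mathbb{Z}^4$.

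\emph{Vortex expansion and extraction of the leading term.} Writing $\rho(d\sigma(p))=1-2\,\mathbf{1}_{\{d\sigma(p)\neq 0\}}$ and $\rho(\sigma(e))=1-2\,\mathbf{1}_{\{\sigma(e)\neq 0\}}$, the Boltzmann weight becomes, up to a constant, $\exp\bigl(-4\beta\,N_{\mathrm v}(\sigma)-4\kappa\,N_{\mathrm e}(\sigma)\bigr)$, with $N_{\mathrm v}$ the number of vortex plaquettes and $N_{\mathrm e}$ the number of excited edges (the $4$'s from four dimensions and from summing each cell over both orientations). The vortex plaquettes form a $\mathbb{Z}_2$ two-cocycle, which in $\mathbb{Z}^4$ splits into connected clusters; the smallest, the \emph{minimal vortex}, is the six plaquettes around a single edge (weight $e^{-24\beta}$ from the six plaquettes, $e^{-4\kappa}$ from the one excited edge it forces). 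One then runs a polymer/cluster expansion of the partition function and of the numerator, with polymers the connected vortex clusters and activities obtained by summing the residual Higgs degrees of freedom around each cluster. Now $\rho(\sigma(\gamma))$ changes sign exactly when an odd number of minimal vortices sit on edges of $\support\gamma$ (only edges in the plane of the rectangle contribute), and at leading order each of the $|\support\gamma|$ such edges carries an independent minimal vortex whose Higgs-sector weight, after integrating out the residual Higgs field against the $\beta=\infty$ Ising measure, equals $e^{-4\kappa}\bigl(1+(e^{8\kappa}-1)\langle L_e\rangle_{\infty,\kappa,\infty}\bigr)$. Exponentiating this dilute gas (a Poisson-type resummation) produces the factor $\Theta'_{\beta,\kappa}(\gamma)$, while the ``no vortex on $\gamma$'' part of the expansion reproduces $H_\kappa(\gamma)$; this is the origin of the product form in~\eqref{eq: main result Z2}.

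\emph{Error control, and the main difficulty.} The error is assembled from: (i) non-minimal vortices and vortex clusters meeting $\support\gamma$, whose Higgs-dressed activities are small precisely under the hypotheses---$18^2e^{-4\kappa}(2+e^{-4\kappa})<1$ controlling the relevant per-edge polymer sum, and $6\beta>\kappa$ (i.e.\ $e^{-24\beta}<e^{-4\kappa}$) making the gauge-sector vortex scale subdominant to the Higgs scale in the combinations that occur; (ii) higher-order interactions among vortices and between a vortex and the ambient Higgs field; (iii) the four corners of the rectangle and the finite-volume truncation, which affect only an $O(1)$ number of edges and so contribute an $O(1)$ additive error (the $2\cdot 18^3$ in the bound on $\mathbf{K}''$), together with ``long'' vortex excitations wrapping a short side (the $|\support\gamma|^{1/2}e^{-4\kappa}\bigl(18^2(2+e^{-4\kappa})\bigr)^{\min(\ell_1,\ell_2)}$ term, negligible once $\min(\ell_1,\ell_2)$ is large enough relative to $\kappa$). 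The power $\tfrac14$ and the factor $|\support\gamma|^{-1/2}$ enter through the quantitative law-of-large-numbers and Cauchy--Schwarz/interpolation estimates used to upgrade first-order bounds to the exponentiated statement. The main obstacle I anticipate is (i)--(ii) together with extracting the \emph{precise} coefficient: one must show that the finite-$\beta$ sum over minimal vortices, with the Higgs field summed in, matches the $\beta=\infty$ reweighting $e^{-4\kappa}\bigl(1+(e^{8\kappa}-1)\langle L_e\rangle_{\infty,\kappa,\infty}\bigr)$ up to the allowed error---not merely reproduce its order of magnitude---and that the Higgs-integrated cluster expansion converges uniformly in $\gamma$; tracking the cancellations and the explicit constants through this is the delicate part.
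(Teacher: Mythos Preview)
Your proposal has a genuine gap at the very first step: you assume $\partial\gamma=0$ and treat $\gamma$ as a closed rectangular loop, but the theorem is stated for a \emph{path} along the boundary of a rectangle, and in this paper ``path'' means either a generalized loop \emph{or an open path} (Definition~\ref{def: generalized loop} and the surrounding discussion). The open case is the whole point of the result---it is what distinguishes this theorem from~\cite{flv2021}---and for open $\gamma$ the observable $L_\gamma(\sigma,\phi)=\rho(\sigma(\gamma)-\phi(\partial\gamma))$ genuinely depends on $\phi$, the factor $H_\kappa(\gamma)$ is the Ising two-point function at the endpoints (Corollary~\ref{corollary: Ising}) rather than $1$, and your Stokes-type reduction of $\rho(\sigma(\gamma))$ to a sum over a spanning surface is unavailable because $\gamma$ bounds no $2$-chain. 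Relatedly, your reading of the error term $|\support\gamma|^{1/2}e^{-4\kappa}\bigl(18^2(2+e^{-4\kappa})\bigr)^{\min(\ell_1,\ell_2)}$ as coming from ``long vortex excitations wrapping a short side'' is off: in the paper it arises from Propositions~\ref{proposition: E1}--\ref{proposition: E2} and~\ref{proposition: square sum term in constant}, where it bounds the probability that an irreducible closed $1$-form separates the two endpoints of the open path (the event that every $\hat\gamma$ with $\partial\hat\gamma=-\partial\gamma$ meets $\support\hathat\sigma'$), a phenomenon that simply does not exist when $\gamma$ is closed.

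Even restricting to closed loops, your method diverges substantially from the paper's. You propose a polymer/cluster expansion with Higgs-dressed activities; the paper instead builds an explicit \emph{coupling} $\mu_{N,(\beta,\kappa),(\infty,\kappa)}$ between the finite-$\beta$ model and the $\beta=\infty$ model (Definition~\ref{def: the coupling}) via a disagreement-percolation construction on the edge graph $\mathcal{G}(\hat\sigma,\hat\sigma')$, and then decomposes $L_\gamma$ through this coupling (Proposition~\ref{proposition: Ising LGT split}) using the new notion of a $1$-form that ``disturbs'' $\gamma$ (Definition~\ref{def: disturbing}). The product $\Theta'_{\beta,\kappa}(\gamma)H_\kappa(\gamma)$ emerges not from a dilute-gas resummation but from (i) a resampling identity (Proposition~\ref{proposition: resampling in main proof}) that turns the vortex contribution into $\prod_{e}\theta_{\beta,\kappa}(\sigma(e)-d\sigma(p_e))$, (ii) a second pass through the coupling (Proposition~\ref{proposition: E}) to replace this by $\Theta_{N,\beta,\kappa}(\gamma)=\mathbb{E}_{N,\infty,\kappa}[\prod_e\theta_{\beta,\kappa}(\sigma(e))]$, and (iii) a Chebyshev/weak-law argument (Lemmas~\ref{lemma: uniform WLLN}--\ref{lemma: WLLN application}) specific to rectangular $\gamma$ that converts $\Theta_{N,\beta,\kappa}(\gamma)$ into the explicit exponential $\Theta'_{\beta,\kappa}(\gamma)$. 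The exponent $\tfrac14$ comes from interpolating this chain of estimates against the a~priori upper bound of Proposition~\ref{proposition: upper bound}. Your outline does not supply a mechanism that produces $H_\kappa(\gamma)$ as a separate factor when $\gamma$ is open, nor the coupling/decomposition machinery that replaces the missing Stokes argument.
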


An exact expression for \( K_0 \) is given in~\eqref%
{eq: constant in main theorem Z2}.

\begin{remark}\label{remark: ising}
    We later show, in Corollary~\ref{corollary: Ising}, that if \( \gamma \) is an open path, then the function \( H_\kappa(\gamma) \) is exactly equal to the spin-spin-correlation of the spins at the endpoints of \( \gamma \) in the Ising model with coupling parameter \( \kappa. \) By the same argument, the term \( \langle L_e(\sigma,\phi) \rangle_{\infty,\kappa,\infty}  \) in the function \( \Theta'_{\beta,\kappa}(\gamma) \) will be equal to the spin-spin-correlation of the spins at the end-points of the (arbitrary) edge \( e. \) 
    It is well known (see, e.g.,~\cite{d2017}) that when \( \kappa \) is larger than the critical parameter for the Ising model, then  \( H_\kappa(\gamma) \) is uniformly bounded from below for all \( \gamma.\) At the same time, by standard arguments, we have \( \bigl\langle L_e(\sigma,\phi) \bigr\rangle_{\infty,\kappa,\infty} = e^{-4 \cdot 8 \kappa} + o_\kappa(1). \)
\end{remark}

\begin{remark}\label{remark: interpretation}
    Using the previous remark, we now interpret our main theorem. To this, end, assume that \( \gamma \) is a loop along the boundary of a rectangle \( R \). Assume further that the two sides of \( R \) are of the same order, so that \( K_0 \) is bounded from above, and that \( \beta \) and \( |\support \gamma| \) are both very large. Then, by Theorem~\ref{theorem: main result Z2}, the following holds. If \( |\support \gamma|e^{-24\beta-4\kappa} \) is very large, then \( \langle L_\gamma(\sigma,\phi)\rangle_{\beta,\kappa,\infty} \) is very close to zero, and if \( |\support \gamma|e^{-24\beta-4\kappa} \) is bounded from above, then \( \langle L_\gamma(\sigma,\phi) \rangle_{\beta,\kappa,\infty} \) will be non-trivial.
\end{remark}

\begin{remark}
    The assumption that \( 18^2 e^{-4\kappa_0}(2 + e^{-4\kappa_0})<1 \) guarantees that the clusters formed by the edges in unitary gauge (see Section~\ref{sec: unitary gauge}) are finite almost surely, and this is one of the main properties of the model which we use in the proof of Theorem~\ref{theorem: main result Z2}. The assumptions that \( 6\beta \geq  \kappa \) and that the path \( \gamma \) is along the boundary of a rectangle is used only to simplify \( \Theta'_{\beta,\kappa}(\gamma) \) and \( K_0, \) and is not needed for any of the main ideas of the proof. 
    In particular, the strategy used to do this also works for more general classes of loops, as long as their shape is not too rough.
\end{remark}

\begin{remark} 
    If \( \gamma \) is a generalized loop, then \( H_\kappa(\gamma) = 1 \), and hence, in this case, we essentially recover Theorem~1.1~\cite{flv2021}.
\end{remark}

In Section~\ref{sec: proof of main result}, we state a more general version of Theorem~\ref{theorem: main result Z2} (Theorem~\ref{theorem: main result}).
While this result is stated for cyclic groups, with minor changes, this paper's arguments should also work in a more general setting. In particular, the proof strategy should work for all finite abelian groups.
Finally, we also mention that alternative versions of our main result, with different error bounds, are given by Proposition~\ref{proposition: short lines} and Proposition~\ref{proposition: first version of main result}.

\subsection{Applications}
We now apply our main result to a few different Wilson lines, and ratios of Wilson lines, which has been considered in the physics literature. 
In all of these examples, we will work under the assumptions of Theorem~\ref{theorem: main result Z2}. We note that, when these hold, if \( \gamma \) is a loop along a rectangle with side lengths of the same order, then, by Theorem~\ref{theorem: main result Z2}, we have
\begin{equation*}
    \bigl\langle L_\gamma(\sigma,\phi) \bigr\rangle_{\beta,\kappa,\infty} = \Theta'_{\beta,\kappa}(\gamma) H_\kappa(\gamma) + o_{\beta}(1) + o_{|\support \gamma|}(1).
\end{equation*}

\begin{example} 
    In~\cite{bf1983}, Bricmont and Fr\"olich consider Wilson line observables \( L_{\gamma}(\sigma,\phi) \) for axis parallel paths~\( \gamma \) which are a shortest path between two points \( x_1 \) and \( x_2 \) (see Figure~\ref{figure: straight Wilson line}).
    \begin{figure}[!htp]
        \centering
        \begin{tikzpicture}
            \draw[detailcolor00] (0,0) -- (8,0); 
            \draw[detailcolor00,-{Straight Barb[length=1.5mm,color=detailcolor00!40!black]}] (3.95,0) -- (4.0,0);
            \fill[detailcolor04] (0,0) circle (2pt) node[anchor=south] {\color{black}\( x_1\)};
            \fill[detailcolor04] (8,0) circle (2pt) node[anchor=south] {\color{black}\( x_2\)};
        \end{tikzpicture}
        \caption{The open path \( \gamma \). Note that for any \( \ell_1 \geq |x_2-x_1|\) and \( \ell_2 \geq 0 \) there is a rectangle \( R \) with side lengths \( \ell_1 \) and \( \ell_2 \) so that \( \gamma \) is a path along the boundary of \( R \), and hence \( \gamma \) satisfies the assumptions of Theorem~\ref{theorem: main result Z2}.} 
        \label{figure: straight Wilson line}
    \end{figure}
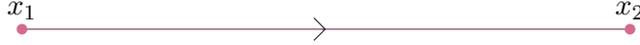
    The authors argue that the expectation \( \langle L_\gamma(\sigma,\phi) \rangle_{\beta,\kappa} \) should exhibit a phase transition, corresponding to binding versus unbinding of dynamical quarks in the field of a static colour source.
    In detail, they argue that \( \langle L_\gamma(\sigma,\phi) \rangle_{\beta,\kappa,\infty} \) should have exponential decay with polynomial corrections if \( \beta \) is large and \( \kappa \) is small, and exponential decay if either \( \beta \) is large and \( \kappa \) is not too small. 
    Since, under assumption~\ref{assumption: 3}, \( H_\kappa(\gamma) \) is uniformly bounded from below for all \( \gamma\), and \( \Theta'_{\beta,\kappa}(\gamma) \) has exponential decay in \( |\support \gamma|,\) we see that \( \langle L_\gamma(\sigma,\phi) \rangle_{\beta,\kappa,\infty} \) indeed has exponential decay in \( |\support \gamma| \) when \( \beta \) is large and \( \kappa \) is not too small.
\end{example}

\begin{example}[The Marcu-Fredenhagen parameter]\label{example: line loop ratio}
    
    Let \( \gamma \) and \( \gamma' \) be as in Figure~\ref{figure: U Wilson line}. In~\cite{fmf1986, m}, they consider the ratio
    \begin{equation}\label{eq: line loop ratio}
        \frac{\langle L_{\gamma'}(\sigma,\phi) \rangle_{\beta,\kappa,\infty}\langle L_{\gamma-\gamma'}(\sigma,\phi) \rangle_{\beta,\kappa,\infty} }{\langle W_{\gamma}(\sigma)\rangle_{\beta,\kappa,\infty}}.
    \end{equation}
    \begin{figure}[!htp]
        \centering
        \begin{subfigure}[b]{0.45\textwidth}
            \centering
            \begin{tikzpicture}
                \draw[detailcolor00] (0,2.5) -- (0,0) node[midway, left] {\color{black}\( h\)} -- (4,0) node[midway,anchor=north] {\color{black}\( |x_2-x_1|\)} -- (4,2.5); 
                \draw[detailcolor00,-{Straight Barb[length=1.5mm,color=detailcolor00!40!black]}] (1.95,0) -- (2.0,0);
                \fill[detailcolor04] (0,2.5) circle (2pt) node[anchor=south] {\color{black}\( x_1\)};
                \fill[detailcolor04] (4,2.5) circle (2pt) node[anchor=south] {\color{black}\( x_2\)};
            \end{tikzpicture}
            \caption{The open path \(  \gamma' \).} 
        \end{subfigure}
        \hfil
        \begin{subfigure}[b]{0.45\textwidth}
            \centering
            \begin{tikzpicture}
                \draw[detailcolor00] (0,5) -- (0,0) node[midway, left] {\color{black}\( 2h\)} --  (4,0) node[midway,anchor=north] {\color{black}\( |x_2-x_1|\)}  -- (4,5) -- (0,5); 
                \draw[detailcolor00,-{Straight Barb[length=1.5mm,color=detailcolor00!40!black]}] (1.95,0) -- (2.0,0);
            \end{tikzpicture}
            \caption{The loop \( \gamma \).} 
        \end{subfigure}
        \caption{The open path \( \gamma' \) and the generalized loop \( \gamma \) considered in Example~\ref{example: line loop ratio}.}
        \label{figure: U Wilson line}
    \end{figure}
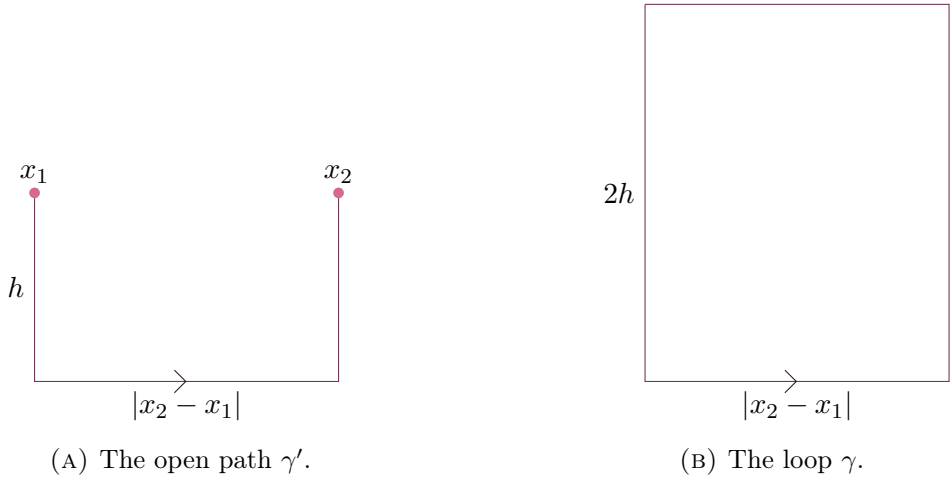
    The limit of this ratio, when \( |x_2-x_1| \) is proportional to \( h \) and \( h \to \infty, \) is often referred to as the \emph{Marcu-Fredenhagen order parameter}.
    If this limit is zero, the model in argued to have charged states, and no confinement, whereas if the limit is non-zero, then there should be no charged states and confinement.
    We mention that this ratio is also studied in, e.g.,~\cite{m,s1988,bf1987,fm1988,ghms2011}.
    
    As an immediate consequence of our Theorem~\ref{theorem: main result Z2}, if \(\kappa \) is not too small and \( \beta,\) \(\kappa,\) and \( \gamma \) are such that \( |\support \gamma|e^{-24\beta-4\kappa}\) is bounded away from infinity, then the right hand side of~\eqref{eq: line loop ratio} is equal to \( H_\kappa(\gamma')^2 + o_{|\support \gamma|}(1) + o_{\beta}(1).\) 
    However, since letting \( |\support \gamma| \) tend to infinity while keeping \( \beta \) and \( \kappa \) fixed violates that assumption that \( |\support \gamma|e^{-24\beta-4\kappa}\) is bounded away from infinity, we cannot use this approximate equation to make conclusions about the Marcu-Fredenhagen parameter itself. 
\end{example}

\begin{example}\label{example: closing box and almost closed}
    Let \( \gamma \) and \( \gamma' \) be as in Figure~\ref{fig: gliozzi 1}. 
    In~\cite{g2006}, Gliozzi considers the ratio
    \begin{equation}\label{eq: closing box and almost closed}
        \frac{ \langle L_{\gamma'}(\sigma,\phi) \rangle_{\beta,\kappa,\infty}
        L_{\gamma-\gamma'}(\sigma,\phi)\rangle_{\beta,\kappa,\infty} }{\langle W_{\gamma}(\sigma) \rangle_{\beta,\kappa,\infty}},
    \end{equation}
    and note that it, asymptotically, seem to only depend on the distance \( |x_2-x_1|. \) Indeed, from Theorem~\ref{theorem: main result Z2}, it follows that if \( |\support \gamma|e^{-24\beta-4\kappa}\) is bounded away from infinity and \( \kappa \) is not too small, then the expression in~\eqref{example: closing box and almost closed} is equal to \( H_\kappa(\gamma')^2 + o_{|\support \gamma|}(1) + o_{\beta}(1). \) Using Remark~\ref{remark: ising} to recognise \( H_\kappa(\gamma') \) as the spin-spin-correlation function for the Ising model, evaluated at the end-points of \( \gamma' \), this confirms the observation made in~\cite{g2006}.
    \begin{figure}[!htp]
        \centering
        \begin{subfigure}[b]{0.45\textwidth}
            \centering
            \begin{tikzpicture}
                \draw[detailcolor00] (0,2.5) -- (0,0) node[midway, left] {\color{black}\( h\)} -- (4,0) node[midway,anchor=north] {\color{black}\( |x_2-x_1|\)} -- (4,2.5); 
                \draw[detailcolor00,-{Straight Barb[length=1.5mm,color=detailcolor00!40!black]}] (1.95,0) -- (2.0,0);
                \fill[detailcolor04] (0,2.5) circle (2pt) node[anchor=south] {\color{black}\( x_1\)};
                \fill[detailcolor04] (4,2.5) circle (2pt) node[anchor=south] {\color{black}\( x_2\)};
            \end{tikzpicture}
            \caption{The open path \( \gamma'\).} 
        \end{subfigure}
        \hfil
        \begin{subfigure}[b]{0.45\textwidth}
            \centering
            \begin{tikzpicture}
                \draw[detailcolor00] (0,2.5) -- (0,0) node[midway, left] {\color{black}\( h\)} -- (4,0) node[midway,anchor=north] {\color{black} \( |x_2-x_1|\)} -- (4,2.5) -- (0,2.5) node[midway, anchor=south] {}; 
                \draw[detailcolor00,-{Straight Barb[length=1.5mm,color=detailcolor00!40!black]}] (1.95,0) -- (2.0,0);
            \end{tikzpicture}
            \caption{The generalized loop \( \gamma \).} 
            \end{subfigure}
        \caption{The open path \( \gamma' \) and generalized loop \( \gamma \) considered in Example~\ref{example: closing box and almost closed}.}
        \label{fig: gliozzi 1}
    \end{figure}
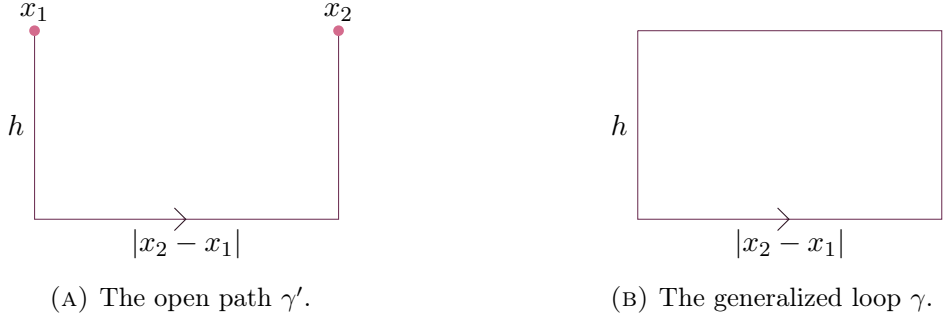

\end{example}

\begin{example}[Almost closed Wilson lines]\label{example: almost closed}
    Let \( \gamma \) and \( \gamma' \) be as in Figure~\ref{figure: almost closed Wilson line}, and let \( r \) be the distance between the endpoints of \( \gamma'. \) In~\cite{g2006}, when  \( r\) is much smaller that \( |\support \gamma|,\) the path \( \gamma' \) is referred to as an \emph{almost closed Wilson line}, and it was argued that the following functional equation should hold.
    \begin{equation}\label{eq: functional equation}
        \bigl\langle L_{\gamma'}(\sigma,\phi) \bigr\rangle_{\beta,\kappa,\infty} 
        \bigl\langle L_{\gamma-\gamma'}(\sigma,\phi)\bigr\rangle_{\beta,\kappa,\infty} 
        \simeq
        \bigl\langle W_{\gamma}(\sigma) \bigr\rangle_{\beta,\kappa,\infty} f(r)
    \end{equation}
    for some function \( f(r) \) that should neither depend on \( \gamma \) nor on the placement of the open path \( \gamma-\gamma' \) on \( \gamma. \) 
    Using our main result, it indeed see that if \( \kappa\) is not too small, then (assuming that the side lengths of the rectangle are proportional to \( |\support \gamma| \) is large), we have
    \begin{equation*}
        \bigl\langle L_{\gamma'}(\sigma,\phi) \bigr\rangle_{\beta,\kappa,\infty} 
        \bigl\langle L_{\gamma-\gamma'}(\sigma,\phi)\bigr\rangle_{\beta,\kappa,\infty} 
        =
        \bigl\langle W_{\gamma}(\sigma) \bigr\rangle_{\beta,\kappa,\infty}   H_\kappa(\gamma')^2 + o_{|\support \gamma|}(1) + o_{\beta+6\kappa}(1).
    \end{equation*} 
    In particular, using Remark~\ref{remark: ising}, this shows that the functional equation in~\eqref{eq: functional equation} indeed hold when \( |\support \gamma| \) and \( \beta \) are both large, and with \( f(r) \) given by the spin-spin-correlation function evaluated at the endpoints of \( \gamma'. \) 
    \begin{figure}[!htp]
        \centering
        \begin{subfigure}[b]{0.45\textwidth}
            \centering
            \begin{tikzpicture}[scale=0.8] 
                \draw[detailcolor00] (3.5,3) -- (0,3) -- (0,0) -- (6,0) -- (6,3) -- (5,3); 
                \fill[detailcolor04] (3.5,3) circle (2.5pt);
                \fill[detailcolor04] (5,3) circle (2.5pt);
                \draw[detailcolor00,-{Straight Barb[length=1.5mm,color=detailcolor00!40!black]}] (2.95,0) -- (3.0,0);
            \end{tikzpicture}
            \caption{The open path \(  \gamma'. \)}
        \end{subfigure}
        \hfil
        \begin{subfigure}[b]{0.45\textwidth}
            \centering
            \begin{tikzpicture}[scale=0.8]
                \draw[detailcolor00] (3.5,3) -- (0,3) -- (0,0) -- (6,0)  -- (6,3)  -- (5,3) -- (3.5,3);  
                \draw[detailcolor00,-{Straight Barb[length=1.5mm,color=detailcolor00!40!black]}] (2.95,0) -- (3.0,0);
            \end{tikzpicture}
            \caption{The generalized loop \(  \gamma .\)}
        \end{subfigure}
        \caption{The open path \( \gamma' \) and the loop \( \gamma \) considered considered in Example~\ref{example: almost closed}.} 
        \label{figure: almost closed Wilson line}
    \end{figure}
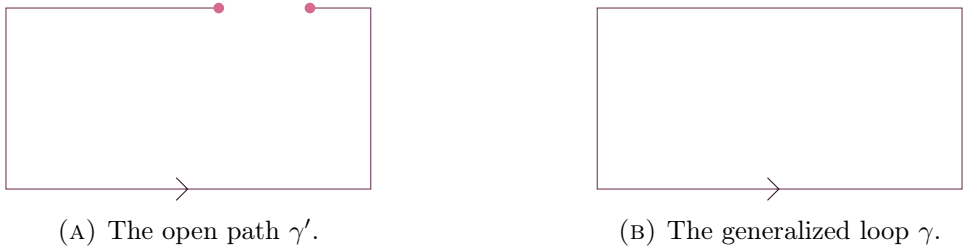
\end{example}

\subsection{Relation to other work}

Many of the ideas used in this paper are refined versions of analogue ideas used in~\cite{flv2021}, which in turn build upon the works~\cite{c2019,flv2020,sc2019}. However, since this paper deals with general paths \( \gamma, \) and not only generalized loops as in~\cite{c2019,flv2020,sc2019,flv2021}, the first main idea in these papers, which is to pass from a generalized loop to an oriented surface, does not work. One of the main contributions of this paper thus consists in dealing with this obstacle. Even in the case when the path \( \gamma \) in Theorem~\ref{theorem: main result Z2} is a generalized loop, our proof is different from that in~\cite{flv2021}, and we hence provides an alternative proof in this case. In addition, when \( \gamma \) is a generalized loop, we express the leading-order term in a more transparent way than in~\cite{flv2021}.

We mention that although the recent paper~\cite{gs2021} also calculate the first order term of Wilson loop observables in an abelian lattice gauge theory, they work with a continuous structure group, and thus their methods are fundamentally different from the ideas used here.

\subsection{Structure of paper}

In Section~\ref{sec: preliminaries}, we give a brief introduction to the cell complex of \( \mathbb{Z}^m \) and the discrete exterior calculus on this cell complex. We also define vortices and recall some of their properties from~\cite{flv2020}~and~\cite{flv2021}. Moreover, we recall the definition of generalized loops and oriented surfaces from~\cite{flv2021}, explain unitary gauge and define a corresponding measure, and discuss the existence of the infinite volume limit \(\mathbb{E}_{\beta,\kappa,\infty}[L_\gamma(\sigma,\phi)]. \)
In Section~\ref{sec: additional notation}, we introduce additional notation which will be useful throughout the paper.
In Section~\ref{sec: activity}, we recall the notion of activity of gauge field configurations from~\cite{flv2021}.
In Section~\ref{sec: couplings}, we describe a useful edge graph, and introduce two couplings, one between the abelian lattice Higgs model and a \( \mathbb{Z}_n \)-model, and one between two \( \mathbb{Z}_n \)-models. These will be important in the proof of our main result.
In Section~\ref{sec: general cluster events}, using the edge graph from Section~\ref{sec: couplings}, we give upper bounds on a number of events related to the couplings introduced in Section~\ref{sec: couplings}.
Next, in Section~\ref{sec: first version of main result}, we show how one of the couplings introduced in Section~\ref{sec: couplings} can be used to obtain a first version of our main result, which is useful when \( |\support \gamma| e^{-4(\kappa + 6\beta)}\) is small. This result is not needed for the proof of Theorem~\ref{theorem: main result Z2}, but illustrates the usefulness of the coupling.
In Section~\ref{sec: spin config decomposition}, we introduce a spin decomposition of two coupled configurations.
In Section~\ref{sec: 1forms}, we describe how different 1-forms affect the Wilson line observable.
Finally, in Section~\ref{sec: proof of main result}, we use the setup from the earlier sections to give a proof of our main result.

\subsection{Acknowledgements}
The author is grateful to Jonatan Lenells and Fredrik Viklund for many useful discussions.

\subsection{Funding}
The author acknowledges support from the Knut and Alice Wallenberg Foundation, from the Swedish Research Council, Grant Agreement No. 2015-05430, and from the European Research Council, Grant Agreement No. 682537.

\section{Preliminaries}\label{sec: preliminaries}

\subsection{The cell complex}
In this section, we introduce notation for the cell complexes of the lattices \( \mathbb{Z}^m \) and \( B_N \coloneqq [-N,N]^m \cap  \mathbb{Z}^m \) for \( m,N \geq 1 \). This section will closely follow the corresponding section in~\cite{flv2020}, where we refer the reader for further details.

To simplify notation, we define \( e_1 \coloneqq (1,0,\dots,0) \), \( e_2 \coloneqq (0,1,0,\dots,0) \), \ldots, \( e_m \coloneqq (0,\dots,0,1) \).

\subsubsection{Boxes and cubes}
A set \( B \) of the form \( \bigl( [a_1,b_1] \times \cdots \times [a_m,b_m] \bigr) \cap \mathbb{Z}^m\) where, for each \( j \in \{ 1,2, \ldots, m \} \), \( \{a_j, b_j\}  \subset \mathbb{Z} \) satisfies \(a_j < b_j\), will be referred to as a  \emph{box}. If all the intervals \( [a_j,b_j]\), \(1 \leq j \leq m\), have the same length, then the set \( \bigl( [a_1,b_1] \times \cdots \times [a_m,b_m] \bigr) \cap \mathbb{Z}^m\) will be referred to as a {\it cube}.

\subsubsection{Non-oriented cells}

When \( a \in \mathbb{Z}^m \), \( k \in \{ 0,1, \dots, m \} \), and \( \{ j_1,\dots, j_k \} \subseteq \{ 1,2, \dots, m \} \), we say that the set
\begin{equation*}
    (a; e_{j_1}, \dots, e_{j_k}) \coloneqq \bigl\{ x \in \mathbb{R}^m \colon \exists b_1, \dots, b_k \in [0,1] \text{ such that } x = a + \sum_{i=1}^k b_i e_{j_i}  \bigr\}
\end{equation*}
is a \emph{non-oriented \( k \)-cell}. Note that if \( \sigma \) is a permutation, then \( (a; e_{j_1}, \dots, e_{j_k}) \) and \( (a; \sigma(e_{j_1}, \dots, e_{j_k})) \) represent the same non-oriented \( k \)-cell.

\subsubsection{Oriented cells}\label{sec: oriented cells}
To each non-oriented $k$-cell \( (a; e_{j_1}, \dots, e_{j_k}) \) with \( a \in \mathbb{Z}^m \), \( k \geq 1 \), and \( 1\leq j_1 < \dots < j_k\leq m \), we associate two \emph{oriented \( k \)-cells}, denoted \(  \frac{\partial}{\partial x^{j_1}}\big|_a \wedge \dots \wedge \frac{\partial}{\partial x^{j_k}}\big|_a\) and \( -\frac{\partial}{\partial x^{j_1}}\big|_a \wedge \dots \wedge \frac{\partial}{\partial x^{j_k}}\big|_a \), with opposite orientation.  
When \( a \in \mathbb{Z}^m \), \( 1\leq j_1 < \dots < j_k\leq m \), and \( \sigma  \) is a permutation of \( \{ 1,2, \dots, k \} \), we define
\begin{equation*}
    \frac{\partial}{\partial x^{j_{\sigma(1)}}}\bigg|_a \wedge \dots \wedge \frac{\partial}{\partial x^{j_{\sigma(k)}}}\bigg|_a 
    \coloneqq 
    \sgn(\sigma) \, 
    \frac{\partial}{\partial x^{j_1}}\bigg|_a \wedge \dots \wedge \frac{\partial}{\partial x^{j_k}}\bigg|_a
\end{equation*}
If \( \sgn(\sigma)=1 \), then  \( \frac{\partial}{\partial x^{j_{\sigma(1)}}}\big|_a \wedge \dots \wedge \frac{\partial}{\partial x^{j_{\sigma(k)}}}\big|_a \) is said to be \emph{positively oriented}, and if \( \sgn(\sigma)=-1 \), then \( \frac{\partial}{\partial x^{j_{\sigma(1)}}}\big|_a \wedge \dots \wedge \frac{\partial}{\partial x^{j_{\sigma(k)}}}\big|_a  \) is said to be \emph{negatively oriented}. 
Analogously, we define \begin{equation*}
    -\frac{\partial}{\partial x^{j_{\sigma(1)}}}\bigg|_a \wedge \dots \wedge \frac{\partial}{\partial x^{j_{\sigma(k)}}}\bigg|_a 
    \coloneqq 
    -\sgn(\sigma) \, 
    \frac{\partial}{\partial x^{j_1}}\bigg|_a \wedge \dots \wedge \frac{\partial}{\partial x^{j_k}}\bigg|_a,
\end{equation*}
and say that \( -\frac{\partial}{\partial x^{j_{\sigma(1)}}}\bigg|_a \wedge \dots \wedge \frac{\partial}{\partial x^{j_{\sigma(k)}}}\bigg|_a  \) is positively oriented if \( -\sgn(\sigma) = 1 \), and negatively oriented if \( -\sgn(\sigma) = -1. \)

Let $\mathcal{L} = \mathbb{Z}^m$ or $\mathcal{L} = B_N \subseteq \mathbb{Z}^m$. An oriented cell \( \frac{\partial}{\partial x^{j_1}}\big|_a \wedge \dots \wedge \frac{\partial}{\partial x^{j_k}}\big|_a \) is said to be in \( \mathcal{L} \) if all corners of \( (a;e_{j_1},\dots, e_{j_k})\) belong to \( \mathcal{L} \); otherwise it is said to be {\it outside} $\mathcal{L}$. 
The set of all oriented \( k \)-cells in \( \mathcal{L} \) will be denoted by \( C_k(\mathcal{L}). \) The set of all positively and negatively oriented cells in \( C_k(\mathcal{L}) \) will be denoted by \( C_k^+(\mathcal{L})\) and \( C_k^-(\mathcal{L})\), respectively.
A set \( C \subseteq C_k(\mathcal{L}) \) is said to be \emph{symmetric} if for each \( c \in C \) we have \( -c \in C \).

A non-oriented 0-cell \( a\in \mathbb{Z}^m \) is simply a point, and to each point we associate two oriented \(0\)-cells \( a^+ \) and \( a^-  \) with opposite orientation. We let \( C_0(\mathcal{L}) \) denote the set of all oriented \( 0 \)-cells.

Oriented 1-cells will be referred to as~\emph{edges}, and oriented 2-cells will be referred to as~\emph{plaquettes}.

\subsubsection{\( k \)-chains}\label{sec: chains}

The space of finite formal sums of positively oriented \( k \)-cells with integer coefficients will be denoted by \( C_k(\mathcal{L},\mathbb{Z}) \). 
Elements of \( C_k(\mathcal{L},\mathbb{Z}) \) will be referred to as \emph{\( k \)-chains}. 
If \( q \in C_k(\mathcal{L},\mathbb{Z}) \) and \( c \in C^+_k(\mathcal{L}) \), we let \( q[c] \) denote the coefficient of \( c \) in \( q \).
If \( c \in C^-_k(\mathcal{L}) \), we let \( q[c]\coloneqq -q[-c]. \)
For \(q,q' \in  C_k(\mathcal{L},\mathbb{Z}) \), we define
\begin{equation*}
    q+q' \coloneqq \sum_{c \in C_k^+(\mathcal{L})} \bigl(q[c] + q'[c] \bigr) c.
\end{equation*}
Using this operation, \( C_k(\mathcal{L},\mathbb{Z}) \) becomes a group.

When \( q \in C_k(\mathcal{L},G) \), we let the \emph{support} of \( q \) be defined by
\begin{equation*}
    \support q \coloneqq \bigl\{ c \in C_k^+(\mathcal{L}) \colon q[c] \neq 0 \bigr\}.
\end{equation*}

To simplify notation, when \( q \in C_k(\mathcal{L},G) \) and \( c\in C_k(\mathcal{L}) \), we write \( c \in q \) if either
\begin{enumerate}
    \item \( c \in C_k^+(\mathcal{L}) \) and \( q[c]>0\), or
    \item \( c \in C_k^-(\mathcal{L}) \) and \( q[-c]<0. \)
\end{enumerate}

\subsubsection{The boundary of a cell}\label{sec: cell boundary}

When \( k \geq 2 \), we define the \emph{boundary} \(\partial c \in C_{k-1}(\mathcal{L}, \mathbb{Z})\) of  \( c = \frac{\partial}{\partial x^{j_1}}\big|_a \wedge \dots \wedge \frac{\partial}{\partial x^{j_k}}\big|_a \in C_k(\mathcal{L})\) by
\begin{align}
    \label{eq: boundary chain}
        \partial c \coloneqq \sum_{k' \in \{ 1,\dots, k \}}  \biggl(&
        (-1)^{k'}  \frac{\partial}{\partial x^{j_1}}\bigg|_a \wedge \dots \wedge \frac{\partial}{\partial x^{j_{k'-1}}}\bigg|_a \wedge  \frac{\partial}{\partial x^{j_{k'+1}}}\bigg|_a \wedge \dots \wedge \frac{\partial}{\partial x^{j_k}}\bigg|_a
        \\\nonumber
        & + (-1)^{k'+1}   
        \frac{\partial}{\partial x^{j_1}}\bigg|_{a + e_{j_{k'}}} \wedge \dots \wedge \frac{\partial}{\partial x^{j_{k'-1}}}\bigg|_{a + e_{j_{k'}}} \wedge  \frac{\partial}{\partial x^{j_{k'+1}}}\bigg|_{a + e_{j_{k'}}} \wedge \dots \wedge \frac{\partial}{\partial x^{j_k}}\bigg|_{a + e_{j_{k'}}} 
        \biggr). 
\end{align}
When \( c \coloneqq \frac{\partial}{\partial x^{j_1}}\big|_a \in C_1(\mathcal{L})\) we define the boundary \( \partial c \in C_0(\mathcal{L},\mathbb{Z}) \) by
\begin{equation*}
    \partial c = (-1)^1 a^+ + (-1)^{1+1} 
    (a+e_{j_1})^+ = (a+e_{j_1})^+ - a^+.
\end{equation*}

We extend the definition of \( \partial \) to \( k \)-chains \( q \in C_k(\mathcal{L},\mathbb{Z}) \) by linearity.
One verifies, as an immediate consequence of this definition, that if \( k \in \{ 2,3, \dots, m \} \), then \( \partial \partial c = 0 \) for any \( c \in \Omega_k(\mathcal{L}). \)

\subsubsection{The coboundary of an oriented cell}\label{sec: coboundary}

If \( k \in \{ 0,1, \ldots, n-1 \} \) and \( c \in C_k(\mathcal{L})\) is an oriented \( k \)-cell, we define the \emph{coboundary} \( \hat \partial c \in C_{k+1}(\mathcal{L})\) of \( c \) as the \( (k+1) \)-chain 
\begin{equation*}
	\hat \partial c \coloneqq \sum_{c' \in C_{k+1}(\mathcal{L})} \bigl(\partial c'[c] \bigr) c'.
\end{equation*}
Note in particular that if \( c' \in C_{k+1}(\mathcal{L}),\) then \( \hat \partial c[c'] = \partial c'[c]. \) We extend the definition of \( \hat{\partial} \) to \( k \)-chains \( q \in C_k(\mathcal{L},\mathbb{Z}) \) by linearity.

\subsubsection{The boundary of a box}

An oriented \( k \)-cell \( c = \frac{\partial}{\partial x^{j_1}}\big|_a \wedge \dots \wedge \frac{\partial}{\partial x^{j_k}}\big|_a \in C_k(B_N)\) is said to be a \emph{boundary cell} of a box \( B = \bigl( [a_1,b_1]\times \dots \times [a_m,b_m] \bigr) \cap \mathbb{Z}^m \subseteq B_N\), or equivalently to be in \emph{the boundary} of \( B \), if the non-oriented cell \( (a;e_{j_1}, \dots, e_{j_k}) \) is a subset of the boundary of 
\(  [a_1,b_1]\times \dots \times [a_m,b_m]. \)

When \( k \in C_k(B_N) \), we let \( \partial C_k(B_N) \) denote the set cells in \( C_k(B_N) \) which are boundary cells of \( B_N. \)

\subsection{Discrete exterior calculus}
In what follows, we give a brief overview of discrete exterior calculus on the cell complexes of \( \mathbb{Z}^m \) and \( B = [a_1,b_1] \times \dots \times [a_m,b_m]  \cap  \mathbb{Z}^m \) for \( m \geq 1 \). As with the previous section, this section will closely follow the corresponding section in~\cite{flv2020}, where we refer the reader for further details and proofs.

All of the results in this subsection are obtained under the assumption that an abelian group \( G \), which is not necessarily finite, has been given. In particular, they all hold for both \( G=\mathbb{Z}_n \) and \( G=\mathbb{Z} \). 

\subsubsection{Discrete differential forms}\label{sec: ddf}

A homomorphism from the group \( C_k(\mathcal{L},\mathbb{Z}) \) to the group \( G \) is called a \emph{\( k \)-form}. The set of all such \( k\)-forms will be denoted by \( \Omega^k(\mathcal{L},G) \). This set becomes an abelian group if we add two homomorphisms by adding their values in \( G \).

The set $C_k^+(\mathcal{L})$ of positively oriented $k$-cells is naturally embedded in  $C_k(\mathcal{L},\mathbb{Z})$ via the map  $c \mapsto 1 \cdot c$, and we will  frequently identify $c \in C_k^+(\mathcal{L})$ with the $k$-chain $1 \cdot c$ using this embedding. Similarly, we will identify a negatively oriented $k$-cell $c \in C_k^-(\mathcal{L})$ with the $k$-chain $(-1) \cdot (-c)$. 
In this way, a $k$-form $\omega$ can be viewed as a \( G \)-valued function on \( C_k(\mathcal{L}) \) with the property that \( \omega(c) = -\omega(-c) \) for all \( c \in C_k(\mathcal{L}) \). Indeed, if \( \omega \in \Omega^k(\mathcal{L},G) \) and \( q = \sum a_i c_i \in C_k(\mathcal{L},\mathbb{Z}) \), we have
\begin{equation*}
    \omega(q) = \omega \bigl(\sum a_i c_i \bigr) = \sum a_i \omega(c_i),
\end{equation*}
and hence a \( k \)-form is uniquely determined by its values on positively oriented \( k \)-cells.

If \( \omega \) is a \( k \)-form, it is useful to represent it by the formal expression 
\begin{equation*}
    \sum_{1 \leq j_1 < \dots < j_k \leq m} \omega_{j_1\dots j_k} dx^{j_1} \wedge \cdots \wedge dx^{j_k}.
\end{equation*}
where \( \omega_{j_1\dots j_k} \) is a \( G \)-valued function on the set of all \( a \in \mathbb{Z}^m \) such that \( \frac{\partial}{\partial x^{j_1}} \big|_a \wedge \dots \wedge \frac{\partial}{\partial x^{j_k}}\big|_a \in C_k(\mathcal{L})\), defined by
\begin{equation*}
    \omega_{j_1 \dots j_k}(a) = \omega \biggl( \frac{\partial}{\partial x^{j_1}} \bigg|_a \wedge \dots \wedge \frac{\partial}{\partial x^{j_k}} \bigg|_a \biggr).
\end{equation*} 

If \( 1\leq j_1 < \dots < j_k\leq m \) and \( \sigma  \) is a permutation of \( \{ 1,2, \dots, k \} \), we define
\begin{equation*}
    dx^{j_{\sigma(1)}}  \wedge \dots \wedge d x^{j_{\sigma(k)}}
    \coloneqq 
    \sgn(\sigma) \, 
    d  x^{j_1} \wedge \dots \wedge d x^{j_k},
\end{equation*} 
and if \( 1 \leq j_1,\dots, j_k \leq n \) are such that \( j_i = j_{i'} \) for some \( 1 \leq i < i' \leq k \), then we let
\begin{equation*}
    d  x^{j_1} \wedge \dots \wedge d x^{j_k} \coloneqq 0.
\end{equation*}

Given a \( k \)-form \( \omega \), we let \( \support \omega \) denote the support of \( \omega \), i.e., the set of all oriented \( k \)-cells \( c \) such that \( \omega(c) \neq 0 \). Note that $\support \omega$ always contains an even number of elements. 

\subsubsection{The exterior derivative}\label{sec: derivative}
Given \( h \colon \mathbb{Z}^m \to G \), \( a \in \mathbb{Z}^m \), and \( i \in \{1,2, \ldots, m \} \), we let 
\begin{equation*}
    \partial_i h(a) \coloneqq h(a+e_i) - h(a) .
\end{equation*}
If \( k \in \{ 0,1,2, \ldots, m \} \) and \( \omega \in \Omega^k(\mathcal{L},G) \), we define the \( (k+1) \)-form \( d\omega \in \Omega^{k+1}(\mathcal{L},G) \) by
\begin{equation*}
    d\omega = \sum_{1 \leq j_1 < \dots < j_k \leq m} \sum_{i=1}^m \partial_i \omega_{j_1,\dots,j_k} \,  dx^i \wedge (dx^{j_1} \wedge \dots \wedge dx^{j_k}).
\end{equation*} 
The operator \( d \) is called the \emph{exterior derivative.} 
Using \eqref{eq: boundary chain}, one can show that  \( \omega\in \Omega^k(\mathcal{L},G) \) and \( c \in  C_k(\mathcal{L},\mathbb{Z}) \), we have \( d\omega(c) = \omega(\partial c). \) This equality is known as the \emph{discrete Stokes' theorem.} 
Recalling that when \( k \in \{ 2,3,\dots, m-2\}  \) and \( c \in C_{k+2}(\mathcal{L}) \), then \( \partial \partial c = 0, \) it follows from the discrete Stokes theorem that for any \( \omega \in \Omega^{k}(\mathcal{L},G) ,\) we have \( dd\omega = 0 .\)

\subsubsection{Closed forms and the Poincar\'e lemma}

For \( k \in \{ 0,\ldots, m \} \), we say that a \( k \)-form \( \omega \in \Omega^k(\mathcal{L},G) \) is \emph{closed} if \( d\omega(c) = 0 \) for all \( c \in C_{k+1}(\mathcal{L}). \)
The set of all closed forms in \( \Omega^k(\mathcal{L},G) \) will be denoted by \( \Omega^k_0(\mathcal{L},G). \)

\begin{lemma}[The Poincar\'e lemma, Lemma 2.2 in~\cite{c2019}]\label{lemma: poincare}
    Let \( k \in \{ 1, \ldots, m\} \) and let \( B \) be a box in \( \mathbb{Z}^m \). Then the exterior derivative \( d \) is a surjective map from the set \( \Omega^{k-1}(B \cap \mathbb{Z}^m, G) \) to \( \Omega^k_0(B \cap \mathbb{Z}^m, G) \).
    Moreover, if \(G\) is finite, then this map is an \( \bigl| \Omega^{k-1}_0(B \cap \mathbb{Z}^m, G)\bigr|\)-to-\(1\) correspondence.
    Lastly, if \( k \in \{ 1,2, \ldots, m-1 \} \) and \(\omega \in \Omega^k_0(B \cap \mathbb{Z}^m, G)\) vanishes on the boundary of \(B\), then there is a \((k-1)\)-form \( \omega' \in \Omega^{k-1}(B \cap \mathbb{Z}^m, G)\) that also vanishes on the boundary of \(B\) and satisfies \(d\omega' = \omega\). 
\end{lemma}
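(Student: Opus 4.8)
The plan is to establish all three statements via a single explicit construction --- a discrete homotopy operator, ``integration along the last coordinate'' --- combined with induction on the dimension $m$ of the box. Write $B = B' \times [a_m, b_m]$ with $B'$ a box in $\mathbb{Z}^{m-1}$, and split a $k$-form $\omega$ on $B$ as $\omega = \omega_h + dx^m \wedge \omega_v$, where neither the ``horizontal'' $k$-form $\omega_h$ nor the ``horizontal'' $(k-1)$-form $\omega_v$ involves $dx^m$ (both may still depend on the last coordinate $x^m$). Writing $d'$ for the exterior derivative in the first $m-1$ coordinates, closedness of $\omega$ is equivalent to the pair of identities $d'\omega_h = 0$ and $\partial_m \omega_h = d'\omega_v$. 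I would then define the candidate primitive $I\omega$ by summing $\omega_v$ over the last coordinate, $(I\omega)(c, x^m) \coloneqq \sum_{t = a_m}^{x^m - 1} \omega_v(c, t)$ for horizontal $(k-1)$-cells $c$; a direct computation gives $\partial_m (I\omega) = \omega_v$, so $\omega - d(I\omega)$ is a horizontal $k$-form, and using $\partial_m \omega_h = d'\omega_v$ one checks that it is independent of $x^m$, hence can be read off as a closed $k$-form $\mu$ on the lower-dimensional box $B'$.

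For the surjectivity statement I would apply the inductive hypothesis (same degree $k$, dimension $m-1$) to get a primitive of $\mu$ on $B'$, pull it back to $B$, and add it to $I\omega$; the recursion terminates because $m-k$ drops by one at each step and $\mu$ vanishes automatically once $m = k$, a $k$-form on a $(k-1)$-dimensional complex being zero, so $\omega = d(I\omega)$ outright in that case. The ``$\lvert \Omega^{k-1}_0(B \cap \mathbb{Z}^m, G)\rvert$-to-$1$'' assertion is then purely formal: $d \colon \Omega^{k-1}(B \cap \mathbb{Z}^m, G) \to \Omega^k_0(B \cap \mathbb{Z}^m, G)$ is a homomorphism of abelian groups whose kernel is, by definition, $\Omega^{k-1}_0(B \cap \mathbb{Z}^m, G)$, so surjectivity forces each fibre to be a coset of this kernel, which is finite of the stated cardinality when $G$ is.

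The relative statement is where the actual work lies, and it is also where the hypothesis $k \le m-1$ enters. Given $\omega \in \Omega^k_0$ vanishing on $\partial B$, the restriction of $\omega_h$ to the bottom face $\{x^m = a_m\}$ vanishes, which forces the residual form $\mu$ above to be identically zero; hence $\omega = d(I\omega)$ already. Moreover $I\omega$ vanishes on the bottom face (empty sum) and on every side face $\{x^j = a_j\}$ or $\{x^j = b_j\}$ with $j < m$, since there each summand is a value of $\omega$ on $\partial B$. The one remaining obstruction is the top face $\{x^m = b_m\}$: there $I\omega$ restricts to a $(k-1)$-form on $B'$ that is closed (because $\omega_h$ vanishes on that face) and vanishes on $\partial B'$. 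Because $k - 1 \le (m-1) - 1$, the inductive relative statement applies on $B'$ and yields a $(k-2)$-form $\theta$ on $B'$, vanishing on $\partial B'$, with $d'\theta$ equal to that top-face restriction. I would then build from $\theta$ a correction $(k-2)$-form $\xi$ on $B$ --- equal to the pullback of $\theta$ on horizontal cells at height $b_m$ and zero on all other cells --- and verify that $\omega' \coloneqq I\omega - d\xi$ satisfies $d\omega' = \omega$ and now vanishes on all of $\partial B$: on the bottom and side faces because $I\omega$ and $d\xi$ each do, and on the top face by the defining property of $\theta$.

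The main obstacle is thus not exactness itself but the boundary bookkeeping in the relative statement: the natural primitive $I\omega$ is automatically boundary-vanishing on every face except one, and cancelling that last contribution requires solving the relative problem one dimension lower, which is only possible because the degree has dropped to $k - 1 \le (m-1)-1$ --- precisely the range in which the hypothesis lets us recurse. I would also be careful that the nested inductions bottom out consistently: for $k = 1$ the ``top-face form'' is a locally constant function on the connected set $B'$ that vanishes on the non-empty set $\partial B'$, hence is zero and no correction is needed, so the correction step is never invoked outside its range of validity.
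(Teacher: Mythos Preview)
The paper does not supply its own proof of this lemma: it is quoted verbatim as Lemma~2.2 of~\cite{c2019} and used as a black box. There is therefore nothing in the present paper to compare your argument against.

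That said, your proposal is the standard proof and is correct. The homotopy operator $I$ (discrete integration in the $x^m$-direction) together with the dimension recursion is exactly how the discrete Poincar\'e lemma is established in~\cite{c2019}; the fibre-counting statement is, as you say, pure group theory once surjectivity is known. Your treatment of the relative statement is also right, including the two points that require care: that the residual form $\mu$ vanishes because $\omega_h$ is zero on the bottom face, and that the only boundary face on which $I\omega$ may fail to vanish is the top one, where its restriction is a closed $(k-1)$-form on $B'$ vanishing on $\partial B'$ to which the inductive hypothesis applies since $k-1\le (m-1)-1$. Your check that the correction $d\xi$ does not spoil vanishing on the side faces (because $\theta$ vanishes on $\partial B'$) and your handling of the $k=1$ base case are both sound.
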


\subsubsection{Non-trivial forms}

We say that a \( k \)-form \( \omega \in \Omega^k(\mathcal{L},G) \) is \emph{non-trivial} if there is at least one \( k \)-cell \( c \in C_k(\mathcal{L}) \) such that \( \omega(c) \neq 0 \).

\subsubsection{Restrictions of forms}

If \( \omega \in \Omega^k(\mathcal{L},G) \), \( C \subseteq C_k(\mathcal{L}) \) is symmetric, and \( c \in C \), we define 
\begin{equation*}
    \omega|_C(c) \coloneqq 
    \begin{cases}
        \omega(c) &\text{if } c \in C, \cr 
        0 &\text{else.}
    \end{cases}
\end{equation*}

\subsubsection{A partial ordering of \texorpdfstring{\(\Omega^k(\mathcal{L},G)\)}{k-forms}}\label{sec: the partial ordering} 

We now recall the partial ordering on differential forms, which was introduced in~\cite{flv2021}.

\begin{definition}[Definition~2.6 in~\cite{flv2021}]\label{def: partial order}
    When \( k \in \{ 0,1,\dots, m \} \) and \( \omega,\omega' \in \Omega^k(\mathcal{L},G) \), we write \( \omega' \leq  \omega \) if
    \begin{enumerate}[label=(\roman*)]
        \item \( \omega' = \omega|_{\support \omega'} \), and
        \item \( d\omega' = (d\omega)|_{\support d\omega'} \).
    \end{enumerate}
    If \( \omega' \neq \omega \) and \( \omega' \leq \omega \), we write \( \omega'< \omega \).
\end{definition}

The following lemma from~\cite{flv2021} collects some basic facts about the relation \(\leq\) on \( \Omega^k(\mathcal{L},G)\), and shows that \(\leq\) is a partial order on \( \Omega^k(\mathcal{L},G)\).
\begin{lemma}[Lemma~2.7 in~\cite{flv2021}]\label{lemma: the blue lemma}
    Let \( k \in \{ 0,1,\dots, m \} \) and \(\omega, \omega', \omega'' \in \Omega^k(\mathcal{L},G) \). The relation \( \leq\) on \( \Omega^k(\mathcal{L},G) \) has the following properties.
    \begin{enumerate}[label=\textnormal{(\roman*)}]
        \item Reflexivity: \( \omega \leq \omega \). \label{property 1}
        \item Antisymmetry: If \(\omega' \leq \omega \) and \(\omega \leq \omega'\), then \( \omega = \omega'\).\label{property 2}
        \item Transitivity: If \( \omega'' \leq \omega' \) and \( \omega' \leq \omega \), then \( \omega'' \leq \omega \).\label{property 3}
        \item If \( \omega' \leq \omega \), then \( \omega-\omega' = \omega|_{C_1(B_N) \smallsetminus (\support \omega')} \leq \omega \). \label{property 4}
        \item If \( \omega' \leq \omega \), then \( \support d\omega'\) and \( \support d(\omega-\omega') \) are disjoint. \label{property 5} 
    \end{enumerate}
\end{lemma}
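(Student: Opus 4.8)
The plan is to prove Lemma~\ref{lemma: the blue lemma} by unpacking Definition~\ref{def: partial order} and using the linearity of the exterior derivative together with the elementary fact that for any form $\eta$ and any symmetric set $C$, we have $\support(\eta|_C) = (\support \eta) \cap C$ and $\eta|_C = \eta$ iff $\support \eta \subseteq C$. Property~\ref{property 1} (reflexivity) is immediate: $\omega|_{\support \omega} = \omega$ and $d\omega = (d\omega)|_{\support d\omega}$ both hold trivially. For property~\ref{property 2} (antisymmetry), from $\omega' \leq \omega$ we get $\omega' = \omega|_{\support \omega'}$, so $\support \omega' \subseteq \support \omega$; symmetrically $\support \omega \subseteq \support \omega'$, hence the supports coincide and $\omega' = \omega|_{\support \omega'} = \omega|_{\support \omega} = \omega$.

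For transitivity~\ref{property 3}, suppose $\omega'' \leq \omega'$ and $\omega' \leq \omega$. First I would show $\support \omega'' \subseteq \support \omega' \subseteq \support \omega$, so that for any $c \in \support \omega''$ we have $\omega''(c) = \omega'(c) = \omega(c)$, giving condition (i), i.e.\ $\omega'' = \omega|_{\support \omega''}$. For condition (ii) I would use the discrete Stokes theorem / linearity: from $d\omega'' = (d\omega')|_{\support d\omega''}$ and $d\omega' = (d\omega)|_{\support d\omega'}$, on any cell $c \in \support d\omega''$ we have $\support d\omega'' \subseteq \support d\omega' \subseteq \support d\omega$, and chasing the values through gives $d\omega''(c) = d\omega'(c) = d\omega(c)$, hence $d\omega'' = (d\omega)|_{\support d\omega''}$. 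The nesting of the coboundary supports is the point requiring a small argument: if $c \in \support d\omega'$ then $d\omega'(c) \neq 0$, and since $d\omega' = (d\omega)|_{\support d\omega'}$ this forces $d\omega(c) = d\omega'(c) \neq 0$, so $c \in \support d\omega$.

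For property~\ref{property 4}, write $\eta \coloneqq \omega - \omega'$. From $\omega' = \omega|_{\support \omega'}$ it follows that $\omega$ and $\omega'$ agree on $\support \omega'$, so $\eta$ vanishes there, and off $\support \omega'$ we have $\omega' = 0$ so $\eta = \omega$; thus $\support \eta = (\support \omega) \setminus (\support \omega')$ and $\eta = \omega|_{C_1(B_N) \setminus \support \omega'}$, which is condition (i) for $\eta \leq \omega$. For condition (ii), I would use linearity $d\eta = d\omega - d\omega'$ and the hypothesis $d\omega' = (d\omega)|_{\support d\omega'}$: this says $d\omega - d\omega'$ vanishes on $\support d\omega'$ and equals $d\omega$ elsewhere, so exactly as above $d\eta = (d\omega)|_{\support d\eta}$. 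Property~\ref{property 5} then follows: $\support d\omega' \subseteq \support d\omega$ is the set where $d\omega' = d\omega$, and on $\support d\eta$ (which by the computation just given is $\support d\omega \setminus \support d\omega'$, or at least disjoint from $\support d\omega'$) we have $d\omega' = 0$ while $d\eta \neq 0$; more directly, $d\omega'(c) \neq 0$ and $d\eta(c) = d\omega(c) - d\omega'(c)$ being nonzero with $d\omega(c) = d\omega'(c)$ would force $0 \neq 0$, so the two supports cannot intersect.

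I do not expect any serious obstacle here — this is a bookkeeping lemma and every step reduces to ``chase the definitions.'' The only mildly delicate point, recurring in \ref{property 3}, \ref{property 4}, and \ref{property 5}, is establishing the containment $\support d\omega' \subseteq \support d\omega$ (and the analogous $\support d(\omega - \omega') \subseteq \support d\omega$) from condition (ii) of $\omega' \leq \omega$; once that is isolated and proved as a preliminary observation, everything else is routine. I would therefore state that containment first, then dispatch the five properties in order.
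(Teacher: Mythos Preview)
Your proposal is correct; each of the five properties follows by the definition-chasing you outline, and the one nontrivial observation you isolate---that condition~(ii) of Definition~\ref{def: partial order} forces \( \support d\omega' \subseteq \support d\omega \)---is indeed the hinge for \ref{property 3}, \ref{property 4}, and \ref{property 5}. Note that the paper itself does not prove this lemma: it is quoted verbatim as Lemma~2.7 of~\cite{flv2021} and used as a black box, so there is no in-paper proof to compare against; your argument is the natural one and almost certainly matches what is in~\cite{flv2021}.
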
 

The next lemma guarantees the existence of minimal elements satisfying certain constraints.
\begin{lemma}[Lemma~2.8 in~\cite{flv2021}]\label{lemma: reduction V}
    Let \( k \in \{ 0,1,\dots, m \}\), let \( \Omega \subseteq \Omega^k(\mathcal{L},G) \), and let \( \omega \in \Omega \). 
    Then there is \(  \omega' \leq \omega \) such that
    \begin{enumerate}[label=\textnormal{(\roman*)}]
        \item \(   \omega' \in \Omega \), and \label{item: new fix lemma i}
        \item there is no \( \omega'' <  \omega' \) such that \( \omega'' \in \Omega \).\label{item: new fix lemma ii}
    \end{enumerate}
\end{lemma}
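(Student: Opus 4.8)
The content of the statement is that the set $P := \{\eta \in \Omega : \eta \le \omega\}$ possesses a minimal element with respect to $\le$. Note that $P$ is nonempty: $\omega \in P$ by reflexivity (Lemma~\ref{lemma: the blue lemma}\ref{property 1}) together with the hypothesis $\omega \in \Omega$. The plan is to produce a minimal element of $P$ by minimizing the cardinality of the support.

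The first step is to unwind condition (i) of Definition~\ref{def: partial order}. If $\eta \le \eta'$ then $\eta = \eta'|_{\support \eta}$, so $\support \eta \subseteq \support \eta'$ and, moreover, $\eta$ is completely determined by the set $\support \eta$ (on that set it agrees with $\eta'$, and it vanishes elsewhere). Consequently, if $\eta \le \eta'$ and $\eta \ne \eta'$, then in fact $\support \eta \subsetneq \support \eta'$: were the two supports equal, we would get $\eta = \eta'|_{\support \eta} = \eta'|_{\support \eta'} = \eta'$. In particular every element of $P$ has support contained in the finite set $\support \omega$, so $\{ \lvert \support \eta \rvert : \eta \in P \}$ is a nonempty set of nonnegative integers.

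Now pick $\omega' \in P$ with $\lvert \support \omega' \rvert$ as small as possible. Then $\omega' \le \omega$ and $\omega' \in \Omega$, which gives~(i). For~(ii), suppose toward a contradiction that some $\omega'' \in \Omega$ satisfies $\omega'' < \omega'$. Transitivity (Lemma~\ref{lemma: the blue lemma}\ref{property 3}) gives $\omega'' \le \omega$, hence $\omega'' \in P$; and by the previous paragraph $\support \omega'' \subsetneq \support \omega'$, so $\lvert \support \omega'' \rvert < \lvert \support \omega' \rvert$, contradicting the choice of $\omega'$. Thus no such $\omega''$ exists, which is~(ii).

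I do not expect a genuine obstacle here: the whole argument is a direct consequence of condition (i) of Definition~\ref{def: partial order} (``strictly below in $\le$ forces a strictly smaller support'') together with the well-ordering of $\mathbb{N}$. The only point that merits a word of care is the implicit finiteness of $\support \omega$, which holds in every situation where the lemma is applied; if one prefers, the same argument can be phrased as an induction on $\lvert \support \omega \rvert$, with base case $\support \omega = \emptyset$ (so $\omega = 0$ and $P = \{0\}$) and inductive step: if $\omega$ is not already minimal in $P$, choose $\eta \in \Omega$ with $\eta < \omega$, observe $\lvert \support \eta \rvert < \lvert \support \omega \rvert$, apply the inductive hypothesis to $\eta$ (with the same $\Omega$), and conclude using transitivity that the resulting form lies below $\omega$.
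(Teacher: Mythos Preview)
Your proof is correct. The paper does not supply its own proof of this lemma (it is quoted from~\cite{flv2021}), and your argument---minimize $|\support \eta|$ over $P=\{\eta\in\Omega:\eta\le\omega\}$, using that condition~(i) of Definition~\ref{def: partial order} forces $\support\eta\subsetneq\support\eta'$ whenever $\eta<\eta'$---is the natural one and almost certainly what is intended. Your caveat about finiteness of $\support\omega$ is well placed: the lemma as stated does not assume it, and without it the conclusion can genuinely fail (e.g.\ take $k=m$ so that condition~(ii) of Definition~\ref{def: partial order} is vacuous, let $\omega$ have infinite support, and let $\Omega$ be the set of forms with infinite support). In every application in the paper the forms live on $B_N$ or are explicitly finitely supported, so this is not an issue in practice.
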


\subsubsection{Irreducible forms}\label{sec: irreducibility}

The partial ordering given in Definition~\ref{def: partial order} allows us to introduce a notion of irreducibility.

\begin{definition}[Definition~2.9 in~\cite{flv2021}]\label{def: irreducible kform}
    When \( k \in \{ 0,1,\dots, m-1 \} \), a \( k \)-form \( \omega \in \Omega^k(\mathcal{L},G) \) is said to be \emph{irreducible} if there is no non-trivial \( k\)-form \( \omega' \in \Omega^k(\mathcal{L},G)\) such that \( \omega' < \omega \).
\end{definition}

Equivalently, \( \omega \in \Omega^k(\mathcal{L},G) \) is irreducible if there is no  non-empty set \( S \subsetneq \support \omega \) such that \( \support d(\omega|_S) \) and \( \support d (\omega|_{S^c}) \) are disjoint.
Note that if \( \omega \in \Omega^k(\mathcal{L},G) \)  satisfies \( d\omega = 0 \), then \( \omega \) is irreducible if and only if there is no non-empty set \( S \subsetneq \support \omega \) such that \(   d(\omega|_S) = d(\omega|_{S^c}) = 0\).

\begin{lemma}[Lemma~2.10 in~\cite{flv2021}]\label{lemma: lemma sum of irreducible configurations} 
    Let \( k \in \{ 0,1,\dots, m-1 \} \), and let \(\omega\in \Omega^k(\mathcal{L},G) \) be non-trivial and have finite support.
    Then there is an integer \( j \geq 1 \) and $k$-forms \( \omega_1, \ldots, \omega_j \in \Omega^k(\mathcal{L},G) \) such that
    \begin{enumerate}[label=\textnormal{(\roman*)}]
        \item\label{lemma210property1} for each \( i \in \{ 1,2, \ldots, j \} \), \( \omega_i  \) is non-trivial and irreducible,
        \item\label{lemma210property2} for each \( i \in \{ 1,2, \ldots, j \} \), \( \omega_i \leq \omega \),
        \item\label{lemma210property3} \( \omega_1, \dots, \omega_j \) have disjoint supports,
        
        \item\label{lemma210property4} \( \omega = \omega_1 + \dots + \omega_j \), and

         \item \label{lemma210property5}
         \( d\omega_1, \dots, d\omega_j \) have disjoint supports. 
         
    \end{enumerate} 
\end{lemma}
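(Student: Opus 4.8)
The plan is to argue by strong induction on the cardinality \( |\support \omega| \), which is finite by hypothesis and always even, so that the smallest possible value for a non-trivial \( \omega \) is \( 2 \). If \( \omega \) is itself irreducible — this in particular covers the case \( |\support \omega| = 2 \) — then taking \( j = 1 \) and \( \omega_1 = \omega \) works: property~\ref{lemma210property2} follows from reflexivity of \( \leq \) (Lemma~\ref{lemma: the blue lemma}\ref{property 1}), property~\ref{lemma210property4} is immediate, and properties~\ref{lemma210property1}, \ref{lemma210property3} and \ref{lemma210property5} are trivial.

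Now suppose \( \omega \) is not irreducible. By Definition~\ref{def: irreducible kform} there is a non-trivial \( \omega' \in \Omega^k(\mathcal{L},G) \) with \( \omega' < \omega \); put \( \omega'' \coloneqq \omega - \omega' \). I would first record the relevant properties of this splitting, all of which come straight from Lemma~\ref{lemma: the blue lemma}. Since \( \omega' \leq \omega \) we have \( \omega' = \omega|_{\support \omega'} \), hence \( \support \omega' \subseteq \support \omega \); since \( \omega' \neq \omega \) this inclusion is strict, and since \( \omega' \) is non-trivial, \( \support \omega' \neq \emptyset \). By Lemma~\ref{lemma: the blue lemma}\ref{property 4}, \( \omega'' \leq \omega \) and \( \support \omega'' = \support \omega \smallsetminus \support \omega' \), which is therefore non-empty and strictly contained in \( \support \omega \). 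So both \( \omega' \) and \( \omega'' \) are non-trivial with finite support, their supports are disjoint with union \( \support \omega \), each has support of size strictly smaller than \( |\support \omega| \), and, by Lemma~\ref{lemma: the blue lemma}\ref{property 5}, \( \support d\omega' \) and \( \support d\omega'' \) are disjoint.

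Applying the induction hypothesis to \( \omega' \) and to \( \omega'' \) yields irreducible decompositions \( \omega' = \omega'_1 + \dots + \omega'_p \) and \( \omega'' = \omega''_1 + \dots + \omega''_q \), each satisfying \ref{lemma210property1}--\ref{lemma210property5}, and I claim the concatenated list \( \omega'_1, \dots, \omega'_p, \omega''_1, \dots, \omega''_q \) is a valid decomposition of \( \omega \). Each form on the list is non-trivial and irreducible by the induction hypothesis, giving~\ref{lemma210property1}. By transitivity (Lemma~\ref{lemma: the blue lemma}\ref{property 3}), \( \omega'_i \leq \omega' \leq \omega \) and \( \omega''_i \leq \omega'' \leq \omega \), giving~\ref{lemma210property2}. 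The supports within each of the two blocks are pairwise disjoint by the induction hypothesis, while \( \support \omega'_i \subseteq \support \omega' \) and \( \support \omega''_{i'} \subseteq \support \omega'' \) are disjoint by the previous paragraph, giving~\ref{lemma210property3}. Adding the two decompositions gives \( \omega' + \omega'' = \omega \), which is~\ref{lemma210property4}. Finally, \( \omega'_i \leq \omega' \) gives \( d\omega'_i = (d\omega')|_{\support d\omega'_i} \), hence \( \support d\omega'_i \subseteq \support d\omega' \) (and likewise \( \support d\omega''_{i'} \subseteq \support d\omega'' \)); so across the two blocks the supports of the derivatives are disjoint by the previous paragraph, and within each block they are disjoint by the induction hypothesis, which gives~\ref{lemma210property5}.

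I do not expect any genuine obstacle here: the argument is essentially bookkeeping once Lemma~\ref{lemma: the blue lemma} is available. The only slightly delicate point is the well-foundedness of the induction — namely, confirming that the strict inequality \( \omega' < \omega \) together with the non-triviality of \( \omega' \) forces \emph{both} \( \omega' \) and \( \omega'' = \omega - \omega' \) to have non-empty support strictly smaller than \( \support \omega \). Everything else (the \( \leq \omega \) bounds, disjointness of supports, and disjointness of the supports of the exterior derivatives) is read off directly from properties~\ref{property 3}, \ref{property 4} and \ref{property 5} in Lemma~\ref{lemma: the blue lemma}.
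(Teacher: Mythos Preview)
Your argument is correct. The paper itself does not supply a proof of this lemma: it is quoted verbatim as Lemma~2.10 of~\cite{flv2021} and used as a black box, so there is no in-paper proof to compare against. That said, your strong induction on \( |\support \omega| \), splitting a reducible \( \omega \) as \( \omega' + (\omega-\omega') \) via Definition~\ref{def: irreducible kform} and then invoking Lemma~\ref{lemma: the blue lemma}\ref{property 3}--\ref{property 5} to propagate the required properties through the concatenation, is exactly the natural route and goes through without difficulty; the well-foundedness check you flag is handled correctly.
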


A set \( \Omega \coloneqq \{ \omega_1,  \dots , \omega_j \} \subseteq \Omega^k(\mathcal{L},G) \) such that \( \omega_1, \dots, \omega_j \) satisfies~\ref{lemma210property1}--\ref{lemma210property5} of Lemma~\ref{lemma: lemma sum of irreducible configurations} will be referred to as a \emph{decomposition} of \( \omega \in \Omega^k(\mathcal{L},G).\)

We note that as an immediate consequence of the previous lemma, if \( \omega \in \Omega^2_0(\mathcal{L},G) \) has finite support, then there is a set \( \Omega \subseteq \Omega^2_0(\mathcal{L},G) \) which is a decomposition of \( \omega \) (see also Lemma~2.12 in~\cite{flv2021}).

\subsubsection{Minimal forms}\label{sec: minimal configurations}

In this section, we recall three lemmas from~\cite{flv2021} which gives lower bounds on the size of the support of differential forms. 
Throughout this section, we assume that \( m = 4 \). In other words, we assume that we are working on the \(\mathbb{Z}^4 \)-lattice.

\begin{lemma}[Lemma~2.16 in~\cite{flv2021}]\label{lemma: 6 plaquettes per edge}
    Let \( \sigma \in \Omega^1(\mathcal{L},G)\). Then
    \begin{equation*}
        |\support  \sigma| \geq |\support d \sigma|/6.
    \end{equation*}
\end{lemma}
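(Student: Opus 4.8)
The plan is to show that every edge appearing in $\support d\sigma$ can be ``charged'' to an edge in $\support\sigma$, with each edge of $\support\sigma$ receiving charge from at most $6$ edges of $\support d\sigma$. The natural combinatorial fact to exploit is that, on $\mathbb{Z}^4$, each edge $e$ lies in the boundary of exactly $2(m-1) = 6$ plaquettes, and dually each plaquette has exactly $4$ edges in its boundary; we want the analogous statement one dimension down, phrased in terms of the coboundary operator $\hat\partial$ acting on $0$-cells and edges, or equivalently in terms of how $d\sigma$ on a plaquette depends on $\sigma$ on its four bounding edges.

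First I would unpack the definitions: $\support d\sigma$ is a set of plaquettes $p$ with $d\sigma(p) = \sigma(\partial p) \neq 0$, and a nonzero value can only occur if at least one of the four edges $e \in \partial p$ has $\sigma(e) \neq 0$, i.e.\ $e \in \support\sigma$. So every $p \in \support d\sigma$ contains in its boundary at least one edge of $\support\sigma$. This gives a relation $R \subseteq \support\sigma \times \support d\sigma$, namely $(e,p) \in R$ iff $e \in \partial p$ and $\sigma(e)\neq 0$. By the observation above, each plaquette $p$ is related to at least one edge $e$, so $|\support d\sigma| \leq |R|$. Conversely, a fixed positively oriented edge $e$ lies in the boundary of exactly $6$ plaquettes of $\mathbb{Z}^4$ (one for each ordered choice of a second coordinate direction $j \neq j_0$ where $e$ points in direction $j_0$, with the plaquette extending in the positive or negative $j$-direction — $2 \cdot 3 = 6$), so each $e \in \support\sigma$ is related to at most $6$ plaquettes, giving $|R| \leq 6\,|\support\sigma|$. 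Combining the two inequalities yields $|\support d\sigma| \leq 6\,|\support\sigma|$, which is the claim after dividing by $6$.

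The one point that needs a little care — and which I expect to be the only mild obstacle — is bookkeeping with orientations, since $\support\sigma$ and $\support d\sigma$ are defined as sets of \emph{positively oriented} cells while $\partial p$ is a signed sum of possibly negatively oriented edges; one must check that ``$e \in \partial p$'' is interpreted via the $c \in q$ convention (membership up to sign) so that the counting of $6$ plaquettes per positively oriented edge is genuinely correct and not off by a factor of $2$. Since $|\partial p|$ as a signed chain always has exactly four terms whose underlying non-oriented cells are distinct, and each non-oriented edge corresponds to exactly one positively oriented edge, the double count $\sum_{p \in \support d\sigma} |\{e : e \in \support\sigma,\ e \in \partial p\}| = \sum_{e \in \support\sigma} |\{p \in \support d\sigma : e \in \partial p\}|$ goes through cleanly, and each inner sum on the right is at most $6$. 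This is essentially a pure incidence-counting argument and requires no probabilistic or analytic input.
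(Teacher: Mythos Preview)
Your argument is correct and is the standard incidence-counting proof: each plaquette $p$ with $d\sigma(p)\neq 0$ must have some edge of $\support\sigma$ in its boundary, and in $\mathbb{Z}^4$ each edge is incident to exactly $2(m-1)=6$ plaquettes, so the double count gives $|\support d\sigma|\le 6\,|\support\sigma|$. The paper does not include its own proof of this lemma---it is quoted verbatim from~\cite{flv2021}---so there is nothing to compare against beyond noting that your approach is the natural one.

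One minor correction on conventions: in this paper, for a $k$-\emph{form} $\omega$ the set $\support\omega$ is defined as the set of \emph{all} oriented $k$-cells $c$ with $\omega(c)\neq 0$, not just the positively oriented ones (see Section~\ref{sec: ddf}; this is why the paper remarks that $\support\omega$ always has even cardinality). This does not affect your argument, since the factor of $2$ from counting both orientations appears on both sides of the inequality and cancels; your double-count, read either over positively oriented cells or over all oriented cells, yields the same ratio of $6$.
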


\begin{lemma}\label{lemma: minimal vortex I}
    Let \( \omega \in \Omega^2_0(\mathcal{L},G) \) be non-trivial and have finite support, and assume that there is a plaquette \( p \in \support \omega \) such that \( \support \partial p \) contains no boundary edges of \( B_N. \)
    Then \( | (\support \omega)^+ | \geq 6,\) and if \(| (\support \omega)^+| = 6\), then there is an edge \( e_0 \in C_1(B_N) \) such that \( \support \nu = \support \hat \partial e_0 \cup \support \hat \partial (-e_0)\). 
\end{lemma}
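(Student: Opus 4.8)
The plan is to analyze a non-trivial closed 2-form $\omega \in \Omega^2_0(\mathcal{L},G)$ with finite support by examining the combinatorics of $\support \omega$ around the special plaquette $p$ whose boundary $\support \partial p$ avoids the boundary edges of $B_N$. Since $\omega$ is closed, for every edge $e$ the coboundary relation $d\omega(\hat\partial e \text{-type cells}) = 0$ forces the oriented sum of $\omega$ over the plaquettes containing $e$ to vanish in $G$. Concretely, each edge $e$ in the support of $\partial p$ is contained in exactly $2(m-1) = 6$ plaquettes in $C_2(\mathcal{L})$ (using $m=4$), since $e$ points in one of the four coordinate directions and each of the remaining three directions yields two plaquettes. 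Because $\omega(p) \neq 0$, for each of the four edges $e \in \support\partial p$ the closedness condition at $e$ rules out the possibility that $p$ is the only plaquette in $\support\omega$ incident to $e$: there must be at least one other plaquette $p' \neq \pm p$ with $e \in \support\partial p'$ and $\omega(p')\neq 0$.

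First I would set up the counting. Let $P = \support\partial p$, a set of four edges (up to sign), none of which is a boundary edge of $B_N$, so all $6$ plaquettes through each such edge lie in $C_2(\mathcal{L})$. By the closedness argument above, each of these four edges is covered by $\support\omega$ at least twice. The key combinatorial point is that a plaquette $p' \neq \pm p$ shares at most two edges with $p$ (in fact, two adjacent plaquettes of $\mathbb{Z}^4$ sharing a 2-dimensional face would coincide; sharing more than one edge already forces coincidence of the underlying planes in many configurations — I would check the cases carefully). Counting incidences: the four edges of $\partial p$ need $\geq 4$ additional edge-plaquette incidences from $\support\omega \setminus \{\pm p\}$, and each additional plaquette supplies at most... here I need to be careful, because a plaquette in a parallel plane translated by one unit shares two edges with $p$. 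So in principle two extra plaquettes could suffice to cover all four edges of $\partial p$; that would give $|(\support\omega)^+| \geq 3$, not $6$. To push the bound up to $6$ I would instead argue via the \emph{corners} (vertices) of $p$: each vertex of $p$ that lies in an edge of $\support\omega$ forces, by the structure of closed forms, enough plaquettes around it; alternatively, invoke Lemma~\ref{lemma: 6 plaquettes per edge} applied cleverly, or more likely use that $\hat\partial e_0$ — the coboundary of a single edge — has exactly $6$ plaquettes in its support, and show that the minimal closed 2-form containing a given plaquette in its support is exactly of the form $\hat\partial e_0$ (the "elementary cube face sum" is $dd$ of nothing, but $\hat\partial e_0 = d(\text{something})$ is closed and has support of size $12$ counting signs, $6$ positively oriented).

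The core of the argument, then, is: take $e_0$ to be any edge of $\partial p$ (say the first one); the coboundary $\hat\partial e_0$ is a closed 2-form whose positively-oriented support has exactly $6$ elements, and it contains $p$ (up to sign) in its support. I would show that any non-trivial closed $\omega$ with $p \in \support\omega$ must "contain" a translate/rotate of this elementary configuration: more precisely, using the decomposition from Lemma~\ref{lemma: lemma sum of irreducible configurations}, reduce to the case $\omega$ irreducible, and then show an irreducible closed 2-form through $p$ has $|(\support\omega)^+| \geq 6$ by propagating the closedness constraint edge-by-edge starting from $\partial p$ — each time an edge is "opened up" it forces a new plaquette, and the minimal closed configuration one can build this way is precisely $\hat\partial e_0 \cup \hat\partial(-e_0)$ for a suitable $e_0$. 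When equality $|(\support\omega)^+|=6$ holds, the propagation has no slack anywhere, which pins down $\support\omega$ to be exactly $\support\hat\partial e_0 \cup \support\hat\partial(-e_0)$.

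The main obstacle I expect is the equality case and the precise incidence bookkeeping: showing that $6$ is actually the minimum (ruling out the naive "$3$ plaquettes" configuration, which fails because such a configuration is not closed) and then showing that the extremal closed form is \emph{uniquely} $\support\hat\partial e_0 \cup \support\hat\partial(-e_0)$ rather than some other $6$-plaquette closed form. I would handle this by a careful local analysis at one vertex of $p$: the three plaquettes of $p$'s plane meeting at that vertex interact with the other coordinate directions, and closedness at each of the three edges emanating from that vertex forces exactly the "cube boundary" pattern. This is essentially the statement that the smallest $2$-cycle in the cubical complex of $\mathbb{Z}^4$ through a given plaquette is the boundary of a $3$-cube, which has $6$ faces — this is where the geometry of $\mathbb{Z}^4$ (as opposed to lower dimensions) and the absence of boundary edges in $\support\partial p$ both get used.
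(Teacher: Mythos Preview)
The paper does not actually give a proof of this lemma; it simply cites Lemma~3.4.6 of~\cite{sc2019}. So there is no in-paper argument to compare against. That said, your proposal has a genuine gap that would need to be fixed before any version of the argument could go through.

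Your central tool is the claim that ``since $\omega$ is closed, for every edge $e$ the oriented sum of $\omega$ over the plaquettes containing $e$ vanishes in $G$.'' This is false. Closedness of a $2$-form means $d\omega=0$, i.e.\ for every $3$-cell $c$ one has $\omega(\partial c)=0$: the oriented sum of $\omega$ over the six \emph{faces of a $3$-cube} vanishes. There is no constraint at edges. To see explicitly that your constraint fails, take the very configuration the lemma singles out, $\omega=d(g\,dx_j)$ supported on $\hat\partial e_0\cup\hat\partial(-e_0)$: then $\omega(\hat\partial e_0)=\sum_{p\in\hat\partial e_0}(\partial p[e_0])\,\omega(p)=6g$, which is nonzero in $G=\mathbb{Z}_n$ unless $n\mid 6$. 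So the edge-based counting you set up (four edges of $p$, each needing a second incident plaquette, etc.)\ is counting with respect to a relation that $\omega$ does not satisfy.

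The correct local input is at $3$-cells: the plaquette $p$ (in, say, the $(1,2)$-plane) lies in the boundary of $2(m-2)=4$ three-cubes, two in each of the transverse directions $3$ and $4$, and in each such cube at least one of the five remaining faces must lie in $\support\omega$. One then has to propagate these constraints and control overlaps to reach the bound $6$ and pin down the equality case as $\support\hat\partial e_0\cup\support\hat\partial(-e_0)$. Your closing heuristic (``the smallest $2$-cycle through $p$ is the boundary of a $3$-cube'') is also off by a duality: the boundary of a $3$-cube is a $2$-\emph{cycle} (has $\partial=0$), not a closed $2$-form; as a $2$-form pattern it fails $d\omega=0$ at $c$ itself. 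The minimal closed $2$-form is the coboundary-of-an-edge configuration, which only coincides with a cube boundary under $4$-dimensional Poincar\'e duality, not literally. Once you replace the edge constraint by the $3$-cell constraint, the propagation idea you sketch can be made to work, but as written the argument rests on a false premise.
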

For a proof of Lemma~\ref{lemma: minimal vortex I}, see, e.g., Lemma~3.4.6~in~\cite{sc2019}.

\begin{lemma}[Lemma~2.19 in~\cite{flv2021}]\label{lemma: small 1forms}
    Let \( \sigma \in \Omega^1_0(B_N,G) \) be non-trivial, and assume that there is an edge \( e\in \support \sigma \) such that the support of \(  \hat \partial e \) contains no boundary cells of \( B_N. \) 
    Then \(|(\support \sigma)^+ | \geq 8 \).
\end{lemma}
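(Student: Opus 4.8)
The plan is to argue locally around $e$, using only that $\sigma$ is closed. The role of the hypothesis that $\support \hat\partial e$ contains no boundary cells of $B_N$ is to place $e$ in the interior of $B_N$, so that $e$ lies on the boundary of exactly six plaquettes of $B_N$, all of the plaquettes and edges within bounded distance of $e$ invoked below lie in $C_2(B_N)$ and $C_1(B_N)$, and the local combinatorics is that of $\mathbb{Z}^4$.

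First I would establish the weaker bound $|(\support \sigma)^+| \geq 7$. For each of the six plaquettes $p$ with $e \in \partial p$, closedness gives $d\sigma(p) = 0$, so since $\sigma(e) \neq 0$ the boundary $\partial p$ must contain an edge $f_p$ with $f_p \neq \pm e$ and $\sigma(f_p) \neq 0$; call $f_p$ a \emph{witness} for $p$. Since two distinct plaquettes through $e$ have no common edge other than $e$, the six witnesses are pairwise distinct and distinct from $e$, so $\support \sigma$ contains at least seven edges.

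It remains to rule out $|(\support \sigma)^+| = 7$. Assume this; then $\support \sigma$ consists of $e$ together with exactly one witness in each of the six plaquettes through $e$, and has no further edge anywhere. We may assume $e = (v_0; e_4)$, with endpoints $v_0$ and $v_1 \coloneqq v_0 + e_4$, so the six plaquettes through $e$ are $(v_0; e_i, e_4)$ and $(v_0 - e_i; e_i, e_4)$, $i \in \{1,2,3\}$. Relative to $e$, each witness is of one of three kinds: (i) an edge incident to $v_0$; (ii) an edge incident to $v_1$; (iii) a transverse unit translate $(v_0 \pm e_i; e_4)$ of $e$. Evaluating $d\sigma = 0$ on the plaquettes spanned by two transverse directions at $v_0$, one shows that if some witness is of kind (i) then all six are, whence $\support \sigma$ equals $e$ together with the six transverse edges at $v_0$, that is, seven of the eight edges meeting $v_0$; but then the plaquette spanned by a transverse direction and $-e_4$ at $v_0$ contains exactly one edge of $\support \sigma$, contradicting $d\sigma = 0$ on it. Kind (ii) is handled identically with $v_1$ in place of $v_0$. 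If all six witnesses are of kind (iii), then a plaquette $(v_0 + e_i; e_j, e_4)$ with $j \neq i$ — a one-step transverse translate of one of the plaquettes through $e$ — contains exactly one edge of $\support \sigma$, again contradicting $d\sigma = 0$. Hence $|(\support \sigma)^+| \geq 8$.

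The step I expect to be the main obstacle is the case analysis of the last paragraph: in each sub-case one must check that the auxiliary plaquettes actually lie in $C_2(B_N)$ — this is precisely where the interiority of $e$ enters — and that, granted $|(\support \sigma)^+| = 7$, each of them contains no support edge besides the single witness. Everything else is bookkeeping. (More conceptually, by the Poincar\'e lemma (Lemma~\ref{lemma: poincare}) one can write $\sigma = d\phi$; then $\support \sigma$ is an edge cut of the grid separating $v_0$ from $v_1$, and since every finite nonempty subset of $\mathbb{Z}^4$ has edge-boundary at least $2 \cdot 4 = 8$ — because isolating even a single interior vertex deletes all $8$ edges at it — while the interiority hypothesis keeps the separated $\phi$-level set away from $\partial B_N$, the cut must have at least eight edges.)
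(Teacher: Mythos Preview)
The paper does not supply its own proof here; it merely cites Lemma~2.19 in~\cite{flv2021}. Your strategy --- one witness per plaquette through $e$ to get seven support edges, then a case split on the witness types to rule out exactly seven --- is the natural one and is correct once $e$ is genuinely interior.

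The gap is precisely where you flagged it. Write $e=(v_0;e_4)$. The condition that no plaquette in $\hat\partial e$ is a boundary cell forces $(v_0)_1,(v_0)_2,(v_0)_3\in(-N,N)$ (each such plaquette has two of the transverse coordinates constant, and those must not equal $\pm N$), but it places no constraint on $(v_0)_4$. If $(v_0)_4=-N$, the plaquette $(v_0-e_4;e_i,e_4)$ you use in the kind-(i) sub-case does not lie in $C_2(B_N)$, and this cannot be repaired from the stated hypothesis: take $\phi(v_0)\neq 0$, $\phi\equiv 0$ elsewhere, and set $\sigma=d\phi$; then $\sigma\in\Omega^1_0(B_N,G)$, the edge $e=(v_0;e_4)$ lies in $\support\sigma$, all six plaquettes through $e$ are non-boundary, yet $|(\support\sigma)^+|=7$ because $v_0$ has only seven neighbours in $B_N$. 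Your parenthetical Poincar\'e argument has the same flaw --- the level set $\{v_0\}$ is not kept away from $\partial B_N$. So the statement as written seems to need a slightly stronger interiority assumption (and indeed every time the lemma is invoked in the paper one has $\dist_0\bigl(e,\partial C_1(B_N)\bigr)\ge 8$, which more than suffices); under any such strengthening your deductions ``one kind-(i) witness forces all six'' via the transverse plaquettes at $v_0$, and the kind-(iii) contradiction via $(v_0+e_i;e_j,e_4)$, are correct, and the final step then goes through unchanged.
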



\subsection{Vortices}\label{sec: vortices}

In this section, we use the notion of irreducibility introduced in Section~\ref{sec: irreducibility} to define what we refer to as vortices. We mention that the definition of a vortex given in Definition~\ref{def: vortex} below is identical to the definitions used in~\cite{flv2020,flv2021}, but is different from the corresponding definitions in~\cite{sc2019}~and~\cite{c2019}.
    
\begin{definition}[Vortex]\label{def: vortex}
    Let \( \sigma \in \Omega^1(B_N,G) \). A non-trivial and irreducible \( 2 \)-form \( \nu \in \Omega^2_0(B_N,G) \) is said to be a \emph{vortex} in \( \sigma \) if \( \nu \leq d\sigma \), i.e., if \( d\sigma(p) = \nu(p)\) for all \( p \in \support \nu \). 
\end{definition}

We say that \( \sigma \in \Omega^1(B_N,G) \) has a vortex at \( V \subseteq C_2(B_N) \) if \( (d\sigma)|_V \) is a vortex in \( \sigma \).

\begin{lemma}[Lemma~3.6 in~\cite{flv2021}]\label{lemma: vortex transfer}
    Let \( \sigma' ,\sigma \in \Omega^1(B_N,G) \) be such that \( \sigma' \leq \sigma \), and let \( \nu \in \Omega^2_0(B_N,G)\) be a vortex in \( \sigma' \). Then  \( \nu \) is a vortex in \( \sigma \).
\end{lemma}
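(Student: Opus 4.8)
\textbf{Proof plan for Lemma~\ref{lemma: vortex transfer}.}
The plan is to unwind the two definitions — the partial order \(\leq\) on \(\Omega^1(B_N,G)\) and the notion of a vortex — and check that the defining property of a vortex in \(\sigma'\) is inherited by \(\sigma\). Recall that \(\nu\) is a vortex in \(\sigma'\) means: \(\nu\) is non-trivial, irreducible, closed (lies in \(\Omega^2_0(B_N,G)\)), and \(\nu \leq d\sigma'\), i.e.\ \(\nu = (d\sigma')|_{\support \nu}\), equivalently \(d\sigma'(p) = \nu(p)\) for all \(p \in \support \nu\). Since \(\nu\) is already given to be non-trivial, irreducible, and in \(\Omega^2_0(B_N,G)\), the only thing to verify is that \(\nu \leq d\sigma\), i.e.\ that \(d\sigma(p) = \nu(p)\) for every \(p \in \support \nu\).

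First I would invoke transitivity of \(\leq\) (Lemma~\ref{lemma: the blue lemma}\ref{property 3}): it suffices to show \(d\sigma' \leq d\sigma\), since then \(\nu \leq d\sigma'\) and \(d\sigma' \leq d\sigma\) give \(\nu \leq d\sigma\) directly. So the crux is the implication \(\sigma' \leq \sigma \implies d\sigma' \leq d\sigma\) for \(1\)-forms. Checking the two conditions of Definition~\ref{def: partial order} for the pair \((d\sigma', d\sigma)\): condition (i) asks \(d\sigma' = (d\sigma)|_{\support d\sigma'}\), and condition (ii) asks \(d(d\sigma') = (d(d\sigma))|_{\support d(d\sigma')}\). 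Condition (ii) is immediate because \(dd = 0\) on the cell complex (noted after the discrete Stokes' theorem in Section~\ref{sec: derivative}), so both sides are the zero form. For condition (i): by hypothesis \(\sigma' = \sigma|_{\support \sigma'}\) and \(d\sigma' = (d\sigma)|_{\support d\sigma'}\) — but the latter is \emph{exactly} condition (i) for the pair \((d\sigma', d\sigma)\). Indeed, the second clause of \(\sigma' \leq \sigma\) literally states \(d\sigma' = (d\sigma)|_{\support d\sigma'}\), which is precisely what condition (i) of \(d\sigma' \leq d\sigma\) requires. Hence \(d\sigma' \leq d\sigma\) holds essentially by definition, with no computation needed beyond citing \(dd=0\).

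Putting it together: given \(\sigma' \leq \sigma\), the second defining clause of the partial order immediately yields \(d\sigma' \leq d\sigma\) (the first clause being vacuous since \(d(d\sigma')=d(d\sigma)=0\)), and then transitivity of \(\leq\) applied to \(\nu \leq d\sigma' \leq d\sigma\) gives \(\nu \leq d\sigma\). Since \(\nu\) retains all its other properties (non-triviality, irreducibility, closedness) unchanged, \(\nu\) is a vortex in \(\sigma\) by Definition~\ref{def: vortex}.

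I do not anticipate any genuine obstacle here; the lemma is a short bookkeeping argument. The only thing to be careful about is not conflating ``\(\nu \leq d\sigma'\)'' (a statement about \(2\)-forms, part of the vortex definition) with ``\(\sigma' \leq \sigma\)'' (a statement about \(1\)-forms, the hypothesis), and making sure the chain of \(\leq\)-relations is applied at the level of \(2\)-forms throughout. If one prefers to avoid invoking the abstract transitivity lemma, an equally short direct argument works: for \(p \in \support \nu\), we have \(\nu(p) = d\sigma'(p)\) since \(\nu \leq d\sigma'\); and since \(p \in \support \nu \subseteq \support d\sigma'\) (as \(\nu\) is non-trivial on its support and agrees with \(d\sigma'\) there), the relation \(d\sigma' = (d\sigma)|_{\support d\sigma'}\) coming from \(\sigma' \leq \sigma\) gives \(d\sigma'(p) = d\sigma(p)\); hence \(\nu(p) = d\sigma(p)\) for all \(p \in \support \nu\), which is \(\nu \leq d\sigma\).
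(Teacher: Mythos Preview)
Your argument is correct. The paper does not supply its own proof of this lemma (it is quoted from~\cite{flv2021} without proof), so there is nothing to compare against here; your unwinding of the definitions---observing that the second clause of \(\sigma' \leq \sigma\) is precisely the first clause of \(d\sigma' \leq d\sigma\), that the second clause of \(d\sigma' \leq d\sigma\) is vacuous by \(dd=0\), and then invoking transitivity---is exactly the kind of short bookkeeping one expects, and the direct alternative you sketch at the end is equally valid.
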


With Lemma~\ref{lemma: minimal vortex I} in mind, we say that a vortex \( \nu \) such that no plaquette in \( \support \nu \) is a boundary plaquette of \( \mathcal{L} \) is a \emph{minimal vortex} if \( |\support \nu| = 12\). 

\begin{lemma}[Lemma~3.2 in~\cite{flv2021}]\label{lemma: minimal vortex II}
    Let \( \sigma \in \Omega^1(B_N,G) \), and let \( \nu \in \Omega^2_0(B_N,G) \) be a minimal vortex in \( \sigma \). Then there is an edge \( \partial x_j  \in C_1(B_N) \) and a group element \( g \in G\smallsetminus \{ 0 \} \) such that 
    \begin{equation}\label{eq: minimal vortex as df}
        \nu = d\bigl(g \, d x_j \bigr).
    \end{equation}
    In particular, \( d\sigma(p) =  \nu(p) = g \) whenever \( p \in \hat \partial e_0 \).
\end{lemma}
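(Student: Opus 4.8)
The plan is first to pin down the support of $\nu$ using Lemma~\ref{lemma: minimal vortex I}, and then to solve the equation $d\nu = 0$ by hand on the resulting local picture.

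\emph{Step 1 (localization).} Being a minimal vortex, $\nu$ is a non-trivial, irreducible, closed $2$-form with $|\support\nu| = 12$ and with no boundary plaquette of $B_N$ in its support. Lemma~\ref{lemma: minimal vortex I} then produces an edge $e_0 \in C_1(B_N)$ with
\[
    \support\nu = \support\hat\partial e_0 \cup \support\hat\partial(-e_0),
\]
so that $\support\nu$ is exactly the ``star'' of the unoriented edge under $e_0$, i.e.\ the set of the twelve oriented plaquettes $p \in C_2(B_N)$ with $\partial p[e_0]\neq 0$; in particular all twelve such plaquettes lie in $C_2(B_N)$, since the right-hand side counts only cells of $C_2(B_N)$ and has cardinality $12$. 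After translating and relabelling coordinates we may assume $e_0 = \frac{\partial}{\partial x^{1}}\big|_a$. The six positively oriented plaquettes of the star are then $p_k^{+} \coloneqq \frac{\partial}{\partial x^{1}}\big|_a \wedge \frac{\partial}{\partial x^{k}}\big|_a$ and $p_k^{-}\coloneqq \frac{\partial}{\partial x^{1}}\big|_{a-e_k}\wedge \frac{\partial}{\partial x^{k}}\big|_{a-e_k}$ for $k \in \{2,3,4\}$, and from the definition of $\partial$ one reads off $\partial p_k^{+}[e_0] = 1$ and $\partial p_k^{-}[e_0] = -1$.

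\emph{Step 2 (solving $d\nu = 0$).} Set $g \coloneqq \nu(p_2^{+})$ and let $\eta \in \Omega^1(B_N,G)$ be the $1$-form equal to $g$ on $e_0$, to $-g$ on $-e_0$, and to $0$ on every other edge; this is the form ``$g\,dx_j$ localized at $e_0$'' appearing in the statement. By the discrete Stokes theorem $d\eta(p) = \eta(\partial p) = g\cdot\partial p[e_0]$, so $d\eta$ is supported on the same twelve plaquettes, with $d\eta(p_k^{+}) = g$ and $d\eta(p_k^{-}) = -g$. Now $\mu \coloneqq \nu - d\eta \in \Omega^2_0(B_N,G)$ is closed, satisfies $\support\mu \subseteq \support\nu$, and vanishes on $p_2^{+}$ (hence on $-p_2^{+}$), so $|(\support\mu)^{+}| \leq 5 < 6$. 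If $\mu$ were non-trivial, Lemma~\ref{lemma: minimal vortex I} (whose boundary hypothesis holds since $\support\mu\subseteq\support\nu$ contains no boundary plaquette) would force $|(\support\mu)^{+}| \geq 6$, a contradiction; hence $\mu = 0$, i.e.\ $\nu = d\eta$.

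\emph{Step 3 (conclusion and an alternative).} This is exactly the asserted identity $\nu = d(g\,dx_j)$, with $e_0$ in the role of the edge $\partial x_j$; and $g\neq 0$ because $\nu$ is non-trivial. For the ``in particular'' claim: if $p \in \hat\partial e_0$ then $\partial p[e_0] = 1$, so $p \in \support\nu$, and hence $d\sigma(p) = \nu(p) = d\eta(p) = g$, using $\nu \leq d\sigma$. One can also avoid the second use of Lemma~\ref{lemma: minimal vortex I} and finish by plain linear algebra over $G$: a plaquette not in the star does not contain $\pm e_0$ in its boundary and so lies outside $\support\nu$, whence evaluating $0 = d\nu(c) = \nu(\partial c)$ on the oriented $3$-cell based at $a$ in directions $1,k,\ell$ (which meets the star only in $p_k^{+}$ and $p_\ell^{+}$) gives $\nu(p_k^{+}) = \nu(p_\ell^{+})$; on the $3$-cell based at $a-e_k-e_\ell$ it gives $\nu(p_k^{-}) = \nu(p_\ell^{-})$; and on the $3$-cell based at $a-e_k$ it gives $\nu(p_k^{-}) = -\nu(p_\ell^{+})$ (for $\ell\neq k$); these three families of identities force $\nu = d(g\,dx_j)$ directly.

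\emph{Main difficulty.} The substantive part is the orientation and indexing bookkeeping behind Steps 2--3: selecting the handful of $3$-cells whose boundary meets the star in exactly two plaquettes, and keeping the signs in $\nu(\partial c)$ straight so that the identities $\nu(p_k^{+}) = \nu(p_\ell^{+})$, $\nu(p_k^{-}) = \nu(p_\ell^{-})$ and $\nu(p_k^{-}) = -\nu(p_k^{+})$ come out with the correct relative signs. A secondary point is to check, each time Lemma~\ref{lemma: minimal vortex I} is invoked (for $\nu$, and in the slick variant also for $\mu$), that its hypothesis on boundary edges is satisfied, which is exactly where the assumption that a minimal vortex has no boundary plaquettes in its support is used.
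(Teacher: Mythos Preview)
The paper does not prove this lemma itself; it is quoted from~\cite{flv2021}, so there is no in-paper argument to compare against. Your proof is essentially correct and self-contained: Step~1 localizes the support via Lemma~\ref{lemma: minimal vortex I}; the subtraction trick in Step~2 (set $\mu=\nu-d\eta$, observe $|(\support\mu)^+|\le 5$, contradict Lemma~\ref{lemma: minimal vortex I} unless $\mu=0$) is clean; and the explicit $3$-cell computations in your alternative route produce the right identities with the right signs.

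One bookkeeping point to tighten. When you invoke Lemma~\ref{lemma: minimal vortex I}, its hypothesis asks for a plaquette $p$ in the support such that $\support\partial p$ contains no boundary \emph{edges} of $B_N$. This is strictly stronger than ``$p$ is not a boundary plaquette'', which is all the definition of a minimal vortex gives you; your justification ``since $\support\mu\subseteq\support\nu$ contains no boundary plaquette'' therefore does not literally match the hypothesis. The same care is needed in Step~1 (first application to $\nu$) and in the alternative argument (the auxiliary $3$-cells must actually lie in $C_3(B_N)$ for $d\nu(c)=0$ to be available). You already flag exactly this in your closing paragraph, so you are aware of it; just make the verification explicit rather than appealing to the weaker plaquette condition.
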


If \( \sigma \in \Omega^1(B_N,G) \) and \( \nu \in \Omega^2_0(B_N,G) \) is a minimal vortex in \( \sigma \) which can be written as in~\eqref{eq: minimal vortex as df} for some \( e_0 \in C_1(B_N) \) and \( g \in G \backslash \{ 0 \} \), then we say that \( \nu \) is a \emph{minimal vortex centered at \( e_0 \).}


\subsection{Generalized loops and oriented surfaces}\label{sec: oriented surfaces}
In this section, we recall the definitions of generalized loops and oriented surfaces from~\cite{flv2020}, and outline their connection.

\begin{definition}[Definition~2.6 in~\cite{flv2020}]\label{def: generalized loop}
    A 1-chain \( \gamma \in C_1(\mathcal{L},\mathbb{Z}) \) with finite support is a \emph{generalized loop} if 
    \begin{enumerate}
        \item for all \( e \in \Omega^1(\mathcal{L}) \), we have \( \gamma[e] \in \{ -1,0,1 \} \), and
        \item  \( \partial \gamma = 0. \)
    \end{enumerate}
\end{definition}

\begin{definition}[Definition~2.7 in~\cite{flv2020}]
    Let \( \gamma \in C_1(\mathcal{L},\mathbb{Z}) \) be a generalized loop. A \( 2 \)-chain \( q \in C_2(\mathcal{L},\mathbb{Z}) \) is an \emph{oriented surface} with \emph{boundary} \( \gamma \) if \( \partial q = \gamma. \)
\end{definition}

We recall that by Stokes' theorem (see Section~\ref{sec: derivative}), for any \( q \in C_2(\mathcal{L},G) \) and any \( \sigma \in \Omega^1(\mathcal{L},G) \), we have
\begin{equation*} 
      \sigma(\partial q) = d\sigma(q).
  \end{equation*}

The following lemma gives a connection between generalized loops and oriented surfaces.

\begin{lemma}[Lemma~2.8 in~\cite{flv2020}]\label{lemma: oriented loops}
    Let \( \gamma \in C_1(\mathcal{L}, \mathbb{Z}) \) be a generalized loop, and let \( B \subseteq \mathcal{L} \) be a box containing the support of \( \gamma \). Then there is an oriented surface \( q \in C_2(\mathcal{L}, \mathbb{Z}) \) with support contained in \( B \) such that \( \gamma \) is the boundary of \( q \).
\end{lemma}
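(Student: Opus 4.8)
This is the discrete analogue of the statement that the box \( B=[a_1,b_1]\times\cdots\times[a_m,b_m]\cap\mathbb{Z}^m \) is contractible, so that every \(1\)-cycle supported in \( B \) bounds a \(2\)-chain supported in \( B \); equivalently, \(H_1\) of the cellular chain complex of \( B \) vanishes. I would prove it in two stages: first reduce to the case of a single simple closed loop, and then fill such a loop explicitly by peeling off plaquettes.

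\emph{Reduction to a simple loop.} Since \(\gamma\) has coefficients in \(\{-1,0,1\}\) and \(\partial\gamma=0\), orienting each \(e\in\support\gamma\) according to the sign of its coefficient makes \(\support\gamma\) into a finite directed graph inside \( B \) in which in-degree equals out-degree at every vertex. A standard Eulerian-type decomposition then writes this edge set as a disjoint union of directed closed walks, and splitting each walk at a repeated vertex repeatedly expresses \(\gamma=\sum_{i=1}^{r}\ell_i\) with each \(\ell_i\) a simple nearest-neighbour loop satisfying \(\support\ell_i\subseteq B\). If for each \(i\) there is \(q_i\in C_2(\mathcal{L},\mathbb{Z})\) with \(\support q_i\subseteq B\) and \(\partial q_i=\ell_i\), then \(q\coloneqq\sum_i q_i\) has \(\support q\subseteq B\) and \(\partial q=\sum_i\ell_i=\gamma\). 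So it suffices to fill a single simple loop \(\ell\subseteq B\).

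\emph{Filling a simple loop.} If \(\ell\) is the boundary of a single plaquette, take \(q\) to be that plaquette. Otherwise \(\ell\) has a corner, i.e.\ a vertex \(v\) whose two incident edges \(u\to v\) and \(v\to w\) point in distinct coordinate directions; let \(p\) be the unit plaquette through \(u,v,w\) and \(u'\coloneqq u+(w-v)\), oriented so that its boundary is the loop \(u\to v\to w\to u'\to u\). A short coordinatewise check shows \(u'\in B\) as soon as \(u,v,w\in B\) (the box is orthogonally convex in the lattice), so \(p\in C_2(B)\). Then \(\ell-\partial p\) is again a \(1\)-cycle supported in \( B \); it no longer touches \(v\), so subtracting \(\partial p\) has ``cut the corner''. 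Re-running the reduction step on \(\ell-\partial p\) and applying an induction hypothesis produces surfaces whose sum, together with \(p\), fills \(\ell\). (Conceptually this just builds a chain homotopy \(P\) with \(\partial P+P\partial=\mathrm{id}\) on \(C_1(B,\mathbb{Z})\), the chain-level counterpart of the operator behind Lemma~\ref{lemma: poincare}; one could instead deduce \(H_1(B;\mathbb{Z})=0\) from Lemma~\ref{lemma: poincare} applied with \(G=\mathbb{Z}\) and with \(G=\mathbb{Z}_n\) for all \(n\) via universal coefficients, but the direct construction is shorter.)

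\emph{Main obstacle.} The one genuinely delicate point is making the induction in the filling step terminate. Subtracting \(\partial p\) can create an edge of coefficient \(\pm2\) or a self-intersection near \(u'\), so \(|\support\ell|\) need not strictly decrease; the fix is to run the induction on a monovariant that provably does decrease rather than on \(|\support\ell|\) itself. A convenient choice is to first push \(\ell\) into a fixed coordinate \(2\)-plane by always peeling at a corner of extremal height in one chosen direction — the pair (maximal coordinate reached, number of edges attaining it) decreases lexicographically — and then, once \(\ell\) lies in a plane, to induct on the lattice area it encloses. Everything else (integrality of coefficients, finiteness and containment of supports in \( B \), and \(\partial\partial=0\) so that each step preserves being a cycle) is routine.
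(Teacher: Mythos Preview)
The present paper does not prove this lemma; it is quoted from \cite{flv2020} without argument, so there is no proof here to compare yours against.

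Your outline is sound. The Eulerian decomposition into simple directed cycles is correct, and corner-cutting is the right geometric move. The one gap is the one you already flag: termination. Your lexicographic monovariant ``(maximal \(x_m\)-height \(h\), number of edges touching height \(h\))'' does work, but only if at each step you peel at a corner involving a \emph{vertical} edge (one in the \(\pm e_m\) direction); a corner between two horizontal edges lying in the top plane \(\{x_m=h\}\) leaves the monovariant unchanged. Such a vertical corner at the maximum does always exist for a simple loop---follow \(\ell\) to its first arrival at height \(h\); the arriving edge is vertical, and by simplicity the next edge cannot be vertical-down to the same vertex, hence is horizontal---but this should be said explicitly. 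You also need the monovariant to behave well under the re-decomposition of \(\ell-\partial p\) into simple loops (it does, being a weighted edge count), and this too deserves a line.

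Your own parenthetical actually gives a cleaner proof with no termination bookkeeping: Lemma~\ref{lemma: poincare} with \(G=\mathbb{Z}\) at \(k=1\) and \(k=2\) yields \(H^1(B;\mathbb{Z})=H^2(B;\mathbb{Z})=0\), whence \(H_1(B;\mathbb{Z})=0\) by universal coefficients; the \(\mathbb{Z}_n\) case you mention is not needed. Equivalently, the explicit prism operator \(P\) that sends each edge to the vertical strip of plaquettes sweeping it down to the face \(\{x_m=a_m\}\) satisfies \(\partial P\gamma=\gamma-\pi_*\gamma\) for every \(1\)-cycle \(\gamma\), and induction on \(m\) finishes directly. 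I would promote one of these to the main argument and keep the corner-cutting as intuition.
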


\subsection{Unitary gauge}\label{sec: unitary gauge}
In this section, we introduce gauge transforms, and the describe how these can be used to rewrite the Wilson line expectation as an expectation with respect to a slightly simpler probability measure.
 
Before we can state the main results of this section, we need to briefly discuss gauge transformations.  
To this end, for \( \eta \in \Omega_0(B_N,G) \), consider the bijection \( \tau \coloneqq \tau_\eta \coloneqq \tau_\eta^{(1)} \times \tau_\eta^{(2)} \colon \Omega^1(B_N,G)  \times \Omega^0(B_N,G)  \to  \Omega_1(B_N,G) \times \Omega^0(B_N,G) \), defined by
\begin{equation}\label{eq: gauge transform}
    \begin{cases}
     \sigma(e) \mapsto -\eta(x) +\sigma(e) + \eta(y), & e=(x,y)\in C_1(B_N), \cr
     \phi(x) \mapsto  \phi(x) + \eta(x), & x \in C_0(B_N).
    \end{cases}
\end{equation}
Any mapping \( \tau \) of this form is called a \emph{gauge transformation}. 
Any mapping \( \tau \) of this form is called a \emph{gauge transformation}, and functions \( f: \Omega^1(B_N,G) \times \Omega^0(B_N,G)  \to \mathbb{C} \) which are invariant under such mappings in the sense that \(f= f \circ \tau\) are said to be \emph{gauge invariant}. 

For \( \beta, \kappa \geq   0 \) and  \( \sigma \in \Omega^1(B_N,G) \), define 
\begin{equation}\label{eq: fixed length unitary measure}
    \mu_{N,\beta,\kappa}(\sigma) 
    \coloneqq
    Z_{N,\beta,\kappa}^{-1}\exp\pigl(\beta \sum_{p \in C_2(B_N)}      \rho\bigl(d  \sigma(p)\bigr) + \kappa \sum_{e \in C_1(B_N)}   \rho \bigl( \sigma(e)\bigr)\pigr)  ,
\end{equation} 
where \( Z_{N,\beta,\kappa}^{-1} \) is a normalizing constant which ensures that \( \mu_{N,\beta,\kappa} \) is a probability measure. We let \( \mathbb{E}_{N,\beta,\kappa} \) denote the corresponding expectation.

The main reason that gauge transformations are useful to us is the following result.

\begin{proposition}[Proposition~2.21 in~\cite{flv2021}]\label{proposition: unitary gauge one dim}
Let \( \beta,\kappa \geq 0 \), and let and assume that  the function \( f \colon \Omega^1(B_N,G) \times \Omega^0(B_N,G) \to \mathbb{C} \) is gauge invariant. Then 
\begin{equation*}
    \mathbb{E}_{N,\beta,\kappa,\infty}\bigl[f(\sigma,\phi)\bigr] = 
    \mathbb{E}_{N,\beta,\kappa}\bigl[f(\sigma,1)\bigr]. 
\end{equation*} 
\end{proposition}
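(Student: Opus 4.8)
The plan is to prove the gauge-invariance reduction (Proposition~\ref{proposition: unitary gauge one dim}) by explicitly averaging over the zero-forms \(\eta\) and exploiting that the Higgs-coupling action depends on \((\sigma,\phi)\) only through the gauge-invariant combination \(\sigma(e)-\phi(\partial e)\). First I would recall that, in the fixed-length model, the Gibbs measure \(\mu_{N,\beta,\kappa,\infty}\) on \(\Omega^1(B_N,G)\times\Omega^0(B_N,G)\) has density proportional to \(\exp(-S_{N,\beta,\kappa,\infty}(\sigma,\phi))\) with respect to the product of uniform measures \(\prod_{e}d\mu_G(\sigma(e))\prod_{x}d\mu_G(\phi(x))\). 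The key observation is that \(S_{N,\beta,\kappa,\infty}\) is gauge invariant: the Wilson term \(-\beta\sum_p\rho(d\sigma(p))\) is unchanged because \(d\) annihilates the coboundary \(d\eta\) (i.e.\ \(d(\sigma+d\eta)=d\sigma\)), and the interaction term \(-\kappa\sum_{\partial e=y-x}\rho(\sigma(e)-\phi(\partial e))\) is unchanged because under \(\tau_\eta\) we have \(\sigma(e)\mapsto\sigma(e)-\eta(x)+\eta(y)=\sigma(e)+d\eta(e)\) and \(\phi(\partial e)\mapsto\phi(\partial e)+d\eta(e)\), so the difference is preserved. Consequently \(\mu_{N,\beta,\kappa,\infty}\) is invariant under every \(\tau_\eta\), and since \(f\) is assumed gauge invariant, \(\mathbb{E}_{N,\beta,\kappa,\infty}[f(\sigma,\phi)]=\mathbb{E}_{N,\beta,\kappa,\infty}[f(\tau_\eta(\sigma,\phi))]\) for each fixed \(\eta\).

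Next I would average this identity over \(\eta\) drawn from the product uniform measure \(\prod_{x\in C_0(B_N)^+}d\mu_G(\eta(x))\). Writing out the expectation, after a change of variables the averaged identity becomes
\begin{equation*}
  \mathbb{E}_{N,\beta,\kappa,\infty}[f(\sigma,\phi)]
  = \int \Bigl( \int f(\sigma,\phi)\, d\mu_G^{\otimes}(\eta) \Bigr)\, d\mu_{N,\beta,\kappa,\infty}(\sigma,\phi),
\end{equation*}
but the more useful manoeuvre is to first integrate out \(\phi\). For each fixed \(\sigma\), choosing \(\eta=-\phi\) (valid since \(\eta\) ranges over all of \(\Omega^0(B_N,G)\) and this is a measure-preserving shift on the uniform measure over \(G\) at each vertex) sends \(\phi\mapsto 0\) and \(\sigma(e)\mapsto\sigma(e)-\phi(\partial e)\). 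So I would change variables \((\sigma,\phi)\mapsto(\sigma',\phi')\) with \(\sigma'=\sigma-d\phi\) and \(\phi'=0\) on the relevant slice; more precisely, using invariance of the uniform measure on \(G^{|C_0(B_N)^+|}\) under translation, for any gauge-invariant \(f\),
\begin{equation*}
  \mathbb{E}_{N,\beta,\kappa,\infty}[f(\sigma,\phi)]
  = \int f(\sigma,1)\, d\nu(\sigma),
\end{equation*}
where \(\nu\) is the marginal law of \(\sigma'=\sigma-d\phi\). The last step is to identify this marginal: substituting \(\phi\equiv 1\) (the identity \(0\in G\) in additive notation — here denoted \(1\) to match the multiplicative convention of the Higgs field) into \(S_{N,\beta,\kappa,\infty}\) collapses the interaction term to \(-\kappa\sum_{e\in C_1(B_N)}\rho(\sigma(e))\) and leaves the Wilson term intact, which is exactly the exponent in \eqref{eq: fixed length unitary measure} defining \(\mu_{N,\beta,\kappa}\); carrying out the \(\phi\)-integral in the partition function shows the normalizing constants match up, so \(\nu=\mu_{N,\beta,\kappa}\) and hence \(\mathbb{E}_{N,\beta,\kappa,\infty}[f(\sigma,\phi)]=\mathbb{E}_{N,\beta,\kappa}[f(\sigma,1)]\).

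The main obstacle — really the only place requiring care — is the bookkeeping of the change of variables on the finite volume \(B_N\): one must check that the map \((\sigma,\phi)\mapsto(\sigma-d\phi,\,0)\) is well defined on \(\Omega^1(B_N,G)\times\Omega^0(B_N,G)\), that it pushes the product uniform measure forward correctly (it does, because for each fixed \(\sigma\) the map \(\phi\mapsto\sigma-d\phi\) followed by marginalizing over \(\phi\) is just a convolution of the uniform measure with itself at each edge, but since we marginalize over all of \(\phi\) the net effect is the uniform measure composed with a bijection of the relevant coordinates), and that boundary edges/vertices of \(B_N\) do not cause an issue — they do not, since no boundary conditions are imposed in the fixed-length model. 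I would also note that the infinite-volume statement is not claimed here; Proposition~\ref{proposition: unitary gauge one dim} is a finite-\(N\) identity, so no limiting argument is needed. Alternatively, since this is quoted verbatim as Proposition~2.21 of~\cite{flv2021}, one may simply cite that reference, but the self-contained argument above is short enough to include.
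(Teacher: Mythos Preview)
Your approach is correct and is precisely the one the paper indicates: the paper does not give a full proof but only sketches it, saying that one applies the gauge transformation \(\tau_{-\phi}\) to each pair \((\sigma,\phi)\) to set \(\phi\equiv 0\), and then cites~\cite{flv2021} for the details. Your write-up fleshes out exactly this change of variables; the detour through averaging over an auxiliary \(\eta\) is unnecessary (you can go directly to \(\eta=-\phi\), as you eventually do), but the core argument matches.
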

The main idea of the proof of Proposition~\ref{proposition: unitary gauge one dim} is to perform a change of variables, where we for each pair \( (\sigma,\phi) \) apply the gauge transformation \( \tau_{-\phi}, \) thus mapping \( \phi \) to \( 0 \). After having applied this gauge transformation, we are said to be working in \emph{unitary gauge}.

Noting that for any path \( \gamma \), the function \( (\sigma,\phi) \mapsto  L_\gamma(\sigma,\phi) \) is gauge invariant, we obtain the following result as an immediate corollary of Proposition~\ref{proposition: unitary gauge one dim}.

\begin{corollary}\label{corollary: unitary gauge}
    Let \( \beta \in [0,\infty],\) \( \kappa \geq 0 \), and let \( \gamma \) be a path in \( C_1(B_N) \). Then
    \begin{equation*}
        \mathbb{E}_{N,\beta,\kappa,\infty}\bigl[L_\gamma(\sigma,\phi)\bigr] =
        \mathbb{E}_{N,\beta,\kappa}\bigl[L_\gamma(\sigma,1)\bigr] =
        \mathbb{E}_{N,\beta,\kappa}\pigl[\rho\bigl(\sigma(\gamma)\bigr)\pigr]. 
    \end{equation*} 
\end{corollary}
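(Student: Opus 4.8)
The plan is to deduce this directly from Proposition~\ref{proposition: unitary gauge one dim}, after checking that the Wilson line observable is gauge invariant and evaluating it at the trivial Higgs field.

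First I would verify that, for each fixed path \( \gamma \), the map \( (\sigma,\phi)\mapsto L_\gamma(\sigma,\phi)=\rho\bigl(\sigma(\gamma)-\phi(\partial\gamma)\bigr) \) is gauge invariant. Let \( \eta\in\Omega^0(B_N,G) \) and let \( \tau_\eta=\tau_\eta^{(1)}\times\tau_\eta^{(2)} \) be the associated gauge transformation from~\eqref{eq: gauge transform}. The map \( \tau_\eta^{(1)} \) sends \( \sigma \) to \( \sigma+d\eta \), where \( d\eta \) is the exterior derivative of the \(0\)-form \( \eta \); hence by the discrete Stokes' theorem (Section~\ref{sec: derivative}) one has \( (\tau_\eta^{(1)}\sigma)(\gamma)=\sigma(\gamma)+d\eta(\gamma)=\sigma(\gamma)+\eta(\partial\gamma) \). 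At the same time \( \tau_\eta^{(2)} \) sends \( \phi \) to \( \phi+\eta \), so \( (\tau_\eta^{(2)}\phi)(\partial\gamma)=\phi(\partial\gamma)+\eta(\partial\gamma) \). Subtracting, the extra term \( \eta(\partial\gamma) \) cancels, so \( L_\gamma\circ\tau_\eta=L_\gamma \); note this works uniformly whether \( \gamma \) is an open path (where \( \eta(\partial\gamma)=\eta(x_2)-\eta(x_1) \)) or a generalized loop (where \( \partial\gamma=0 \), so both terms are already unchanged). Since \( L_\gamma \) is bounded and measurable, the hypotheses of Proposition~\ref{proposition: unitary gauge one dim} are met with \( f=L_\gamma \), and for \( \beta,\kappa\ge 0 \) it yields \( \mathbb{E}_{N,\beta,\kappa,\infty}\bigl[L_\gamma(\sigma,\phi)\bigr]=\mathbb{E}_{N,\beta,\kappa}\bigl[L_\gamma(\sigma,1)\bigr] \). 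For the second equality I would simply evaluate \( L_\gamma(\sigma,1) \): here \( 1 \) denotes the constant (identically trivial) Higgs field, for which \( \phi(\partial\gamma)=0 \) (it is the difference of two equal values when \( \gamma \) is open, and \( 0 \) outright when \( \gamma \) is a loop), so \( L_\gamma(\sigma,1)=\rho\bigl(\sigma(\gamma)\bigr) \).

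The one point that is not literally covered by Proposition~\ref{proposition: unitary gauge one dim} as stated is the case \( \beta=\infty \), so that is the step requiring a little care. The measures \( \mu_{N,\beta,\kappa} \) and \( \mu_{N,\beta,\kappa,\infty} \), and hence the corresponding expectations, are defined by Gibbs weights that, on the finite configuration space over \( B_N \) with the finite group \( G \), depend on \( \beta \) only through finitely many terms of the form \( e^{\beta a} \) with \( a\in\mathbb{R} \). Writing each expectation as the ratio of two such finite sums and dividing numerator and denominator by \( e^{\beta a_{\max}} \), both sides of the identity above are rational combinations of \( e^{-\beta} \) that extend continuously to \( \beta=\infty \), where they reduce to the same expressions restricted to the maximizers of the Wilson term; thus the identity passes to the limit \( \beta\to\infty \). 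Combining this with the two displayed equalities of the previous paragraph gives the claim for all \( \beta\in[0,\infty] \) and \( \kappa\ge 0 \). I do not expect any genuine obstacle here: modulo the gauge-invariance check and the elementary limiting argument for \( \beta=\infty \), the statement is an immediate substitution into Proposition~\ref{proposition: unitary gauge one dim}.
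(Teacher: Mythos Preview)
Your proposal is correct and follows the same approach as the paper, which simply notes that \(L_\gamma\) is gauge invariant and invokes Proposition~\ref{proposition: unitary gauge one dim} without further detail. Your explicit verification of gauge invariance and your continuity argument for the case \(\beta=\infty\) (which the paper does not address separately) make the derivation more complete than the paper's one-line justification.
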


Results analogous to Proposition~\ref{proposition: unitary gauge one dim} are considered well-known in the physics literature.

By combining the previous result with Lemma~\ref{lemma: poincare}, we obtain the following result, which will help us interpret our main result.

\begin{corollary}\label{corollary: Ising}
    Let \( \kappa \geq 0 \),  and let \( \gamma \) be an open path from \( x_1 \in C_0(B_N) \) to \( x_2 \in C_0(B_N).\) Then 
    \begin{equation*}
        H_\kappa(\gamma) = \lim_{N \to \infty} Z_{N,\kappa}^{-1} \sum_{\eta \in \Omega^0(B_N,G)} \rho\bigl(\eta(-x_1)\bigr)\rho\bigl(\eta(x_2)\bigr)e^{-\kappa \sum_{e \in C_1(B_N)} \rho(\eta(\partial e))},
    \end{equation*} 
    where 
    \begin{equation*}
        Z_{N,\kappa} \coloneqq  \sum_{\eta \in \Omega^0(B_N,G)} e^{\kappa \sum_{e \in C_1(B_N)}\rho(\eta(\partial e))}.
    \end{equation*} 
    If particular, if \( G = \mathbb{Z}_2 ,\) then \( H_\kappa(\gamma) \) is the  spin-spin-correlation between for the spins at the endpoints of \( \gamma \) for the Ising model on \( B_N \) with coupling constant \( \kappa. \)
\end{corollary}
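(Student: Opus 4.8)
The plan is to combine Corollary~\ref{corollary: unitary gauge} with the Poincar\'e lemma (Lemma~\ref{lemma: poincare}) to rewrite the $\beta=\infty$ Wilson line expectation as a sum over $0$-forms. First I would observe that by definition $H_\kappa(\gamma) = \langle L_\gamma(\sigma,\phi)\rangle_{\infty,\kappa,\infty} = \lim_{N\to\infty}\mathbb{E}_{N,\infty,\kappa}[\rho(\sigma(\gamma))]$, where $\mathbb{E}_{N,\infty,\kappa}$ is the expectation with respect to the measure $\mu_{N,\infty,\kappa}$ from~\eqref{eq: fixed length unitary measure}. Sending $\beta\to\infty$ in that measure forces $d\sigma(p)=0$ for every plaquette $p\in C_2(B_N)$ (since $\rho$ is faithful and unitary, $\rho(d\sigma(p))$ is maximized exactly when $d\sigma(p)=0$); hence the limiting measure is supported on the set $\Omega^1_0(B_N,G)$ of closed $1$-forms, with relative weights proportional to $e^{\kappa\sum_{e\in C_1(B_N)}\rho(\sigma(e))}$.

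Next I would apply the Poincar\'e lemma: since $B_N$ is a box, the exterior derivative $d\colon\Omega^0(B_N,G)\to\Omega^1_0(B_N,G)$ is surjective and, because $G$ is finite, it is exactly $|\Omega^0_0(B_N,G)|$-to-$1$ (here $\Omega^0_0(B_N,G)$ are the constant $G$-valued functions, so this multiplicity is $|G|=n$, a constant independent of the summand). Substituting $\sigma = d\eta$ for $\eta\in\Omega^0(B_N,G)$ turns the sum over closed $1$-forms into a sum over $0$-forms, with the constant overcounting factor cancelling between numerator and denominator. Under this substitution $\sigma(e) = d\eta(e) = \eta(\partial e)$ for each edge, so $\sum_{e\in C_1(B_N)}\rho(\sigma(e)) = \sum_{e\in C_1(B_N)}\rho(\eta(\partial e))$, giving the stated Gibbs weight and the partition function $Z_{N,\kappa}$. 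For the observable, if $\gamma$ is an open path from $x_1$ to $x_2$ then $\partial\gamma = x_2 - x_1$, so by discrete Stokes' theorem $\sigma(\gamma) = d\eta(\gamma) = \eta(\partial\gamma) = \eta(x_2)-\eta(x_1)$, and since $\rho$ is a homomorphism $\rho(\sigma(\gamma)) = \rho(\eta(x_2))\rho(-\eta(x_1)) = \rho(\eta(-x_1))\rho(\eta(x_2))$ (using $\eta(-x) = -\eta(x)$). This yields exactly the claimed formula for $H_\kappa(\gamma)$ before taking $N\to\infty$.

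For the final sentence, I would specialize to $G=\mathbb{Z}_2$ with $\rho(0)=1$, $\rho(1)=-1$. Then a $0$-form $\eta$ is precisely an Ising spin configuration $(s_x)_{x}$ with $s_x = \rho(\eta(x))\in\{\pm1\}$ (the constraint $\eta(x) = -\eta(x)$ is automatic in $\mathbb{Z}_2$, and $\rho$ is a bijection onto $\{\pm1\}$), and $\rho(\eta(\partial e)) = \rho(\eta(y)-\eta(x)) = s_x s_y$ for $e$ from $x$ to $y$. Hence $e^{\kappa\sum_e\rho(\eta(\partial e))} = e^{\kappa\sum_{\langle x,y\rangle} s_x s_y}$ up to the identification of each unoriented edge with its two orientations (which multiplies the exponent by $2$, i.e.\ corresponds to the Ising model at inverse temperature $\kappa$ in the convention where each unoriented bond is counted once with coupling $2\kappa$, or $\kappa$ if one counts oriented edges — the precise bookkeeping I would state carefully). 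The observable becomes $\rho(\eta(-x_1))\rho(\eta(x_2)) = s_{x_1}s_{x_2}$ since $\rho(\eta(-x_1)) = \rho(-\eta(x_1)) = \rho(\eta(x_1))^{-1} = s_{x_1}$. Thus $H_\kappa(\gamma) = \lim_N \langle s_{x_1} s_{x_2}\rangle_{B_N}^{\mathrm{Ising}}$, the spin-spin correlation.

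The main obstacle is not conceptual but a matter of care: one must justify rigorously that $\mu_{N,\infty,\kappa}$ (i.e.\ the $\beta\to\infty$ limit) is well-defined and concentrated on $\Omega^1_0(B_N,G)$ with the stated conditional weights, and one must track the Poincar\'e-lemma multiplicity constant carefully to see it cancels in the ratio defining the expectation. A secondary point requiring attention is the interchange of the $\beta\to\infty$ and $N\to\infty$ limits implicit in writing $H_\kappa(\gamma) = \langle L_\gamma\rangle_{\infty,\kappa,\infty}$; this is handled by the Ginibre-inequality monotonicity/existence of infinite-volume limits invoked earlier in the paper, so it should amount to citing that machinery rather than new work.
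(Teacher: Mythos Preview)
Your proposal is correct and follows essentially the same route as the paper: invoke Corollary~\ref{corollary: unitary gauge} to pass to unitary gauge, observe that $\beta=\infty$ restricts the sum to $\Omega^1_0(B_N,G)$, and then apply the Poincar\'e lemma (Lemma~\ref{lemma: poincare}) to replace closed $1$-forms by exact ones $\sigma=d\eta$, with the constant multiplicity $|\Omega^0_0(B_N,G)|$ cancelling between numerator and denominator. Your treatment is in fact more detailed than the paper's (you spell out the Stokes step $\sigma(\gamma)=\eta(x_2)-\eta(x_1)$ and the $\mathbb{Z}_2$ specialization explicitly), and the obstacles you flag are non-issues in the paper's setup: $\mu_{N,\infty,\kappa}$ is defined directly via $\varphi_\infty$ rather than as a $\beta\to\infty$ limit, and $\langle\,\cdot\,\rangle_{\infty,\kappa,\infty}$ is by definition $\lim_{N\to\infty}\mathbb{E}_{N,\infty,\kappa,\infty}[\,\cdot\,]$ (whose existence is covered by Proposition~\ref{proposition: limit exists} with $\beta\in[0,\infty]$), so no interchange of limits arises.
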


\begin{proof}
    By~Corollary~\ref{corollary: unitary gauge}, we have
    \begin{equation*}
        \begin{split}
            &
            H_\kappa(\gamma) 
            = \bigl\langle L_\gamma(\sigma,\phi)  \bigr\rangle_{\infty,\kappa,\infty} 
            = \lim_{N \to \infty} \bigl\langle L_\gamma(\sigma,\phi)\bigr\rangle_{N,\infty,\kappa,\infty} 
            \\&\qquad= \lim_{N \to \infty}\mathbb{E}_{N,\infty,\kappa,\infty} \bigl[ L_\gamma(\sigma,\phi)  \bigr]
            = \lim_{N \to \infty}\mathbb{E}_{N,\infty,\kappa} \bigl[ L_\gamma(\sigma,1)  \bigr]
            \\&\qquad
            = \lim_{N \to \infty} Z_{N,\infty,\kappa,\infty}^{-1} \sum_{\sigma \in \Omega^1_0(B_N,G)} \rho(\sigma( \gamma) ) e^{\kappa \sum_{e \in C_1(B_N)} \rho(\sigma( e))}.
        \end{split}
    \end{equation*}
    Since \( \beta = \infty,\) we only need to sum over \( \sigma \in \Omega^1_0(B_N,G). \) Now recall that by Lemma~\ref{lemma: poincare}, for each \( \sigma \in \Omega^1_0(B_N,G) \) there is \( \eta \in \Omega^0(B_N,G) \) such that \( d \eta = \sigma. \) Moreover, the mapping \( \eta \mapsto d\eta \) is a \( |\Omega_0^0(B_N,G)|\)-to-1 correspondence. From this the desired conclusion immediately follows.
\end{proof}
With the current section in mind, we will work with \( \sigma \sim \mu_{N,\beta, \kappa} \) rather than \( (\sigma,\phi) \sim \mu_{N,\beta,\kappa,\infty}\) throughout the rest of this paper, together with the observable
\begin{equation*}
    L_\gamma(\sigma) \coloneqq L_\gamma(\sigma,1) = \prod_{e \in \gamma} \rho\bigl(\sigma(e)\bigr) = \rho(\sigma(\gamma)).
\end{equation*}

\subsection{Existence of the infinite volume limit}\label{sec: ginibre}

In this section, we recall a result which shows existence and translation invariance of the infinite volume limit \( \langle L_\gamma(\sigma,\phi) \rangle_{\beta,\kappa} \) defined in the introduction. This result is well-known, and is often mentioned in the literature as a direct consequence of the Ginibre inequalities. A full proof of this result in the special case \( \kappa = 0 \) was included in~\cite{flv2020}, and the general case can be proven completely analogously, hence we omit the proof here. 

\begin{proposition}\label{proposition: limit exists}
    Let \( G = \mathbb{Z}_n \), \( M \geq 1 \), and let \( f \colon \Omega^1(B_M,G)  \to \mathbb{R}\).
    For \( M' \geq M \), we abuse notation and let \( f \) denote the natural extension of \( f \) to \( C_1(B_{M'}) \), i.e., the unique function such that \( f(\sigma) = f(\sigma|_{C_1(B_M)}) \) for all \( \sigma \in \Omega^1(B_{M'},G) \).
    Further, let \( \beta \in [0, \infty] \) and \( \kappa \geq 0 \). Then the following hold.
    \begin{enumerate}[label=\textnormal{(\roman*)}]
        \item The limit \( \lim_{N \to \infty} \mathbb{E}_{N,\beta,\kappa} \bigl[ f(\sigma) \bigr] \) exists.
        \item For any translation \( \tau \) of \( \mathbb{Z}^m \), we have \( \lim_{N \to \infty} \mathbb{E}_{N,\beta,\kappa}  \bigl[f \circ \tau(\sigma)\bigr] =  \lim_{N \to \infty} \mathbb{E}_{N,\beta,\kappa}\bigl[ f(\sigma) \bigr] \).
    \end{enumerate} 
\end{proposition}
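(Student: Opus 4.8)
The plan is to prove Proposition~\ref{proposition: limit exists} using the Ginibre inequalities (more precisely, the FKG-type correlation inequalities for ferromagnetic spin systems with abelian symmetry), following the argument given in~\cite{flv2020} for the case \( \kappa = 0 \) and explaining why the general \( \kappa \geq 0 \) case goes through verbatim. The first observation is that after passing to unitary gauge via Corollary~\ref{corollary: unitary gauge}, we may work with the measure \( \mu_{N,\beta,\kappa} \) on \( \Omega^1(B_N,G) \) defined in~\eqref{eq: fixed length unitary measure}, whose weight is built entirely from the functions \( \rho(d\sigma(p)) \) and \( \rho(\sigma(e)) \). Writing \( G = \mathbb{Z}_n \) and \( \rho(g) = e^{2\pi i g/n} \), one expands each exponential of \( \rho \) and \( \bar\rho \) and recognizes \( \mu_{N,\beta,\kappa} \) as a ferromagnetic system in the sense required by the Ginibre / Messager--Miracle-Sole framework: the coupling constants \( \beta \) and \( \kappa \) are nonnegative, and the interaction is invariant under the global symmetry group. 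In particular, both the plaquette term and the single-edge term have nonnegative Fourier coefficients against the characters of \( \mathbb{Z}_n \), so all correlation inequalities of Ginibre type apply.

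The core of the proof is then a standard monotonicity argument. First I would reduce to the case where \( f \) is a product of characters (a ``monomial observable'') \( \sigma \mapsto \rho\big(\sum_i a_i \sigma(e_i)\big) \), since every \( f \colon \Omega^1(B_M,G) \to \mathbb{R} \) is a finite real-linear combination of such monomials (and their complex conjugates), and the claimed limit and translation invariance are linear in \( f \). For such a monomial, the Ginibre inequality gives that \( \mathbb{E}_{N,\beta,\kappa}[f(\sigma)] \) is monotone in \( N \): enlarging the box from \( B_N \) to \( B_{N+1} \) amounts to adding extra nonnegatively-coupled edges and plaquettes, which by Ginibre can only increase (or leave fixed) the expectation of a monomial of characters with the appropriate sign convention. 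Combined with the trivial uniform bound \( |\mathbb{E}_{N,\beta,\kappa}[f(\sigma)]| \le 1 \) coming from \( |\rho| = 1 \), monotone bounded sequences converge, which gives part~(i). For part~(ii), translation invariance, I would note that for fixed \( N \) and any translation \( \tau \), the measure \( \mu_{N,\beta,\kappa} \) is not translation invariant because of the boundary of \( B_N \); but \( f \circ \tau \) is supported in some \( B_{M'} \subseteq B_N \) for all \( N \) large, and a second application of the Ginibre comparison inequalities sandwiches \( \mathbb{E}_{N,\beta,\kappa}[f \circ \tau(\sigma)] \) between \( \mathbb{E}_{N',\beta,\kappa}[f(\sigma)] \) for suitable \( N' \to \infty \), so the two limits agree.

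I expect the main obstacle to be purely bookkeeping rather than conceptual: one must check carefully that adding the single-edge term \( \kappa \sum_{e} \rho(\sigma(e)) \) to the pure-gauge weight preserves the ferromagnetic structure and the correct sign conventions in the Ginibre inequalities, so that the monotonicity in \( N \) still holds for the relevant class of observables. This is where the \( \kappa = 0 \) proof in~\cite{flv2020} must be extended, but since the edge term is itself a nonnegatively-weighted sum of characters of a single spin variable, it fits directly into the same framework; the inequalities are stable under adding such terms. A secondary point is handling the \( \beta = \infty \) boundary case in the statement, where the plaquette weight degenerates to the indicator of \( \Omega^1_0(B_N,G) \); here one simply notes that this is the \( \beta \to \infty \) limit of the finite-\( \beta \) measures, the Ginibre inequalities pass to the limit, and the monotonicity argument is unchanged. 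Since all of this is genuinely a transcription of the argument in~\cite{flv2020}, I would, as the excerpt does, state the result and refer to~\cite{flv2020} for the detailed proof, remarking only that the inclusion of the hopping term \( \kappa \) requires no new ideas.
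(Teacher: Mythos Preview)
Your proposal is correct and matches the paper's approach exactly: the paper also omits the proof, stating that it is a direct consequence of the Ginibre inequalities, that a full proof for \(\kappa = 0\) appears in~\cite{flv2020}, and that the general case is completely analogous. Your sketch of how the Ginibre monotonicity argument extends to \(\kappa > 0\) (the single-edge term being a nonnegatively-weighted character, hence preserving the ferromagnetic structure) is precisely the ``analogous'' step the paper alludes to but does not spell out.
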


\section{Additional notation and standing assumptions}\label{sec: additional notation}

Throughout the rest of this paper, we will assume that \( N \geq 1 \) is given, and that \( G = \mathbb{Z}_n \) for some \( n \geq 2 . \)

To simplify the notation, we now introduce some additional notation.

For \( r \geq 0\) and \( g \in G \), we define
\begin{equation}\label{eq: varphi}
    \varphi_r(g) \coloneqq e^{r \Re (\rho(g)-\rho(0))}.
\end{equation}
We extend this notation to \( r = \infty \) by letting
\begin{equation*}
    \varphi_\infty(g) 
    \coloneqq 
    \begin{cases} 
        1 &\text{if } g = 0 , \\
        0 & \text{if }g \in G \smallsetminus \{0\}.
    \end{cases}
\end{equation*}
Next, for \( \hat g \in G \) and \( \beta,\kappa \geq 0 \), we define 
\begin{equation}\label{eq: def of thetag}
    \theta_{\beta,\kappa}(\hat g) \coloneqq \frac{\sum_{g \in G} \rho(g)  \varphi_\beta (g)^{12} \varphi_\kappa(g+\hat g)^2}{\sum_{g \in G} \varphi_\beta(g)^{12} \varphi_\kappa (g+\hat g)^2}.
\end{equation} 
When \( \gamma \) is a path, or when \( E \subseteq C_1(B_N) \) is a finite set, we define
\begin{equation}\label{eq: ThetaN}
    \Theta_{N,\beta,\kappa}(\gamma) \coloneqq \mathbb{E}_{N,\infty,\kappa} \Bigl[ \prod_{e \in \gamma} \theta_{\beta,\kappa}\bigl(\sigma(e)\bigr)\Bigr] \quad \text{and} \quad
    \Theta_{N,\beta,\kappa}(E) \coloneqq \mathbb{E}_{N,\infty,\kappa} \Bigl[ \prod_{e \in E} \theta_{\beta,\kappa}\bigl(\sigma(e)\bigr)\Bigr].
\end{equation}

We next define a number of functions which will be used as error bounds. To this end, for \( r \geq 0 \), let
\begin{equation}\label{eq: alpha01def}
    \alpha_0(r) \coloneqq \sum_{g \in G\smallsetminus \{ 0 \}} \varphi_r(g)^2 \quad \text{and} \quad  \alpha_1(r) \coloneqq \max_{g \in G\smallsetminus \{ 0 \}} \varphi_r(g)^2.
\end{equation} 
Next, for \( \beta,\kappa \geq 0 \), define
\begin{equation}\label{eq: alpha234def}
    \alpha_2(\beta,\kappa) \coloneqq \alpha_0(\beta) \alpha_0(\kappa)^{1/6},
    \quad
    \alpha_3(\beta,\kappa) \coloneqq  \bigl| 1-  \theta_{\beta,\kappa}(0)\bigr|,
    \quad
    \alpha_4(\beta,\kappa) \coloneqq  \max_{g \in G} \bigl| \theta_{\beta,\kappa}(g)-  \theta_{\beta,\kappa}(0)\bigr|,
\end{equation} 
\begin{equation}\label{eq: alpha5}
    \alpha_5(\beta,\kappa) \coloneqq  \min_{g_1,g_2, \ldots, g_6 \in G}\bigg( 1 -   \biggl| \frac{\sum_{g \in G} \rho(g) \bigl(\prod_{k =1}^6 \varphi_\beta(g+g_k)^2\bigr) \varphi_\kappa(g)^2}{\sum_{g \in G} \bigl(\prod_{k =1}^6\varphi_\beta(g+g_k)^2\bigr) \varphi_\kappa(g)^2} \biggr|\bigg)
\end{equation}  
and
\begin{equation}\label{eq: alpha6}
    \alpha_6(\beta,\kappa) \coloneqq 
    \max_{g \in G}\bigl| 1 - \theta_{\beta,\kappa}( g) \bigr|.
\end{equation}

When \( \gamma \) is a path, an edge \( e \in \support \gamma \) is said to be a corner edge in \( \gamma \) if there is another edge \( e' \in \gamma \) and a plaquette \( p \in \hat \partial e \) such that \( p \in \pm \hat \partial e' \). We define the 1-form \( \gamma_c \in C_1(\mathcal{L},\mathbb{Z}) \) for \( c' \in C_1(B_N) \) by 
\begin{equation}\label{def: gammac}
    \gamma_c[c'] \coloneqq \begin{cases}
        \gamma[c'] &\text{if } c' \text{ is a corner edge of } \gamma, \cr
        0 &\text{else.}
    \end{cases}
\end{equation}

In the rest of this paper, we will often work under the following assumption.
\begin{enumerate}[label=\textnormal{[A]}] 
        \item \( 18^2 \alpha_0(\kappa)(2 + \alpha_0(\kappa))<1\). \label{assumption: 3}
\end{enumerate}
In essence, the purpose of this assumption is to guarantee that we are in the sub-critical regime of the model, where certain edge clusters are finite almost surely.

\section{Activity of gauge field configurations}\label{sec: activity}

In this section, we recall the useful notion of the activity of a gauge field configurations from~\cite{flv2021}.
To this end, recall the definition of \( \varphi_r \) from the previous section.
Since \( \rho \) is a unitary representation of \( G \), for any \( g \in G \) we have \( \rho(g) = \overline{\rho(-g)} \), and hence \( \Re \rho(g) = \Re \rho(-g) \). In particular, this implies that for any \( g \in G \) and any \( r \geq 0\), we have
\begin{equation} \label{eq: phi is symmetric}
    \varphi_r(g)
    =
    e^{ r (\Re \rho(g)-\rho(0))  }
    =
    e^{r \beta (\Re \rho(-g)-\rho(0)) }
    =
    \varphi_r(-g).
\end{equation}
Clearly, we also have \( \varphi_\infty(g) = \varphi_\infty(-g) \) for all \( g \in G \).
Moreover, if \( a \geq 0 \) and \( r \geq 0 \), then
\begin{equation*}
    \varphi_r(g)^a = \varphi_{ar}(g).
\end{equation*}
Abusing notation, for \( \sigma \in \Omega^1(B_N,G) \) and \( r \in [0,\infty] \), we define
\begin{equation*}
    \varphi_r(\sigma) \coloneqq \prod_{e \in C_1(B_N)} \varphi_r\bigl(\sigma(e)\bigr),
\end{equation*}
and for \( \omega \in \Omega^2_0(B_N,G) \), we define
\begin{equation*}
    \varphi_r(\omega) \coloneqq \prod_{p \in C_2(B_N)}\varphi_r\bigl(\omega(p)\bigr).
\end{equation*}
For \( \beta \in [0,\infty ] \) and \( \kappa \geq 0 \), we define the \emph{activity} of \( \sigma \in \Omega^1(B_N,G) \) by
\begin{equation*}
    \varphi_{\beta,\kappa}(\sigma) \coloneqq 
    \varphi_\kappa(\sigma) \varphi_\beta(d\sigma).
\end{equation*}
Note that with this notation, for \( \sigma \in \Omega^1(B_N,G) \), \( \beta \in [0,\infty] \), and \( \kappa \geq 0 \), we have
\begin{equation}\label{eq: mubetakappaphi}
    \mu_{N,\beta,\kappa}(\sigma) = \frac{\varphi_{\beta,\kappa} (\sigma)}{\sum_{\sigma' \in \Omega^1(B_N,G)} \varphi_{\beta,\kappa}(\sigma')}.
\end{equation}

Before ending this section, we recall two results from~\cite{flv2021} about the activity of gauge field configurations, which will be useful to us. 

\begin{lemma}[Lemma~4.1 in~\cite{flv2021}]\label{lemma: factorization property}
    Let \( \sigma,\sigma' \in \Omega^1(B_N,G) \) be such that \( \sigma' \leq \sigma \), let \( \beta \in [0,\infty] \), and let \( \kappa \geq 0 \). Then 
    \begin{equation}\label{eq: factorization property}
        \varphi_{\beta,\kappa}(\sigma) = \varphi_{\beta,\kappa}(\sigma')\varphi_{\beta,\kappa}(\sigma-\sigma').
    \end{equation}
\end{lemma}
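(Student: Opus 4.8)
The plan is to prove the two multiplicative factors of \( \varphi_{\beta,\kappa}(\sigma) = \varphi_\kappa(\sigma)\,\varphi_\beta(d\sigma) \) factorize separately, reducing everything to the elementary identity \( \varphi_r(0) = 1 \), which holds for every \( r \in [0,\infty] \) (immediately from~\eqref{eq: varphi} when \( r < \infty \), and by definition when \( r = \infty \)). Note that the products defining \( \varphi_\kappa(\sigma) \) and \( \varphi_\beta(d\sigma) \) are over the finite sets \( C_1(B_N) \) and \( C_2(B_N) \), so they rearrange freely.

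First I would handle the edge factor \( \varphi_\kappa \). Since \( \sigma' \leq \sigma \), clause~(i) of Definition~\ref{def: partial order} gives \( \sigma' = \sigma|_{\support \sigma'} \); hence for every \( e \in C_1(B_N) \), either \( e \in \support \sigma' \), in which case \( \sigma'(e) = \sigma(e) \) and \( (\sigma-\sigma')(e) = 0 \), or \( e \notin \support \sigma' \), in which case \( \sigma'(e) = 0 \) and \( (\sigma-\sigma')(e) = \sigma(e) \). In both cases one of the two numbers \( \varphi_\kappa(\sigma'(e)) \), \( \varphi_\kappa((\sigma-\sigma')(e)) \) equals \( \varphi_\kappa(0) = 1 \) and the other equals \( \varphi_\kappa(\sigma(e)) \), so their product is \( \varphi_\kappa(\sigma(e)) \). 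Taking the product over \( e \in C_1(B_N) \) yields \( \varphi_\kappa(\sigma) = \varphi_\kappa(\sigma')\,\varphi_\kappa(\sigma-\sigma') \).

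Next I would treat the plaquette factor \( \varphi_\beta(d\sigma) \) in the same way, now exploiting how \( d \) interacts with \( \leq \). Since \( d \) is linear, \( d\sigma = d\sigma' + d(\sigma-\sigma') \), so \( d\sigma(p) = d\sigma'(p) + d(\sigma-\sigma')(p) \) for every \( p \in C_2(B_N) \); and by property~\ref{property 5} of Lemma~\ref{lemma: the blue lemma}, \( \support d\sigma' \) and \( \support d(\sigma-\sigma') \) are disjoint. Consequently, for each plaquette \( p \), at least one of \( d\sigma'(p) \), \( d(\sigma-\sigma')(p) \) is \( 0 \), and the nonzero one (if any) equals \( d\sigma(p) \); exactly as in the previous paragraph this gives \( \varphi_\beta(d\sigma(p)) = \varphi_\beta(d\sigma'(p))\,\varphi_\beta(d(\sigma-\sigma')(p)) \). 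Multiplying over \( p \in C_2(B_N) \) gives \( \varphi_\beta(d\sigma) = \varphi_\beta(d\sigma')\,\varphi_\beta(d(\sigma-\sigma')) \), and combining the two factorizations yields~\eqref{eq: factorization property}.

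There is no serious obstacle here; the proof is essentially a bookkeeping exercise. The only points that need a little care are making sure the argument uses \emph{only} \( \varphi_r(0) = 1 \) (and never the false identity \( \varphi_r(a+b) = \varphi_r(a)\varphi_r(b) \)), which is why the disjointness of supports is essential, and checking that the endpoint cases \( \kappa = \infty \) or \( \beta = \infty \) cause no trouble — they do not, since \( \varphi_\infty(0) = 1 \) and the disjointness argument still forces at most one of the two contributions at each cell to be nonzero.
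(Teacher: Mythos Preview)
Your proof is correct. Note that the present paper does not actually give its own proof of this lemma: it is quoted verbatim as Lemma~4.1 of~\cite{flv2021} and simply invoked. Your argument is precisely the standard one --- factor the activity as \(\varphi_\kappa(\sigma)\varphi_\beta(d\sigma)\), use \(\sigma'=\sigma|_{\support\sigma'}\) to split the edge product, and use the disjointness of \(\support d\sigma'\) and \(\support d(\sigma-\sigma')\) from Lemma~\ref{lemma: the blue lemma}\ref{property 5} to split the plaquette product --- and is exactly how the result is established in the cited reference. One trivial remark: the hypothesis only allows \(\kappa\in[0,\infty)\), so your closing comment about \(\kappa=\infty\) is harmless but superfluous.
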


\begin{proposition}[Proposition~5.1 in~\cite{flv2021}]\label{proposition: edgecluster flipping ii}
    Let  \( \sigma' \in \Omega^1(B_N,G) \), let \( \beta \in [0,\infty] \), and let \( \kappa \geq 0 \). Then
    \begin{equation*}
        \mu_{N,\beta,\kappa}\pigl(\bigl\{ \sigma \in \Omega^1(B_N,G) \colon
        \sigma' \leq \sigma \bigr\} \pigr) 
        \leq 
        \varphi_{\beta,\kappa}(\sigma').
    \end{equation*}
\end{proposition}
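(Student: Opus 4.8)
The plan is to combine the two recalled facts about activities — the explicit expression~\eqref{eq: mubetakappaphi} for $\mu_{N,\beta,\kappa}$ and the factorization property of Lemma~\ref{lemma: factorization property} — with the trivial observation that $\varphi_{\beta,\kappa} \geq 0$. I expect the whole argument to be essentially a two-line computation, so the "main obstacle" is really only to make sure the hypotheses of the quoted lemma are met.

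First I would write, using~\eqref{eq: mubetakappaphi},
\begin{equation*}
    \mu_{N,\beta,\kappa}\pigl(\bigl\{ \sigma \in \Omega^1(B_N,G) \colon \sigma' \leq \sigma \bigr\}\pigr)
    = \frac{\sum_{\sigma \colon \sigma' \leq \sigma} \varphi_{\beta,\kappa}(\sigma)}{\sum_{\sigma'' \in \Omega^1(B_N,G)} \varphi_{\beta,\kappa}(\sigma'')}.
\end{equation*}
For each $\sigma$ in the set being summed over we have $\sigma' \leq \sigma$, so Lemma~\ref{lemma: factorization property} applies and gives $\varphi_{\beta,\kappa}(\sigma) = \varphi_{\beta,\kappa}(\sigma')\,\varphi_{\beta,\kappa}(\sigma-\sigma')$. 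Hence the numerator factors as $\varphi_{\beta,\kappa}(\sigma') \sum_{\sigma \colon \sigma' \leq \sigma} \varphi_{\beta,\kappa}(\sigma-\sigma')$, and it remains to check that the remaining ratio is at most $1$.

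For this, note that $\sigma \mapsto \sigma - \sigma'$ is injective on $\{\sigma \colon \sigma' \leq \sigma\}$ (it is the restriction of a translation in the abelian group $\Omega^1(B_N,G)$), so it identifies that set with some subset $T \subseteq \Omega^1(B_N,G)$. Since $\varphi_{\beta,\kappa}(\tau)$ is a product of nonnegative factors for every $\tau$ (the exponentials $\varphi_r$, which lie in $[0,1]$ — this is where $\beta=\infty$, giving $\varphi_\infty \in \{0,1\}$, is harmless), we obtain
\begin{equation*}
    \sum_{\sigma \colon \sigma' \leq \sigma} \varphi_{\beta,\kappa}(\sigma-\sigma')
    = \sum_{\tau \in T} \varphi_{\beta,\kappa}(\tau)
    \leq \sum_{\tau \in \Omega^1(B_N,G)} \varphi_{\beta,\kappa}(\tau),
\end{equation*}
and the right-hand side is exactly the denominator above. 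Dividing through yields the claimed inequality $\mu_{N,\beta,\kappa}(\{\sigma \colon \sigma' \leq \sigma\}) \leq \varphi_{\beta,\kappa}(\sigma')$.

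The only subtlety worth spelling out is the verification that Lemma~\ref{lemma: factorization property} is indeed applicable term by term in the numerator: this is guaranteed precisely because we restricted the sum to configurations $\sigma$ with $\sigma' \leq \sigma$, and the lemma (as stated) covers the full range $\beta \in [0,\infty]$ and $\kappa \geq 0$, so no separate treatment of $\beta=\infty$ is needed. Everything else is routine.
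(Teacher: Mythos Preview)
Your proof is correct. The paper does not actually contain a proof of this proposition --- it is only quoted from~\cite{flv2021} --- so there is nothing in the present paper to compare against; that said, your argument is the natural one (factorize via Lemma~\ref{lemma: factorization property}, then bound the resulting sum by the full partition function using nonnegativity of $\varphi_{\beta,\kappa}$), and this is essentially how such Peierls-type estimates are always derived.
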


\section{Two couplings}\label{sec: couplings} 

The main purpose of this section is to introduce two couplings which will be useful to us throughout this paper. Both of these couplings use ideas from disagreement percolation, and will be constructed so that the two coupled configurations agree as often as possible, given certain constraints. Before we introduce the two couplings, we will recall the definition of a certain edge graph from~\cite{f2021}, and state and prove some of its properties, and introduce a set \( E_{E_0,\hat \sigma, \hat \sigma'} \) which will be used for the definitions of the two couplings.

\subsection{A useful edge graph}\label{sec: edge graph}


%
\begin{definition}
    Given \( \sigma,\sigma' \in \Omega^1(B_N,G) \), let  \( \mathcal{G}(\sigma, \sigma') \) be the graph with vertex set \( C_1(B_N) \), and with an edge between two distinct vertices  \( e,e' \in C_1(B_N)  \) if either 
    \begin{enumerate}[label=\textnormal{(\roman*)}]
        \item \( e' = -e \), or 
        \item \( e,e' \in \support \sigma \cup   \support \sigma'  \), and either \( \support \hat \partial e \cap \support \hat \partial e' \neq \emptyset \) or \( \support \hat \partial e \cap \support \hat \partial (-e') \neq \emptyset \).
    \end{enumerate} 
    Given \(\sigma \in \Omega^1(B_N),\) we let \( \mathcal{G}(\sigma) \coloneqq \mathcal{G}(\sigma,0). \) 
    Given \( \sigma,\sigma' \in \Omega^1(B_N), \) \( \mathcal{G} \coloneqq  \mathcal{G}(\sigma, \sigma'), \) and \( e \in C_1(B_N) ,\) we let \( \mathcal{C}_{\mathcal{G}}(e) \) be set of all edges \( e'\in C_1(B_N) \) which belong to the same connected component as \( e \) in \( \mathcal{G} \). For \( E \subseteq C_1(B_N) \), we let \( \mathcal{C}_{\mathcal{G}}(E) \coloneqq \bigcup_{e \in E} \mathcal{C}_{\mathcal{G}}(e).  \) 
\end{definition}

We now state and prove a number of lemmas, which describe different properties of the sets \( \mathcal{C}_{\mathcal{G}(\hat \sigma,\hat \sigma')}(E). \)

\begin{lemma}[Lemma~7.2 in~\cite{flv2021}]\label{lemma: cluster is subconfig}
    Let \( \sigma ,\sigma' \in \Omega^1(B_N,G)  \),  \( E \subseteq C_1(B_N) \), and \( E' \coloneqq \mathcal{C}_{\mathcal{G}(\sigma, \sigma')}(E) \). Then 
    \begin{enumerate}[label=\textnormal{(\roman*)}]
        \item \( \sigma |_{E'} \leq \sigma \), \label{item: subconfig i}
        \item \( \sigma |_{C_1(B_N)\smallsetminus E'} \leq \sigma \),\label{item: subconfig ii}
        \item \( \sigma' |_{E'} \leq \sigma' \), and\label{item: subconfig iii}
        \item \( \sigma' |_{C_1(B_N) \smallsetminus E'} \leq \sigma' \).\label{item: subconfig iv}
    \end{enumerate} 
\end{lemma}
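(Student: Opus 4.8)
The plan is to unwind the definition of $\leq$ from Definition~\ref{def: partial order} and verify conditions (i) and (ia) directly, using the fact that $E' = \mathcal{C}_{\mathcal{G}(\sigma,\sigma')}(E)$ is a union of connected components of $\mathcal{G}(\sigma,\sigma')$ and is symmetric. By symmetry of the roles of $\sigma$ and $\sigma'$ and the complementary roles of $E'$ and $C_1(B_N)\smallsetminus E'$ (via property~\ref{property 4} of Lemma~\ref{lemma: the blue lemma}), it suffices to prove~\ref{item: subconfig i}, i.e.\ that $\sigma|_{E'} \leq \sigma$; the other three statements then follow formally. Indeed, once~\ref{item: subconfig i} is known, \ref{item: subconfig ii} is immediate from Lemma~\ref{lemma: the blue lemma}\ref{property 4} since $C_1(B_N)\smallsetminus E' = C_1(B_N)\smallsetminus \support(\sigma|_{E'})$ intersected with $\support\sigma$ up to harmless extra edges, and \ref{item: subconfig iii}--\ref{item: subconfig iv} are the same assertions with $\sigma$ replaced by $\sigma'$, which is legitimate because the graph $\mathcal{G}(\sigma,\sigma')$ and hence $E'$ is symmetric in the pair $(\sigma,\sigma')$.

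For~\ref{item: subconfig i}, condition~(i) of Definition~\ref{def: partial order}, namely $\sigma|_{E'} = \sigma|_{\support(\sigma|_{E'})}$, is essentially automatic since restricting to $E'$ and then to the support of the restriction cannot change any values. Thus the content is condition~(ii): $d(\sigma|_{E'}) = (d\sigma)|_{\support d(\sigma|_{E'})}$. Writing $\sigma_1 \coloneqq \sigma|_{E'}$ and $\sigma_2 \coloneqq \sigma|_{C_1(B_N)\smallsetminus E'}$, so that $\sigma = \sigma_1 + \sigma_2$, linearity of $d$ gives $d\sigma = d\sigma_1 + d\sigma_2$. Hence it is enough to show that $\support d\sigma_1$ and $\support d\sigma_2$ are disjoint: if $p \in \support d\sigma_1$ then $d\sigma_2(p) = 0$, so $d\sigma(p) = d\sigma_1(p)$, which is exactly condition~(ii). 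The disjointness of $\support d\sigma_1$ and $\support d\sigma_2$ is where the definition of the edge graph $\mathcal{G}(\sigma,\sigma')$ enters: if some plaquette $p$ lay in both supports, there would be an edge $e \in \support\sigma_1 \subseteq E'$ and an edge $e' \in \support\sigma_2 \subseteq C_1(B_N)\smallsetminus E'$ with $p \in \support\hat\partial(\pm e)$ and $p \in \support\hat\partial(\pm e')$; since $e,e' \in \support\sigma \subseteq \support\sigma\cup\support\sigma'$ and their coboundaries (up to orientation) share the plaquette $p$, clause~(ii) in the definition of $\mathcal{G}(\sigma,\sigma')$ would put an edge between $e$ and $e'$ (or between $e$ and $-e'$, which is joined to $e'$ by clause~(i)), so $e$ and $e'$ would lie in the same connected component — contradicting $e \in E'$, $e' \notin E'$, since $E'$ is a union of full connected components.

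The main obstacle, and the only place requiring care, is bookkeeping around orientations: the coboundary $\hat\partial e$ involves both positively and negatively oriented plaquettes, $d\sigma_1(p) \neq 0$ forces some $e \in \support\sigma_1$ with $p \in \support\hat\partial e$ but only after correctly tracking that $\support d\sigma_1 \subseteq \bigcup_{e \in \support\sigma_1}\support\hat\partial e$ (each plaquette in the support of $d\sigma_1$ borders an edge on which $\sigma_1$ is nonzero), and one must make sure that the $\pm$ ambiguities are absorbed by clause~(i) of the graph definition (which always joins $e$ to $-e$) so that the connectivity argument goes through regardless of signs. Once this is set up cleanly, the proof is short. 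I would also explicitly invoke Lemma~\ref{lemma: the blue lemma}\ref{property 5}, which states that $\omega'\leq\omega$ implies $\support d\omega'$ and $\support d(\omega-\omega')$ are disjoint, as a sanity check and possibly to streamline the deduction of~\ref{item: subconfig ii} from~\ref{item: subconfig i}.
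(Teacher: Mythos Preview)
Your argument is correct and is the natural proof of this statement. Note, however, that the paper does not actually give its own proof of this lemma: it is stated as Lemma~7.2 of~\cite{flv2021} and cited without proof, so there is nothing to compare against in the present paper. Your reduction to showing that $\support d(\sigma|_{E'})$ and $\support d(\sigma|_{C_1(B_N)\smallsetminus E'})$ are disjoint, together with the observation that $E'$ is a symmetric union of full connected components of $\mathcal{G}(\sigma,\sigma')$, is exactly the right idea and goes through cleanly.

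One minor remark on presentation: your deduction of~\ref{item: subconfig ii} from~\ref{item: subconfig i} via Lemma~\ref{lemma: the blue lemma}\ref{property 4} is valid (since $\sigma|_{C_1(B_N)\smallsetminus E'} = \sigma - \sigma|_{E'}$ as $1$-forms), but the phrasing ``up to harmless extra edges'' is vague. It would be cleaner either to state this identity explicitly, or simply to observe that $C_1(B_N)\smallsetminus E'$ is \emph{also} a symmetric union of connected components of $\mathcal{G}(\sigma,\sigma')$, so the same argument that proved~\ref{item: subconfig i} proves~\ref{item: subconfig ii} directly.
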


\begin{lemma}\label{lemma: irreducible to connected}
  Let \( \sigma \in \Omega^1(B_N,G) \) be nontrivial and irreducible. Then the support of \( \sigma \) is a connected set in \( \mathcal{G}(\sigma). \)
\end{lemma}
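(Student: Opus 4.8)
The plan is to prove the contrapositive: if $\support \sigma$ is disconnected in the edge graph $\mathcal{G}(\sigma)$, then $\sigma$ is reducible (or trivial). So suppose $\sigma$ is nontrivial and $\support\sigma$ (viewed as a symmetric subset of $C_1(B_N)$, or rather its positive representatives) splits into two nonempty parts $S$ and $T = \support\sigma \smallsetminus S$ with no $\mathcal{G}(\sigma)$-edge between them. I will show that $S$ and $S^c$ witness reducibility in the sense of the equivalent characterization recalled after Definition~\ref{def: irreducible kform}: I must check that $\support d(\sigma|_S)$ and $\support d(\sigma|_{S^c})$ are disjoint, which (since $S$ and $T$ partition $\support\sigma$ and $\sigma$ vanishes off $\support\sigma$) amounts to showing $\support d(\sigma|_S) \cap \support d(\sigma|_T) = \emptyset$.

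The key step is the plaquette bookkeeping. A plaquette $p$ lies in $\support d(\sigma|_S)$ only if some edge $e$ of $\partial p$ lies in $S$ with $\sigma(e)\neq 0$, i.e. $p \in \support\hat\partial e$ (or $\support\hat\partial(-e)$) for some $e \in S \cap \support\sigma$; likewise for $T$. So if $p \in \support d(\sigma|_S) \cap \support d(\sigma|_T)$, there are edges $e \in S$, $e' \in T$, both in $\support\sigma$, with $p$ in the coboundary of $e$ (up to sign) and of $e'$ (up to sign). But that is precisely condition (ii) in the definition of $\mathcal{G}(\sigma) = \mathcal{G}(\sigma,0)$ being satisfied for the pair $e, e'$ — noting that $\support\sigma \cup \support 0 = \support\sigma$ — so $e$ and $e'$ are adjacent in $\mathcal{G}(\sigma)$, contradicting that there is no edge between $S$ and $T$. (One must be slightly careful with orientations: an edge of $C_1(B_N)$ and its negative are always adjacent via (i), and the support of a form is a symmetric set, so working with positively-oriented representatives $S, T \subseteq C_1^+(B_N)$ and using (i) to absorb sign issues handles this cleanly.)

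From disjointness of the two supports I conclude $\sigma|_S < \sigma$: property (i) of Definition~\ref{def: partial order}, $\sigma|_S = \sigma|_{\support(\sigma|_S)}$, is immediate since $\sigma|_S$ is a restriction of $\sigma$; property (ii), $d(\sigma|_S) = (d\sigma)|_{\support d(\sigma|_S)}$, follows from $d\sigma = d(\sigma|_S) + d(\sigma|_T)$ together with the just-established disjointness of $\support d(\sigma|_S)$ and $\support d(\sigma|_T)$, so that on $\support d(\sigma|_S)$ the term $d(\sigma|_T)$ contributes nothing. Since $S \neq \emptyset$ and $T \neq \emptyset$, $\sigma|_S$ is a nontrivial $1$-form strictly below $\sigma$, so $\sigma$ is not irreducible. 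This is the desired contradiction, and I would present it directly in contrapositive form.

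I do not anticipate a serious obstacle here; the main thing to get right is the orientation/symmetry bookkeeping — being consistent about whether $\support\sigma$, $S$, $T$ are taken as symmetric subsets of $C_1(B_N)$ or as subsets of $C_1^+(B_N)$, and invoking clause (i) of the graph definition (the edge between $e$ and $-e$) so that ``$p \in \support\hat\partial e$ or $\support\hat\partial(-e)$'' in clause (ii) lines up with ``$p$ appears in $\partial p$ with a nonzero coefficient from $\sigma$''. Everything else is a direct unwinding of the definitions of $d$, $\hat\partial$, $\support$, and the partial order.
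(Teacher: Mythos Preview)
Your proof is correct and follows essentially the same idea as the paper's: both arguments restrict $\sigma$ to a proper union of connected components of $\support\sigma$ in $\mathcal{G}(\sigma)$ and show this restriction is a nontrivial $1$-form $\leq \sigma$, contradicting irreducibility. The only difference is packaging: the paper argues directly (pick $e\in\support\sigma$, set $\sigma'=\sigma|_{\mathcal{C}_{\mathcal{G}(\sigma)}(e)}$, invoke Lemma~\ref{lemma: cluster is subconfig} to get $\sigma'\leq\sigma$, and conclude $\sigma'=\sigma$ by irreducibility), whereas you argue by contrapositive and inline the plaquette bookkeeping that Lemma~\ref{lemma: cluster is subconfig} encapsulates. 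Your version is more self-contained; the paper's is shorter because that bookkeeping has already been isolated as a lemma.
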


\begin{proof}
    Let \( e \in \support \sigma \), and define \( \sigma' \coloneqq \sigma|_{\mathcal{C}_{\mathcal{G}(\sigma)}(e)}. \) Then, by definition, \( \sigma' \) is non-trivial, and by Lemma~\ref{lemma: cluster is subconfig}, we have \( \sigma' \leq \sigma. \) Since \( \sigma \) is irreducible, it follows that \( \sigma = \sigma', \) and hence the desired conclusion follows.
\end{proof}

\begin{lemma}\label{lemma: new technical lemma 1}
    Let \( \sigma,\sigma' \in \Omega^1(B_N,G)\). Assume that \( \sigma'' \leq \sigma \) is nontrivial and irreducible, and let \( e \in \support \sigma'' .\) Then
    \begin{equation}\label{eq: new technical lemma 1}
        \sigma'' \leq \sigma|_{\mathcal{C}_{\mathcal{G}(\sigma, \sigma')}(e)}.
    \end{equation}
\end{lemma}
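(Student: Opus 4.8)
The goal is to show that the irreducible subconfiguration $\sigma''$ of $\sigma$ is supported entirely within the cluster $\mathcal{C}_{\mathcal{G}(\sigma,\sigma')}(e)$ of the single edge $e \in \support \sigma''$, and moreover agrees with $\sigma$ there (i.e.\ $\sigma'' \leq \sigma|_{\mathcal{C}}$). Write $\mathcal{G} \coloneqq \mathcal{G}(\sigma,\sigma')$ and $E' \coloneqq \mathcal{C}_{\mathcal{G}}(e)$ for brevity. The plan is to combine the fact that the support of an irreducible form is connected in its own edge graph (Lemma~\ref{lemma: irreducible to connected}) with the monotonicity of the edge graph: since $\sigma'' \leq \sigma$ we have $\support \sigma'' \subseteq \support \sigma \subseteq \support \sigma \cup \support \sigma'$, so every edge of $\mathcal{G}(\sigma'')$ is also an edge of $\mathcal{G}$ (the adjacency condition $(ii)$ only gets easier to satisfy as the configuration grows, and condition $(i)$ is configuration-independent). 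Hence any two edges of $\support \sigma''$ that are connected in $\mathcal{G}(\sigma'')$ are connected in $\mathcal{G}$ as well.

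First I would make precise the embedding $\mathcal{G}(\sigma'') \hookrightarrow \mathcal{G}$ on the common vertex set $C_1(B_N)$: if $\{e_1,e_2\}$ is an edge of $\mathcal{G}(\sigma'')$ then either $e_2 = -e_1$ (always an edge of $\mathcal{G}$) or $e_1,e_2 \in \support \sigma''$ with the coboundary-overlap condition, and since $\support \sigma'' \subseteq \support \sigma \cup \support \sigma'$ this same pair satisfies condition~$(ii)$ for $\mathcal{G}$. Next, by Lemma~\ref{lemma: irreducible to connected}, $\support \sigma''$ is connected in $\mathcal{G}(\sigma'')$; by the embedding it is therefore contained in a single connected component of $\mathcal{G}$, and since $e \in \support \sigma''$ that component is exactly $E' = \mathcal{C}_{\mathcal{G}}(e)$. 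Thus $\support \sigma'' \subseteq E'$.

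It remains to upgrade the support inclusion $\support \sigma'' \subseteq E'$ to the partial-order statement $\sigma'' \leq \sigma|_{E'}$. Here I would invoke Lemma~\ref{lemma: cluster is subconfig}\ref{item: subconfig i}, which gives $\sigma|_{E'} \leq \sigma$, together with the hypothesis $\sigma'' \leq \sigma$, and then check the two defining conditions of Definition~\ref{def: partial order} for the pair $(\sigma'', \sigma|_{E'})$. Condition~(i): since $\support \sigma'' \subseteq E'$, for every $c \in \support \sigma''$ we have $\sigma|_{E'}(c) = \sigma(c) = \sigma''(c)$ because $\sigma'' \leq \sigma$; hence $\sigma'' = (\sigma|_{E'})|_{\support \sigma''}$. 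Condition~(ii): one needs $d\sigma'' = (d(\sigma|_{E'}))|_{\support d\sigma''}$; this should follow from the corresponding identity $d\sigma'' = (d\sigma)|_{\support d\sigma''}$ (part of $\sigma'' \leq \sigma$) once one argues that $d\sigma$ and $d(\sigma|_{E'})$ agree on $\support d\sigma''$ — which holds because every plaquette in $\support d\sigma''$ lies in $\support \hat\partial e_0$ for some $e_0 \in \support \sigma''  \subseteq E'$, and by the construction of $\mathcal{G}$ all edges whose coboundaries touch such a plaquette and lie in $\support\sigma$ are already in the same cluster $E'$, so restricting $\sigma$ to $E'$ does not change $d\sigma$ on those plaquettes.

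The main obstacle I anticipate is precisely this last verification of condition~(ii) of the partial order: one must be careful that restricting $\sigma$ to the cluster $E'$ does not alter the value of $d\sigma$ on any plaquette in $\support d\sigma''$. The key point making this work is that the edge graph $\mathcal{G}$ was defined so that two edges in $\support \sigma \cup \support \sigma'$ sharing a plaquette in their (signed) coboundaries are adjacent; consequently, if $p \in \support d\sigma''$ then all edges $e_0' \in \support\sigma$ with $p \in \pm\hat\partial e_0'$ belong to $\mathcal{C}_{\mathcal{G}}(e_0) = E'$, so $d(\sigma|_{E'})(p) = \sum_{e_0' \in \partial p} (\sigma|_{E'})(e_0') = \sum_{e_0' \in \partial p} \sigma(e_0') = d\sigma(p)$. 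Alternatively, and perhaps more cleanly, one can bypass condition~(ii) entirely by noting that $\sigma'' \leq \sigma$ and $\support\sigma'' \subseteq E' = \support(\sigma|_{E'})$ together with $\sigma|_{E'} \leq \sigma$ and general properties of $\leq$ from Lemma~\ref{lemma: the blue lemma} (transitivity-type arguments on the common sub-support) force $\sigma'' \leq \sigma|_{E'}$; I would check which of these two routes is shorter when writing the full proof.
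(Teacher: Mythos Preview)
Your proposal is correct and follows essentially the same route as the paper's proof: use Lemma~\ref{lemma: irreducible to connected} plus the subgraph inclusion \(\mathcal{G}(\sigma'')\subseteq\mathcal{G}(\sigma,\sigma')\) to get \(\support\sigma''\subseteq E'\), then verify conditions~(i) and~(ii) of Definition~\ref{def: partial order} directly, with (ii) handled by the observation that every edge of \(\partial p\) lying in \(\support\sigma\) is adjacent in \(\mathcal{G}\) to some \(e_0\in\support\sigma''\subseteq E'\), so \(d(\sigma|_{E'})(p)=d\sigma(p)\) on \(\support d\sigma''\). Your speculative ``alternative'' via Lemma~\ref{lemma: the blue lemma} alone would not bypass the plaquette argument (and note \(E'\) need not equal \(\support(\sigma|_{E'})\), since \(E'\) may contain edges of \(\support\sigma'\smallsetminus\support\sigma\)), so the direct verification you outline first is the right one.
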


\begin{proof}
    Since \( \sigma'' \) is irreducible, by Lemma~\ref{lemma: irreducible to connected}, the support of \( \sigma'' \) is a connected set  \( \mathcal{G}(\sigma'',0) \). 
    Since \( \sigma'' \leq \sigma \), we have \( \sigma|_{\support \sigma''} = \sigma'' \), and hence it follows that the support of \( \sigma'' \) is a connected set in \( \mathcal{G}(\sigma, \sigma'). \)
    Consequently, since \( e \in \support \sigma'', \) we have
    \begin{equation}\label{eq: irreducible is subset}
        \support \sigma'' \subseteq \mathcal{C}_{\mathcal{G}(\sigma, \sigma')}(e),
    \end{equation}
    and thus, since \( \sigma|_{\support \sigma''} = \sigma'', \) it follows that
    \begin{equation*}\label{eq: irreducible is subset ii}
        (\sigma|_{\mathcal{C}_{\mathcal{G}(\sigma,\sigma')}(e)})|_{\support \sigma''} = \sigma|_{\support \sigma''} = \sigma''.
    \end{equation*}
    For~\eqref{eq: new technical lemma 1} to follow, it thus remains to show that 
    \begin{equation}\label{eq: irreducible is subset iii}
        \bigl(d(\sigma|_{\mathcal{C}_{\mathcal{G}(\sigma,\sigma')}(e)})\bigr)\pigr|_{\support d\sigma''} = d\sigma''.
    \end{equation}
    If \( d\sigma'' = 0 \), then this immediately follows. Hence, assume that \( d\sigma'' \neq 0 \), and let \( p \in \support d\sigma''. \) %
    Since \( p \in \support d\sigma'', \) there must exist at least one \( e' \in \partial p \) with \( \sigma''(e') \neq 0.\) 
    Since \( e' \in \support \sigma'' \), it follows from~\eqref{eq: irreducible is subset} that \( e' \in \mathcal{C}_{\mathcal{G}(\sigma,\sigma')}(e). \) Since \( e' \in \partial p \), it follows from the definition of \( \mathcal{C}_{\mathcal{G}(\sigma,\sigma')}(e) \) that any edge  \( e'' \in \partial p \) with \( \sigma(e'') \neq 0 \) is also a member of \( \mathcal{C}_{\mathcal{G}(\sigma,\sigma')}(e). \) Consequently, we must have \( \sigma(e'') = \sigma|_{\mathcal{C}_{\mathcal{G}(\sigma,\sigma')}(e)}(e'') \) for all \( e'' \in \partial p, \) and hence
    \begin{equation*}
        d\sigma|_{\mathcal{C}_{\mathcal{G}(\sigma,\sigma')}(e)} (p) = d\sigma(p).
    \end{equation*}
    Since \( \sigma'' \leq \sigma \) and \( p \in \support d\sigma'' \), we also have \( d\sigma''(p) = d\sigma(p),  \) and hence we conclude that
    \begin{equation*}\label{eq: d cluster agreement}
        d\sigma|_{\mathcal{C}_{\mathcal{G}(\sigma,\sigma')}(e)} (p) = d\sigma''(p).
    \end{equation*}
    Since this holds for any \( p \in \support d\sigma'', \) we obtain~\eqref{eq: irreducible is subset iii}.
    This concludes the proof.
\end{proof}

\begin{lemma}\label{lemma: graph decomposition in coarser i}
    Let \( \sigma,\sigma' \in \Omega^1(B_N,G) \), and let \( E \subseteq C_1(B_N). \) Assume that \( \sigma'' \leq \sigma \) is non-trivial and irreducible, and that \( \support \sigma'' \cap \mathcal{C}_{\mathcal{G}(\sigma,\sigma')}(E) \neq \emptyset\). Then \( \sigma'' \leq \sigma|_{\mathcal{C}_{\mathcal{G}(\sigma,\sigma')}(E)}. \)
\end{lemma}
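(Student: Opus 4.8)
The plan is to reduce this statement to the previous lemma (Lemma~\ref{lemma: new technical lemma 1}) by a short argument about the structure of connected components in the edge graph. First I would pick an edge $e_0 \in \support \sigma'' \cap \mathcal{C}_{\mathcal{G}(\sigma,\sigma')}(E)$, which exists by hypothesis. Since $e_0 \in \mathcal{C}_{\mathcal{G}(\sigma,\sigma')}(E)$, there is some $e_1 \in E$ such that $e_0$ and $e_1$ lie in the same connected component of $\mathcal{G}(\sigma,\sigma')$; equivalently $\mathcal{C}_{\mathcal{G}(\sigma,\sigma')}(e_0) = \mathcal{C}_{\mathcal{G}(\sigma,\sigma')}(e_1) \subseteq \mathcal{C}_{\mathcal{G}(\sigma,\sigma')}(E)$. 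In particular $\mathcal{C}_{\mathcal{G}(\sigma,\sigma')}(e_0) \subseteq \mathcal{C}_{\mathcal{G}(\sigma,\sigma')}(E)$.

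Next I would apply Lemma~\ref{lemma: new technical lemma 1} with the edge $e_0 \in \support \sigma''$, which gives directly
\begin{equation*}
    \sigma'' \leq \sigma|_{\mathcal{C}_{\mathcal{G}(\sigma,\sigma')}(e_0)}.
\end{equation*}
So it remains to pass from $\sigma|_{\mathcal{C}_{\mathcal{G}(\sigma,\sigma')}(e_0)}$ to the (larger) restriction $\sigma|_{\mathcal{C}_{\mathcal{G}(\sigma,\sigma')}(E)}$. For this I would invoke transitivity of $\leq$ (Lemma~\ref{lemma: the blue lemma}~\ref{property 3}): it suffices to check that $\sigma|_{\mathcal{C}_{\mathcal{G}(\sigma,\sigma')}(e_0)} \leq \sigma|_{\mathcal{C}_{\mathcal{G}(\sigma,\sigma')}(E)}$. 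Writing $E' \coloneqq \mathcal{C}_{\mathcal{G}(\sigma,\sigma')}(E)$ and noting $\mathcal{C}_{\mathcal{G}(\sigma,\sigma')}(e_0) \subseteq E'$, one has $\sigma|_{\mathcal{C}_{\mathcal{G}(\sigma,\sigma')}(e_0)} = (\sigma|_{E'})|_{\mathcal{C}_{\mathcal{G}(\sigma,\sigma')}(e_0)}$. By Lemma~\ref{lemma: cluster is subconfig}~\ref{item: subconfig i}, $\sigma|_{E'} \leq \sigma$; and since $\mathcal{C}_{\mathcal{G}(\sigma,\sigma')}(e_0)$ is itself a union of connected components of $\mathcal{G}(\sigma,\sigma')$, hence also a union of components of $\mathcal{G}(\sigma|_{E'}, 0)$ (restricting the first form can only delete edges/components, not merge them), a further application of Lemma~\ref{lemma: cluster is subconfig}~\ref{item: subconfig i} to the pair $(\sigma|_{E'}, 0)$ yields $(\sigma|_{E'})|_{\mathcal{C}_{\mathcal{G}(\sigma,\sigma')}(e_0)} \leq \sigma|_{E'}$. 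Combining the three relations via transitivity gives $\sigma'' \leq \sigma|_{E'}$, as desired.

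The main obstacle I anticipate is the bookkeeping in the last step — making precise that restricting to a cluster $\mathcal{C}_{\mathcal{G}(\sigma,\sigma')}(e_0)$ behaves well under first restricting $\sigma$ to the larger set $E'$, so that Lemma~\ref{lemma: cluster is subconfig} can legitimately be re-applied. An alternative that sidesteps this is to verify the two defining conditions of $\leq$ in Definition~\ref{def: partial order} directly for $\sigma'' \leq \sigma|_{E'}$: condition (i) is immediate from $\sigma'' \leq \sigma$ and $\support \sigma'' \subseteq \mathcal{C}_{\mathcal{G}(\sigma,\sigma')}(e_0) \subseteq E'$; for condition (ii) one argues exactly as in the proof of Lemma~\ref{lemma: new technical lemma 1} — given $p \in \support d\sigma''$, every edge $e'' \in \partial p$ with $\sigma(e'') \neq 0$ lies in the same $\mathcal{G}(\sigma,\sigma')$-component as some $e' \in \partial p \cap \support \sigma'' \subseteq E'$, hence in $E'$, so $\sigma|_{E'}$ agrees with $\sigma$ on all of $\partial p$, giving $d(\sigma|_{E'})(p) = d\sigma(p) = d\sigma''(p)$. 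Either route is short; I would likely present the transitivity version for brevity, falling back on the direct verification if the component-restriction compatibility feels too terse.
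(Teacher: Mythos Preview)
Your proposal is correct and follows essentially the same route as the paper: pick an edge in the intersection, apply Lemma~\ref{lemma: new technical lemma 1} to get \( \sigma'' \leq \sigma|_{\mathcal{C}_{\mathcal{G}(\sigma,\sigma')}(e_0)} \), then use the inclusion \( \mathcal{C}_{\mathcal{G}(\sigma,\sigma')}(e_0) \subseteq \mathcal{C}_{\mathcal{G}(\sigma,\sigma')}(E) \) together with Lemma~\ref{lemma: cluster is subconfig} and transitivity (Lemma~\ref{lemma: the blue lemma}\ref{property 3}) to conclude. The paper is terser at the last step---it just writes \( (\sigma|_{\mathcal{C}_{\mathcal{G}(\sigma,\sigma')}(E)})|_{\mathcal{C}_{\mathcal{G}(\sigma,\sigma')}(e)} \leq \sigma|_{\mathcal{C}_{\mathcal{G}(\sigma,\sigma')}(E)} \) ``by Lemma~\ref{lemma: cluster is subconfig}''---whereas you correctly flag the component-compatibility bookkeeping needed to make this invocation precise, and your fallback direct verification of Definition~\ref{def: partial order} is also valid.
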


\begin{proof}
    Fix some \( e \in \support \sigma'' \cap \mathcal{C}_{\mathcal{G}(\sigma,\sigma')}(E). \)
    Since \( \sigma'' \) is irreducible and \( e \in \support \sigma'', \) it follows from  Lemma~\ref{lemma: new technical lemma 1} that \( \sigma'' \leq \sigma|_{\mathcal{C}_{\mathcal{G}(\sigma,\sigma')}(e)}. \)
    Next, since \( e \in \mathcal{C}_{\mathcal{G}(\sigma,\sigma')}(E) \), we have \( \mathcal{C}_{\mathcal{G}(\sigma,\sigma')}(e) \subseteq \mathcal{C}_{\mathcal{G}(\sigma,\sigma')} (E) \), and hence, by Lemma~\ref{lemma: cluster is subconfig}, it follows that
    \begin{equation*}
        \sigma|_{\mathcal{C}_{\mathcal{G}(\sigma,\sigma')}(e)} = (\sigma|_{\mathcal{C}_{\mathcal{G}(\sigma,\sigma')}(E)})|_{\mathcal{C}_{\mathcal{G}(\sigma,\sigma')}(e)} \leq \sigma|_{\mathcal{C}_{\mathcal{G}(\sigma,\sigma')}(E)}.
    \end{equation*}
    Since \( \sigma'' \leq \sigma|_{\mathcal{C}_{\mathcal{G}(\sigma,\sigma')}(e)} \), using Lemma~\ref{lemma: the blue lemma}\ref{property 3}, we thus conclude that \( \sigma'' \leq  \sigma|_{\mathcal{C}_{\mathcal{G}(\sigma,\sigma')}(E)}. \)
\end{proof}

\begin{lemma}\label{lemma: graph decomposition in coarser}
    Let \( \sigma,\sigma' \in \Omega^1(B_N,G) \), let \( E \subseteq C_1(B_N), \) and assume that \( \sigma'' \leq \sigma \) is non-trivial and irreducible. Then either \( \sigma'' \leq \sigma|_{\mathcal{C}_{\mathcal{G}(\sigma,\sigma')}(E)} \), or \( \sigma'' \leq \sigma|_{C_1(B_N)\smallsetminus \mathcal{C}_{\mathcal{G}(\sigma,\sigma')}(E)}. \)
\end{lemma}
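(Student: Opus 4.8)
The statement is a simple dichotomy, and the natural approach is to reduce it to Lemma~\ref{lemma: graph decomposition in coarser i} by case analysis on whether \( \support \sigma'' \) meets the cluster \( \mathcal{C}_{\mathcal{G}(\sigma,\sigma')}(E) \). Write \( E' \coloneqq \mathcal{C}_{\mathcal{G}(\sigma,\sigma')}(E) \). There are two cases.

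\emph{Case 1: \( \support \sigma'' \cap E' \neq \emptyset \).} Then Lemma~\ref{lemma: graph decomposition in coarser i} applies verbatim and gives \( \sigma'' \leq \sigma|_{E'} \), which is the first alternative.

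\emph{Case 2: \( \support \sigma'' \cap E' = \emptyset \).} Then \( \support \sigma'' \subseteq C_1(B_N) \smallsetminus E' \). I would like to invoke Lemma~\ref{lemma: graph decomposition in coarser i} again, but now with the roles reversed — treating \( C_1(B_N) \smallsetminus E' \) as the relevant set. However, \( C_1(B_N)\smallsetminus E' \) need not itself be a \( \mathcal{G}(\sigma,\sigma') \)-cluster of anything, so I cannot apply that lemma directly. Instead I would argue as follows: \( \sigma'' \) is irreducible, so by Lemma~\ref{lemma: irreducible to connected} its support is connected in \( \mathcal{G}(\sigma'',0) \), and since \( \sigma'' \leq \sigma \) we have \( \sigma|_{\support \sigma''} = \sigma'' \), so \( \support \sigma'' \) is connected in \( \mathcal{G}(\sigma,\sigma') \) as well. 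A connected subset of \( \mathcal{G}(\sigma,\sigma') \) that is disjoint from the union of clusters \( E' \) must lie entirely in a single connected component of \( \mathcal{G}(\sigma,\sigma') \) that does not meet \( E \); equivalently, fixing \( e \in \support \sigma'' \), we have \( \support \sigma'' \subseteq \mathcal{C}_{\mathcal{G}(\sigma,\sigma')}(e) \) and \( \mathcal{C}_{\mathcal{G}(\sigma,\sigma')}(e) \cap E' = \emptyset \) (the latter because cluster components are either equal or disjoint, and \( e \notin E' \)). Hence \( \support \sigma'' \subseteq \mathcal{C}_{\mathcal{G}(\sigma,\sigma')}(e) \subseteq C_1(B_N)\smallsetminus E' \).

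It then remains to upgrade the set-inclusion \( \support \sigma'' \subseteq C_1(B_N) \smallsetminus E' \) to the partial-order statement \( \sigma'' \leq \sigma|_{C_1(B_N)\smallsetminus E'} \). For this I would apply Lemma~\ref{lemma: new technical lemma 1} (with \( e \in \support \sigma'' \)) to get \( \sigma'' \leq \sigma|_{\mathcal{C}_{\mathcal{G}(\sigma,\sigma')}(e)} \), then use Lemma~\ref{lemma: cluster is subconfig}\ref{item: subconfig ii} which gives \( \sigma|_{C_1(B_N)\smallsetminus E'} \leq \sigma \), together with \( \mathcal{C}_{\mathcal{G}(\sigma,\sigma')}(e) \subseteq C_1(B_N)\smallsetminus E' \), to conclude \( \sigma|_{\mathcal{C}_{\mathcal{G}(\sigma,\sigma')}(e)} = (\sigma|_{C_1(B_N)\smallsetminus E'})|_{\mathcal{C}_{\mathcal{G}(\sigma,\sigma')}(e)} \leq \sigma|_{C_1(B_N)\smallsetminus E'} \); finally transitivity (Lemma~\ref{lemma: the blue lemma}\ref{property 3}) yields \( \sigma'' \leq \sigma|_{C_1(B_N)\smallsetminus E'} \). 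This mirrors exactly the structure of the proof of Lemma~\ref{lemma: graph decomposition in coarser i}.

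**Main obstacle.** The only subtlety is the bookkeeping in Case 2: showing that a connected piece of \( \support \sigma'' \) disjoint from \( E' \) is genuinely confined to the complement of \( E' \) as a \emph{cluster} (not just set-theoretically), so that Lemma~\ref{lemma: cluster is subconfig}\ref{item: subconfig ii} can be applied. This is entirely a consequence of the fact that distinct connected components of \( \mathcal{G}(\sigma,\sigma') \) are disjoint, so \( e \notin E' \) forces \( \mathcal{C}_{\mathcal{G}(\sigma,\sigma')}(e) \cap E' = \emptyset \); there is no real difficulty, just care needed in phrasing. Everything else is a direct citation of the preceding lemmas.
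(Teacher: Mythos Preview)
Your proposal is correct and follows essentially the same approach as the paper: case analysis on whether \(\support\sigma''\) meets \(\mathcal{C}_{\mathcal{G}(\sigma,\sigma')}(E)\), with Case~1 handled by Lemma~\ref{lemma: graph decomposition in coarser i}. The only minor difference is in Case~2: the paper observes that the complement is itself of the form \(\mathcal{C}_{\mathcal{G}(\sigma,\sigma')}(E'')\) for the auxiliary set \(E'' \coloneqq \support\sigma|_{C_1(B_N)\smallsetminus \mathcal{C}_{\mathcal{G}(\sigma,\sigma')}(E)} \cup \support\sigma'|_{C_1(B_N)\smallsetminus \mathcal{C}_{\mathcal{G}(\sigma,\sigma')}(E)}\), so Lemma~\ref{lemma: graph decomposition in coarser i} applies a second time verbatim, whereas you instead redo that lemma's proof for the complement via Lemma~\ref{lemma: new technical lemma 1} and transitivity---both routes are valid and amount to the same underlying argument.
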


\begin{proof}
    If \( \support \sigma'' \cap \mathcal{C}_{\mathcal{G}(\sigma,\sigma')}(E) \neq \emptyset \), then, by Lemma~\ref{lemma: graph decomposition in coarser i}, we have \( \sigma'' \leq \sigma|_{\mathcal{C}_{\mathcal{G}(\sigma,\sigma')}(E)} \), and hence the desired conclusion holds in this case.
    Now instead assume that \( \support \sigma'' \cap \mathcal{C}_{\mathcal{G}(\sigma,\sigma')}(E) = \emptyset, \) and note that this implies that \( \support \sigma'' \subseteq C_1(B_N)\smallsetminus \mathcal{C}_{\mathcal{G}(\sigma,\sigma')}(E). \)
    Define
    \begin{equation*}
        E' \coloneqq \support  \sigma|_{C_1(B_N)\smallsetminus \mathcal{C}_{\mathcal{G}(\sigma,\sigma')}(E)} \cup \support  \sigma'|_{C_1(B_N)\smallsetminus \mathcal{C}_{\mathcal{G}(\sigma,\sigma')}(E)}.
    \end{equation*}
    Then, since \( \sigma'' \leq \sigma \) and \(\support \sigma'' \subseteq C_1(B_N)\smallsetminus \mathcal{C}_{\mathcal{G}(\sigma,\sigma')}(E),\) we have \( \support \sigma'' \subseteq E'. \)
    Consequently, by Lemma~\ref{lemma: graph decomposition in coarser i}, we have \( \sigma'' \leq \sigma|_{\mathcal{C}_{\mathcal{G}(\sigma,\sigma')}(E')}. \) Since
    \begin{equation*}
        \sigma|_{C_1(B_N)\smallsetminus \mathcal{C}_{\mathcal{G}(\sigma,\sigma')}(E)} = 
        \sigma|_{\mathcal{C}_{\mathcal{G}(\sigma,\sigma')}(E')},
    \end{equation*}
    we obtain \( \sigma'' \leq \sigma|_{C_1(B_N)\smallsetminus \mathcal{C}_{\mathcal{G}(\sigma,\sigma')}(E)}, \) and hence the desired conclusion holds also in this case.
    This completes the proof.
\end{proof}

\subsection{\texorpdfstring{The set \( E_{E_0,\hat \sigma,\hat \sigma'}\)}{A useful set}}

We now define a set which we will need for the definitions of the couplings in Sections~\ref{sec: Z-Z coupling}~and~\ref{sec: LGT-Z coupling}
\begin{definition} 
    For \( \sigma,\sigma' \in \Omega^1(B_N,G) \) and \( E_0 \subseteq C_1(B_N) \), define
    \begin{equation}\label{eq: EE0sigmasigmadef}
        \begin{split}
            E_{E_0,\sigma,\sigma'} 
            \coloneqq \mathcal{C}_{\mathcal{G}(\sigma, \sigma')} \bigl(&E_0 \cup \{ e \in \support \sigma \colon  d\sigma|_{\pm \support \hat \partial e} \neq 0 \} \cup \{ e \in \support \sigma' \colon d\sigma'|_{\pm \support \hat \partial e} \neq 0 \}  \bigr).
        \end{split}
    \end{equation}
\end{definition}

\begin{lemma}\label{lemma: closed is closed new}
    Let \(  \sigma,\sigma'  \in \Omega^1(B_N,G) \), and let \( E_0 \subseteq C_1(B_N). \)
    Then  
    \begin{enumerate}[label=(\roman*)]
        \item \( d( \sigma|_{E_{E_0,\sigma,\sigma'}}) = d \sigma,\) and \label{item: coupling eqs i}
        \item \( d( \sigma|_{C_1(B_N)\smallsetminus E_{E_0,\sigma,\sigma'}}) = 0.\) \label{item: coupling eqs ii}
    \end{enumerate}
\end{lemma}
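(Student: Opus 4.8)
The plan is to establish part (ii) first and to read off part (i) as an immediate consequence. Throughout, abbreviate \( E := E_{E_0,\sigma,\sigma'} \). The one structural fact I will use about \( E \), beyond its being a union of connected components of \( \mathcal{G}(\sigma,\sigma') \), is that by construction \( E = \mathcal{C}_{\mathcal{G}(\sigma, \sigma')}(\tilde E) \) for a set \( \tilde E \) that contains
\begin{equation*}
    A \coloneqq \bigl\{ e \in \support \sigma \colon d\sigma|_{\pm \support \hat\partial e} \neq 0 \bigr\},
\end{equation*}
that is, \( \tilde E \) swallows every edge of \( \support\sigma \) incident to a plaquette carrying nonzero curvature.

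\textbf{Step 1 (part (ii)).} Since \( E \) has the form \( \mathcal{C}_{\mathcal{G}(\sigma,\sigma')}(\tilde E) \), Lemma~\ref{lemma: cluster is subconfig}\ref{item: subconfig ii} applies and gives \( \sigma|_{C_1(B_N)\smallsetminus E} \leq \sigma \). Suppose, toward a contradiction, that \( d(\sigma|_{C_1(B_N)\smallsetminus E})(p) \neq 0 \) for some plaquette \( p \). By clause (ii) of Definition~\ref{def: partial order} applied to \( \sigma|_{C_1(B_N)\smallsetminus E} \leq \sigma \), the \(2\)-form \( d(\sigma|_{C_1(B_N)\smallsetminus E}) \) agrees with \( d\sigma \) on its support, so \( d\sigma(p) = d(\sigma|_{C_1(B_N)\smallsetminus E})(p) \neq 0 \). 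On the other hand, \( d(\sigma|_{C_1(B_N)\smallsetminus E})(p) = \sum_{e \in \partial p} \sigma|_{C_1(B_N)\smallsetminus E}(e) \neq 0 \) forces the existence of an edge \( e \in \partial p \) with \( e \notin E \) and \( \sigma(e) \neq 0 \). Then \( e \in \support\sigma \), and since \( e \in \partial p \) the plaquette \( p \) (possibly after reversing its orientation) lies in \( \support\hat\partial e \); together with \( d\sigma(p) \neq 0 \) this yields \( d\sigma|_{\pm\support\hat\partial e} \neq 0 \). Hence \( e \in A \subseteq \tilde E \subseteq \mathcal{C}_{\mathcal{G}(\sigma,\sigma')}(\tilde E) = E \), contradicting \( e \notin E \). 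Therefore \( d(\sigma|_{C_1(B_N)\smallsetminus E}) = 0 \), which is part (ii).

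\textbf{Step 2 (part (i)).} Every connected component of \( \mathcal{G}(\sigma,\sigma') \) is a symmetric subset of \( C_1(B_N) \), because \( \mathcal{G}(\sigma,\sigma') \) joins \( e \) to \( -e \) for every edge \( e \); hence \( E \) is symmetric, and consequently \( \sigma = \sigma|_E + \sigma|_{C_1(B_N)\smallsetminus E} \) in \( \Omega^1(B_N,G) \). Applying the exterior derivative, which is a group homomorphism, and invoking Step 1,
\begin{equation*}
    d\sigma = d\bigl(\sigma|_E\bigr) + d\bigl(\sigma|_{C_1(B_N)\smallsetminus E}\bigr) = d\bigl(\sigma|_E\bigr),
\end{equation*}
which is part (i).

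\textbf{On the difficulty.} There is no genuinely hard step: once Lemma~\ref{lemma: cluster is subconfig} and the partial-order formalism are available, the argument is a one-line contradiction plus a splitting. The only point requiring minor care is the orientation bookkeeping in Step 1 — checking that ``\( e \in \partial p \)'' together with \( d\sigma(p) \neq 0 \) really does trigger the clause \( d\sigma|_{\pm\support\hat\partial e} \neq 0 \) appearing in the definition of \( E_{E_0,\sigma,\sigma'} \). Alternatively, one could prove both parts at once without Lemma~\ref{lemma: cluster is subconfig}, by noting that \( E \) is a union of \( \mathcal{G}(\sigma,\sigma') \)-components and any two edges of \( \support\sigma \) sharing a plaquette are \( \mathcal{G}(\sigma,\sigma') \)-adjacent, so that for each plaquette \( p \) either all edges of \( \partial p \) on which \( \sigma \) does not vanish lie in \( E \) or none do; combined with \( A \subseteq E \) this gives \( d(\sigma|_E)(p) = d\sigma(p) \) and \( d(\sigma|_{C_1(B_N)\smallsetminus E})(p) = 0 \) directly. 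The route through Lemma~\ref{lemma: cluster is subconfig} is shorter and is the one I would write up.
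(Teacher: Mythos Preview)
Your proof is correct and uses essentially the same ingredients as the paper: Lemma~\ref{lemma: cluster is subconfig} together with the observation that every edge of \(\support\sigma\) bounding a plaquette with \(d\sigma(p)\neq 0\) lies in \(E\). The only difference is that the paper proves~\ref{item: coupling eqs i} first (showing directly that \(d(\sigma|_E)(p)=d\sigma(p)\) whenever \(d\sigma(p)\neq 0\), since all edges of \(\support\sigma\cap\support\partial p\) then fall into \(E\)) and derives~\ref{item: coupling eqs ii} by subtraction, whereas you do it the other way round; this is a cosmetic reorganization rather than a different argument.
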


\begin{proof}
    To simplify notation, let \( E \coloneqq E_{E_0,\sigma,\sigma'} \).
    By Lemma~\ref{lemma: cluster is subconfig}, applied with \(  \sigma \), \(  \sigma' \), and  \( E \), we then have  \(  \sigma|_{E} \leq  \sigma \), \(  \sigma'|_{E} \leq  \sigma' \), \(  \sigma|_{C_1(B_N) \smallsetminus E} \leq  \sigma \) and \(  \sigma'|_{C_1(B_N) \smallsetminus E} \leq  \sigma' \).

    We now show that \( d(  \sigma|_{E} ) = d \sigma\).
    Since  \(   \sigma|_{E} \leq  \sigma \), it suffices to show that \( d(\sigma|_{E})(p) \neq 0 \) whenever \( d \sigma(p) \neq 0. \) 
    To this end, assume that \( d \sigma(p) \neq 0. \)
    Then the set \( \support \partial p \cap \support  \sigma \) must be non-empty. 
    Fix one edge \( e \in \support \partial p \cap \support  \sigma .\)
    Recalling the definition of \( E \), we see that \( e \in E \), and hence any edge \( e' \in \support \partial p\smallsetminus \{ e \} \) must satisfy either \(  \sigma'(e') =  \sigma(e') = 0 \), or \( e' \in E \).
    Consequently, \(  \sigma |_{E}(\partial p) =  \sigma (\partial p)  \), and hence
    \begin{equation*}
        d \sigma |_{E}(p) =  \sigma |_{E}(\partial p) =  \sigma (\partial p) = d \sigma ( p)
    \end{equation*}
    as desired. This concludes the proof of~\ref{item: coupling eqs i}.
    
    To see that~\ref{item: coupling eqs ii} holds, note simply that, using~\ref{item: coupling eqs i}, we  have
    \begin{equation*}
        d( \sigma|_{C_1(B_N)\smallsetminus E}) = 
        d( \sigma -  \sigma|_{E}) 
        =
        d \sigma-d( \sigma|_{E})
        =
        d \sigma - d \sigma
        =
        0,
    \end{equation*}
    and hence~\ref{item: coupling eqs ii} holds. This concludes the proof.
\end{proof}

\begin{lemma}\label{lemma: E in coupling}
    Let \(  \hat\sigma,\hat\sigma' \in \Omega^1(B_N,G) \),  and let
    \( E_0 \subseteq C_1(B_N). \)
    Further, either let 
    \begin{equation*}
        \begin{cases}
            \sigma \coloneqq  \hat \sigma|_{E_{E_0,\hat \sigma,\hat \sigma'}}
            +
            \hat \sigma'|_{C_1(B_N) \smallsetminus E_{E_0,\hat\sigma,\hat\sigma'}} \cr 
            \sigma' \coloneqq \hat \sigma'.
        \end{cases}
    \end{equation*}
    or let 
    \begin{equation*}
        \begin{cases}
            \sigma \coloneqq  \hat \sigma|_{E_{E_0,\hat \sigma,\hat \sigma'}}
            +
            \hat \sigma'|_{C_1(B_N) \smallsetminus E_{E_0,\hat\sigma,\hat\sigma'}} \cr 
            \sigma' \coloneqq \hat \sigma'|_{E_{E_0,\hat \sigma,\hat \sigma'}}
            +
            \hat \sigma|_{C_1(B_N) \smallsetminus E_{E_0,\hat\sigma,\hat\sigma'}}.
        \end{cases}
    \end{equation*}
    Then \(   E_{E_0,\sigma, \sigma'} = E_{E_0,\hat \sigma,\hat\sigma'} \). 
\end{lemma}

\begin{proof}
     By Lemma~\ref{lemma: closed is closed new}, we have  \( d\hat\sigma|_{E_{E_0,\sigma,\sigma'}} = d\hat \sigma, \) and \( d \hat \sigma'|_{C_1(B_N)\smallsetminus E_{E_0,\hat\sigma,\hat\sigma'}} = 0 \)  and hence
    \begin{equation*}
        d\sigma 
        = d(\hat \sigma|_{E_{E_0,\hat\sigma,\hat\sigma'}} 
        +
        \hat \sigma'|_{C_1(B_N)\smallsetminus E_{E_0,\sigma,\sigma'}}) 
        = d(\hat \sigma|_{E_{E_0,\hat\sigma,\hat\sigma'}}) + d(\hat \sigma'|_{C_1(B_N)\smallsetminus E_{E_0,\sigma,\sigma'}}) 
        = d\hat\sigma + 0 = d\hat\sigma.
    \end{equation*}
    If \( e \in \support \hat \sigma \) is such that \( d\hat \sigma|_{\pm \support \hat \partial e} \neq 0,\)  then \( e \in E_{E_0,\hat\sigma,\hat \sigma'} \), and thus \( \sigma(e) = \hat \sigma(e), \) implying in particular that \( e \in \support \sigma .\) 
    Since \( d\sigma = d\hat \sigma, \) it follows that \( d\sigma|_{\pm \support \hat \partial e} \neq 0, \) and hence
    \begin{equation*}
        \{ e \in \support \hat \sigma \colon d\hat \sigma|_{\pm \support \hat \partial e} \neq 0 \} \subseteq 
        \{ e \in \support \sigma \colon d\sigma|_{\pm \support \hat  \partial e} \neq 0 \}.
    \end{equation*} 
    Analogously, we obtain
    \begin{equation*}
        \{ e \in \support \hat \sigma' \colon d\hat \sigma'|_{\pm \support \hat \partial e} \neq 0 \} \subseteq 
        \{ e \in \support \sigma' \colon d\sigma'|_{\pm \support \hat  \partial e} \neq 0 \}.
    \end{equation*} 
    Noting that \( \mathcal{G}(\hat \sigma,\hat \sigma') = \mathcal{G}(\sigma,\sigma'), \) we thus obtain 
    \begin{equation*}
        \begin{split}
            &E_{E_0,\hat \sigma, \hat \sigma'} 
            =
            \mathcal{C}_{\mathcal{G}(\hat \sigma,\hat \sigma')} \bigl(E_0 \cup \{ e \in \support \hat \sigma \colon d\hat \sigma|_{\pm \support \partial e} \neq 0 \} \cup \{ e \in \support \hat \sigma' \colon d\hat \sigma'|_{\pm \support \partial e} \neq 0 \} \bigr)
            \\&\qquad\subseteq
            \mathcal{C}_{\mathcal{G}( \sigma, \sigma')} \bigl(E_0 \cup \{ e \in \support  \sigma \colon d \sigma|_{\pm \support \partial e} \neq 0 \} \cup \{ e \in \support  \sigma' \colon d \sigma'|_{\pm \support \partial e} \neq 0 \} \bigr)
            =
            E_{E_0, \sigma,  \sigma'}.
        \end{split}
    \end{equation*}  
    
    For the other direction, assume that \( e \in \support \sigma \) is such that \( d\sigma|_{\pm \support \hat \partial e} \neq 0. \) Then \( \sigma(e)\neq 0 \), and there must exist \( p \in \hat \partial e \) such that \( d\sigma(p) \neq 0 \). 
    Since \( d\hat \sigma = d\sigma \), it follows that \( d\hat \sigma(p) \neq 0 \). Consequently, there must exist \( e' \in \partial p \) such that \( \hat \sigma(e') \neq 0 \). For any such edge \( e' \), we have \(  d\hat \sigma|_{\pm \support \hat \partial e'} \neq 0, \) and 
    hence \( e' \in E_{E_0,\hat \sigma,\hat \sigma'}. \) In particular, this implies that \( \sigma(e') = \hat \sigma(e')  \neq 0, \) and hence \( e \in E_{E_0,\hat \sigma,\hat \sigma'}. \) Consequently, 
    \begin{equation*}
        \{ e \in \support \sigma \colon d\sigma|_{\pm \support \hat  \partial e} \neq 0 \}  \subseteq E_{E_0,\hat \sigma,\hat \sigma'}. 
    \end{equation*} 
    Analogously, we also obtain
         \begin{equation*}
        \{ e \in \support \sigma' \colon d\sigma'|_{\pm \support \hat  \partial e} \neq 0 \}  \subseteq E_{E_0,\hat \sigma,\hat \sigma'}. 
    \end{equation*}  
    Again recalling that \( \mathcal{G}(\hat \sigma,\hat \sigma') = \mathcal{G}(\sigma,\sigma'), \) we thus obtain 
    \begin{equation*}
        \begin{split}
            &E_{E_0, \sigma,  \sigma'} 
            =
            \mathcal{C}_{\mathcal{G}( \sigma, \sigma')} \bigl(E_0 \cup \{ e \in \support  \sigma \colon d \sigma|_{\pm \support \partial e} \neq 0 \} \cup \{ e \in \support  \sigma' \colon d \sigma'|_{\pm \support \partial e} \neq 0 \} \bigr)
            \\&\qquad\subseteq
            \mathcal{C}_{\mathcal{G}(\hat \sigma,\hat \sigma')} \bigl(E_0 \cup E_{E_0,\hat \sigma,\hat \sigma'} \cup \{ e \in \support \hat \sigma' \colon d\hat \sigma'|_{\pm \support \partial e} \neq 0 \} \bigr)
            =
            E_{E_0,\hat \sigma, \hat \sigma'} .
        \end{split}
    \end{equation*}
    This concludes the proof.
\end{proof}

\begin{lemma}\label{lemma: Edsigmalemma}
    Let \( \sigma, \sigma' \in \Omega^1(B_N,G) \), let \( E_0 \subseteq C_1(B_N)\), and let \( e \in C_1(B_N).\) Then \( e\in E_{E_0,\sigma, \sigma'}\) if and only if one of the following holds.
    \begin{enumerate}[label=(\roman*)]
        \item \(d(\sigma |_{\mathcal{C}_{\mathcal{G}(\sigma, \sigma')}(e)}) \neq 0\)
        \item \(d(\sigma' |_{\mathcal{C}_{\mathcal{G}(\sigma, \sigma')}(e)}) \neq 0\)
        \item \(\mathcal{C}_{\mathcal{G}(\sigma, \sigma')}(e) \cap E_0 \neq \emptyset\)
    \end{enumerate}
\end{lemma}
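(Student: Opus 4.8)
The plan is to unfold the definition \eqref{eq: EE0sigmasigmadef} and work purely combinatorially with connected components of \( \mathcal{G}(\sigma,\sigma') \). Write \( \mathcal{G} \coloneqq \mathcal{G}(\sigma,\sigma') \), \( \mathcal{C}(\cdot) \coloneqq \mathcal{C}_{\mathcal{G}}(\cdot) \), and set
\[
    A_0 \coloneqq E_0 \cup \{ e' \in \support \sigma \colon d\sigma|_{\pm \support \hat\partial e'} \neq 0 \} \cup \{ e' \in \support \sigma' \colon d\sigma'|_{\pm \support \hat\partial e'} \neq 0 \},
\]
so that \( E_{E_0,\sigma,\sigma'} = \mathcal{C}(A_0) \). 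The first step is to observe that, since belonging to the same connected component of \( \mathcal{G} \) is an equivalence relation, \( e \in \mathcal{C}(A_0) = \bigcup_{e' \in A_0} \mathcal{C}(e') \) if and only if the component \( C \coloneqq \mathcal{C}(e) \) meets \( A_0 \). Splitting \( A_0 \) into its three defining pieces, the contribution of \( E_0 \) is exactly condition (iii), and by the symmetry of the construction of \( \mathcal{G} \) in \( \sigma \) and \( \sigma' \) it remains only to show that \( C \cap \{ e' \in \support \sigma \colon d\sigma|_{\pm \support \hat\partial e'} \neq 0 \} \neq \emptyset \) if and only if \( d(\sigma|_C) \neq 0 \), i.e.\ condition (i).

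For the implication "\( C \) meets that set \( \Rightarrow d(\sigma|_C) \neq 0 \)", I would fix such an edge \( e' \in C \cap \support \sigma \) together with a plaquette \( p \) in the (signed) coboundary of \( e' \) satisfying \( d\sigma(p) \neq 0 \), and then show that \( \sigma|_C \) agrees with \( \sigma \) on every edge of \( \partial p \): any edge \( e'' \in \partial p \) carrying a nonzero value of \( \sigma \) or \( \sigma' \) lies in \( \support\sigma \cup \support\sigma' \) and shares the plaquette \( p \) with \( e' \) in its coboundary, hence is adjacent in \( \mathcal{G} \) to \( e' \) or to \( -e' \) (both of which are in \( C \)), so \( e'' \in C \). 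Consequently \( d(\sigma|_C)(p) = \sigma|_C(\partial p) = \sigma(\partial p) = d\sigma(p) \neq 0 \). For the converse, pick \( p \) with \( d(\sigma|_C)(p) \neq 0 \); then some edge \( e' \in \partial p \) satisfies \( e' \in C \) and \( \sigma(e') \neq 0 \). By Lemma~\ref{lemma: cluster is subconfig} applied with \( E = \{ e \} \) we have \( \sigma|_C \leq \sigma \), so Definition~\ref{def: partial order} gives \( d(\sigma|_C) = (d\sigma)|_{\support d(\sigma|_C)} \); in particular \( d\sigma(p) = d(\sigma|_C)(p) \neq 0 \). Since \( p \) lies in the signed coboundary of \( e' \), this exhibits \( e' \) as a member of \( C \cap \{ e' \in \support\sigma \colon d\sigma|_{\pm\support\hat\partial e'} \neq 0 \} \).

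The main obstacle I anticipate is not conceptual but bookkeeping: keeping the orientation conventions consistent (the meaning of \( \pm\support\hat\partial e' \), the always-present edge \( \{ e', -e' \} \) of \( \mathcal{G} \), and the fact that adjacency in \( \mathcal{G} \) is phrased via \( \support\hat\partial e \cap \support\hat\partial(\pm e') \)), and isolating cleanly the "saturation" claim that \( \sigma|_C \) coincides with \( \sigma \) on the entire boundary of any plaquette touching \( C \). This is precisely the mechanism already used in the proof of Lemma~\ref{lemma: closed is closed new}, so once it is stated in the form needed here the two directions follow immediately, and the symmetric statement for \( \sigma' \) and condition (ii) is obtained by swapping the roles of \( \sigma \) and \( \sigma' \).
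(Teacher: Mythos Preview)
Your proposal is correct and follows essentially the same route as the paper: reduce \( e \in E_{E_0,\sigma,\sigma'} \) to ``\( \mathcal{C}(e) \) meets \( A_0 \)'' via the equivalence-relation structure, handle the \( E_0 \) piece as condition~(iii), and for the \( \sigma \)-piece use the saturation property that all edges of \( \partial p \) lying in \( \support\sigma \cup \support\sigma' \) belong to \( C \) once one of them does. The only cosmetic difference is that in the converse direction you invoke Lemma~\ref{lemma: cluster is subconfig} to obtain \( \sigma|_C \leq \sigma \) and hence \( d\sigma(p) = d(\sigma|_C)(p) \), whereas the paper simply repeats the direct saturation argument; both are equally short.
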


\begin{proof}
    Suppose first that \( e\in E_{E_0,\sigma, \sigma'}\).
    By the definition of \(\mathcal{C}_{\mathcal{G}(\sigma,\sigma')}(e)\), there exists an edge \( e' \in \mathcal{C}_{\mathcal{G}(\sigma,\sigma')}(e)\) such that 
    \begin{equation*}
        e' \in E_0 \cup \{ e'' \in \support \sigma \colon d\sigma|_{\pm \support \hat \partial e''} \neq 0\}\cup \{ e'' \in \support \sigma' \colon d\sigma'|_{\pm \support \hat \partial e''} \neq 0\}.
    \end{equation*} 
    If \( e' \in E_0 \), then \( e' \in \mathcal{C}_{\mathcal{G}(\sigma,\sigma')}(e) \cap E_0,\) and hence \( \mathcal{C}_{\mathcal{G}(\sigma,\sigma')}(e) \cap E_0 \neq \emptyset.\)
    If \( e' \notin E_0 \), then, by symmetry, we can assume that \( e' \in \{ e'' \in \support \sigma \colon d\sigma|_{\pm \support \hat \partial e''} \neq 0 \}. \) %
    In this case, we infer that there exists a plaquette \(p \in \hat{\partial} e' \) such that \(d\sigma(p) \neq 0.\)
    Since \( e' \in \mathcal{C}_{\mathcal{G}(\sigma,\sigma')}(e)\), we have \(\support \sigma \cap \support \partial p \subseteq \mathcal{C}_{\mathcal{G}(\sigma,\sigma')}(e)\), and so \(d(\sigma |_{\mathcal{C}_{\mathcal{G}(\sigma,\sigma')}(e)})(p) = d\sigma(p) \neq 0\).
    
    For the other direction, assume first that \(\mathcal{C}_{\mathcal{G}(\sigma, \sigma')}(e) \cap E_0 \neq \emptyset.\) Then there is \( e' \in E_0 \) such that \( e' \in \mathcal{C}_{\mathcal{G}(\sigma, \sigma')}(e) .\) Since \( e' \in \mathcal{C}_{\mathcal{G}(\sigma, \sigma')}(e) \) we must also have \( e \in \mathcal{C}_{\mathcal{G}(\sigma, \sigma')}(e'), \) which is a subset of \( E_{E_0,\sigma,\sigma'} \) since \( e' \in E_0. \)
    Next, assume instead that there is a plaquette \(p \in C_2(B_N) \) such that  \( d(\sigma|_{\mathcal{C}_{\mathcal{G}(\sigma,\sigma')}(e)})(p) \neq 0. \)
    Then there exists an edge \(e' \in \partial p\) with \(\sigma(e') \neq 0\) and \( e' \in \mathcal{C}_{\mathcal{G}(\sigma,\sigma')}(e)\). Thus  \(\support \sigma \cap \support \partial p \subseteq \mathcal{C}_{\mathcal{G}(\sigma,\sigma')}(e)\), and so \(d\sigma(p) = d(\sigma |_{\mathcal{C}_{\mathcal{G}(\sigma,\sigma')}(e)})(p) \neq 0.\) In particular, it follows that \(e' \in \support \partial p \subseteq \{ e'' \in \support \sigma \colon d\sigma|_{\pm \support \hat \partial e''} \neq 0 \} .\) Consequently, we must have \(e' \in \{ e'' \in \support \sigma \colon d\sigma|_{\pm \support \hat \partial e'' } \neq 0 \}\), and hence \( e' \in E_{\sigma,\sigma'}. \) Since \( e' \in \mathcal{C}_{\mathcal{G}_{\sigma,\sigma'}(e)} \), we thus have \( e \in \mathcal{C}_{\mathcal{G}_{\sigma,\sigma'}(e')} \subseteq E_{E_0,\sigma,\sigma'} \) as desired.
    Using symmetry, this concludes the proof.
\end{proof}

\begin{lemma}\label{lemma: technical coupling lemma}
    Let \( \beta_1,\beta_2 \in [0,\infty]\), \( \kappa \geq 0 \), \( E_0 \subseteq C_1(B_N),\) and \(  \sigma,\hat \sigma,\hat \sigma' \in \Omega^1(B_N,G) \).
    Then
	\begin{equation}\label{eq: goal of technical lemma}
	    \begin{split}
	        &\varphi_{\beta_1,\kappa}(\hat \sigma) \varphi_{\beta_2,\kappa}(\hat \sigma') \cdot \mathbb{1} \bigl( \hat \sigma|_{E_{E_0, \hat \sigma, \hat\sigma'}} +  \hat \sigma'|_{C_1(B_N)\smallsetminus E_{E_0, \hat \sigma, \hat\sigma'}} =  \sigma \bigr) 
	        \\&\qquad =
	        \varphi_{\beta_1,\kappa}(\sigma)\sum_{\sigma' \in \Omega^1(B_N,G)}\varphi_{\beta_2,\kappa}(\sigma') 
			\cdot \mathbb{1}\bigl( \hat \sigma =  \sigma|_{E_{E_0,  \sigma, \sigma'}}+ \sigma'|_{C_1(B_N)\smallsetminus E_{E_0,  \sigma, \sigma'}} \bigr)
			\\[-2ex]&\hspace{17em}\cdot 
			\mathbb{1}\bigl(  \hat \sigma' = \sigma' |_{E_{E_0,  \sigma, \sigma'}} +  \sigma |_{C_1(B_N)\smallsetminus E_{E_0,  \sigma, \sigma'}}  \bigr).
	    \end{split}
	\end{equation}
\end{lemma}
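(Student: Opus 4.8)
Here is the plan. I will recognise the asserted identity as the statement that a certain deterministic swap is a weight-preserving involution. For a pair $(\tau,\tau')\in\Omega^1(B_N,G)^2$ write $F(\tau,\tau'):=E_{E_0,\tau,\tau'}$ and define its \emph{swap} $\Phi(\tau,\tau')=(\Phi_1(\tau,\tau'),\Phi_2(\tau,\tau'))$ by
\[
\Phi(\tau,\tau'):=\bigl(\,\tau|_{F(\tau,\tau')}+\tau'|_{C_1(B_N)\smallsetminus F(\tau,\tau')}\,,\ \ \tau'|_{F(\tau,\tau')}+\tau|_{C_1(B_N)\smallsetminus F(\tau,\tau')}\,\bigr).
\]
With this notation the left-hand side of~\eqref{eq: goal of technical lemma} equals $\varphi_{\beta_1,\kappa}(\hat\sigma)\varphi_{\beta_2,\kappa}(\hat\sigma')\cdot\mathbb{1}\bigl(\sigma=\Phi_1(\hat\sigma,\hat\sigma')\bigr)$, and, for a fixed $\sigma'$, the product of the two indicators on the right-hand side equals $\mathbb{1}\bigl((\hat\sigma,\hat\sigma')=\Phi(\sigma,\sigma')\bigr)$. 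So the lemma will follow once I establish (a) $\Phi\circ\Phi=\mathrm{id}$, and (b) $\varphi_{\beta_1,\kappa}(\tau)\varphi_{\beta_2,\kappa}(\tau')=\varphi_{\beta_1,\kappa}(\Phi_1(\tau,\tau'))\varphi_{\beta_2,\kappa}(\Phi_2(\tau,\tau'))$ for all $(\tau,\tau')$. Indeed, given (a) and (b): if $\sigma\neq\Phi_1(\hat\sigma,\hat\sigma')$ then no $\sigma'$ can satisfy $(\hat\sigma,\hat\sigma')=\Phi(\sigma,\sigma')$ (apply $\Phi$ and use (a)), so both sides vanish; and if $\sigma=\Phi_1(\hat\sigma,\hat\sigma')$ then by (a) the only $\sigma'$ with $\Phi(\sigma,\sigma')=(\hat\sigma,\hat\sigma')$ is $\sigma'=\Phi_2(\hat\sigma,\hat\sigma')$, whose contribution $\varphi_{\beta_1,\kappa}(\sigma)\varphi_{\beta_2,\kappa}(\sigma')$ equals $\varphi_{\beta_1,\kappa}(\hat\sigma)\varphi_{\beta_2,\kappa}(\hat\sigma')$ by (b), matching the left-hand side.

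Statement (a) is the crux and the step I expect to be hardest. Put $F:=E_{E_0,\tau,\tau'}$ and $(\Sigma,\Sigma'):=\Phi(\tau,\tau')$. First I observe that $\support\Sigma\cup\support\Sigma'=\support\tau\cup\support\tau'$, since the swap only redistributes the two forms' values between $F$ and $C_1(B_N)\smallsetminus F$; hence $\mathcal{G}(\Sigma,\Sigma')=\mathcal{G}(\tau,\tau')=:\mathcal{G}$, so the clusters $\mathcal{C}_{\mathcal{G}}(e)$ are unchanged, and since $F$ is a union of $\mathcal{G}$-clusters every cluster lies entirely in $F$ or entirely in $C_1(B_N)\smallsetminus F$. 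On a cluster $\mathcal{C}\subseteq F$ one has $(\Sigma|_{\mathcal{C}},\Sigma'|_{\mathcal{C}})=(\tau|_{\mathcal{C}},\tau'|_{\mathcal{C}})$, and on a cluster $\mathcal{C}\subseteq C_1(B_N)\smallsetminus F$ one has $(\Sigma|_{\mathcal{C}},\Sigma'|_{\mathcal{C}})=(\tau'|_{\mathcal{C}},\tau|_{\mathcal{C}})$. Now I apply Lemma~\ref{lemma: Edsigmalemma}: it characterises $e\in E_{E_0,\sigma,\sigma'}$ by the disjunction of its conditions (i),(ii),(iii), where (i)$\vee$(ii) is just the statement that $d$ of one of the two restrictions of $(\sigma,\sigma')$ to $\mathcal{C}_{\mathcal{G}(\sigma,\sigma')}(e)$ is nonzero — hence symmetric under exchanging $\sigma$ and $\sigma'$ — while (iii) does not involve $\sigma,\sigma'$ at all. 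By the cluster-by-cluster description above this disjunction holds for $(\Sigma,\Sigma')$ and $e$ if and only if it holds for $(\tau,\tau')$ and $e$; therefore $E_{E_0,\Sigma,\Sigma'}=E_{E_0,\tau,\tau'}=F$, and so $\Phi(\Sigma,\Sigma')=(\Sigma|_{F}+\Sigma'|_{C_1(B_N)\smallsetminus F},\Sigma'|_{F}+\Sigma|_{C_1(B_N)\smallsetminus F})=(\tau,\tau')$, which is (a).

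For (b) I will use the factorisation $\varphi_{\beta,\kappa}(\cdot)=\varphi_\kappa(\cdot)\,\varphi_\beta(d\,\cdot)$. The $\kappa$-part is immediate: $\varphi_\kappa(\Sigma)\varphi_\kappa(\Sigma')=\prod_{e}\varphi_\kappa(\Sigma(e))\varphi_\kappa(\Sigma'(e))=\prod_{e}\varphi_\kappa(\tau(e))\varphi_\kappa(\tau'(e))=\varphi_\kappa(\tau)\varphi_\kappa(\tau')$, since $\{\Sigma(e),\Sigma'(e)\}=\{\tau(e),\tau'(e)\}$ for every edge $e$. For the $\beta$-part I invoke Lemma~\ref{lemma: closed is closed new} for $(\tau,\tau')$, and also in its mirror form with the two arguments interchanged — legitimate because $E_{E_0,\cdot,\cdot}$ is symmetric — to get $d(\tau|_F)=d\tau$, $d(\tau'|_F)=d\tau'$, and $d(\tau|_{C_1(B_N)\smallsetminus F})=d(\tau'|_{C_1(B_N)\smallsetminus F})=0$, whence $d\Sigma=d\tau$ and $d\Sigma'=d\tau'$, and thus $\varphi_{\beta_1}(d\Sigma)\varphi_{\beta_2}(d\Sigma')=\varphi_{\beta_1}(d\tau)\varphi_{\beta_2}(d\tau')$. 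Multiplying the two parts gives (b). The only genuinely delicate point is (a), and within it the verification that the swap interacts correctly with the cluster structure underlying $E_{E_0,\cdot,\cdot}$; everything else is bookkeeping with the cited lemmas.
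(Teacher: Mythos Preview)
Your proof is correct. The involution framing is a clean way to package the argument, and both ingredients (a) and (b) are established soundly: for (a) you use the observation $\support\Sigma\cup\support\Sigma'=\support\tau\cup\support\tau'$ to conclude $\mathcal{G}(\Sigma,\Sigma')=\mathcal{G}(\tau,\tau')$ and then the symmetric cluster--level characterisation of Lemma~\ref{lemma: Edsigmalemma}; for (b) you use Lemma~\ref{lemma: closed is closed new} (together with the symmetry $E_{E_0,\tau,\tau'}=E_{E_0,\tau',\tau}$) to get $d\Sigma=d\tau$, $d\Sigma'=d\tau'$ and combine this with the edge--by--edge $\kappa$-symmetry.

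The paper's proof is organised differently. For weight preservation it invokes the factorisation Lemma~\ref{lemma: factorization property} on each of $\hat\sigma,\hat\sigma'$ (after Lemma~\ref{lemma: cluster is subconfig}) and then recombines the four pieces, rather than directly checking that $d$ is unchanged by the swap; and for the invariance $E_{E_0,\hat\sigma,\hat\sigma'}=E_{E_0,\sigma,\sigma'}$ it cites the asymmetric Lemma~\ref{lemma: E in coupling} (which keeps the second coordinate fixed), whereas you prove the full symmetric invariance directly via Lemma~\ref{lemma: Edsigmalemma}. Your route is slightly more conceptual and avoids the extra bookkeeping with $\leq$ and the factorisation lemma; the paper's route has the advantage of reusing Lemma~\ref{lemma: E in coupling}, which it needs anyway in the subsequent coupling lemmas. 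Both ultimately rest on the same two facts: the restrictions to $C_1(B_N)\smallsetminus F$ are closed (Lemma~\ref{lemma: closed is closed new}), and the set $F$ is determined by data that the swap does not change.
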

	
\begin{proof}  
	By Lemma~\ref{lemma: cluster is subconfig}, we have \( \hat\sigma |_{E_{E_0,  \hat\sigma, \hat\sigma'}} \leq  \hat\sigma \) and \(  \hat\sigma' |_{E_{E_0,  \hat\sigma, \hat\sigma'}} \leq  \hat\sigma'\) and hence, by Lemma~\ref{lemma: factorization property},
    \begin{equation}\label{eq: useful eq 1newa}
     \varphi_{\beta_1,\kappa}(\hat \sigma) = \varphi_{\beta_1,\kappa}\bigl( \hat \sigma |_{E_{E_0,  \hat\sigma, \hat\sigma'}} \bigr)
     \varphi_{\beta_1,\kappa}\bigl( \hat \sigma |_{C_1(B_N)\smallsetminus E_{E_0,  \hat\sigma, \hat \sigma'}} \bigr)
     \end{equation}
     and
     \begin{equation}\label{eq: useful eq 1newb}
        \varphi_{\beta_2,\kappa}(\hat \sigma') = \varphi_{\beta_2,\kappa}\bigl( \hat \sigma' |_{E_{E_0,  \hat\sigma, \hat\sigma'}} \bigr)
        \varphi_{\beta_2,\kappa}\bigl(  \hat\sigma' |_{C_1(B_N)\smallsetminus E_{E_0,  \hat\sigma, \hat\sigma'}} \bigr).
    \end{equation}
    Next, by Lemma~\ref{lemma: closed is closed new}, we have
    \begin{equation*}
        d(\hat \sigma |_{C_1(B_N)\smallsetminus E_{E_0,  \hat\sigma, \hat\sigma'}}) = 0,
    \end{equation*}
    and hence 
    \begin{equation*}
        \varphi_{\beta_1,\kappa} \bigl( \hat\sigma |_{C_1(B_N)\smallsetminus E_{E_0,  \hat\sigma, \hat\sigma'}}\bigr)
        =
        \varphi_{\kappa} \bigl( \hat\sigma |_{C_1(B_N)\smallsetminus E_{E_0,  \hat\sigma, \hat\sigma'}}  \bigr)
        =
        \varphi_{\beta_2,\kappa} \bigl( \hat\sigma |_{C_1(B_N)\smallsetminus E_{E_0,  \hat\sigma, \hat\sigma'}}\bigr).
    \end{equation*}
    Since \( \hat \sigma'|_{E_{E_0, \hat\sigma,\hat\sigma'}} \) and \( \hat\sigma|_{C_1(B_N)\smallsetminus E_{E_0, \hat\sigma,\hat\sigma'}} \) have disjoint supports, it also follows that
    \begin{equation*}
        \hat\sigma'|_{E_{E_0, \hat\sigma,\hat\sigma'}} \leq \hat\sigma'|_{E_{E_0, \hat\sigma,\hat\sigma'}} + \hat\sigma|_{C_1(B_N)\smallsetminus E_{E_0, \hat\sigma,\hat\sigma'}}.
    \end{equation*}
    Thus, by Lemma~\ref{lemma: factorization property}, it follows that
    \begin{equation}\label{eq: useful eq 2new}
        \varphi_{\beta_2,\kappa}\bigl(\hat \sigma'|_{E_{E_0, \hat \sigma,\hat\sigma'}}\bigr)
        \varphi_{\beta_1,\kappa}\bigl(\hat\sigma|_{C_1(B_N)\smallsetminus E_{E_0, \hat\sigma,\hat\sigma'}}\bigr)
        =
        \varphi_{\beta_2,\kappa}\bigl(\hat\sigma'|_{E_{E_0, \hat\sigma,\hat\sigma'}} + \hat\sigma|_{C_1(B_N)\smallsetminus E_{E_0, \hat\sigma,\hat\sigma'}}\bigr).
    \end{equation}
    By symmetry, we also have
    \begin{equation}\label{eq: useful eq 3new}
        \varphi_{\beta_1,\kappa}\bigl(\hat\sigma|_{E_{E_0, \hat\sigma,\hat\sigma'}}\bigr)
        \varphi_{\beta_2,\kappa}\bigl(\hat\sigma'|_{C_1(B_N)\smallsetminus E_{E_0, \hat\sigma,\hat\sigma'}}\bigr)
        =
        \varphi_{\beta_1,\kappa}\bigl(\hat\sigma|_{E_{E_0, \hat\sigma,\hat\sigma'}} + \hat\sigma'|_{C_1(B_N)\smallsetminus E_{E_0, \hat\sigma,\hat\sigma'}}\bigr).
    \end{equation}
    Combining~\eqref{eq: useful eq 1newa},~\eqref{eq: useful eq 1newb},~\eqref{eq: useful eq 2new}, and~\eqref{eq: useful eq 3new}, it follows that
    \begin{equation}\label{eq: coupling goal eq ii} 
		\begin{split}
			&
			\varphi_{\beta_1,\kappa}( \hat\sigma) \varphi_{\beta_2,\kappa}( \hat\sigma') \cdot \mathbb{1} \bigl( \hat\sigma|_{E_{E_0,  \hat\sigma, \hat\sigma'}} +  \hat\sigma'|_{C_1(B_N)\smallsetminus E_{E_0,  \hat\sigma, \hat\sigma'}} =  \sigma \bigr) 
			\\&\qquad 
			=
			\varphi_{\beta_1,\kappa}\bigl( \hat\sigma |_{E_{E_0,  \hat\sigma, \hat\sigma'}} + \hat\sigma' |_{C_1(B_N)\smallsetminus E_{E_0,  \hat\sigma, \hat\sigma'}} \bigr)
			\varphi_{\beta_2,\kappa}\bigl(  \hat\sigma' |_{E_{E_0,  \hat\sigma, \hat\sigma'}} + \hat\sigma |_{C_1(B_N)\smallsetminus E_{E_0,  \hat\sigma, \hat\sigma'}} \bigr)
			\\&\qquad\qquad \cdot 
			\mathbb{1} \bigl(  \hat\sigma|_{E_{E_0,  \hat\sigma, \hat\sigma'}} +  \hat\sigma'|_{C_1(B_N)\smallsetminus E_{E_0,  \hat\sigma, \hat\sigma'}} =  \sigma \bigr) 
			\\&\qquad 
			=
			\varphi_{\beta_1,\kappa}(\sigma)
			\varphi_{\beta_2,\kappa}\bigl(  \hat\sigma' |_{E_{E_0, \hat \sigma, \hat\sigma'}} + \hat\sigma |_{C_1(B_N)\smallsetminus E_{E_0,  \hat\sigma, \hat\sigma'}} \bigr)
			\cdot 
			\mathbb{1} \bigl( \hat\sigma|_{E_{E_0,  \hat\sigma, \hat\sigma'}} + \hat\sigma'|_{C_1(B_N)\smallsetminus E_{E_0,  \hat\sigma, \hat\sigma'}} =  \sigma \bigr) 
			\\&\qquad 
			=
			\varphi_{\beta_1,\kappa}(\sigma)\sum_{\sigma' \in \Omega^1(B_N,G)}\varphi_{\beta_2,\kappa}(\sigma') 
			\cdot \mathbb{1} \bigl(  \hat\sigma' |_{E_{E_0,  \hat\sigma, \hat\sigma'}} + \hat\sigma |_{C_1(B_N)\smallsetminus E_{E_0,  \hat\sigma, \hat\sigma'}} = \sigma'\bigr)
			\\&\qquad\qquad\cdot 
			\mathbb{1} \bigl( \hat\sigma|_{E_{E_0,  \hat\sigma, \hat\sigma'}} + \hat\sigma'|_{C_1(B_N)\smallsetminus E_{E_0,  \hat\sigma, \hat\sigma'}} =  \sigma \bigr).
			\end{split}	
	\end{equation} 

    Now fix \( \sigma' \in \Omega^1(B_N,G)\) and assume that 
    \begin{align*}
    	 \begin{cases}
    	 	\sigma = \hat\sigma|_{E_{E_0,  \hat\sigma, \hat\sigma'}} + \hat\sigma'|_{C_1(B_N)\smallsetminus E_{E_0,  \hat\sigma, \hat\sigma'}} \cr
    	 	\sigma'  = \hat\sigma' |_{E_{E_0,  \hat\sigma, \hat\sigma'}} + \hat\sigma |_{C_1(B_N)\smallsetminus E_{E_0,  \hat\sigma, \hat\sigma'}}  
		 .
    	 \end{cases}
    \end{align*}
    By Lemma~\ref{lemma: E in coupling}, we have \( E_{E_0,  \hat\sigma, \hat\sigma'} = E_{E_0, \sigma,\sigma'}. \) 
    Since 
    \( \sigma|_{E_{E_0,  \hat\sigma, \hat\sigma'}}  = \hat\sigma|_{E_{E_0,  \hat\sigma, \hat\sigma'}} \) and \( \sigma' |_{C_1(B_N)\smallsetminus E_{E_0,  \hat\sigma, \hat\sigma'}} = \hat\sigma |_{C_1(B_N)\smallsetminus E_{E_0,  \hat\sigma, \hat\sigma'}},  \) it follows that
    \begin{equation*}
    	\hat\sigma = \sigma|_{E_{E_0,  \hat\sigma, \hat\sigma'}}  +  \sigma'|_{C_1(B_N)\smallsetminus E_{E_0,  \hat\sigma, \hat\sigma'}} =  \sigma|_{E_{E_0,  \sigma, \sigma'}}  +  \sigma'|_{C_1(B_N)\smallsetminus E_{E_0,  \sigma, \sigma'}}.
    \end{equation*} 
    Analogously, since \( \sigma'|_{E_{E_0,  \hat\sigma, \hat\sigma'}}   = \hat\sigma' |_{E_{E_0,  \hat\sigma, \hat\sigma'}} \) and \( \sigma|_{C_1(B_N)\smallsetminus E_{E_0,  \hat\sigma, \hat\sigma'}} =\hat\sigma'|_{C_1(B_N)\smallsetminus E_{E_0,  \hat\sigma, \hat\sigma'}}, \) it follows that
    \begin{equation*}
    	\hat\sigma' = \sigma'|_{E_{E_0,  \hat\sigma, \hat\sigma'}}  +  \sigma|_{C_1(B_N)\smallsetminus E_{E_0,  \hat\sigma, \hat\sigma'}} = \sigma'|_{E_{E_0,  \sigma, \sigma'}}  +  \sigma|_{C_1(B_N)\smallsetminus E_{E_0,  \sigma, \sigma'}}.
    \end{equation*}
    This shows that for any \( \sigma' \in \Omega^1(B_N,G),\) we have
	\begin{equation}\label{eq: doublesum11}
		\begin{split}
			&
			\mathbb{1} \bigl(  \hat\sigma' |_{E_{E_0,  \hat\sigma, \hat\sigma'}} + \hat\sigma |_{C_1(B_N)\smallsetminus E_{E_0,  \hat\sigma, \hat\sigma'}} = \sigma'\bigr)
			\cdot 
			\mathbb{1} \bigl( \hat\sigma|_{E_{E_0,  \hat\sigma, \hat\sigma'}} + \hat\sigma'|_{C_1(B_N)\smallsetminus E_{E_0,  \hat\sigma, \hat\sigma'}} =  \sigma \bigr)
			\\&\qquad=
			\mathbb{1}\bigl(  \hat\sigma =  \sigma|_{E_{E_0,  \sigma, \sigma'}}+ \sigma'|_{C_1(B_N)\smallsetminus E_{E_0,  \sigma, \sigma'}} \bigr)
			\cdot 
			\mathbb{1}\bigl(   \hat\sigma' = \sigma' |_{E_{E_0,  \sigma, \sigma'}} +  \sigma |_{C_1(B_N)\smallsetminus E_{E_0,  \sigma, \sigma'}}  \bigr).
		\end{split}
	\end{equation}

	Combining~\eqref{eq: coupling goal eq ii}  and~\eqref{eq: doublesum11}, we obtain~\eqref{eq: goal of technical lemma} as desired.
\end{proof}

\subsection{A coupling between two \texorpdfstring{\( \mathbb{Z}_n \)}{Zn}-models }\label{sec: Z-Z coupling}

In this section, we define a coupling between two copies of \( \mu_{N,\infty,\kappa} \), constructed to always agree on a given set \( E_0 \subseteq C_1(B_N) \)

\begin{definition}[A coupling of two \texorpdfstring{\( \mathbb{Z}_n \)}{Zn}-models]\label{def: the ZZ coupling}
    For \( \kappa \geq 0, \)
    \( \sigma,\sigma' \in \Omega^1_0(B_N,G), \) \( E_0 \subseteq C_1(B_N), \) and \( E_{E_0, \sigma,\sigma'} = \mathcal{C}_{\mathcal{G}_{\sigma,\sigma'}}(E_0),\) we define
    \begin{align*}
        \mu^{E_0}_{N, (\infty,\kappa),(\infty,\kappa)}(\sigma, \sigma') 
        &\coloneqq \mu_{N, \infty,\kappa} \times \mu_{N,\infty,\kappa}
        \pigl(\bigl\{
        (\hat{\sigma}, \hat{\sigma}')\in \Omega^1_0(B_N,G) \times \Omega^1_0(B_N,G) \colon 
        \\[-0.5ex]&\hspace{11em}  \sigma=\hat{\sigma}|_{E_{E_0, \hat \sigma,\hat \sigma'}} + \hat \sigma'|_{C_1(B_N) \smallsetminus E_{E_0, \hat \sigma,\hat \sigma'}} \text{ and } \sigma'=\hat \sigma'
        \bigr\} \pigr).
    \end{align*}
    We let \( \mathbb{E}^{E_0}_{N,(\infty,\kappa),(\infty,\kappa)} \) denote the corresponding expectation. 
\end{definition}

\begin{figure}[htp]
    \centering
    \begin{subfigure}[t]{0.32\textwidth}\centering
        \includegraphics[width=\textwidth]{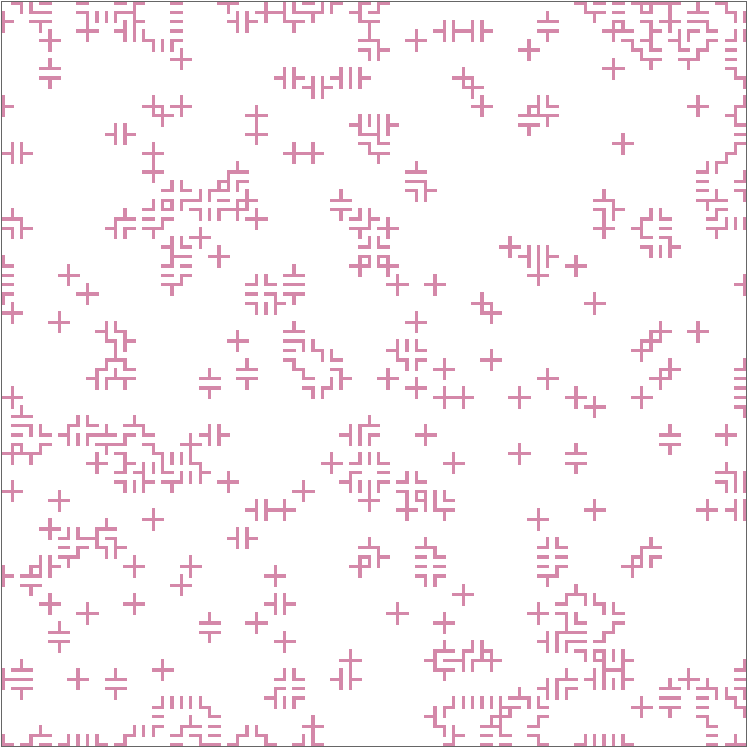}
        \caption{Red edges correspond to the support of \( \hat \sigma \in \Omega^1_0(B_N,\mathbb{Z}_2)\).}
    \end{subfigure}
    \hfil
    \begin{subfigure}[t]{0.32\textwidth}\centering
        \includegraphics[width=\textwidth]{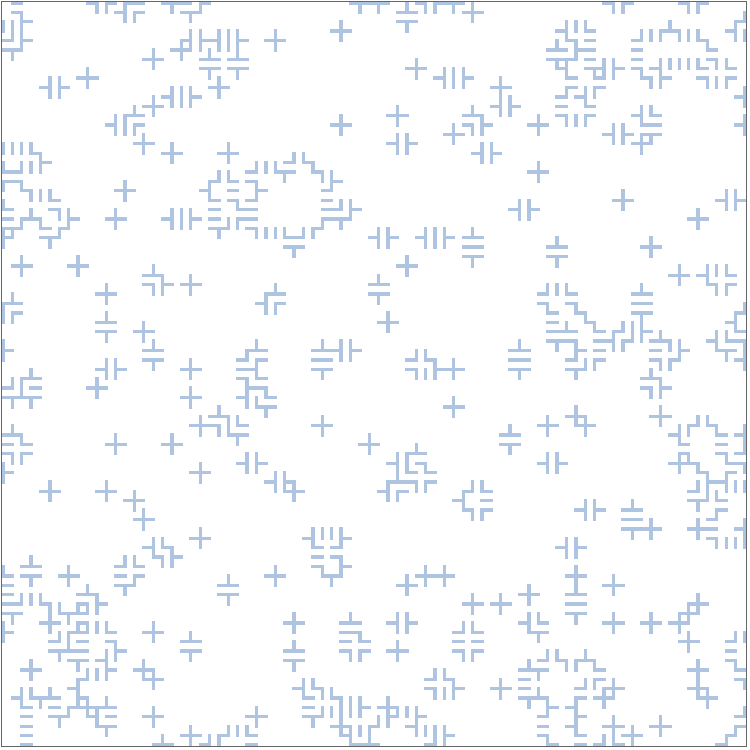}
        \caption{Blue edges correspond to the support of \( \hat \sigma'  \in \Omega^1_0(B_N,\mathbb{Z}_2)\).}
    \end{subfigure}
    \hfil
    \begin{subfigure}[t]{0.32\textwidth}\centering
        \includegraphics[width=\textwidth]{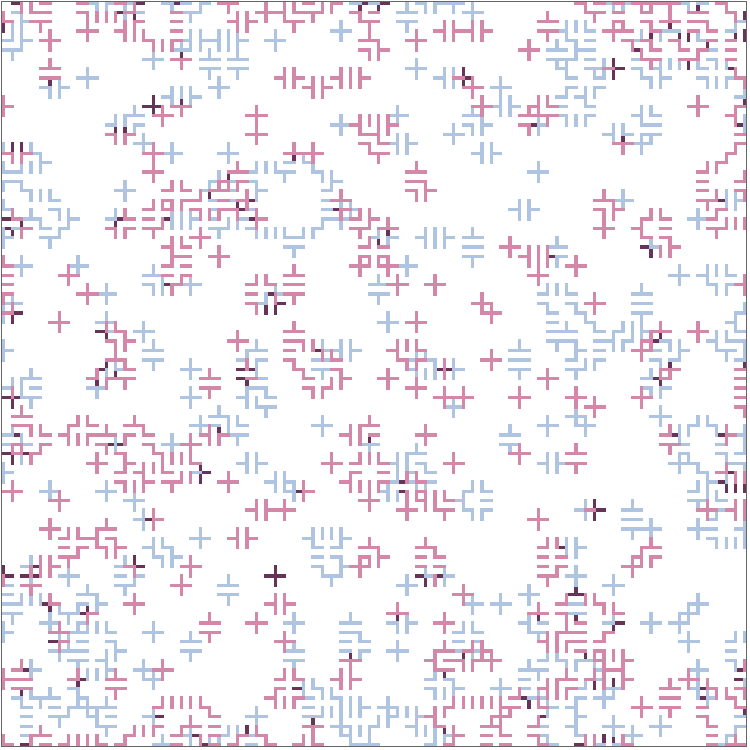}
        \caption{Red edges correspond to the support of \( \hat \sigma \), and blue edges correspond to the support of \( \hat \sigma'\).}
    \end{subfigure}

    \vspace{1ex}
    \begin{subfigure}[t]{0.32\textwidth}\centering
        \includegraphics[width=\textwidth]{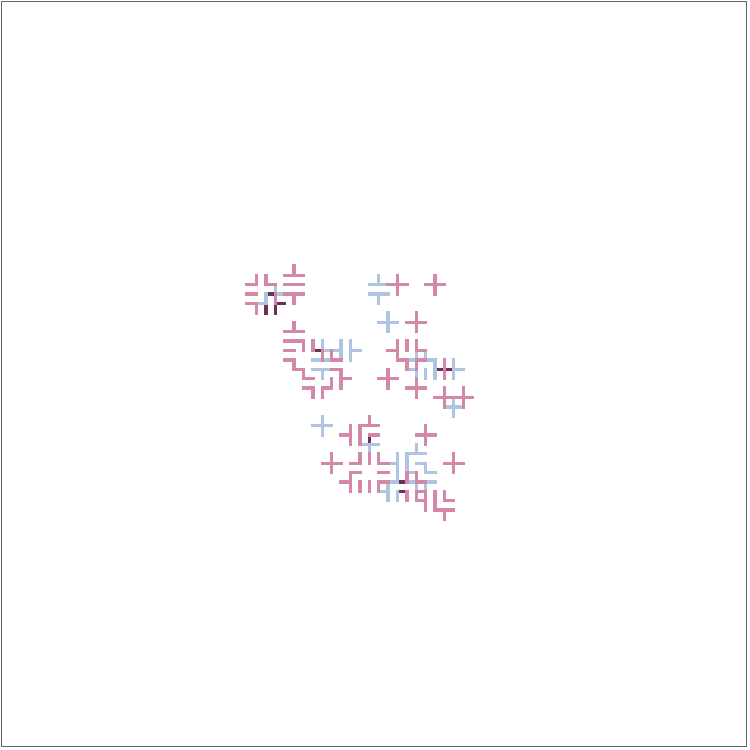}
        \caption{Red edges correspond to the support of  \(  \hat \sigma|_{E_{E_0,\hat \sigma,\hat \sigma'}}  \), and blue edges correspond to the support of \(  \hat \sigma'|_{E_{E_0,\hat \sigma,\hat \sigma'}}. \)}\label{subfig: the ZZ coupling d}
    \end{subfigure}
    \hfil
    \begin{subfigure}[t]{0.32\textwidth}\centering
        \includegraphics[width=\textwidth]{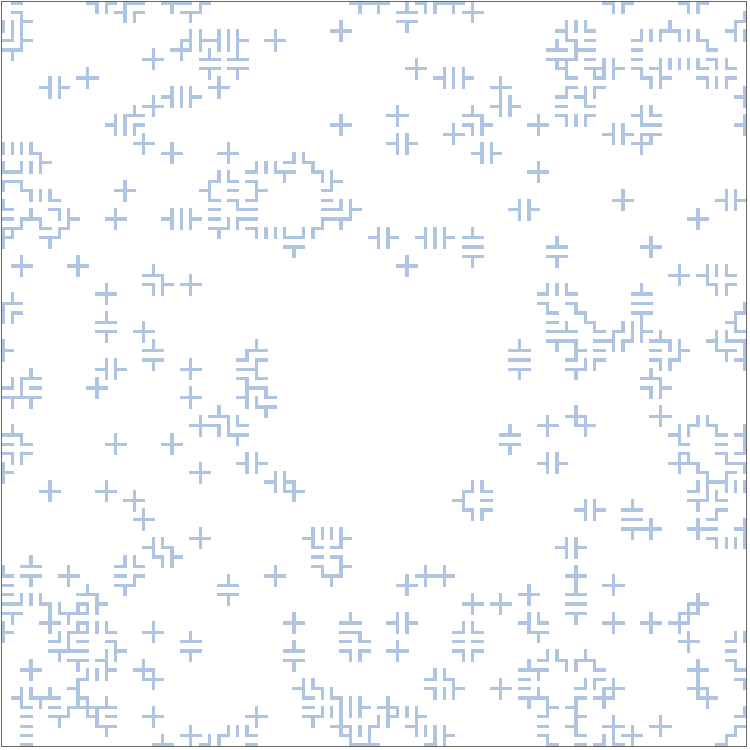}
        \caption{Blue edges correspond to the support of \( \hat \sigma'|_{C_1(B_N)\smallsetminus E_{\hat \sigma, \hat \sigma'}}\).}
    \end{subfigure}
    \hfil
    \begin{subfigure}[t]{0.32\textwidth}\centering
        \includegraphics[width=\textwidth]{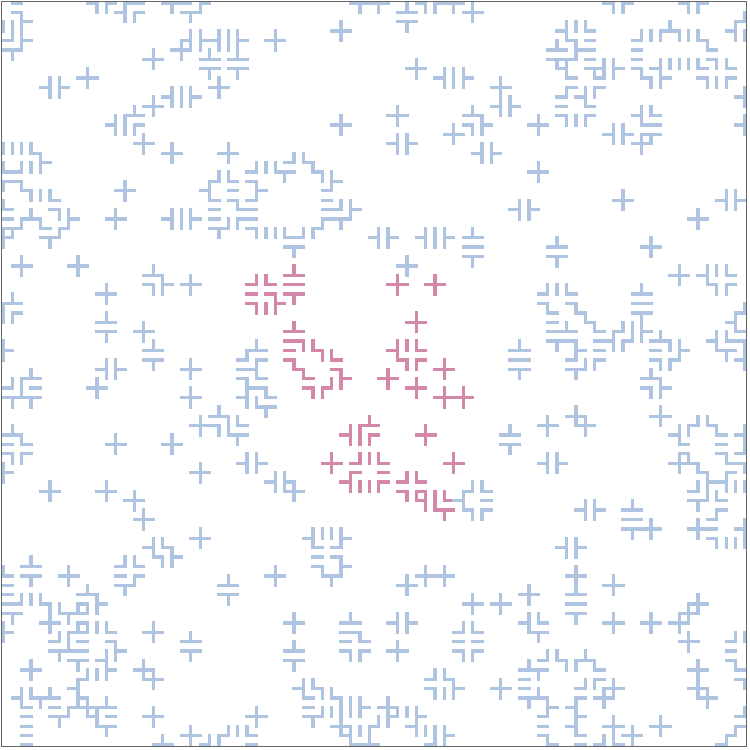}
        \caption{Red edges correspond to the support of \( \hat \sigma|_{E_{E_0,\hat \sigma,\hat \sigma'}} \), and purple edges correspond to the support of  \(  \hat \sigma'|_{E_N\smallsetminus E_{\hat \sigma,\hat \sigma'}}  \).}\label{subfig: the ZZ coupling e}
    \end{subfigure}
    \caption{Illustration of the coupling \( (\sigma, \sigma')  \sim \mu^{E_0}_{N,(\infty,\beta),(\infty,\beta)}\) defined in Definition~\ref{def: the coupling}, simulated on a 2-dimensional lattice, with \( G = \mathbb{Z}_2 \), and with \( E_0 = C_1(B_{N/4}).\)}
    \label{fig: the ZZ coupling}
\end{figure}

\begin{remark}
    When \(  \sigma,\sigma' \in \Omega^1_0(B_N,G) \), then \( d\sigma = d\sigma' = 0, \) and hence the definition of \( E_{E_0,\sigma,\sigma'} \) in Definition~\ref{def: the ZZ coupling} in consistent with~\eqref{eq: EE0sigmasigmadef}.
\end{remark}

\begin{remark}
    By definition, if \(  \hat \sigma, \hat  \sigma' \sim \mu_{N,\infty,\kappa} \) are independent, and we let \(\sigma \coloneqq  \hat  \sigma|_{E_{E_0,\hat \sigma, \hat \sigma'}}
        +
        \hat \sigma'|_{C_1(B_N) \smallsetminus E_{E_0, \hat \sigma, \hat \sigma'}} \)
    and \( \sigma' \coloneqq \hat \sigma' \), then \( (\sigma,\sigma') \sim \mu^{E_0}_{N,(\infty,\kappa),(\infty,\kappa)} \).
\end{remark}

The next result shows that the measure introduced in Definition~\ref{def: the ZZ coupling} is indeed a coupling.
\begin{proposition}\label{proposition: Ising coupling is a coupling}
    Let \( \kappa \geq 0 \), and let \( E_0 \subseteq C_1(B_N) \). Then \( \mu^{E_0}_{N,(\infty,\kappa),{(\infty,\kappa})} \) is a coupling of \( \mu_{N,\infty,\kappa} \) and \( \mu_{N,\infty,\kappa} \). 
\end{proposition}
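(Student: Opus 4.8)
The plan is to verify directly that both marginals of \( \mu^{E_0}_{N,(\infty,\kappa),(\infty,\kappa)} \) equal \( \mu_{N,\infty,\kappa} \). Throughout, write \( Z \coloneqq \sum_{\sigma \in \Omega^1(B_N,G)} \varphi_{\infty,\kappa}(\sigma) \), so that by~\eqref{eq: mubetakappaphi} we have \( \mu_{N,\infty,\kappa}(\sigma) = Z^{-1}\varphi_{\infty,\kappa}(\sigma) \); since \( \varphi_{\infty,\kappa}(\sigma) = \varphi_\kappa(\sigma)\varphi_\infty(d\sigma) \) vanishes unless \( d\sigma = 0 \), both the measure and the sum \( Z \) are effectively supported on \( \Omega^1_0(B_N,G) \). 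By Definition~\ref{def: the ZZ coupling}, \( \mu^{E_0}_{N,(\infty,\kappa),(\infty,\kappa)} \) is the pushforward of \( \mu_{N,\infty,\kappa}\times\mu_{N,\infty,\kappa} \) under the map \( (\hat\sigma,\hat\sigma')\mapsto\bigl(\hat\sigma|_{E_{E_0,\hat\sigma,\hat\sigma'}} + \hat\sigma'|_{C_1(B_N)\smallsetminus E_{E_0,\hat\sigma,\hat\sigma'}},\,\hat\sigma'\bigr) \). One should first note that this map does send \( \Omega^1_0(B_N,G)^2 \) into itself: by Lemma~\ref{lemma: closed is closed new}, for \( \hat\sigma,\hat\sigma'\in\Omega^1_0(B_N,G) \) we have \( d(\hat\sigma|_{E_{E_0,\hat\sigma,\hat\sigma'}}) = d\hat\sigma = 0 \) and \( d(\hat\sigma'|_{C_1(B_N)\smallsetminus E_{E_0,\hat\sigma,\hat\sigma'}}) = 0 \), and these two forms have disjoint supports, so their sum is again closed; this also makes the simplified description \( E_{E_0,\sigma,\sigma'} = \mathcal{C}_{\mathcal{G}(\sigma,\sigma')}(E_0) \) in Definition~\ref{def: the ZZ coupling} consistent with~\eqref{eq: EE0sigmasigmadef}.

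The second marginal is immediate. Since the map above fixes the second coordinate and, for each \( (\hat\sigma,\hat\sigma') \), determines the first coordinate uniquely, summing \( \mu^{E_0}_{N,(\infty,\kappa),(\infty,\kappa)}(\sigma,\sigma') \) over \( \sigma \) simply returns the \( \hat\sigma' \)-marginal of \( \mu_{N,\infty,\kappa}\times\mu_{N,\infty,\kappa} \), which is \( \mu_{N,\infty,\kappa}(\sigma') \).

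The first marginal is the substance, and it is exactly what Lemma~\ref{lemma: technical coupling lemma} is built to deliver. I would apply that lemma with \( \beta_1 = \beta_2 = \infty \) and sum both sides of~\eqref{eq: goal of technical lemma} over all \( \hat\sigma,\hat\sigma'\in\Omega^1(B_N,G) \). On the left this produces \( \sum_{\hat\sigma,\hat\sigma'}\varphi_{\infty,\kappa}(\hat\sigma)\varphi_{\infty,\kappa}(\hat\sigma')\,\mathbb{1}\bigl(\hat\sigma|_{E_{E_0,\hat\sigma,\hat\sigma'}}+\hat\sigma'|_{C_1(B_N)\smallsetminus E_{E_0,\hat\sigma,\hat\sigma'}}=\sigma\bigr) \), which is precisely \( Z^{2} \) times \( \sum_{\sigma'}\mu^{E_0}_{N,(\infty,\kappa),(\infty,\kappa)}(\sigma,\sigma') \). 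On the right, for each fixed \( \sigma' \) the two indicators pin down \( \hat\sigma \) and \( \hat\sigma' \) uniquely as functions of \( \sigma,\sigma' \), and (via Lemma~\ref{lemma: E in coupling}) that pair indeed satisfies \( E_{E_0,\hat\sigma,\hat\sigma'}=E_{E_0,\sigma,\sigma'} \), so the relations are consistent and summation over \( \hat\sigma,\hat\sigma' \) merely deletes the indicators, leaving \( \varphi_{\infty,\kappa}(\sigma)\sum_{\sigma'}\varphi_{\infty,\kappa}(\sigma') = Z\,\varphi_{\infty,\kappa}(\sigma) \). Dividing through by \( Z^2 \) gives \( \sum_{\sigma'}\mu^{E_0}_{N,(\infty,\kappa),(\infty,\kappa)}(\sigma,\sigma') = Z^{-1}\varphi_{\infty,\kappa}(\sigma) = \mu_{N,\infty,\kappa}(\sigma) \), as required.

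I do not expect a serious obstacle here: all the real content—the factorization of the activity \( \varphi_{\beta,\kappa} \) across the split at \( E_{E_0,\cdot,\cdot} \), and the stability \( E_{E_0,\hat\sigma,\hat\sigma'}=E_{E_0,\sigma,\sigma'} \) under the coupling map—is already packaged in Lemmas~\ref{lemma: technical coupling lemma},~\ref{lemma: E in coupling}, and~\ref{lemma: closed is closed new}. The only points requiring a little care are keeping the normalizing constant \( Z \) tracked correctly on both sides of the identity, and restricting attention to closed forms so that the measure's support, the coupling map, and Definition~\ref{def: the ZZ coupling}'s description of \( E_{E_0,\sigma,\sigma'} \) all line up.
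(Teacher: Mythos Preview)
Your proposal is correct and follows essentially the same approach as the paper: both argue that the second marginal is immediate from the definition, then handle the first marginal by applying Lemma~\ref{lemma: technical coupling lemma} with \(\beta_1=\beta_2=\infty\) and summing over \((\hat\sigma,\hat\sigma')\), with Lemma~\ref{lemma: closed is closed new} ensuring that the relevant forms stay closed so the sums collapse correctly. The only cosmetic difference is that the paper works directly with \(\varphi_\kappa\) and sums over \(\Omega^1_0(B_N,G)\), while you sum over all of \(\Omega^1(B_N,G)\) and let \(\varphi_{\infty,\kappa}\) kill the non-closed contributions; your explicit appeal to Lemma~\ref{lemma: E in coupling} on the right-hand side is slightly redundant, since that consistency is already built into the proof of Lemma~\ref{lemma: technical coupling lemma}.
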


\begin{proof} 
    It is immediate from the definition that if \( (\sigma,\sigma') \sim \mu^{E_0}_{N,(\infty,\kappa),(\infty,\kappa)}\), then \( \sigma' \sim\mu_{N,\infty,\kappa} \), and it is hence sufficient to show that \( \sigma \sim \mu_{N,\infty,\kappa} \).
	To this end, fix some \(  \sigma \in \Omega^1_0(B_N) \).
	We need to show that
	\begin{equation*}
	    \mu_{N,\infty,\kappa}\times \mu_{N,\infty,\kappa} \bigl( \bigl\{ (\hat \sigma, \hat \sigma') \in \Omega^1_0(B_N) \times \Omega^1_0(B_N) \colon \hat \sigma|_{E_{E_0,\hat \sigma, \hat\sigma'}} + \hat \sigma'|_{C_1(B_N)\smallsetminus E_{E_0, \hat \sigma,\hat  \sigma'}} = \sigma \bigr\} \bigr)
		=
		\mu_{N,\infty,\kappa}( \sigma),
	\end{equation*}
	or equivalently, that
	\begin{equation}\label{eq: Z-Z coupling goal eq}
		\sum_{\substack{\hat \sigma \in \Omega^1_0(B_N,G),\\ \hat \sigma' \in \Omega^1_0(B_N,G)}}
		\varphi_{\kappa}(\hat \sigma) \varphi_{\kappa}(\hat \sigma') \cdot \mathbb{1} \bigl( \hat \sigma|_{E_{E_0,\hat \sigma,\hat \sigma'}} + \hat \sigma'|_{C_1(B_N)\smallsetminus E_{E_0,\hat \sigma,\hat \sigma'}} =  \sigma \bigr)
		=
		\varphi_\kappa( \sigma) \sum_{ \sigma' \in \Omega^1_0(B_N,G)} \varphi_{\kappa} ( \sigma').
	\end{equation}
	We now rewrite the left-hand side of~\eqref{eq: Z-Z coupling goal eq} in order to see that this equality indeed holds.
	To this end, note first that by Lemma~\ref{lemma: technical coupling lemma}, applied with \( \beta_1=\beta_2=\infty \), we have
    \begin{equation}\label{eq: Z-Z coupling goal eq 2} 
		\begin{split}
			&\sum_{\substack{\hat \sigma \in \Omega^1_0(B_N,G),\\ \hat \sigma' \in \Omega^1_0(B_N,G)}}
			\varphi_{\kappa}(\hat \sigma) \varphi_{\kappa}(\hat \sigma')\mathbb{1} \bigl( \hat  \sigma|_{E_{E_0, \hat \sigma, \hat \sigma'}} +  \hat \sigma'|_{C_1(B_N)\smallsetminus E_{E_0, \hat \sigma, \hat \sigma'}} =  \sigma \bigr) 
			\\&\qquad =
			\varphi_{\kappa}( \sigma )
			\sum_{\sigma' \in \Omega^1_0(B_N,G)}
			\varphi_{\kappa}(\sigma')
			\cdot 
			\sum_{\substack{\hat \sigma \in \Omega^1_0(B_N,G),\\ \hat \sigma' \in \Omega^1_0(B_N,G)}}
			\mathbb{1}\bigl( \hat  \sigma =  \sigma'|_{E_{E_0, \sigma, \sigma'}}+ \sigma|_{C_1(B_N)\smallsetminus E_{E_0, \sigma, \sigma'}} \bigr)
			\\[-3.5ex]&\hspace{22em}\cdot 
			\mathbb{1}\bigl(  \hat  \sigma' = \sigma |_{E_{E_0, \sigma, \sigma'}} +  \sigma' |_{C_1(B_N)\smallsetminus E_{E_0, \sigma, \sigma'}}  \bigr).
		\end{split}
	\end{equation}
	Since \(  \sigma, \sigma' \in \Omega^1_0(B_N) \), we can apply Lemma~\ref{lemma: closed is closed new} to see that \( \sigma |_{E_{E_0, \sigma, \sigma'}} +  \sigma' |_{C_1(B_N)\smallsetminus E_{E_0, \sigma, \sigma'}} \in \Omega^1_0(B_N,G)\) and  \(  \sigma' |_{E_{E_0, \sigma, \sigma'}} +  \sigma|_{C_1(B_N)\smallsetminus E_{E_0, \sigma, \sigma'}}  \in \Omega^1_0(B_N,G).\)
	From this it follows that the double sum on the right-hand side of~\eqref{eq: Z-Z coupling goal eq 2} is equal to \( 1 \), and hence we obtain~\eqref{eq: Z-Z coupling goal eq} as desired. This completes the proof.
\end{proof}

\begin{lemma}\label{lemma: properties of Z-Z coupling measure} 
    Let \( \beta,\kappa \geq 0,\) let \( E_0 \subseteq C_1(B_N),\) and let \( (\sigma,\sigma')\in \Omega^1_0(B_N,G) \times \Omega^1_0(B_N,G) \) be such that  \( \mu^{E_0}_{N,(\infty,\kappa),(\infty,\kappa)}(\sigma , \sigma') \neq 0\). Then \( \sigma(e) = \sigma'(e) \) for all \( e \in C_1(B_N) \smallsetminus E_{E_0,\sigma,\sigma'} \).   
\end{lemma}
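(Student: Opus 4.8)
The plan is to unwind the definition of the coupling measure and reduce the claim to Lemma~\ref{lemma: E in coupling}, which already packages all the combinatorial content. The first step is to record that $\varphi_\kappa(g) = e^{\kappa(\Re \rho(g) - \rho(0))}$ is strictly positive for every $g \in G$, so that $\varphi_\kappa(\hat\sigma)\varphi_\kappa(\hat\sigma') > 0$ for all $\hat\sigma,\hat\sigma' \in \Omega^1_0(B_N,G)$. Hence the hypothesis $\mu^{E_0}_{N,(\infty,\kappa),(\infty,\kappa)}(\sigma,\sigma') \neq 0$ forces the event defining this measure to be non-empty; that is, there exists at least one pair $(\hat\sigma,\hat\sigma') \in \Omega^1_0(B_N,G) \times \Omega^1_0(B_N,G)$ with
\begin{equation*}
    \sigma = \hat\sigma|_{E_{E_0,\hat\sigma,\hat\sigma'}} + \hat\sigma'|_{C_1(B_N)\smallsetminus E_{E_0,\hat\sigma,\hat\sigma'}}
    \qquad\text{and}\qquad
    \sigma' = \hat\sigma'.
\end{equation*}

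Having fixed such a witness $(\hat\sigma,\hat\sigma')$, the second step is to invoke Lemma~\ref{lemma: E in coupling}. That lemma applies verbatim (it is stated for arbitrary $1$-forms, hence a fortiori for closed ones), and since $\sigma, \sigma' \in \Omega^1_0(B_N,G)$ the symbol $E_{E_0,\sigma,\sigma'}$ appearing in the present statement is the one from~\eqref{eq: EE0sigmasigmadef}, which by the remark following Definition~\ref{def: the ZZ coupling} agrees with $\mathcal{C}_{\mathcal{G}(\sigma,\sigma')}(E_0)$; so there is no clash of notation. Lemma~\ref{lemma: E in coupling} then yields $E_{E_0,\sigma,\sigma'} = E_{E_0,\hat\sigma,\hat\sigma'}$. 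For any edge $e \in C_1(B_N)\smallsetminus E_{E_0,\sigma,\sigma'} = C_1(B_N) \smallsetminus E_{E_0,\hat\sigma,\hat\sigma'}$, evaluating the first displayed identity above at $e$ gives $\sigma(e) = \hat\sigma'(e)$, and combining this with $\sigma' = \hat\sigma'$ gives $\sigma(e) = \sigma'(e)$, which is the assertion.

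I do not anticipate a genuine obstacle here: the work of showing that altering $\hat\sigma$ only on $E_{E_0,\hat\sigma,\hat\sigma'}$ does not disturb the cluster structure, and hence leaves the set $E$ invariant, has already been carried out inside Lemma~\ref{lemma: E in coupling} --- indeed its \textit{in particular} clause is essentially the pointwise statement we want. The present lemma is thus a transcription of that clause into the language of the measure $\mu^{E_0}_{N,(\infty,\kappa),(\infty,\kappa)}$, and the only care needed is the two bookkeeping points above: positivity of $\varphi_\kappa$ (to produce a witness pair from the nonvanishing of the measure) and the consistency of the two definitions of $E_{E_0,\cdot,\cdot}$ on closed forms.
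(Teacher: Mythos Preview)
Your proposal is correct and follows essentially the same approach as the paper: produce a witness pair $(\hat\sigma,\hat\sigma')$ from the nonvanishing of the measure, then invoke Lemma~\ref{lemma: E in coupling} to identify $E_{E_0,\sigma,\sigma'}$ with $E_{E_0,\hat\sigma,\hat\sigma'}$ and read off the pointwise equality. The paper's version is simply terser, omitting the positivity-of-$\varphi_\kappa$ and notation-consistency remarks you spelled out.
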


\begin{proof}
    Since \(\mu^{E_0}_{N,(\infty,\kappa),(\infty,\kappa)}(\sigma , \sigma') \neq 0\), by definition, there is \( (\hat \sigma, \hat \sigma') \in \Omega^1_0(B_N,G) \times \Omega^1_0(B_N,G) \) such that \( \sigma = \hat \sigma|_{E_{E_0,\hat \sigma,\hat \sigma'}} + \hat\sigma'|_{C_1(B_N) \smallsetminus E_{E_0,\hat{\sigma}, \hat{\sigma}'}} \) and \( \sigma' = \hat \sigma'\). Using Lemma~\ref{lemma: E in coupling}, we immediately obtain the desired conclusion.
\end{proof}

One application of the coupling introduced in Definition~\ref{def: the ZZ coupling}, which will be particularly useful to us, is the following proposition.
\begin{proposition}\label{proposition: coupling covariance}
    Let \( \kappa \geq 0 \), and let \( E_0,E_1 \subseteq C^1(B_N,\mathbb{Z}) \) have disjoint supports. Further, let \( f_0,f_1 \colon \Omega^1(B_N,G) \to \mathbb{R}\) be such that \( f_0(\sigma) = f(\sigma|_{E_0}) \) and \( f_1(\sigma) = f_1(\sigma|_{E_1}) \) for all \( \sigma \in \Omega^1(B_N). \) 
    Then
    \begin{align}
        \label{eq: coupling covariance} 
        &\pigl| 
        \mathbb{E}_{N,\infty,\kappa} \bigl[ f_0(\sigma)f_1(\sigma) \bigr]
        - 
        \mathbb{E}_{N,\infty,\kappa} \bigl[ f_0(\sigma) \bigr]
        \mathbb{E}_{N,\infty,\kappa} \bigl[ f_1(\sigma)\bigr]
        \pigr|
        \\\nonumber&\qquad \leq 
        2\| f_0 \|_\infty \| f_1 \|_\infty \sum_{e \in  E_1} \mu^{ E_0}_{N,(\infty,\kappa),(\infty,\kappa)} \bigl( \bigr\{ (\hat\sigma,\hat\sigma') \in \Omega^1_0(B_N,G) \times \Omega^1_0(B_N,G) \colon e \in  E_{E_0,\hat \sigma,\hat \sigma'} \bigr\} \bigr).
    \end{align}
\end{proposition}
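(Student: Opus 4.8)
The plan is to run a disagreement‑percolation telescoping argument through the coupling $\mu^{E_0}_{N,(\infty,\kappa),(\infty,\kappa)}$, which I abbreviate to $\mu^{E_0}$, writing $\mathbb{E}^{E_0}$ for the corresponding expectation. Realize the coupling on $\Omega^1_0(B_N,G)\times\Omega^1_0(B_N,G)$ by taking $\hat\sigma,\hat\sigma'$ independent, each with law $\mu_{N,\infty,\kappa}$, and setting
\begin{equation*}
  \sigma \coloneqq \hat\sigma|_{E_{E_0,\hat\sigma,\hat\sigma'}} + \hat\sigma'|_{C_1(B_N)\smallsetminus E_{E_0,\hat\sigma,\hat\sigma'}}, \qquad \sigma' \coloneqq \hat\sigma',
\end{equation*}
so that $(\sigma,\sigma')\sim\mu^{E_0}$. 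By Proposition~\ref{proposition: Ising coupling is a coupling} each of $\sigma$ and $\sigma'$ is marginally distributed according to $\mu_{N,\infty,\kappa}$; in particular $\mathbb{E}^{E_0}[f_0(\sigma)f_1(\sigma)] = \mathbb{E}_{N,\infty,\kappa}[f_0(\sigma)f_1(\sigma)]$.

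The crucial observation is that $f_0(\sigma)$ is a function of $\hat\sigma$ alone and $f_1(\sigma')$ a function of $\hat\sigma'$ alone. Indeed, $E_0\subseteq E_{E_0,\hat\sigma,\hat\sigma'}$ by the definition of the latter, so $\sigma$ and $\hat\sigma$ agree on $E_0$, i.e.\ $\sigma|_{E_0}=\hat\sigma|_{E_0}$; applying the hypothesis $f_0=f_0(\,\cdot\,|_{E_0})$ to both $\sigma$ and $\hat\sigma$ then gives $f_0(\sigma)=f_0(\sigma|_{E_0})=f_0(\hat\sigma|_{E_0})=f_0(\hat\sigma)$, while $f_1(\sigma')=f_1(\hat\sigma')$ is immediate. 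Since $\hat\sigma$ and $\hat\sigma'$ are independent, each with law $\mu_{N,\infty,\kappa}$, it follows that
\begin{equation*}
  \mathbb{E}^{E_0}\bigl[f_0(\sigma)f_1(\sigma')\bigr]=\mathbb{E}_{N,\infty,\kappa}\bigl[f_0(\sigma)\bigr]\,\mathbb{E}_{N,\infty,\kappa}\bigl[f_1(\sigma)\bigr].
\end{equation*}
Subtracting this from the previous display, the left‑hand side of~\eqref{eq: coupling covariance} equals $\bigl|\mathbb{E}^{E_0}\bigl[f_0(\sigma)\bigl(f_1(\sigma)-f_1(\sigma')\bigr)\bigr]\bigr|$.

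It then remains to bound $f_1(\sigma)-f_1(\sigma')$ pointwise under $\mu^{E_0}$. Since $f_1=f_1(\,\cdot\,|_{E_1})$, we have $f_1(\sigma)=f_1(\sigma')$ whenever $\sigma$ and $\sigma'$ agree on $E_1$, so $|f_1(\sigma)-f_1(\sigma')|\le 2\|f_1\|_\infty\,\mathbb{1}\bigl(\exists\,e\in E_1\colon\sigma(e)\neq\sigma'(e)\bigr)$. By Lemma~\ref{lemma: properties of Z-Z coupling measure}, $\mu^{E_0}$‑almost surely $\sigma(e)=\sigma'(e)$ for every $e\in C_1(B_N)\smallsetminus E_{E_0,\sigma,\sigma'}$, so that indicator is at most $\sum_{e\in E_1}\mathbb{1}(e\in E_{E_0,\sigma,\sigma'})$. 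Combining this with $|f_0(\sigma)|\le\|f_0\|_\infty$, taking $\mathbb{E}^{E_0}$, and applying the union bound gives
\begin{equation*}
  \Bigl|\mathbb{E}^{E_0}\bigl[f_0(\sigma)\bigl(f_1(\sigma)-f_1(\sigma')\bigr)\bigr]\Bigr|\le 2\|f_0\|_\infty\|f_1\|_\infty\sum_{e\in E_1}\mu^{E_0}\bigl(\{e\in E_{E_0,\sigma,\sigma'}\}\bigr),
\end{equation*}
and by Lemma~\ref{lemma: E in coupling} the event $\{e\in E_{E_0,\sigma,\sigma'}\}$ coincides with the event $\{e\in E_{E_0,\hat\sigma,\hat\sigma'}\}$ written in~\eqref{eq: coupling covariance}. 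This yields the claimed bound.

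The argument is essentially routine once the coupling is in place; the step requiring the most care is the middle one, where one must keep straight that, after the construction, $f_0(\sigma)$ and $f_1(\sigma')$ are functions of the two distinct independent fields $\hat\sigma$ and $\hat\sigma'$ (so the product expectation factorizes), and must read off from Proposition~\ref{proposition: Ising coupling is a coupling} that both coordinates of the coupling have marginal $\mu_{N,\infty,\kappa}$. I note in passing that disjointness of $E_0$ and $E_1$ is not used in deriving the inequality itself — it is what prevents the right‑hand side from being trivial, since every edge of $E_0$ always lies in $E_{E_0,\sigma,\sigma'}$.
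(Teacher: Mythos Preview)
Your proof is correct and follows essentially the same disagreement-percolation route as the paper: realize the coupling from independent copies $(\hat\sigma,\hat\sigma')$, identify $f_0(\sigma)=f_0(\hat\sigma)$ via $E_0\subseteq E_{E_0,\hat\sigma,\hat\sigma'}$, and bound the remaining error by the probability that $E_1$ meets $E_{E_0,\sigma,\sigma'}$. If anything your write-up is slightly cleaner than the paper's: rather than splitting on the event $\{E_1\cap E_{E_0,\hat\sigma,\hat\sigma'}=\emptyset\}$ and collecting two error terms, you write the covariance directly as $\mathbb{E}^{E_0}\bigl[f_0(\sigma)(f_1(\sigma)-f_1(\sigma'))\bigr]$ and bound pointwise.
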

We provide an upper bound on the right hand side of~\eqref{eq: coupling covariance} in Proposition~\ref{proposition: ZZ upper bound}.

\begin{proof}[Proof of Proposition~\ref{proposition: coupling covariance}]
    To simplify notation, for \( \sigma \in \Omega^1(B_N,G), \) let \( F(\sigma) \coloneqq f_1(\sigma)f_2(\sigma). \)
    
    Let \( \hat \sigma,\hat \sigma' \sim \mu_{N,\infty,\kappa}.\) Note that when \( d\hat \sigma = d\hat \sigma' = 0 \), we have \( E_{E_0,\hat \sigma,\hat \sigma'} = \mathcal{C}_{\mathcal{G}(\hat \sigma,\hat \sigma') }(E_0) .\)
    Define
    \begin{equation*}
        \begin{cases}
            \sigma \coloneqq  \hat \sigma|_{E_{E_0,\hat \sigma,\hat \sigma'}} + \hat \sigma'|_{C_1(B_N)\smallsetminus E_{E_0,\hat \sigma,\hat \sigma'}}. \cr 
            \sigma' \coloneqq \hat \sigma'.
        \end{cases}
    \end{equation*}
    Then \( (\sigma,\sigma') \sim \mu^{E_0}_{N,(\infty,\kappa),(\infty,\kappa)}, \) and hence \( \sigma,\sigma' \sim \mu_{N,\infty,\kappa}. \)
    Consequently, we have
    \begin{equation}\label{eq: coupling cov eq 1}
        \begin{split}
            & \mathbb{E}_{N,\infty,\kappa} \bigl[ F(\sigma) \bigr] 
            =
            \mathbb{E}^{E_0}_{N,(\infty,\kappa),(\infty,\kappa)} \bigl[ F(\sigma)  \bigr] 
            \\&\qquad=
            \mathbb{E}_{N,\infty,\kappa} \times \mathbb{E}_{N,\infty,\kappa} \bigl[ F (\hat \sigma|_{E_{E_0,\hat \sigma,\hat \sigma'}} + \hat \sigma'|_{C_1(B_N)\smallsetminus E_{E_0,\hat \sigma,\hat \sigma'}})   \bigr]
            \\&\qquad=
            \mathbb{E}_{N,\infty,\kappa} \times \mathbb{E}_{N,\infty,\kappa}\bigl[ F (\hat \sigma|_{E_{E_0,\hat \sigma,\hat \sigma'}} + \hat \sigma'|_{C_1(B_N)\smallsetminus E_{E_0,\hat \sigma,\hat \sigma'}}) \cdot \mathbb{1}( E_1 \cap E_{E_0,\hat \sigma,\hat \sigma'}) = \emptyset) \bigr]
            \\&\qquad\qquad+
            \mathbb{E}_{N,\infty,\kappa}\times\mathbb{E}_{N,\infty,\kappa} \bigl[ F (\hat \sigma|_{E_{E_0,\hat \sigma,\hat \sigma'}} + \hat \sigma'|_{C_1(B_N)\smallsetminus E_{E_0,\hat \sigma,\hat \sigma'}}) \cdot \mathbb{1}( E_1 \cap E_{E_0,\hat \sigma,\hat \sigma'}) \neq \emptyset) \bigr].
        \end{split}
    \end{equation}
    
    Since \( E_{E_0,\hat \sigma,\hat \sigma'} = \mathcal{C}_{\mathcal{G}(\hat \sigma,\hat \sigma')}(E_0), \) we have  \( ( \support \hat \sigma \cup \support \hat \sigma' )\cap E_0 \subseteq E_{E_0,\hat \sigma,\hat \sigma'}. \) This implies in particular that
    \begin{equation*}
        (\hat \sigma|_{E_{E_0,\hat \sigma, \hat \sigma'}} + \hat \sigma'|_{C_1(B_N)\smallsetminus E_{E_0,\hat \sigma,\hat \sigma'}})|_{E_0} 
        =
        (\hat \sigma|_{E_{E_0,\hat \sigma, \hat \sigma'}})|_{E_0}  + (\hat \sigma'|_{C_1(B_N)\smallsetminus E_{E_0,\hat \sigma,\hat \sigma'}})|_{E_0} 
        = 
        \hat \sigma|_{E_0},
    \end{equation*}
    and hence
    \begin{equation*}
        \begin{split}
            &f_0(\hat \sigma|_{E_{E_0,\hat \sigma, \hat \sigma'}} + \hat \sigma'|_{C_1(B_N)\smallsetminus E_{E_0,\hat \sigma,\hat \sigma'}})
            =
            f_0(\hat \sigma).
        \end{split}
    \end{equation*} 
    At the same time, on the event \( E_1 \cap E_{E_0,\hat \sigma,\hat \sigma'} = \emptyset, \) for all \( e \in E_1 \) we have 
    \begin{equation*}
        \begin{split}
            (\hat \sigma|_{E_{E_0,\hat \sigma,\hat \sigma'}} + \hat \sigma'|_{C_1(B_N)\smallsetminus E_{E_0,\hat \sigma,\hat \sigma'}})|_{E_1}
            =
            (\hat \sigma|_{E_{E_0,\hat \sigma,\hat \sigma'}})|_{E_1} + (\hat \sigma'|_{C_1(B_N)\smallsetminus E_{E_0,\hat \sigma,\hat \sigma'}})|_{E_1} = 0 + \hat \sigma'|_{E_1},
        \end{split}
    \end{equation*}
    and hence
    \begin{equation*}
        \begin{split} 
            &f_1(\hat \sigma|_{E_{E_0,\hat \sigma,\hat \sigma'}} + \hat \sigma'|_{C_1(B_N)\smallsetminus E_{E_0,\hat \sigma,\hat \sigma'}}) 
            =
            f_1(\hat \sigma').
        \end{split}
    \end{equation*}
    Consequently, on the event \( E_1 \cap E_{E_0,\sigma,\sigma'} = \emptyset, \) we have
    \begin{equation*}
        \begin{split}
            &F  (\hat \sigma|_{E_{E_0,\hat \sigma,\hat \sigma'}} + \hat \sigma'|_{C_1(B_N)\smallsetminus E_{E_0,\hat \sigma,\hat \sigma'}})
            =f_0 \bigl( \hat \sigma\bigr)
            f_1 \bigl(  \hat \sigma'\bigr),
        \end{split}
    \end{equation*}
    and hence
    \begin{equation*}
        \begin{split}
            &
            \mathbb{E}_{N,\infty,\kappa} \times \mathbb{E}_{N,\infty,\kappa}\bigl[ F (\hat \sigma|_{E_{E_0,\hat \sigma,\hat \sigma'}} + \hat \sigma'|_{C_1(B_N)\smallsetminus E_{E_0,\hat \sigma,\hat \sigma'}}) \cdot \mathbb{1}( E_1 \cap E_{E_0,\hat \sigma,\hat \sigma'} = \emptyset) \bigr]
            \\&\qquad= 
            \mathbb{E}_{N,\infty,\kappa} \times \mathbb{E}_{N,\infty,\kappa}\bigl[ f_0(\hat \sigma) f_1( \hat \sigma') \cdot \mathbb{1}( E_1 \cap E_{E_0,\hat \sigma,\hat \sigma'} = \emptyset) \bigr]
            \\&\qquad= 
            \mathbb{E}_{N,\infty,\kappa} \times \mathbb{E}_{N,\infty,\kappa}\bigl[ f_0(\hat \sigma) f_1( \hat \sigma') \bigr]
            -
            \mathbb{E}_{N,\infty,\kappa} \times \mathbb{E}_{N,\infty,\kappa}\bigl[ f_0(\hat \sigma) f_1( \hat \sigma') \cdot \mathbb{1}( E_1 \cap E_{E_0,\hat \sigma,\hat \sigma'} \neq \emptyset) \bigr]
            \\&\qquad= 
            \mathbb{E}_{N,\infty,\kappa} \bigl[ f_0(\hat \sigma) \bigr] \mathbb{E}_{N,\infty,\kappa} \bigl[ f_1( \hat \sigma') \bigr]
            -
            \mathbb{E}_{N,\infty,\kappa} \times \mathbb{E}_{N,\infty,\kappa}\bigl[ f_0(\hat \sigma) f_1( \hat \sigma') \cdot \mathbb{1}( E_1 \cap E_{E_0,\hat \sigma,\hat \sigma'} \neq \emptyset) \bigr].
        \end{split}
    \end{equation*}
    Inserting this into~\eqref{eq: coupling cov eq 1}, we see that
    \begin{equation*}
        \begin{split}
            & \mathbb{E}_{N,\infty,\kappa} \bigl[ f_0(\sigma)f_1(\sigma) \bigr] 
            \\&\qquad=
            \mathbb{E}_{N,\infty,\kappa} \bigl[ f_0( \hat \sigma)  \bigr] 
            \mathbb{E}_{N,\infty,\kappa}\bigl[ 
            f_1( \hat \sigma')  \bigr]
            -
            \mathbb{E}_{N,\infty,\kappa} \times \mathbb{E}_{N,\infty,\kappa}\bigl[ f_0 ( \hat \sigma)
            f_1( \hat \sigma')
            \cdot \mathbb{1}( E_1 \cap E_{E_0,\hat \sigma,\hat \sigma'}) \neq \emptyset) \bigr]
            \\&\qquad\qquad+
            \mathbb{E}_{N,\infty,\kappa}\times\mathbb{E}_{N,\infty,\kappa} \bigl[ F 
            (\hat \sigma'|_{E_{E_0,\hat \sigma,\hat \sigma'}} + \hat \sigma|_{C_1(B_N)\smallsetminus E_{E_0,\hat \sigma,\hat \sigma'}}) \cdot \mathbb{1}( E_1 \cap E_{E_0,\hat \sigma,\hat \sigma'} \neq \emptyset) \bigr].
        \end{split}
    \end{equation*}
    In particular, this implies that
    \begin{equation*}
        \begin{split}
            & \Bigl| \mathbb{E}_{N,\infty,\kappa} \bigl[ f_0( \sigma)f_1( \sigma) \bigr] 
            -
            \mathbb{E}_{N,\infty,\kappa} \bigl[ f_0( \hat \sigma)  \bigr] 
            \mathbb{E}_{N,\infty,\kappa}\bigl[ 
            f_1 ( \hat \sigma')  \bigr]
            \Bigr|
            \\&\qquad\leq
            2\| f_0\|_\infty \|f_1 \|_\infty\mu_{N,\infty,\kappa} \times \mu_{N,\infty,\kappa}
            \bigl( E_1 \cap E_{E_0,\hat \sigma,\hat \sigma'} \neq \emptyset \bigr)
            \\&\qquad\leq
            2\| f_0\|_\infty \|f_1 \|_\infty \, \sum_{e \in E_1} \mu_{N,\infty,\kappa} \times \mu_{N,\infty,\kappa}
            \bigl( e \in  E_{E_0,\hat \sigma,\hat \sigma'} \neq \emptyset \bigr).
        \end{split}
    \end{equation*}
    To obtain the desired conclusion, we note that by Lemma~\ref{lemma: E in coupling}, we have \( E_{E_0,\hat \sigma,\hat \sigma'} = E_{E_0,\sigma,\sigma'}. \) This concludes the proof.
\end{proof}

\subsection{A coupling between the Abelian Higgs model and the \texorpdfstring{\(\mathbb{Z}_n\)}{Zn}-model}\label{sec: LGT-Z coupling}

In this section, we recall the coupling between \( \mu_{N,\beta,\kappa} \) and \( \mu_{N,\infty,\kappa} \) introduced in~\cite{flv2021}. 

\begin{definition}[The coupling to the \( \mathbb{Z}_n \) model]\label{def: the coupling}
    For \( \beta,\kappa \geq 0, \) \( \sigma \in \Omega^1(B_N,G), \) and \( \sigma' \in \Omega^1_0(B_N,G),\) let
    let  
    \begin{equation}\label{eq: Esigmasigmadef}
        E_{\sigma, \sigma'} \coloneqq E_{\emptyset,\sigma, \sigma'}= \mathcal{C}_{\mathcal{G}(\sigma, \sigma')}\bigl( \{ e \in \support  \sigma \colon d \sigma|_{\pm \support \hat \partial e} \neq 0 \} \bigr).
    \end{equation}
    and define
    \begin{align*}
        &\mu_{N, (\beta,\kappa),(\infty,\kappa)}(\sigma, \sigma') 
        \\&\qquad\coloneqq \mu_{N, \beta,\kappa} \times \mu_{N,\infty, \kappa}\bigl(\bigl\{(\hat{\sigma}, \hat{\sigma}')\in \Omega^1(B_N,G) \times \Omega^1_0(B_N,G) \colon 
        \\&\hspace{16em} \sigma = \hat{\sigma}|_{E_{\hat \sigma,\hat \sigma'}} + \hat \sigma'|_{C_1(B_N) \smallsetminus E_{\hat \sigma,\hat \sigma'}} \text{ and } \sigma'=\hat{\sigma}' \bigr\} \bigr).
    \end{align*}
    We let \( \mathbb{E}_{N,(\beta,\kappa),(\infty,\kappa)} \) denote the corresponding expectation. 
\end{definition}
 
\begin{figure}[htp]
    \centering
    \begin{subfigure}[t]{0.32\textwidth}\centering
        \includegraphics[width=\textwidth]{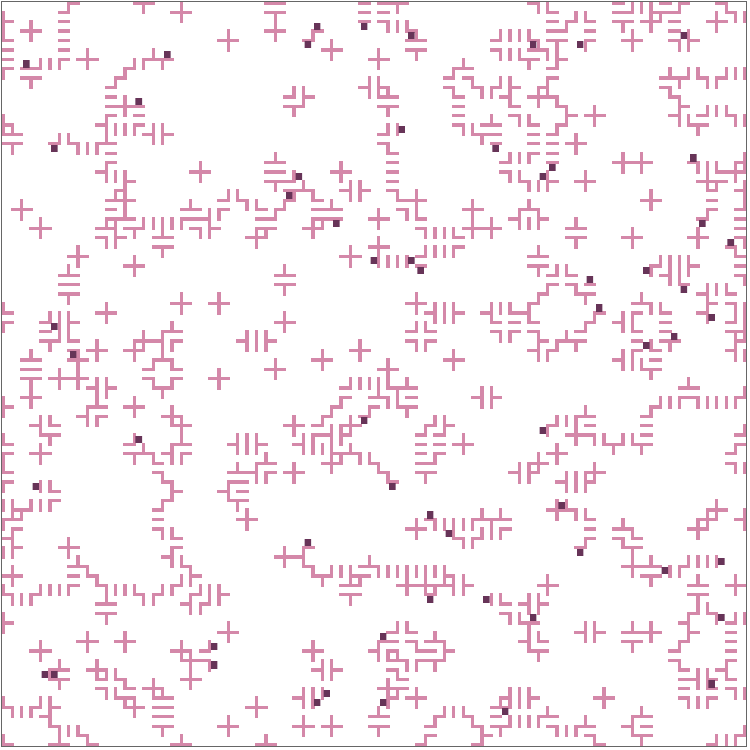}
        \caption{Red edges correspond to the support of \( \hat \sigma \in \Omega^1(B_N,\mathbb{Z}_2),\) and black squares to the support of \( d\hat \sigma \).}
    \end{subfigure}
    \hfil
    \begin{subfigure}[t]{0.32\textwidth}\centering
        \includegraphics[width=\textwidth]{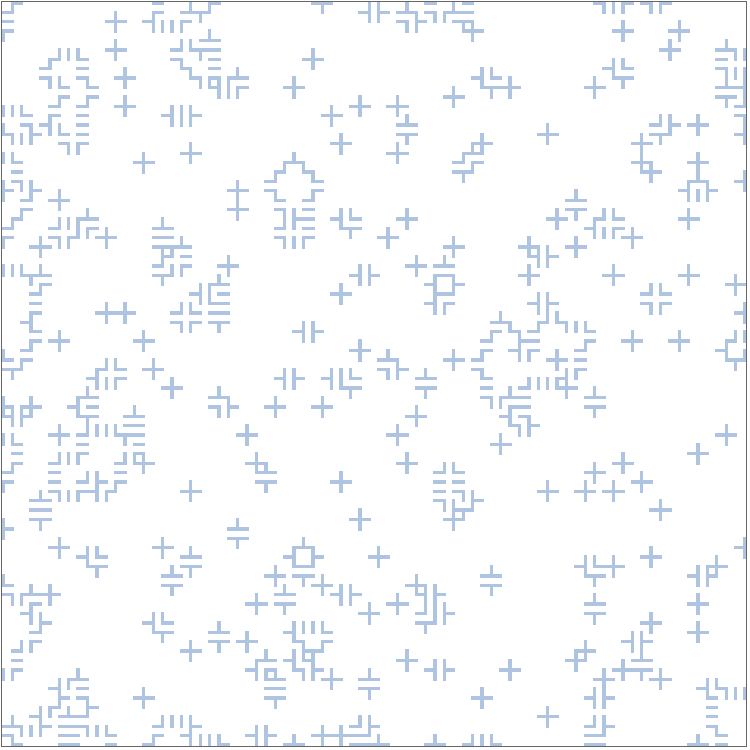}
        \caption{Blue edges correspond to the support of \( \hat \sigma' \in \Omega^1(B_N,\mathbb{Z}_2)\). In this case we automatically have \( d\hat \sigma' = 0\).}
    \end{subfigure}
    \hfil
    \begin{subfigure}[t]{0.32\textwidth}\centering
        \includegraphics[width=\textwidth]{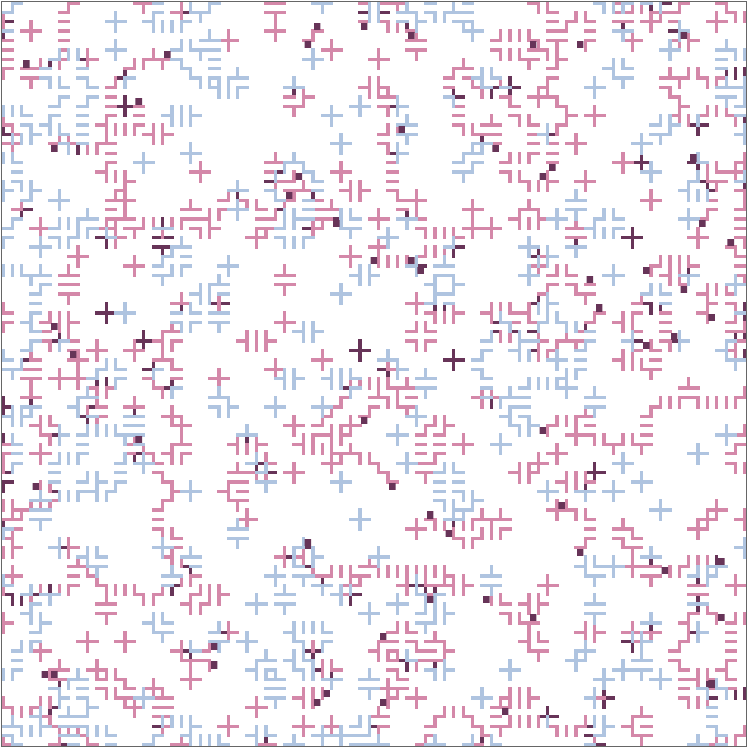}
        \caption{Red edges correspond to the support of \( \hat \sigma \), blue edges correspond to the support of \( \hat \sigma'\), and black squares to the support of \( d\hat \sigma \).}
    \end{subfigure}

    \vspace{1ex}
    \begin{subfigure}[t]{0.32\textwidth}\centering
        \includegraphics[width=\textwidth]{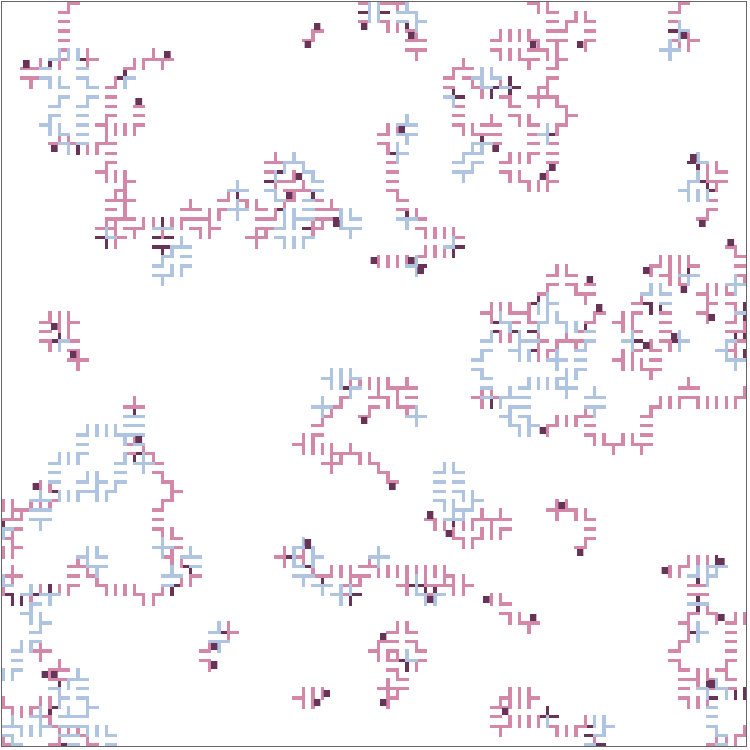}
        \caption{Red edges correspond to the support of  \(  \hat \sigma|_{E_{\hat \sigma,\hat \sigma'}}  \),  blue edges correspond to the support of \(  \hat \sigma'|_{E_{\hat \sigma,\hat \sigma'}} \), and  black squares correspond to the support of \( d\hat \sigma  \).}\label{subfig: the coupling d}
    \end{subfigure}
    \hfil
    \begin{subfigure}[t]{0.32\textwidth}\centering
        \includegraphics[width=\textwidth]{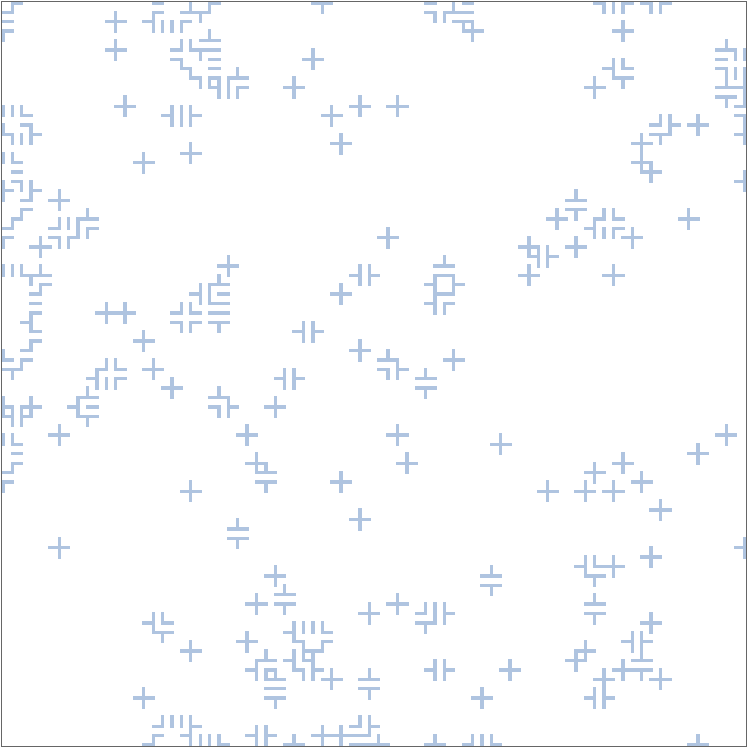}
        \caption{Blue edges correspond to the support of  \(  \hat \sigma'|_{C_1(B_N)\smallsetminus E_{\hat \sigma,\hat \sigma'}}  \).}\label{subfig: the coupling e}
    \end{subfigure}
    \hfil
    \begin{subfigure}[t]{0.32\textwidth}\centering
        \includegraphics[width=\textwidth]{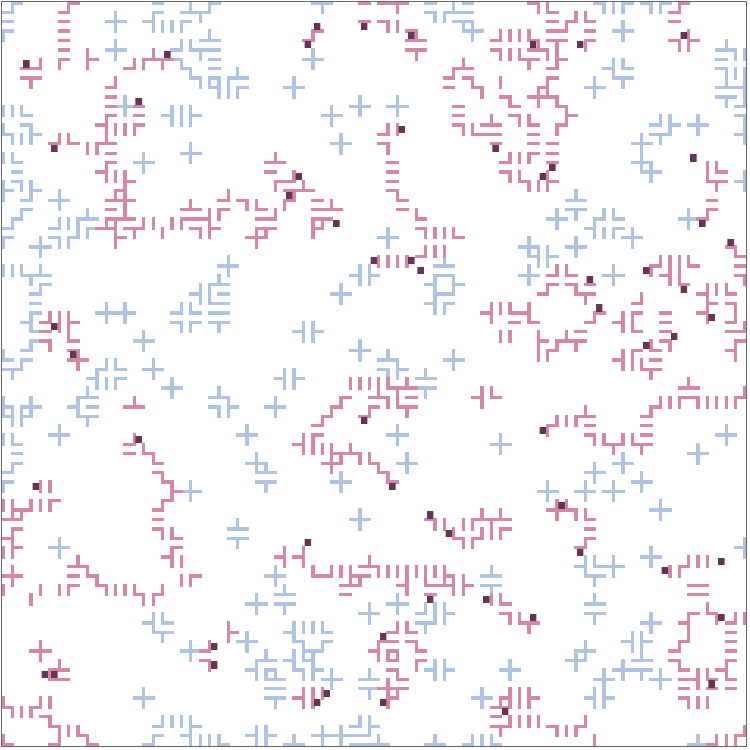}
        \caption{Red edges correspond to the support of \( \hat \sigma|_{E_{\hat \sigma, \hat \sigma'}}\), blue edges correspond to the support of \( \hat \sigma'|_{C_1(B_N)\smallsetminus E_{\hat \sigma, \hat \sigma'}}\), and  black squares correspond to the support of \( d\hat \sigma \).}
    \end{subfigure}
    \caption{Illustration of the coupling \( (\sigma, \sigma') \sim \mu_{N,(\beta,\kappa),(\infty,\kappa)} \) defined in Definition~\ref{def: the coupling} (simulated on a 2-dimensional lattice, with \( G = \mathbb{Z}_2 \)). }
    \label{fig: the coupling}
\end{figure}

\begin{remark}
    If~\cite{flv2021}, the measure \( \mu_{N,(\beta,\kappa),(\infty,\kappa)} \) in Definition~\ref{def: the coupling} above was defined slightly differently, but using the Proposition~\ref{proposition: coupling is a coupling} below, one easily shows that they are equivalent.
\end{remark}

The next result shows that this is indeed a coupling.
\begin{proposition}\label{proposition: coupling is a coupling}
    Let \( \beta,\kappa \geq 0 \). Then \( \mu_{N,(\beta,\kappa),{(\infty,\kappa})} \) is a coupling of \( \mu_{N,\beta,\kappa} \) and \( \mu_{N,\infty,\kappa} \). 
\end{proposition}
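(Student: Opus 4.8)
The plan is to mimic, essentially verbatim, the proof of Proposition~\ref{proposition: Ising coupling is a coupling}, now comparing the abelian lattice Higgs model at a finite parameter $\beta$ with the $\mathbb{Z}_n$-model (the $\beta = \infty$ case), and taking $E_0 = \emptyset$ throughout so that $E_{\emptyset,\sigma,\sigma'} = E_{\sigma,\sigma'}$. It is immediate from Definition~\ref{def: the coupling} that if $(\sigma,\sigma') \sim \mu_{N,(\beta,\kappa),(\infty,\kappa)}$ then $\sigma' \sim \mu_{N,\infty,\kappa}$, so only the first marginal needs checking. Fixing $\sigma \in \Omega^1(B_N,G)$ and expanding the first marginal probability via~\eqref{eq: mubetakappaphi} (and recalling that $\varphi_{\infty,\kappa}$ vanishes off $\Omega^1_0(B_N,G)$, so that the second-coordinate sum runs over closed forms with weight $\varphi_\kappa$), the claim reduces to the identity
\begin{equation*}
\sum_{\substack{\hat\sigma \in \Omega^1(B_N,G),\\ \hat\sigma' \in \Omega^1_0(B_N,G)}}
\varphi_{\beta,\kappa}(\hat\sigma)\,\varphi_\kappa(\hat\sigma')\cdot \mathbb{1}\bigl(\hat\sigma|_{E_{\hat\sigma,\hat\sigma'}} + \hat\sigma'|_{C_1(B_N)\smallsetminus E_{\hat\sigma,\hat\sigma'}} = \sigma\bigr)
= \varphi_{\beta,\kappa}(\sigma)\sum_{\sigma' \in \Omega^1_0(B_N,G)}\varphi_\kappa(\sigma').
\end{equation*}

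First I would apply Lemma~\ref{lemma: technical coupling lemma} with $\beta_1 = \beta$, $\beta_2 = \infty$, and $E_0 = \emptyset$. Summing the conclusion of that lemma over all $\hat\sigma \in \Omega^1(B_N,G)$ and $\hat\sigma' \in \Omega^1_0(B_N,G)$ rewrites the left-hand side above as
\begin{equation*}
\varphi_{\beta,\kappa}(\sigma)\sum_{\sigma' \in \Omega^1_0(B_N,G)}\varphi_\kappa(\sigma')
\sum_{\substack{\hat\sigma \in \Omega^1(B_N,G),\\ \hat\sigma' \in \Omega^1_0(B_N,G)}}
\mathbb{1}\bigl(\hat\sigma = \sigma|_{E_{\sigma,\sigma'}} + \sigma'|_{C_1(B_N)\smallsetminus E_{\sigma,\sigma'}}\bigr)\,
\mathbb{1}\bigl(\hat\sigma' = \sigma'|_{E_{\sigma,\sigma'}} + \sigma|_{C_1(B_N)\smallsetminus E_{\sigma,\sigma'}}\bigr),
\end{equation*}
so it remains to show that for each $\sigma' \in \Omega^1_0(B_N,G)$ the inner double sum equals $1$.

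The double sum is trivially at most $1$, since the two indicators pin down $\hat\sigma$ and $\hat\sigma'$ uniquely; the only real point is that the prescribed pair genuinely belongs to the index set. This is the step I expect to carry the content: one must check that $\sigma|_{E_{\sigma,\sigma'}} + \sigma'|_{C_1(B_N)\smallsetminus E_{\sigma,\sigma'}} \in \Omega^1(B_N,G)$ (automatic) and, crucially, that $\sigma'|_{E_{\sigma,\sigma'}} + \sigma|_{C_1(B_N)\smallsetminus E_{\sigma,\sigma'}} \in \Omega^1_0(B_N,G)$. For the latter, Lemma~\ref{lemma: closed is closed new}\ref{item: coupling eqs ii} (applied with $E_0 = \emptyset$) gives $d(\sigma|_{C_1(B_N)\smallsetminus E_{\sigma,\sigma'}}) = 0$, while $\sigma'|_{E_{\sigma,\sigma'}} \leq \sigma'$ by Lemma~\ref{lemma: cluster is subconfig} together with $d\sigma' = 0$ forces $d(\sigma'|_{E_{\sigma,\sigma'}}) = 0$; by linearity of $d$ the sum of these two forms is then closed. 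Hence exactly one pair $(\hat\sigma,\hat\sigma')$ contributes, the inner double sum is $1$, and the displayed identity follows. Dividing both sides by $Z_{N,\beta,\kappa}\sum_{\sigma' \in \Omega^1_0(B_N,G)}\varphi_\kappa(\sigma')$ then identifies the first marginal of $\mu_{N,(\beta,\kappa),(\infty,\kappa)}$ with $\mu_{N,\beta,\kappa}$, completing the argument. Compared with Proposition~\ref{proposition: Ising coupling is a coupling}, the only modification is that here $\sigma$ need not be closed, which is precisely why Lemma~\ref{lemma: closed is closed new} — rather than a direct application of the Poincaré-type bookkeeping — is needed.
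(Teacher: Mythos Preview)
Your proof is correct and follows essentially the same approach as the paper: both reduce to the identity you display, invoke Lemma~\ref{lemma: technical coupling lemma} with \(\beta_1=\beta\), \(\beta_2=\infty\), \(E_0=\emptyset\), and then verify that the prescribed \(\hat\sigma'\) is closed so the inner double sum equals \(1\). Your justification of \(d(\sigma'|_{E_{\sigma,\sigma'}})=0\) via Lemma~\ref{lemma: cluster is subconfig} and the partial order is a slightly more explicit variant of the paper's appeal to Lemma~\ref{lemma: closed is closed new} (which, by the symmetry of \(E_{E_0,\sigma,\sigma'}\) in \(\sigma,\sigma'\), applies equally to \(\sigma'\)).
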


\begin{proof}
    It is immediate from the definition that if \( (\sigma,\sigma') \sim \mu_{N,(\beta,\kappa),(\infty,\kappa)}\), then \( \sigma' \sim\mu_{N,\infty,\kappa} \), and it is hence sufficient to show that \( \sigma \sim \mu_{N,\beta,\kappa} \).
	This is exactly equivalent to, for each \(  \sigma \in \Omega^1(B_N,G) \), showing that
	\begin{equation*}
	    \mu_{N,\beta,\kappa}\times \mu_{N,\infty,\kappa} \bigl( \bigl\{ (\hat \sigma, \hat \sigma') \in \Omega^1(B_N) \times \Omega^1_0(B_N) \colon  \hat \sigma|_{E_{ \hat \sigma, \hat \sigma'}} + \hat \sigma'|_{C_1(B_N)\smallsetminus E_{ \hat \sigma, \hat \sigma'}} = \sigma \bigr\} \bigr)
		=
		\mu_{N,\beta,\kappa}( \sigma).
	\end{equation*}
	or, equivalently, that
	\begin{equation}\label{eq: LGT-Z coupling goal eq}
		\sum_{\substack{\hat \sigma \in \Omega^1(B_N,G),\\ \hat \sigma' \in \Omega^1_0(B_N,G)}}
		\varphi_{\beta,\kappa}(\hat \sigma) \varphi_{\kappa}(\hat \sigma') \cdot \mathbb{1} \bigl( \hat \sigma|_{E_{\hat \sigma,\hat \sigma'}} + \hat \sigma'|_{C_1(B_N)\smallsetminus E_{\hat \sigma,\hat \sigma'}} =  \sigma \bigr)
		=
		\varphi_{\beta,\kappa}( \sigma) \sum_{ \sigma' \in \Omega^1_0(B_N,G)} \varphi_{\kappa} ( \sigma').
	\end{equation}
	
	We now show that~\eqref{eq: LGT-Z coupling goal eq} holds. To this end, fix some \(  \sigma \in \Omega^1(B_N,G) \). 
	By Lemma~\ref{lemma: technical coupling lemma}, applied with \( \beta_1= \beta \) and \(\beta_2=\infty \), we have
    \begin{equation}\label{eq: LGT-Z coupling goal eq ii} 
		\begin{split}
			&\sum_{\substack{\hat \sigma \in \Omega^1(B_N,G),\\ \hat \sigma' \in \Omega^1_0(B_N,G)}}
			\varphi_{\beta,\kappa}(\sigma) \varphi_{\kappa}(\sigma')\mathbb{1} \bigl(  \hat \sigma|_{E_{\hat \sigma,\hat \sigma'}} +  \hat \sigma'|_{C_1(B_N)\smallsetminus E_{ \hat\sigma, \hat\sigma'}} =  \sigma \bigr) 
			\\&\qquad =
			\varphi_{\kappa}( \sigma )
			\sum_{\sigma' \in \Omega^1(B_N,G)}
			\varphi_{\beta,\kappa}(\sigma')
			\sum_{\substack{\hat \sigma \in \Omega^1(B_N,G),\\ \hat \sigma' \in \Omega^1_0(B_N,G)}}
			\mathbb{1}\bigl(  \hat \sigma =  \sigma|_{E_{ \sigma, \sigma'}}+ \sigma'|_{C_1(B_N)\smallsetminus E_{ \sigma, \sigma'}} \bigr)
			\\[-3.5ex]&\hspace{22em}\cdot 
			\mathbb{1}\bigl( \hat \sigma' = \sigma' |_{E_{ \sigma, \sigma'}} +  \sigma |_{C_1(B_N)\smallsetminus E_{ \sigma, \sigma'}}  \bigr)
			\\&\qquad =
			\varphi_{\kappa}( \sigma )
			\sum_{\sigma' \in \Omega^1(B_N,G)}
			\varphi_{\beta,\kappa}(\sigma')
			\mathbb{1}\bigl( \sigma' |_{E_{ \sigma, \sigma'}} +  \sigma |_{C_1(B_N)\smallsetminus E_{ \sigma, \sigma'}} \in \Omega_0^1(B_N,G) \bigr)
			\\&\qquad =
			\varphi_{\kappa}( \sigma )
			\sum_{\sigma' \in \Omega^1(B_N,G)}
			\varphi_{\beta,\kappa}(\sigma')
			\mathbb{1}\Bigl( d\bigl( \sigma' |_{E_{ \sigma, \sigma'}} +  \sigma |_{C_1(B_N)\smallsetminus E_{ \sigma, \sigma'}}\bigr) =0  \Bigr).
		\end{split}
	\end{equation}
	By Lemma~\ref{lemma: closed is closed new}, we have 
	\begin{equation*}
		d\bigl( \sigma' |_{E_{ \sigma, \sigma'}} +  \sigma |_{C_1(B_N)\smallsetminus E_{ \sigma, \sigma'}} \bigr) 
		=
		d\bigl( \sigma' |_{E_{ \sigma, \sigma'}} \bigr)+  d\bigl(\sigma |_{C_1(B_N)\smallsetminus E_{ \sigma, \sigma'}} \bigr) 
		= d\bigl( \sigma' |_{E_{ \sigma, \sigma'}} \bigr) +0 = d\sigma',
	\end{equation*}
	and hence
	\begin{equation}\label{eq: LGT-Z coupling goal eq iii}
		\begin{split} 
		&
		\varphi_{\kappa}( \sigma )
		\sum_{\sigma' \in \Omega^1(B_N,G)}
		\varphi_{\beta,\kappa}(\sigma')
		\mathbb{1}\Bigl( d\bigl( \sigma' |_{E_{ \sigma, \sigma'}} +  \sigma |_{C_1(B_N)\smallsetminus E_{ \sigma, \sigma'}}\bigr) =0  \Bigr)
		\\&\qquad=
		\varphi_{\kappa}( \sigma )
		\sum_{\sigma' \in \Omega^1(B_N,G)}
		\varphi_{\beta,\kappa}(\sigma')
		\mathbb{1}( d\sigma' =0  )
		=
		\varphi_{\kappa}( \sigma )
		\sum_{\sigma' \in \Omega^1_0(B_N,G)}
		\varphi_{\beta,\kappa}(\sigma')  .
		\end{split}	
	\end{equation}
	Combining~\eqref{eq: LGT-Z coupling goal eq ii}  and~\eqref{eq: LGT-Z coupling goal eq iii} , we obtain~\eqref{eq: LGT-Z coupling goal eq} as desired. This concludes the proof.
\end{proof}

\begin{lemma}\label{lemma: properties of coupling measure} 
    Let \( \beta,\kappa \geq 0,\) let \( E_0 \subseteq C_1(B_N),\) and let \( (\sigma,\sigma')\in \Omega^1(B_N,G) \times \Omega^1_0(B_N,G) \) be such that  \( \mu_{N,(\beta,\kappa),(\infty,\kappa)}(\sigma , \sigma') \neq 0\). Then \( \sigma(e) = \sigma'(e) \) for all \( e \in C_1(B_N) \smallsetminus E_{\sigma,\sigma'} \). 
\end{lemma}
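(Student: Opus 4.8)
The plan is to mirror, essentially verbatim, the proof of Lemma~\ref{lemma: properties of Z-Z coupling measure}, replacing the invocation of Lemma~\ref{lemma: E in coupling} for the $\mathbb{Z}_n$--$\mathbb{Z}_n$ coupling by the same lemma applied with $E_0 = \emptyset$. First I would unpack the hypothesis: since
\[
    \mu_{N,(\beta,\kappa),(\infty,\kappa)}(\sigma,\sigma')
    =
    \mu_{N,\beta,\kappa}\times \mu_{N,\infty,\kappa}
    \bigl(\{(\hat\sigma,\hat\sigma') \colon \sigma = \hat\sigma|_{E_{\hat\sigma,\hat\sigma'}} + \hat\sigma'|_{C_1(B_N)\smallsetminus E_{\hat\sigma,\hat\sigma'}} \text{ and } \sigma' = \hat\sigma'\}\bigr)
    \neq 0,
\]
the event on the right must be non-empty, so there is a witness pair $(\hat\sigma,\hat\sigma') \in \Omega^1(B_N,G) \times \Omega^1_0(B_N,G)$ with $\sigma = \hat\sigma|_{E_{\hat\sigma,\hat\sigma'}} + \hat\sigma'|_{C_1(B_N)\smallsetminus E_{\hat\sigma,\hat\sigma'}}$ and $\sigma' = \hat\sigma'$. (One may note, if desired, that $\mu_{N,\beta,\kappa}$ and $\mu_{N,\infty,\kappa}$ have full support, so non-vanishing of the mass is in fact equivalent to non-emptiness of the event, but only non-emptiness is needed.)

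Next I would recall from~\eqref{eq: Esigmasigmadef} that $E_{\hat\sigma,\hat\sigma'} = E_{\emptyset,\hat\sigma,\hat\sigma'}$, so the witness pair satisfies exactly the hypotheses of Lemma~\ref{lemma: E in coupling} with $E_0 = \emptyset$: we have $\sigma = \hat\sigma|_{E_{\emptyset,\hat\sigma,\hat\sigma'}} + \hat\sigma'|_{C_1(B_N)\smallsetminus E_{\emptyset,\hat\sigma,\hat\sigma'}}$ and $\sigma' = \hat\sigma'$. Applying that lemma then yields $E_{\emptyset,\sigma,\sigma'} = E_{\emptyset,\hat\sigma,\hat\sigma'}$, i.e. $E_{\sigma,\sigma'} = E_{\hat\sigma,\hat\sigma'}$, and its ``in particular'' clause gives directly that $\sigma(e) = \sigma'(e)$ for every $e \in C_1(B_N)\smallsetminus E_{\sigma,\sigma'}$, which is the assertion to be proved.

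There is no real obstacle here; the statement is a short corollary of Lemma~\ref{lemma: E in coupling} and Definition~\ref{def: the coupling}. The only points requiring care are bookkeeping ones: making sure the witness pair lands in the correct space $\Omega^1(B_N,G) \times \Omega^1_0(B_N,G)$ (so that $d\hat\sigma'=0$ and the definition of $E_{\hat\sigma,\hat\sigma'}$ via~\eqref{eq: Esigmasigmadef} applies), and matching the notational conventions $E_{\sigma,\sigma'} = E_{\emptyset,\sigma,\sigma'}$ when specializing Lemma~\ref{lemma: E in coupling} to $E_0=\emptyset$. Since all the substantive content is already contained in Lemma~\ref{lemma: E in coupling}, the write-up should be only a couple of sentences long, exactly paralleling the proof of Lemma~\ref{lemma: properties of Z-Z coupling measure}.
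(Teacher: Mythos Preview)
Your proposal is correct and matches the paper's proof essentially verbatim: the paper also extracts a witness pair $(\hat\sigma,\hat\sigma')$ from the nonvanishing of the coupling measure and then invokes Lemma~\ref{lemma: E in coupling} (with $E_0=\emptyset$) to conclude. Your additional bookkeeping remarks about $E_{\sigma,\sigma'}=E_{\emptyset,\sigma,\sigma'}$ and the ``in particular'' clause are exactly the content the paper is using implicitly.
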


\begin{proof}
    Since \(\mu_{N,(\beta,\kappa),(\infty,\kappa)}(\sigma , \sigma') \neq 0\), by definition, there is \( (\hat \sigma, \hat \sigma') \in \Omega^1(B_N,G) \times \Omega^1_0(B_N,G) \) such that \( \sigma = \hat \sigma|_{E_{\hat \sigma,\hat \sigma'}} + \hat{\sigma}'|_{C_1(B_N) \smallsetminus E_{\hat{\sigma}, \hat{\sigma}'}} \) and \( \sigma' = \hat \sigma'\). Using Lemma~\ref{lemma: E in coupling}, we immediately obtain the desired conclusion.
\end{proof}

\section{Distribution of vortices and edge configurations}\label{sec: general cluster events}

In this section, we use the edge graph defined in Section~\ref{sec: edge graph} to give upper bounds on several useful events. Throughout this section, constants \( K_1,K_2,\dots, K_{15} \) will be introduced. We use distinct names for these to make it possible to find explicit upper bounds, but stress that under the assumptions of the main results these are all bounded from above, and will thus not affect the decay rate of the upper bounds obtained throughout this section.

For \( E_0 \subseteq C_1(B_N) \) and \( e \in C_1(B_N) \), we define
\begin{equation*}
    \begin{split}
        &\dist_1 (e,E_0) \coloneqq \frac{1}{2}\min \Bigl\{ \bigl|\mathcal{C}_{\mathcal{G}_{\hat \sigma,\hat \sigma'}}(e) \bigr| \colon \sigma,\sigma' \in \Omega^1(B_N,G),\,  \mathcal{C}_{\mathcal{G}_{\hat \sigma,\hat \sigma'}}(e)\cap E_0 \neq \emptyset  \Bigr\}
        \\&\qquad= 
        \frac{1}{2}\min \Bigl\{ \bigl|\mathcal{C}_{\mathcal{G}_{\hat \sigma}}(e) \bigr| \colon \sigma\in \Omega^1(B_N,G),\,  \mathcal{C}_{\mathcal{G}_{\hat \sigma}}(e)\cap E_0 \neq \emptyset  \Bigr\} ,\quad e \in C_1(B_N) ,
    \end{split}
\end{equation*}  
and
\begin{equation*}
    \dist_0 (e,E_0) \coloneqq \frac{1}{2}\min \Bigl\{ \bigl|\mathcal{C}_{\mathcal{G}_{\hat \sigma,\hat \sigma'}}(e) \bigr| \colon \sigma,\sigma' \in \Omega^1_0(B_N,G),\,  \mathcal{C}_{\mathcal{G}_{\hat \sigma,\hat \sigma'}}(e)\cap E_0 \neq \emptyset  \Bigr\} ,\quad e \in C_1(B_N).
\end{equation*}  
Note that, by Lemma~\ref{lemma: small 1forms}, if \( e \notin E, \) then \( \dist_0(e,E) \geq 8. \) 
We extend this definition to sets \( {E \subseteq C_1(B_N)} \) by letting \( \dist_1(E,E_0) \coloneqq \min_{e \in E} \dist_1(e,E_0) \) and \( \dist_0(E,E_0) \coloneqq \min_{e \in E} \dist_0(e,E_0). \) 

In this section, we will state and prove the following three propositions.

\begin{proposition}\label{proposition: new Z-LGT coupling upper bound}
    Let \( \beta,\kappa_1,\kappa_2 \in [0,\infty] \) be such that~\( 18^2\bigl( \alpha_0(\kappa_1) + \alpha_0(\kappa_2) + \alpha_0(\kappa_1)\alpha_0(\kappa_2)\bigr)<1 \), let \( e \in C_1(B_N) \) be such that \( \dist_0\bigl(e,\partial C_1(B_N)\bigr) \geq 8, \) and let \( M \geq 1 \) and \( M' \geq 0 .\)
    Then
    \begin{equation}\label{eq: coupling and conditions 3} 
        \begin{split}
            &
            \mu_{N,\beta,\kappa_1} \times \mu_{N,\infty,\kappa_2} \Bigl( \pigl\{ (\hat \sigma,\hat \sigma') \in \Omega^1(B_N,G) \times \Omega^1_0(B_N,G)\colon  
            \\[-1.5ex]&\hspace{13em}
            |\mathcal{C}_{\mathcal{G}(\hat \sigma,\hat \sigma')}(e)| \geq 2M, \text{ and } \bigl|\support d\bigl( \hat \sigma|_{ \mathcal{C}_{\mathcal{G}(\hat \sigma,\hat \sigma')}(e)} \bigr)\bigl|\geq 2 M' \pigr\}\Bigr)
            \\&\qquad\leq 
            \mathbb{1}_{M'>0} \cdot   \mathbb{1}_{M=1}  \cdot 
            \pigl(18^{2} \bigl(\alpha_0(\kappa_1) + \alpha_0(\kappa_2) + \alpha_0(\kappa_1)\alpha_0(\kappa_2) \bigr)\pigr)  \alpha_1(\beta)^{\max(6,M')}
            \\&\qquad\qquad+
            \mathbb{1}_{M'>0} \cdot K_1\pigl(18^{2} \bigl(\alpha_0(\kappa_1) + \alpha_0(\kappa_2) + \alpha_0(\kappa_1)\alpha_0(\kappa_2) \bigr)\pigr)^{\max(M,2)}   \alpha_1(\beta)^{\max(6,M')}
            \\&\qquad\qquad+
            \mathbb{1}_{M'\in \{ 1,2,3,4,5\}} \cdot K_1
            \pigl(18^{2} \bigl(\alpha_0(\kappa_1) + \alpha_0(\kappa_2) + \alpha_0(\kappa_1)\alpha_0(\kappa_2) \bigr)\pigr) ^{\dist_1(e,\partial C_1(B_N))} \alpha_1(\beta)
            \\&\qquad\qquad+ \mathbb{1}_{M'=0} \cdot K_1
            \pigl(18^{2} \bigl(\alpha_0(\kappa_1) + \alpha_0(\kappa_2) + \alpha_0(\kappa_1)\alpha_0(\kappa_2) \bigr)\pigr) ^{\max(M,8)},
        \end{split}
    \end{equation} 
    where 
    \begin{equation}\label{eq: K1}
        K_1 = K_1(\kappa_1,\kappa_2)\coloneqq 18^{-3}\pigl( 1 - 18^{2} \bigl(\alpha_0(\kappa_1) + \alpha_0(\kappa_2) + \alpha_0(\kappa_1)\alpha_0(\kappa_2) \bigr)\pigr)^{-1}.
    \end{equation}
\end{proposition}

\begin{remark}
    If \( \kappa_1 = \kappa_2 \eqqcolon \kappa \), then the assumption on \( \kappa_1 \) and \( \kappa_2\) in Proposition~\eqref{proposition: new Z-LGT coupling upper bound} is equivalent to~\ref{assumption: 3}.
\end{remark}

\begin{proposition}\label{proposition: alternative plaquette bound}
    Let \( \beta,\kappa \geq 0 \) be such that~\ref{assumption: 3} holds, and assume that \( p \in C_2(B_N) \) is such that \( \dist_0(\support \partial e,\partial C_1(B_N))) \geq 8. \) Then 
    \begin{equation}\label{eq: coupling and conditions 4}
        \begin{split}
            &
            \mu_{N,\beta,\kappa} \Bigl( \pigl\{ \hat \sigma \in \Omega^1(B_N,G)  \colon  
             d  \hat \sigma(p) \neq 0 \pigr\}\Bigr)
            \leq  
            K_2 \alpha_2(\beta,\kappa),
        \end{split}
    \end{equation}
    where 
    \begin{equation}\label{eq: K2}
        K_2 \coloneqq 4  
        \bigl( 18^{2}   
            +
            18 \alpha_0(\kappa) \bigl(1-18^2 \alpha_0(\kappa)\bigr)^{-1}     \bigr) \frac{\alpha_1(\beta)^6}{\alpha_0(\beta)^6}. 
    \end{equation}
\end{proposition}

\begin{proposition}\label{proposition: ZZ upper bound}
    Let \( \kappa \geq 0 \) be such that~\ref{assumption: 3} holds, let \( E_0 \subseteq C_1(B_N) \) be non-empty, and let \( e \in C_1(B_N) \) be such that \( \dist_0(e,\partial C_1(B_N))) \geq 8. \)
    Then
    \begin{equation}\label{eq: Z-Z coupling inequality}
        \mu^{E_0}_{N,(\infty,\kappa),(\infty,\kappa)} \bigl( \bigr\{ (\sigma,\sigma') \in \Omega^1_0(B_N,G) \times \Omega^1_0(B_N,G) \colon e \in E_{E_0,\sigma, \sigma'}  \bigr\} \bigr) \leq 
        K_3 \Bigl( 
        K_4 \alpha_0(\kappa) \Bigr)^{\dist_0(e,E_0)}
    \end{equation}
    where
    \begin{equation}\label{eq: K3 and K4}
        K_3 \coloneqq 18^{-3}(1 - 18^2  \bigl(2 + \alpha_0(\kappa) \bigr)\alpha_0(\kappa))^{-1},
        \quad \text{and} \quad
        K_4 \coloneqq 18^{2} \bigl(2  + \alpha_0(\kappa) \bigr).
    \end{equation} 
\end{proposition}

\begin{proposition}\label{proposition: before E3}
    Let \( \beta,\kappa \geq 0 \) be such that~\ref{assumption: 3} hold, let \( e \in C_1(B_N) \) be such that for all \( p \in \hat \partial e\) we have \( {\dist_0(\support \partial p,\partial C_1(B_N))\geq 8},\) and let \( E_0 \coloneqq \{ e' \in C_1(B_N) \colon \hat \partial e'  \cap \hat \partial e \neq \emptyset\}. \) Then
    \begin{equation*}\label{eq: coupling and conditions 5}
        \begin{split}
            &\mu_{N,\beta,\kappa}   \Bigl( \pigl\{ \sigma \in \Omega^1(B_N,G)  \colon   
            |\mathcal{C}_{\mathcal{G}(\sigma)}(E_0)| \geq 2M \text{ and }
            \bigl| \support d( \sigma|_{ \mathcal{C}_{\mathcal{G}(\sigma)}(E_0)} ) \bigr| \geq 2M' \pigr\}\Bigr)
            \\&\qquad\leq 
            K_5 \bigl(18^{2} \alpha_0(\kappa)\bigr)^M  \alpha_1(\beta)^{M'}.
        \end{split}
    \end{equation*}
    where
    \begin{equation}\label{eq: K5}
        K_5 \coloneqq  (18^{-3} + 18^{-1}) (1-18^2 \alpha_0(\kappa))^{-1}.
    \end{equation}
\end{proposition}

Before we give proofs of the above propositions, we introduce some additional notation and prove two useful lemmas.
To this end, we first define a graph \( \bar {\mathcal{G}} \) as follows. Fix some \( g \in G\smallsetminus \{ 0 \} \) and define \( \bar \sigma \in \Omega^1(B_N,G) \) by letting \( \sigma(e) = g \) for all \( e \in C_1(B_N)^+ \). Let \( \bar {\mathcal{G}} \coloneqq \mathcal{G}(\bar \sigma,0) \) and note that \( \bar {\mathcal{G}} \) does not depend on the choice of \( g \). 
Note also that if \( \sigma,\sigma' \in \Omega^1(B_N,G) \), then \( \mathcal{G}(\sigma, \sigma') \) is a subgraph of \( \bar {\mathcal{G}} \).

\begin{lemma}[See also Lemma~7.15 and~Lemma~7.16 in \cite{flv2021}]\label{lemma: from walks to sets}
   Let \( e \in C_1^+(B_N) \), and let \( m \geq 1. \) Then
   \begin{equation*}
       \pigl| \bigl\{ E \subseteq C_1^+(B_N) \colon e \in E,\, |E| = m,\, \text{and } \bar {\mathcal{G}}|_{E} \text{ is connected} \bigr\} \bigr|\leq 18^{\max(0,2m-3)}.
   \end{equation*} 
\end{lemma}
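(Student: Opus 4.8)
The statement is a standard counting estimate for connected subgraphs of a fixed locally finite graph, so the plan is to bound the degree of $\bar{\mathcal{G}}$ and then apply a walk-exploration argument. First I would compute the maximum degree $\Delta$ of $\bar{\mathcal{G}}$. A vertex $e \in C_1(B_N)$ is adjacent in $\bar{\mathcal{G}}$ to $-e$, and to every $e' \neq \pm e$ such that $\support \hat\partial e$ and $\support \hat\partial(\pm e')$ intersect; since in $\mathbb{Z}^4$ an edge lies on exactly $6$ plaquettes and each plaquette has $4$ edges, the number of edges $e'$ (up to sign) sharing a plaquette with $e$ is at most $6 \cdot 3 = 18$ other positively oriented edges, and accounting for orientations one gets a degree bound of the shape $\Delta \le 18$ when counting on $C_1^+(B_N)$ (the precise bookkeeping is exactly the content of Lemma~7.15/7.16 of~\cite{flv2021}, which I would cite for the constant).

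Next I would set up the exploration. Fix $e \in C_1^+(B_N)$ and a connected set $E \ni e$ of size $m$ in $\bar{\mathcal{G}}|_E$. Choose a spanning tree $T$ of $\bar{\mathcal{G}}|_E$ rooted at $e$; perform a depth-first traversal of $T$, which visits each of the $m-1$ non-root edges of $E$ along a closed walk that traverses each tree-edge exactly twice, hence has length $2(m-1)$. At each step of this walk we are at a vertex of $\bar{\mathcal{G}}$ and must choose one of its at most $18$ neighbours (or backtrack, which is determined), so the number of such walks starting at $e$ is at most $18^{2(m-1)} = 18^{2m-2}$. A cleaner bound, and the one matching the claimed exponent $\max(0, 2m-3)$, comes from noting that the first step of the walk has at most $18$ choices but every subsequent forward step is constrained, or alternatively from the standard improvement that a DFS on a tree with $m-1$ edges, when one records only the ``new vertex discovered'' moves and uses that the return moves are forced, yields at most $18^{m-1}$ or, with the sharper accounting used in~\cite{flv2021}, $18^{2m-3}$ for $m \ge 2$ and $1$ for $m=1$. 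Since each set $E$ is the vertex set of at least one such walk, $|\{E : e \in E, |E| = m, \bar{\mathcal{G}}|_E \text{ connected}\}| \le 18^{\max(0,2m-3)}$.

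The main obstacle is pinning down the exact constant $18$ and the exact exponent $\max(0, 2m-3)$ rather than a cruder $18^{2m-2}$ or a larger base: this requires carefully counting, for a fixed oriented edge $e$ in $\mathbb{Z}^4$, exactly how many edges $e'$ can be $\bar{\mathcal{G}}$-adjacent to it without double-counting orientations, and then arguing that the $m=1$ case is trivial (the single set $\{e\}$) while for $m \ge 2$ one can save a factor by observing the root $e$ is fixed and the walk has length $2(m-1)$ but the last return step to any leaf contributes no new choice, or by invoking the precise tree-walk bound. Since the referenced Lemma~7.15 and Lemma~7.16 of~\cite{flv2021} already establish exactly this kind of estimate, I would structure the proof as: (i) recall that $\bar{\mathcal{G}}$ has the same local structure as the graph in~\cite{flv2021}; (ii) for $m=1$ the bound is immediate; (iii) for $m \ge 2$, reduce to counting DFS-walks on spanning trees and apply the degree bound to get $18^{2m-3}$, citing~\cite{flv2021} for the degree computation and the walk-counting lemma. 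I expect essentially no analytic difficulty — the entire content is combinatorial bookkeeping of the cell complex of $\mathbb{Z}^4$.
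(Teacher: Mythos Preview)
Your approach is essentially the same as the paper's: degree bound $18$ via the $6\cdot 3$ plaquette count, spanning tree of $\bar{\mathcal G}|_E$, and encoding $E$ by a spanning walk rooted at $e$. The only place where you are vague and the paper is precise is the passage from the exponent $2m-2$ to $2m-3$. The paper does not use any of the mechanisms you list (constrained forward steps, forced returns from leaves, or a separate $18^{m-1}$ bound); it simply observes that the closed DFS walk of length $2(m-1)$ starts and ends at the fixed vertex $e$, so deleting its final step yields an open walk of length $2m-3$ that still starts at $e$ and still visits every vertex of $E$. Counting walks of length $2m-3$ from a fixed start in a graph of maximum degree $18$ gives $18^{2m-3}$ directly.
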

 
\begin{proof}
    Since the case \( m = 1 \) is trivial, we can assume that \( m \geq 2. \)

    Fix some set  \( E \subseteq C_1^+(B_N) \) such that \( e \in E, \) \( |E| = m,\) and \( \bar {\mathcal{G}}|_{E} \) is connected.
    Since the graph \( \bar {\mathcal{G}}|_{E}  \) is connected, it has a spanning tree. Let \( \mathcal{T} \) be such a spanning tree.
    By definition, \( \mathcal{T} \) must contain exactly \( m-1 \) edges. Since any spanning tree is connected, \( \mathcal{T} \) must have a spanning walk which uses each edge in \( \mathcal{T} \) exactly twice, and starts and ends at the same vertex. This walk must have length \( 2(m-1) = 2m-2 \).  By removing one of the edges adjacent to the vertex \( e \), we obtain a spanning walk of \( \bar {\mathcal{G}}|_{E}  \) which has length \( 2m-3 \), starts at the vertex \( e \) and visits every vertex of \( \bar {\mathcal{G}}|_{E}  \) at least once.
    
    Since for each \( e' \in E_N \), we have \( \bigl|\{ e'' \in C_1(B_N)\smallsetminus \{ e' \} \colon \hat \partial e'' \cap \hat \partial e' \neq \emptyset \}\bigr|= 6 \cdot 3 = 18,\) there can exists at most \( 18^{2m-3} \) walks in \( \bar{\mathcal{G}} \) which starts at \( e \) and has length \( 2m-3, \) and hence the desired conclusion follows.
\end{proof}

\begin{lemma}\label{lemma: Jke bound}
    Let \( \kappa_1,\kappa_2 \geq 0 \), and let \( E \subseteq C_1^+(B_N). \) Then
    \begin{equation}\label{eq: Jke}
         \sum_{\substack{ \hat{\hat\sigma} \in \Omega^1(B_N,G),\,  \hat{\hat\sigma}' \in \Omega^1(B_N,G) \mathrlap{\colon} \\
        (\support \hat{\hat\sigma} \cup \support \hat{\hat\sigma}')^+ = E}}
        \varphi_{\kappa_1}\bigl(\hat{\hat\sigma}\bigr) \varphi_{\kappa_2}\bigl(\hat{\hat\sigma}' \bigr)
        \leq \bigl( \alpha_0(\kappa_1) + \alpha_0(\kappa_2)
        + \alpha_0(\kappa_1)\alpha_0(\kappa_2)\bigr)^{|E|}.
    \end{equation} 
\end{lemma}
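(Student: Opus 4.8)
The plan is to factorize the sum defining $J_{\kappa_1,\kappa_2}(E)$ over the edges of $E$. Recall that for any $\sigma\in\Omega^1(B_N,G)$ we have $\varphi_{\kappa}(\sigma)=\prod_{e\in C_1(B_N)}\varphi_\kappa(\sigma(e))$, and $\varphi_\kappa(0)=1$, so only edges in the support contribute a nontrivial factor. Hence for a pair $(\hat{\hat\sigma},\hat{\hat\sigma}')$ with $(\support\hat{\hat\sigma}\cup\support\hat{\hat\sigma}')^+=E$, the product $\varphi_{\kappa_1}(\hat{\hat\sigma})\varphi_{\kappa_2}(\hat{\hat\sigma}')$ factors as $\prod_{e\in E}\varphi_{\kappa_1}(\hat{\hat\sigma}(e))\varphi_{\kappa_2}(\hat{\hat\sigma}'(e))$, where we use that $\varphi$ is symmetric (see~\eqref{eq: phi is symmetric}) so the choice of positive orientation representative of each edge in $E$ does not matter, and $\varphi$ takes value $1$ off the support.

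The key step is then to observe that the constraint $(\support\hat{\hat\sigma}\cup\support\hat{\hat\sigma}')^+=E$ is \emph{not} a product constraint — it requires the union of supports to be \emph{exactly} $E$, not just contained in $E$. To handle this I would instead bound $J_{\kappa_1,\kappa_2}(E)$ by the sum over all pairs with $(\support\hat{\hat\sigma}\cup\support\hat{\hat\sigma}')^+\subseteq E$, which only enlarges the sum since all terms are nonnegative. Under this relaxed constraint the sum factorizes completely:
\begin{equation*}
    J_{\kappa_1,\kappa_2}(E)\leq\prod_{e\in E}\Bigl(\sum_{g,g'\in G}\varphi_{\kappa_1}(g)\varphi_{\kappa_2}(g')\Bigr)\Big/(\text{correction})
\end{equation*}
— more precisely, for each $e\in E$ the pair $(\hat{\hat\sigma}(e),\hat{\hat\sigma}'(e))=(g,g')$ ranges over all of $G\times G$, and the local weight is $\varphi_{\kappa_1}(g)\varphi_{\kappa_2}(g')$. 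Splitting $G\times G$ into the four cases $g=g'=0$; $g\neq0,g'=0$; $g=0,g'\neq0$; $g\neq0,g'\neq0$, and using $\varphi_{\kappa_i}(0)=1$ together with the definition $\alpha_0(r)=\sum_{g\in G\smallsetminus\{0\}}\varphi_r(g)$ — here I should double-check whether the definition in~\eqref{eq: alpha01def} uses $\varphi_r(g)$ or $\varphi_r(g)^2$; the statement's target bound involving $\alpha_0(\kappa_1)+\alpha_0(\kappa_2)+\alpha_0(\kappa_1)\alpha_0(\kappa_2)$ forces the local sum over $(G\times G)\smallsetminus\{(0,0)\}$ to equal exactly $\alpha_0(\kappa_1)+\alpha_0(\kappa_2)+\alpha_0(\kappa_1)\alpha_0(\kappa_2)$, so one must read off the right power — the per-edge factor is $1+\alpha_0(\kappa_1)+\alpha_0(\kappa_2)+\alpha_0(\kappa_1)\alpha_0(\kappa_2)=(1+\alpha_0(\kappa_1))(1+\alpha_0(\kappa_2))$.

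The last step is to notice that we are overcounting: the pair $(g,g')=(0,0)$ on a given edge $e$ would mean $e\notin\support\hat{\hat\sigma}\cup\support\hat{\hat\sigma}'$, which is incompatible with $e\in E$ when we demand equality. So in fact for each $e\in E$ the local sum runs only over $(g,g')\in(G\times G)\smallsetminus\{(0,0)\}$, giving per-edge factor exactly $\alpha_0(\kappa_1)+\alpha_0(\kappa_2)+\alpha_0(\kappa_1)\alpha_0(\kappa_2)$, and multiplying over the $|E|$ edges of $E$ yields $J_{\kappa_1,\kappa_2}(E)\leq\bigl(\alpha_0(\kappa_1)+\alpha_0(\kappa_2)+\alpha_0(\kappa_1)\alpha_0(\kappa_2)\bigr)^{|E|}$ as claimed. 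The only mild subtlety, and the place to be careful, is the bookkeeping: one must argue that summing over pairs with $\support$ \emph{exactly} $E$ is dominated by a clean product over $e\in E$ of local sums over $(G\times G)\smallsetminus\{(0,0)\}$ — equivalently, the map $(\hat{\hat\sigma},\hat{\hat\sigma}')\mapsto\bigl((\hat{\hat\sigma}(e),\hat{\hat\sigma}'(e))\bigr)_{e\in E}$ is a bijection between the relevant configuration set and a subset of $\prod_{e\in E}\bigl((G\times G)\smallsetminus\{(0,0)\}\bigr)$, and the weight is multiplicative. This is essentially an exercise in unpacking notation; there is no real analytic obstacle.
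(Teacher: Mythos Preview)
Your proposal is correct and follows essentially the same route as the paper's proof: factor the weight over positive edges, observe that the constraint $(\support\hat{\hat\sigma}\cup\support\hat{\hat\sigma}')^+=E$ forces, for each $e\in E$, the pair $(\hat{\hat\sigma}(e),\hat{\hat\sigma}'(e))$ to lie in $(G\times G)\smallsetminus\{(0,0)\}$, and then bound by the product of the three-case local sums. The one detail you flagged but did not resolve is the power: since $\varphi_\kappa(\sigma)=\prod_{e\in C_1(B_N)}\varphi_\kappa(\sigma(e))$ runs over \emph{both} orientations of every edge and $\varphi_r(g)=\varphi_r(-g)$, collapsing to $C_1^+(B_N)$ gives $\prod_{e\in C_1^+(B_N)}\varphi_\kappa(\sigma(e))^2$, which is exactly why $\alpha_0(r)=\sum_{g\neq 0}\varphi_r(g)^2$ carries the square and the per-edge sum comes out to $\alpha_0(\kappa_1)+\alpha_0(\kappa_2)+\alpha_0(\kappa_1)\alpha_0(\kappa_2)$.
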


\begin{proof}
    If \( \hat{\hat\sigma} \in \Omega^1(B_N,G) \) and \( e' \notin \support \hat{\hat\sigma} \), then, for any \( \kappa \geq 0 \), we have \( \varphi_\kappa\bigl(\hat{\hat\sigma}(e')\bigr) = \varphi_\kappa(0)=1 \). Also, if \( \hat{\hat\sigma}, \hat{\hat\sigma}' \in \Omega^1(B_N,G) \) and \( e' \in (\support \hat{\hat\sigma} \cup \support \hat{\hat\sigma}')^+ = E  \), then either \( \hat{\hat\sigma}(e') \neq 0 \) and \( \hat{\hat\sigma}'(e') = 0 \), \( \hat{\hat\sigma}(e')=0 \) and \(\hat{\hat\sigma}'(e') \neq 0 \), or \( \hat{\hat\sigma}(e'),\hat{\hat\sigma}'(e') \neq 0 \). %
    Combining these observations, we find that
    \begin{equation*} 
    \begin{split}
        &\sum_{\substack{ \hat{\hat\sigma} \in \Omega^1(B_N,G),\,  \hat{\hat\sigma}' \in \Omega^1(B_N,G) \colon \\
            (\support \hat{\hat\sigma} \cup \support \hat{\hat\sigma}')^+ = E}}
        \prod_{e' \in C_1(B_N)^+}  \varphi_{\kappa_1}\bigl(\hat{\hat\sigma}(e')\bigr)^2 \prod_{e'' \in C_1(B_N)^+}
        \varphi_{\kappa_2}\bigl(\hat{\hat\sigma}'(e'')\bigr)
        \\&\qquad\leq  
        \prod_{e' \in E}
        \biggl\{ 
        \varphi_{\kappa_1}(0)^2
        \bigg(\sum_{\hat{\hat\sigma}'(e') \in G \smallsetminus \{0\}} \varphi_{\kappa_2}(\hat{\hat\sigma}'(e'))^2\bigg)
        + 
        \biggl(\sum_{\hat{\hat\sigma}(e') \in G \smallsetminus \{0\}}
        \varphi_{\kappa_1}\bigl(\hat{\hat\sigma}(e')\bigr)^2 \biggr)
        \varphi_{\kappa_2}(0)^2
        \\\nonumber
        &\qquad\qquad\qquad +\bigg(\sum_{\hat{\hat\sigma}(e') \in G \smallsetminus \{0\}} 
        \varphi_{\kappa_1}\bigl(\hat{\hat\sigma}(e')\bigr)^2 \bigg)
        \bigg(\sum_{\hat{\hat\sigma}'(e') \in G \smallsetminus \{0\}} \varphi_{\kappa_2}\bigl(\hat{\hat\sigma}'(e')\bigr)^2\bigg)
        \biggr\}
            \\ 
        & \qquad = \prod_{e' \in E}
        \bigl(\alpha_0(\kappa_1) + \alpha_0(\kappa_2)
        + \alpha_0(\kappa_1)\alpha_0(\kappa_2)\bigr) 
        =   \bigl(\alpha_0(\kappa_1) + \alpha_0(\kappa_2)
        + \alpha_0(\kappa_1)\alpha_0(\kappa_2)\bigr)^{|E|}. 
    \end{split}
    \end{equation*}     
    This concludes the proof.
\end{proof}

\begin{proof}[Proof of Proposition~\ref{proposition: new Z-LGT coupling upper bound}]

Since \( \mathcal{C}_{\mathcal{G}(\hat \sigma,\hat \sigma')}(e) \) is symmetric, induces a connected subgraph in \( \bar{\mathcal{G}} \), and contains \( e \) if it non-empty, we have
    \begin{align}
            &\mu_{N,\beta,\kappa_1} \times \mu_{N,\infty,\kappa_2} \Bigl( \pigl\{ (\hat \sigma,\hat \sigma') \in \Omega^1(B_N,G) \times \Omega^1_0(B_N,G)\colon  \nonumber
            \\[-1ex]&\hspace{13em}
            |\mathcal{C}_{\mathcal{G}(\hat \sigma,\hat \sigma')}(e)| \geq 2M, \text{ and }\bigl| \support d\bigl( \hat \sigma|_{ \mathcal{C}_{\mathcal{G}(\hat \sigma,\hat \sigma')}(e)} \bigr) \bigr| \geq 2M'  \pigr\}\Bigr)\nonumber
            \\&\qquad=          
            \sum_{\substack{E \subseteq C_1^+(B_N) \mathrlap{\colon} \\ \substack{e \in E,\, |E| \geq M,\\ \bar{\mathcal{G}|}_E \text{ is connected} }}}
            \mu_{N,\beta,\kappa_1} \times \mu_{N,\infty,\kappa_2} \Bigl( \pigl\{ (\hat \sigma,\hat \sigma') \in \Omega^1(B_N,G) \times \Omega^1_0(B_N,G)\colon  \label{eq: second line}
            \\[-6.75ex]&\hspace{13em}
            \mathcal{C}_{\mathcal{G}(\hat \sigma,\hat \sigma')}(e)^+ = E, \text{ and }\bigl| \support d\bigl( \hat \sigma|_{ \mathcal{C}_{\mathcal{G}(\hat \sigma,\hat \sigma')}(e)} \bigr) \bigr| \geq 2M'  \pigr\}\Bigr) \phantom{\mathrlap{\sum_{\substack{E \subseteq C_1^+(B_N) \mathrlap{\colon} \\ \substack{e \in E,\, |E| \geq M,\\ E \text{ connected in } \bar{\mathcal{G}}}}}}} \nonumber
    \end{align}
        Given \( (\hat \sigma,\hat \sigma') \in \Omega^1(B_N,G) \times \Omega^1_0(B_N,G) \), if we let \( \hat{\hat\sigma} \coloneqq  \hat \sigma|_{\mathcal{C}_{\mathcal{G}(\hat \sigma,\hat \sigma')}(e)} \) and \( \hat{\hat \sigma}' \coloneqq  \hat \sigma'|_{\mathcal{C}_{\mathcal{G}(\hat\sigma,\hat \sigma')}(e)} \), then the following statements hold.
    \begin{enumerate}
    
        \item By Lemma~\ref{lemma: closed is closed new}, we have  \( \hat{\hat\sigma}' \in \Omega^1_0(B_N,G) \). 
        
        \item If \( \bigl| \support d(\hat \sigma|_{\mathcal{C}_{\mathcal{G}(\hat \sigma, \hat \sigma')}(e)})\bigr| \geq 2M' \), then,  by definition, \( \bigl| \support d\hat{\hat \sigma} \bigr| \geq 2M'. \)
        
        \item If \( |\mathcal{C}_{\mathcal{G}(\hat \sigma,\hat \sigma')}(e) |\geq 2 \), then we have \( e \in \support \hat \sigma \cup \support \hat \sigma',\) and thus
        \( (\mathcal{C}_{\mathcal{G}(\hat \sigma,\hat \sigma')}(e))^+ = (\support \hat{\hat\sigma} \cup \support \hat{\hat \sigma}')^+\). Consequently, \( (\mathcal{C}_{\mathcal{G}(\hat \sigma,\hat \sigma')}(e))^+ = E \) if and only if \(   (\support \hat{\hat \sigma} \cup \support \hat{\hat\sigma}')^+ = E. \)
    \end{enumerate}
     As a consequence, ~\eqref{eq: second line} can be bounded from above by
    \begin{equation*} 
        \begin{split}
            &
            \sum_{\substack{E \subseteq C_1^+(B_N) \mathrlap{\colon} \\ \substack{e \in E,\, |E| \geq M,\\ \bar{\mathcal{G}|}_E \text{ is connected} }}}\,
            \sum_{\substack{\hat{\hat\sigma} \in \Omega^1(B_N,G),\,  \hat{\hat\sigma}' \in \Omega^1_0(B_N,G) \colon \\
            \substack{(\support \hat{\hat\sigma} \cup \support \hat{\hat\sigma}')^+ = E,\\ |\support d \hat{\hat\sigma} |\geq 2M'} }} 
            \mu_{N,\beta,\kappa_1} \times \mu_{N,\infty,\kappa_2} \bigl( \{ (\hat \sigma,\hat \sigma') \in \Omega^1(B_N,G) \times \Omega^1_0(B_N,G) \colon
            \\[-8ex]&\hspace{24em}
            \hat \sigma|_{\mathcal{C}_{\mathcal{G}(\hat \sigma, \hat \sigma')}(e)} =  \hat{\hat\sigma}
            , \text{ and }
            \hat \sigma'|_{\mathcal{C}_{\mathcal{G}(\hat \sigma, \hat \sigma'}(e)} =  \hat{\hat\sigma}'\} \bigr). \\ &\mbox{}
        \end{split}
    \end{equation*}
    For any   \(  \hat{\hat\sigma} \), and \(  \hat{\hat\sigma}' \) as in the sum above, by Lemma~\ref{lemma: cluster is subconfig}, we have 
    \begin{equation*}
        \begin{split}
            &\mu_{N,\beta,\kappa_1} \times \mu_{N,\infty,\kappa_2} \bigl( \{ (\hat \sigma,\hat \sigma') \in \Omega^1(B_N,G) \times \Omega^1_0(B_N,G) \colon
            \hat \sigma|_{\mathcal{C}_{\mathcal{G}(\hat \sigma, \hat \sigma')}(e)} =  \hat{\hat\sigma} \text{ and } 
            \hat \sigma'|_{\mathcal{C}_{\mathcal{G}(\hat \sigma, \hat \sigma'}(e)} = \hat{\hat\sigma}'\} \bigr) 
            \\&\qquad\leq \mu_{N,\beta,\kappa_1} \times \mu_{N,\infty,\kappa_2} \bigl( \{ (\hat \sigma,\hat \sigma') \in \Omega^1(B_N,G) \times \Omega^1_0(B_N,G) \colon
            \hat{\hat\sigma} \leq \hat \sigma  \text{ and } 
            \hat{\hat\sigma}' \leq \hat \sigma' \} \bigr)
            \\&\qquad= \mu_{N,\beta,\kappa_1} \bigl( \{ \hat \sigma  \in \Omega^1(B_N,G)  \colon
            \hat{\hat\sigma} \leq \hat \sigma   \} \bigr)
            \mu_{N,\infty,\kappa_2} \bigl( \{ \hat \sigma' \in  \Omega^1_0(B_N,G) \colon
            \hat{\hat\sigma}' \leq \hat \sigma' \} \bigr) \leq \varphi_{\beta,\kappa_1}( \hat{\hat\sigma}) \varphi_{\infty,\kappa_2} (\hat{\hat\sigma}'),
        \end{split}
    \end{equation*}
    where the last inequality follows by applying Proposition~\ref{proposition: edgecluster flipping ii} twice.
    Taken together, the above equations thus show that
    \begin{equation}\label{eq: first summary B2}  
        \begin{split}
            &\mu_{N,\beta,\kappa_1} \times \mu_{N,\infty,\kappa_2} \Bigl( \pigl\{ (\hat \sigma,\hat \sigma') \in \Omega^1(B_N,G) \times \Omega^1_0(B_N,G)\colon  
            \\[-1.5ex]&\hspace{13em}
            |\mathcal{C}_{\mathcal{G}(\hat \sigma,\hat \sigma')}(e)| \geq 2M\text{ and }  \bigl| \support d\bigl( \hat \sigma|_{ \mathcal{C}_{\mathcal{G}(\hat \sigma,\hat \sigma')}(e)} \bigr)\bigr| \geq 2M'  \pigr\}\Bigr)
            \\&\qquad \leq
            \sum_{\substack{E \subseteq C_1^+(B_N) \mathrlap{\colon} \\ \substack{e \in E,\, |E| \geq M,\\ \bar{\mathcal{G}|}_E \text{ is connected} }}}
            J_{\beta, \kappa_1, \kappa_2}(E),
        \end{split}
    \end{equation} 
    where
    \begin{equation*}
        J_{\beta, \kappa_1,\kappa_2}(E) \coloneqq 
        \sum_{\substack{ \hat{\hat\sigma} \in \Omega^1(B_N,G),\,  \hat{\hat\sigma}' \in \Omega^1_0(B_N,G) \colon \\
        \substack{(\support \hat{\hat\sigma} \cup \support \hat{\hat\sigma}')^+ = E,\\ \bigl| \support d \sigma \bigr| \geq 2M' 
        }}} \varphi_{\beta,\kappa_1}  ( \hat{\hat\sigma} )\,  \varphi_{\infty,\kappa_2} (  \hat{\hat\sigma}'  ).
    \end{equation*}  

    Fix some set \( E \subseteq C_1^+(B_N).\) Then 
    \begin{equation*}
        \begin{split}
            &J_{\beta, \kappa_1,\kappa_2}(E) = \sum_{\substack{ \hat{\hat\sigma} \in \Omega^1(B_N,G),\,  \hat{\hat\sigma}' \in \Omega^1_0(B_N,G) \colon \\
            \substack{(\support \hat{\hat\sigma} \cup \support \hat{\hat\sigma}')^+ = E,\\ |\support d \hat{\hat\sigma} | \geq 2M'
             }}} \varphi_{\kappa_1}\bigl(\hat{\hat\sigma}\bigr) \varphi_{\kappa_2}\bigl(\hat{\hat\sigma}'\bigr) \prod_{p \in C_2(B_N)}\varphi_\beta \bigl(d\hat{\hat\sigma}(p) \bigr)  .
        \end{split}
    \end{equation*} 
    Now recall that for any \( r \geq 0 \) and \( g \in G \), we have  \( \varphi_r(0) = 1\) and \( \varphi_r(g) = e^{r \Re(\rho(g) - 1)} \in (0,1]\). If \( g \neq 0\), then \( \varphi_r(g) < 1\) and hence \(\varphi_\beta(g)^2 \leq \alpha_1(\beta) < 1\).
    If \( \hat{\hat\sigma}, \) \( \hat{\hat\sigma}' \) and \( E \) are as above, then we must be in one of the following three cases.
    \begin{enumerate}
        \item If \( |(\support d\hat{\hat\sigma})^+| \geq 6\), then 
        \begin{equation*}
            \prod_{p \in C_2(B_N)}\varphi_\beta \bigl(d\hat{\hat\sigma}(p) \bigr) = \prod_{p \in C_2(B_N,G)^+}\varphi_\beta \bigl((d\hat{\hat\sigma})_p \bigr)^2 \leq \alpha_1(\beta)^{\max(M',6)}.
        \end{equation*} 
        
        \item If  \( |(\support d\hat{\hat\sigma})^+| \in \{ 1,2,3,4,5 \}\),  by Lemma~\ref{lemma: minimal vortex I}, \( \hat{\hat\sigma} \) must support a vortex with support at the boundary of \( B_N \), and hence we must have \( |E| \geq \dist_1(e,\partial C_1(B_N)).\) At the same time, by definition, we also have \( \prod_{p \in C_1(B_N,G)}\varphi_\beta \bigl(d\hat{\hat\sigma}(p) \bigr) \leq \alpha_1(\beta). \) 
        \item If  \( |(\support d\hat{\hat\sigma})^+| = 0, \) then \( \hat{\hat\sigma} \in \Omega^1_0(B_N,G). \) Since \( |\support (\hat{\hat\sigma})^+ \cup \support (\hat{\hat\sigma}')^+| > 0 \), it follows from Lemma~\ref{lemma: small 1forms} that \(|E| \geq \min(M,8,\dist_0(e, \partial C_1(B_N))). \) Moreover, we have \( \prod_{p \in C_1(B_N,G)}\varphi_\beta \bigl(d\hat{\hat\sigma}(p) \bigr) = 1. \) 
    \end{enumerate}
    Consequently, we have
    \begin{equation*}
        \begin{split}
            J_{\beta, \kappa_1,\kappa_2}(E)  
            &\leq   \bigl(
            \alpha_1(\beta)^{\max(6,M')}
            +
            \mathbb{1}_{M'\in \{ 1,2,3,4,5 \},\,|E| \geq \dist_1(e,\partial C_1(B_N))} \cdot \alpha_1(\beta)
            \\&\qquad\qquad+
            \mathbb{1}_{M'=0,\, |E| \geq \max(M,\min(8,\dist_0(e, \partial C_1(B_N))))} 
            \bigr)
            \!\!\!\!\sum_{\substack{ \hat{\hat\sigma} \in \Omega^1(B_N,G),\,  \hat{\hat\sigma}' \in \Omega^1_0(B_N,G) \colon \\
            \substack{(\support \hat{\hat\sigma} \cup \support \hat{\hat\sigma}')^+ = E, \\ |\support d\hat{\hat\sigma}| \geq 2M'  }}} \!\!\!\! \varphi_{\kappa_1}\bigl(\hat{\hat\sigma}\bigr) \varphi_{\kappa_2}\bigl(\hat{\hat\sigma}'\bigr).
        \end{split}
    \end{equation*} 
    By dropping the condition \( \bigl|\support d\hat{\hat\sigma}\bigr| \geq 2M'\), and replacing the condition \( \hat{\hat\sigma}' \in \Omega^1_0(B_N,G)\) with the condition that \( \hat{\hat\sigma}' \in \Omega^1(B_N,G) \), we make the sum larger. 
    Hence
    \begin{equation*}
        \begin{split}
            &
            \sum_{\substack{ \hat{\hat\sigma} \in \Omega^1(B_N,G),\,  \hat{\hat\sigma}' \in \Omega^1_0(B_N,G) \colon \\
            \substack{(\support \hat{\hat\sigma} \cup \support \hat{\hat\sigma}')^+ = E, \\ |\support d\hat{\hat\sigma}| \geq 2M'  }}} \varphi_{\kappa_1}\bigl(\hat{\hat\sigma}\bigr) \varphi_{\kappa_2}\bigl(\hat{\hat\sigma}'\bigr)
            \leq
            \sum_{\substack{ \hat{\hat\sigma} \in \Omega^1(B_N,G),\,  \hat{\hat\sigma}' \in \Omega^1(B_N,G) \colon \\
            (\support \hat{\hat\sigma} \cup \support \hat{\hat\sigma}')^+ = E }} \varphi_{\kappa_1}\bigl(\hat{\hat\sigma}\bigr) \varphi_{\kappa_2}\bigl(\hat{\hat\sigma}'\bigr).
        \end{split}
    \end{equation*}
    Using Lemma~\ref{lemma: Jke bound}, we thus obtain
    \begin{equation}\label{eq: innermost sum B2}
    \begin{split}
        J_{\beta, \kappa_1,\kappa_2}(E)
        &\leq 
        \bigl(
            \alpha_1(\beta)^{\max(6,M')}
            +
            \mathbb{1}_{M'\in \{ 1,2,3,4,5\},\,|E| \geq \dist_1(e,\partial C_1(B_N))} \cdot \alpha_1(\beta)
            \\&\qquad\qquad+
            \mathbb{1}_{M'=0,\, |E| \geq \max(M,\min(8,\dist_0(e, \partial C_1(B_N))))} 
            \bigr)
            \\&\qquad\cdot  \pigl( \bigl(\alpha_0(\kappa_1)+\alpha_0(\kappa_2)
        + \alpha_0(\kappa_1)\alpha_0(\kappa_2)\bigr)^{|E|} \pigr).
        \end{split}
    \end{equation}
    Combining~\eqref{eq: first summary B2} and~\eqref{eq: innermost sum B2} and applying Lemma~\ref{lemma: from walks to sets}, we now finally obtain
    \begin{equation*} 
        \begin{split}
            &\mu_{N,\beta,\kappa} \times \mu_{N,\infty,\kappa} \Bigl( \pigl\{ (\hat \sigma,\hat \sigma') \in \Omega^1(B_N,G) \times \Omega^1_0(B_N,G)\colon  
            \\[-1.5ex]&\hspace{13em}
           |\mathcal{C}_{\mathcal{G}(\hat \sigma,\hat \sigma')}(e)| \geq 2M\text{ and }  \bigl| \support d\bigl( \hat \sigma|_{ \mathcal{C}_{\mathcal{G}(\hat \sigma,\hat \sigma')}(e)} \bigr)\bigr| \geq 2M'  \pigr\}\Bigr)
            \\&\qquad \leq
            \sum_{m=M}^\infty   
            18^{\max(0,2m-3)} \bigl(\alpha_0(\kappa_1) + \alpha_0(\kappa_2) + \alpha_0(\kappa_1)\alpha_0(\kappa_2) \bigr)^m 
            \alpha_1(\beta)^{\max(6,M')}
            \\&\qquad\qquad+
            \mathbb{1}_{M'\in \{1,2,3,4,5\}}\sum_{m=\dist_1(e,\partial C_1(B_N))}^\infty  
            18^{\max(0,2m-3)} \bigl(\alpha_0(\kappa_1) + \alpha_0(\kappa_2) + \alpha_0(\kappa_1)\alpha_0(\kappa_2) \bigr)^m \alpha_1(\beta)
            \\&\qquad\qquad+
            \mathbb{1}_{M'=0}\!\!\!\! \sum_{m=\max(M,\min(8,\dist_0(e,\partial C_1(B_N))))}^\infty  \!\!\!\!
            18^{\max(0,2m-3)} \bigl(\alpha_0(\kappa_1) + \alpha_0(\kappa_2) + \alpha_0(\kappa_1)\alpha_0(\kappa_2) \bigr)^m.
        \end{split}
    \end{equation*} 
    Computing the above geometric sums, we obtain~\eqref{eq: coupling and conditions 4} as desired. 
\end{proof}

\begin{proof}[Proof of Proposition~\ref{proposition: alternative plaquette bound}]
    If \( \hat \sigma \in \Omega^1(B_N,G) \) satisfies \( d \hat \sigma(p) \neq 0 \), then there must exist \( e \in \partial p \) such that \( \sigma(e) \neq 0 .\) For any such \( e, \) we must have \( |(\mathcal{C}_{\mathcal{G}(\hat \sigma,0)}(e))^+| \geq 1 \). Moreover, since \( \sigma(e) \neq 0 \), for any \( e' \in \partial p \) such that \( \sigma(e') \neq0 \), by definition, we have \( e' \in \mathcal{C}_{\mathcal{G}(\hat \sigma,0)}(e). \) Consequently, we must have \( d(\hat \sigma|_{\mathcal{C}_{\mathcal{G}(\hat \sigma,0)}(e)})(p) = d\hat \sigma(p) \neq 0. \) Using Lemma~\ref{lemma: minimal vortex I}, it follows that \( |(\support d(\hat \sigma|_{\mathcal{C}_{\mathcal{G}(\hat \sigma,0)}(e)}))^+|\geq 6. \) Combining these observations with a union bound, it follows that
    \begin{equation*}
        \begin{split}
            &
            \mu_{N,\beta,\kappa} \Bigl( \pigl\{ \hat \sigma \in \Omega^1(B_N,G)  \colon  
             d  \hat \sigma(p) \neq 0 \pigr\}\Bigr)
            \\&\qquad\leq  
            \sum_{e \in \partial p}
            \mu_{N,\beta,\kappa} \Bigl( \pigl\{ \hat \sigma \in \Omega^1(B_N,G)  \colon  
            |(\mathcal{C}_{\mathcal{G}(\hat \sigma)}(e))^+|\geq 1 \text{ and } |(\support d(\hat \sigma|_{\mathcal{C}_{\mathcal{G}(\hat \sigma)}(e)}))^+| \geq 6 \pigr\}\Bigr).
        \end{split}
    \end{equation*}
    Applying Proposition~\ref{proposition: new Z-LGT coupling upper bound} with \( \kappa_1 = \kappa ,\) \( \kappa_2 = \infty ,\) \( M = 1, \) and \( M' = 6, \) we obtain~\eqref{eq: coupling and conditions 4} as desired.
\end{proof}

\begin{proof}[Proof of Proposition~\ref{proposition: ZZ upper bound}]
    Recall first that by the definition of \( \mu^{E_0}_{N,(\infty,\kappa),(\infty,\kappa)} \), using Lemma~\ref{lemma: E in coupling}, we have 
    \begin{equation*}  
        \begin{split}
            &\mu^{E_0}_{N,(\infty,\kappa),(\infty,\kappa)}   \bigl( \big\{ ( \sigma, \sigma') \in \Omega^1_0(B_N,G) \times \Omega^1_0(B_N,G) \colon  e \in E_{E_0,\sigma,  \sigma'}\big\}\bigr)
            \\&\qquad= 
            \mu_{N,\infty,\kappa} \times \mu_{N,\infty,\kappa}   \bigl( \big\{ (\hat \sigma,\hat \sigma') \in \Omega^1_0(B_N,G) \times \Omega^1_0(B_N,G) \colon  e \in E_{\hat \sigma,\hat \sigma'} \big\}\bigr).
        \end{split}
    \end{equation*}
    Next, since \( \hat \sigma,\hat \sigma' \in \Omega^1_0(B_N,G), \) we have \( d\hat \sigma = d\hat \sigma' = 0. \) Consequently, 
    \begin{equation*}  
        e \in E_{E_0,\hat \sigma,\hat \sigma'} \Leftrightarrow e \in \mathcal{C}_{\mathcal{G}(\hat \sigma,\hat \sigma')}(E_0) \Leftrightarrow E_0 \cap \mathcal{C}_{\mathcal{G}(\hat \sigma,\hat \sigma')}(e) \neq \emptyset.
    \end{equation*}
    Finally, note that if \( E_0 \cap \mathcal{C}_{\mathcal{G}(\hat \sigma,\hat \sigma')}(e) \neq \emptyset \), then, by definition, we must have \( |(\mathcal{C}_{\mathcal{G}(\hat \sigma,\hat \sigma')}(e))^+ | \geq \dist_0(e,E_0) .  \)
    Combining these observations, it follows that  
    \begin{equation*}  
        \begin{split}
            &\mu^{E_0}_{N,(\infty,\kappa),(\infty,\kappa)}   \bigl( \big\{ ( \sigma, \sigma') \in \Omega^1_0(B_N,G) \times \Omega^1_0(B_N,G) \colon  e \in E_{E_0,\sigma,  \sigma'}\big\}\bigr)
            \\&\qquad\leq 
            \mu_{N,\infty,\kappa} \times \mu_{N,\infty,\kappa}   \bigl( \big\{ (\hat \sigma,\hat \sigma') \in \Omega^1_0(B_N,G) \times \Omega^1_0(B_N,G) \colon  |\mathcal{C}_{\mathcal{G}(\hat \sigma,\hat \sigma')}(e) | \geq \dist_0(e,E_0) \big\}\bigr). 
        \end{split}
    \end{equation*}
    Applying Proposition~\ref{proposition: new Z-LGT coupling upper bound} with \( \kappa_1 = \kappa_2 = \kappa, \) \( \beta= \infty, \) \( M = \dist_0(e,E_0) ,\) and \( M' = 0 \), we obtain~\eqref{eq: Z-Z coupling inequality} as desired. 
\end{proof}

\begin{proof}[Proof of Proposition~\ref{proposition: before E3}]
    Without loss of generality, we can assume that \( e \in C_1(B_N)^+ \). To simplify notation, let 
    \begin{equation*}
        \mathcal{E} \coloneqq \bigl\{ \sigma \in \Omega^1(B_N,G) \colon |\mathcal{C}_{\mathcal{G}(\sigma)}(E_0)| \geq 2M, \text{ and }\bigl| \support d\bigl( \sigma|_{ \mathcal{C}_{\mathcal{G}(\sigma)}(E_0)} \bigr) \bigr| \geq 2M' \bigr\}.
    \end{equation*}
    
    Now note that \( \mathcal{C}_{\mathcal{G}(\hat \sigma,\hat \sigma')}(E_0) \) is symmetric, and that the set \( \mathcal{C}_{\mathcal{G}(\hat \sigma,\hat \sigma')}(E_0) \cup \{ e,-e \} \) induces a connected set in \( \bar {\mathcal{G}}. \)
    Consequently, we have
    \begin{equation*}
        \begin{split}
            &\mu_{N,\beta,\kappa} ( \mathcal{E})
            \\&\qquad= 
            \!\!\!\!\!\!\sum_{\substack{E \subseteq C_1^+(B_N) \colon  |E| \geq M,\\ \bar{\mathcal{G}|}_{E\cup \{ e \}} \text{ is connected} }}
            \!\!\!\!
            \mu_{N,\beta,\kappa}  \Bigl( \pigl\{  \sigma \in \Omega^1(B_N,G) \colon 
            \mathcal{C}_{\mathcal{G}(\sigma)}(E_0)^+ = E ,\, \bigl| \support d\bigl( \sigma|_{ \mathcal{C}_{\mathcal{G}(\sigma)}(E_0)} \bigr) \bigr| \geq 2M'
            \pigr\}\Bigr).
        \end{split}
    \end{equation*}

    Given \( \sigma \in \Omega^1(B_N,G) \), if we let \( \hat{\hat\sigma} \coloneqq  \sigma|_{\mathcal{C}_{\mathcal{G}(\sigma)}(E_0)} \), then the following statements hold.
    \begin{enumerate} 
    
        \item If \( \bigl| \support d( \sigma|_{\mathcal{C}_{\mathcal{G}( \sigma, 0)}(E_0)})\bigr| \geq 2M' \), then, by definition, \( \bigl| \support d\hat{\hat \sigma} \bigr| \geq 2M'. \)
        
        \item If \( d( \sigma|_{\mathcal{C}_{\mathcal{G}( \sigma, 0)}(E_0)})\neq 0 \), then \( E_0 \cap \support  \sigma \neq \emptyset\) and thus 
        \( (\mathcal{C}_{\mathcal{G}(\sigma)}(E_0))^+ = (\support \hat{\hat\sigma} )^+\). 
    \end{enumerate}
    As a consequence, the expression in the previous equation is bounded from above by 
    \begin{equation}\label{eq: muNbetainftysumsum4}
        \begin{split}
             &
            \sum_{\substack{E \subseteq C_1^+(B_N) {\colon} |E| \geq M,\\ \bar{\mathcal{G}|}_{E \cup \{ e \}} \text{ is connected} }}
            \,
            \sum_{\substack{\hat{\hat\sigma} \in \Omega^1(B_N,G)\colon \\
            \substack{(\support \hat{\hat\sigma})^+  = E,\\ |\support d \hat{\hat\sigma} |\geq 2M'} }} 
            \mu_{N,\beta,\kappa}  \Bigl( \pigl\{  \sigma \in \Omega^1(B_N,G) \colon  
            \hat \sigma|_{\mathcal{C}_{\mathcal{G}(\hat \sigma, 0)}(E_0)}  =  \hat{\hat\sigma}
            \pigr\}\Bigr).
        \end{split}
    \end{equation}
    For any \(  \hat{\hat\sigma} \) as in the sum above, by applying first Lemma~\ref{lemma: cluster is subconfig}, and then Proposition~\ref{proposition: edgecluster flipping ii}, we have 
    \begin{equation*}
        \begin{split}
            &\mu_{N,\beta,\kappa} \bigl( \{ \hat \sigma \in \Omega^1(B_N,G) \colon
            \hat \sigma|_{\mathcal{C}_{\mathcal{G}(\hat \sigma, 0)}(E_0)} =  \hat{\hat\sigma} \} \bigr)
            \leq 
            \mu_{N,\beta,\kappa}  \bigl( \{ \hat \sigma \in \Omega^1(B_N,G) \colon
            \hat{\hat\sigma} \leq \hat \sigma  \} \bigr)
            \leq \varphi_{\beta,\kappa}( \hat{\hat\sigma}).
        \end{split}
    \end{equation*}
    Taken together, the above equations show that
    \begin{equation}\label{eq: first summary B2 2}  
        \begin{split}
            &\mu_{N,\beta,\kappa}(\mathcal{E})
            \leq
            \sum_{\substack{E \subseteq C_1^+(B_N) \mathrlap{\colon} \\ \substack{e \in E,\, |E| \geq M,\\ \bar{\mathcal{G}|}_E \text{ is connected} }}} 
            J_{\beta, \kappa}(E),
        \end{split}
    \end{equation} 
    where
    \begin{equation*}
        J_{\beta, \kappa}(E) \coloneqq 
        \sum_{\substack{\hat{\hat\sigma} \in \Omega^1(B_N,G)\colon \\
        \substack{(\support \hat{\hat\sigma})^+ = \support E,\\ |\support d \hat{\hat\sigma} |\geq 2M'} }} \varphi_{\beta,\kappa}  ( \hat{\hat\sigma} ).
    \end{equation*} 
    Now recall that
    \begin{equation*} 
        \begin{split}
            &\varphi_{\beta,\kappa}( \hat{\hat\sigma})
            = \prod_{e' \in C_1(B_N)} \varphi_\kappa\bigl(\hat{\hat\sigma}(e')\bigr) \prod_{p \in C_2(B_N)}\varphi_\beta \bigl(d\hat{\hat\sigma}(p) \bigr).
        \end{split} 
    \end{equation*}
    Also, recall that for any \( r \geq 0 \) and \( g \in G \), we have  \( \varphi_r(0) = 1\) and \( \varphi_r(g) = e^{r \Re(\rho(g) - 1)} \in (0,1]\). If \( g \neq 0\), then \( \varphi_r(g) < 1\) and hence \(\varphi_\beta(g)^2 \leq \alpha_1(\beta) < 1\).
    If \( \hat{\hat\sigma} \) is as above, then \( |\support d\hat{\hat\sigma}| \geq 2M' \), and hence
    \begin{equation*}
        \prod_{p \in C_2(B_N,G)^+}\varphi_\beta \bigl((d\hat{\hat\sigma})_p \bigr)^2 \leq \alpha_1(\beta)^{M'}.
    \end{equation*}  
    Consequently, if \( E \) is as above, then
    \begin{equation*}
        \begin{split}
            &J_{\beta, \kappa}(E)  \leq \alpha_1(\beta)^{M'} 
             \sum_{\substack{ \hat{\hat\sigma} \in \Omega^1(B_N,G)  \colon \\
            \substack{(\support \hat{\hat\sigma})^+  = E, \\ |\support d\hat{\hat\sigma}| \geq 2M'  }}}
            \prod_{e' \in C_1(B_N)^+} \varphi_\kappa\bigl(\hat{\hat\sigma}(e')\bigr)^2 .
        \end{split}
    \end{equation*} 
    By dropping the condition \( \bigl|\support d\hat{\hat\sigma}\bigr| \geq 2M'\) we make the sum larger. Hence
    \begin{equation*}
        \begin{split}
            &J_{\beta, \kappa}(E)  \leq \alpha_1(\beta)^{M'} 
             \sum_{\substack{ \hat{\hat\sigma} \in \Omega^1(B_N,G)  \colon \\
            (\support \hat{\hat\sigma})^+  = \support W}}
            \prod_{e' \in C_1(B_N)^+} \varphi_\kappa\bigl(\hat{\hat\sigma}(e')\bigr)^2 .
        \end{split}
    \end{equation*}
    
    If \( \hat{\hat\sigma} \in \Omega^1(B_N,G) \) and \( e' \notin \support \hat{\hat\sigma} \), then \( \varphi_\kappa\bigl(\hat{\hat\sigma}(e')\bigr) = \varphi_\kappa(0)=1 \). Also, if \( \hat{\hat\sigma} \in \Omega^1(B_N,G) \) and \( e' \in (\support \hat{\hat\sigma})^+ \), then \( \hat{\hat\sigma}(e') \neq 0. \) 
    Using this observation, we obtain
    \begin{equation}
        \begin{split}
        &\sum_{\substack{ \hat{\hat\sigma} \in \Omega^1(B_N,G) \colon \\
        (\support \hat{\hat\sigma} ')^+  = E }}
        \prod_{e' \in C_1(B_N)^+} \varphi_\kappa\bigl(\hat{\hat\sigma}(e')\bigr)^2 
        \leq  
        \prod_{e' \in E } 
        \sum_{\hat{\hat\sigma}'(e') \in G \smallsetminus \{0\}} \varphi_\kappa(\hat{\hat\sigma}'(e'))^2  
        = \prod_{e' \in E }
        \alpha_0(\kappa)  
         = \alpha_0(\kappa)^{|E|}, 
        \end{split}
    \end{equation} 
    We thus have
    \begin{align}\label{eq: innermost sum B2 2}
        &J_{\beta, \kappa}(E)
        \leq  \alpha_1(\beta)^{M'}   \alpha_0(\kappa)
        ^{|E|}.
    \end{align} 
    
    Now note that, by Lemma~\ref{lemma: from walks to sets}, for any \( m \geq M, \) we have 
    \begin{equation*}
    \begin{split}
        &\pigl| \bigr\{ E \subseteq C_1^+(B_N) \colon |E|= m, \, \bar{\mathcal{G}}|_{E \cup \{ e \}} \text{ is connected} \bigr\} \pigr| 
        \\&\qquad=
        \pigl| \bigr\{ E \subseteq C_1^+(B_N) \colon |E|= m,\, e \in E, \, \bar{\mathcal{G}}|_{E} \text{ is connected} \bigr\} \pigr| 
        \\&\qquad\qquad+
        \pigl| \bigr\{ E \subseteq C_1^+(B_N) \colon |E|= m+1,\, e \in E, \, \bar{\mathcal{G}}|_{E} \text{ is connected} \bigr\} \pigr|
        \\&\qquad\leq 18^{2m-3} + 18^{2(m+1)-3}. 
    \end{split}
    \end{equation*}
    Combining this with~\eqref{eq: first summary B2 2} and~\eqref{eq: innermost sum B2}, we thus find that 
    \begin{equation*} 
        \mu_{N,\beta,\kappa} ( \mathcal{E})
        \leq
        \sum_{m=M}^\infty   
        (18^{2m-3} + 18^{2(m+1)-3}) \alpha_0(\kappa)^m  \alpha_1(\beta)^{M'}.
    \end{equation*} 
    Computing the above geometric sum, we obtain~\eqref{eq: coupling and conditions 4}.
\end{proof}

\section{A first version of our main result}\label{sec: first version of main result}

In this section, we present a first application of the coupling introduced in Section~\ref{sec: LGT-Z coupling}, by giving a first version of Theorem~\ref{theorem: main result}. This result provides an upper bound on \( \langle L_\gamma(\sigma,\phi) \rangle \) which is good when the probability is small that there is a cluster in \( \mathcal{G}(\hat \sigma,\hat \sigma') \) which both intersects \( \support \gamma \) and supports a vortex. We later present a strengthening of this result in Proposition~\ref{proposition: first version of main result}.

\begin{proposition}\label{proposition: short lines}
    Let \( \beta,\kappa \geq 0 \) be such that~\ref{assumption: 3} holds, and let \( \gamma \) be a path with finite support. Then
    \begin{equation}\label{eq: short lines}
        \Bigl| \bigl\langle L_\gamma(\sigma,\phi) \bigr\rangle_{\beta,\kappa,\infty} - \bigl\langle L_\gamma(\sigma,\phi)\bigr\rangle_{\infty,\kappa,\infty} \Bigr| \leq 2  
        K_4 \bigl( 1 + K_3 K_4
        \alpha_0(\kappa)\bigr)|\support \gamma|\alpha_0(\kappa)  \alpha_1(\beta)^{6},
    \end{equation}
    where \( K_3 \) and \( K_4 \) are defined by~\eqref{eq: K3 and K4}.
\end{proposition}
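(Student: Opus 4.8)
The plan is to reduce to unitary gauge, then compare \( \mu_{N,\beta,\kappa} \) with \( \mu_{N,\infty,\kappa} \) through the coupling \( \mu_{N,(\beta,\kappa),(\infty,\kappa)} \) of Definition~\ref{def: the coupling} by a disagreement-percolation estimate. By Corollary~\ref{corollary: unitary gauge}, \( \mathbb{E}_{N,\beta,\kappa,\infty}[L_\gamma(\sigma,\phi)] = \mathbb{E}_{N,\beta,\kappa}[\rho(\sigma(\gamma))] \), and likewise with \( \beta \) replaced by \( \infty \); by Proposition~\ref{proposition: limit exists} both infinite-volume limits exist, so it suffices to bound \( \bigl|\mathbb{E}_{N,\beta,\kappa}[\rho(\sigma(\gamma))] - \mathbb{E}_{N,\infty,\kappa}[\rho(\sigma(\gamma))]\bigr| \) for all large \( N \) and then let \( N \to \infty \). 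Fix such an \( N \), with \( \support\gamma \) deep inside \( B_N \), and let \( (\sigma,\sigma') \sim \mu_{N,(\beta,\kappa),(\infty,\kappa)} \); then \( \sigma \sim \mu_{N,\beta,\kappa} \), \( \sigma' \sim \mu_{N,\infty,\kappa} \), and by Lemma~\ref{lemma: properties of coupling measure} the configurations \( \sigma,\sigma' \) agree on \( C_1(B_N) \smallsetminus E_{\sigma,\sigma'} \). Hence on the event \( \support\gamma \cap E_{\sigma,\sigma'} = \emptyset \) we have \( \sigma(\gamma) = \sigma'(\gamma) \), so \( \rho(\sigma(\gamma)) = \rho(\sigma'(\gamma)) \); since \( |\rho| \equiv 1 \), it follows that \( \bigl|\mathbb{E}_{N,\beta,\kappa}[\rho(\sigma(\gamma))] - \mathbb{E}_{N,\infty,\kappa}[\rho(\sigma(\gamma))]\bigr| \le 2\,\mu_{N,(\beta,\kappa),(\infty,\kappa)}\bigl(\support\gamma \cap E_{\sigma,\sigma'} \neq \emptyset\bigr) \le 2\sum_{e \in \support\gamma}\mu_{N,(\beta,\kappa),(\infty,\kappa)}\bigl(e \in E_{\sigma,\sigma'}\bigr) \).

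The core of the proof is the per-edge estimate. Fix \( e \in \support\gamma \). Realizing the coupling through independent \( (\hat\sigma,\hat\sigma') \sim \mu_{N,\beta,\kappa} \times \mu_{N,\infty,\kappa} \), Lemma~\ref{lemma: E in coupling} gives \( E_{\sigma,\sigma'} = E_{\hat\sigma,\hat\sigma'} \), hence \( \mu_{N,(\beta,\kappa),(\infty,\kappa)}(e \in E_{\sigma,\sigma'}) = \mu_{N,\beta,\kappa} \times \mu_{N,\infty,\kappa}(e \in E_{\hat\sigma,\hat\sigma'}) \). Since \( \hat\sigma' \in \Omega^1_0(B_N,G) \) forces \( d\hat\sigma' = 0 \), Lemma~\ref{lemma: Edsigmalemma} (with \( E_0 = \emptyset \)) shows \( e \in E_{\hat\sigma,\hat\sigma'} \) if and only if \( d\bigl(\hat\sigma|_{\mathcal{C}_{\mathcal{G}(\hat\sigma,\hat\sigma')}(e)}\bigr) \neq 0 \), so in particular this event is contained in \( \bigl\{ |\mathcal{C}_{\mathcal{G}(\hat\sigma,\hat\sigma')}(e)| \ge 2 \ \text{and}\ \bigl|\support d\bigl(\hat\sigma|_{\mathcal{C}_{\mathcal{G}(\hat\sigma,\hat\sigma')}(e)}\bigr)\bigr| \ge 2 \bigr\} \). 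Now apply Proposition~\ref{proposition: new Z-LGT coupling upper bound} with \( \kappa_1 = \kappa \), \( \kappa_2 = \infty \) (so \( \alpha_0(\kappa_2) = 0 \) and its hypothesis reduces to~\ref{assumption: 3}), \( M = 1 \), \( M' = 1 \): the surviving terms are \( 18^2\alpha_0(\kappa)\,\alpha_1(\beta)^6 \), \( \hat K\,(18^2\alpha_0(\kappa))^2\,\alpha_1(\beta)^6 \), and \( \hat K\,(18^2\alpha_0(\kappa))^{\dist_1(e,\partial C_1(B_N))}\,\alpha_1(\beta) \). Bounding \( 18^2 \le K' \) and \( \hat K\,18^2 \le KK' \) (both valid under~\ref{assumption: 3}), the first two sum to at most \( K'\bigl(1 + KK'\alpha_0(\kappa)\bigr)\alpha_0(\kappa)\,\alpha_1(\beta)^6 \), uniformly in \( N \), while the third tends to \( 0 \) as \( N \to \infty \) because \( \dist_1(e,\partial C_1(B_N)) \to \infty \) for fixed \( e \). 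Substituting this into the union bound, summing over the finitely many \( e \in \support\gamma \), multiplying by \( 2 \), and passing to the limit \( N \to \infty \) yields~\eqref{eq: short lines}.

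The one genuinely nontrivial step is this per-edge estimate, and even there the heavy lifting is already done: the substantive observations are that \( \{e \in E_{\sigma,\sigma'}\} \) is, after translation to the product measure via Lemma~\ref{lemma: E in coupling}, precisely the event that the \( \mathcal{G}(\hat\sigma,\hat\sigma') \)-cluster of \( e \) carries a vortex (Lemma~\ref{lemma: Edsigmalemma}), and that Proposition~\ref{proposition: new Z-LGT coupling upper bound} — whose proof already extracts the six-plaquette factor \( \alpha_1(\beta)^6 \) from Lemma~\ref{lemma: minimal vortex I} together with the activity factorization of Lemma~\ref{lemma: factorization property} and Proposition~\ref{proposition: edgecluster flipping ii} — then delivers the required bound up to an \( N \to \infty \)-negligible boundary error. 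No combinatorics beyond what is assembled there is needed.
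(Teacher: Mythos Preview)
Your approach is exactly the paper's: pass to unitary gauge, use the coupling \(\mu_{N,(\beta,\kappa),(\infty,\kappa)}\), bound the difference by twice the probability that \(\support\gamma\) meets \(E_{\sigma,\sigma'}\), union bound over edges, translate to the product measure via Lemma~\ref{lemma: E in coupling} and Lemma~\ref{lemma: Edsigmalemma}, and finish with Proposition~\ref{proposition: new Z-LGT coupling upper bound} at \(M=M'=1\).

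There is one genuine slip in the per-edge step. In the coupling, the second factor is \(\mu_{N,\infty,\kappa}\): gauge coupling \(\infty\), hopping parameter \(\kappa\). Proposition~\ref{proposition: new Z-LGT coupling upper bound} concerns \(\mu_{N,\beta,\kappa_1}\times\mu_{N,\infty,\kappa_2}\), so the correct choice is \(\kappa_1=\kappa_2=\kappa\) (this is what the paper uses; the Remark after the Proposition notes that its hypothesis then coincides with~\ref{assumption: 3}). Taking \(\kappa_2=\infty\) would instead bound a probability under \(\mu_{N,\beta,\kappa}\times\mu_{N,\infty,\infty}\), which is not the measure at hand, so the intermediate numbers you write (\(18^{2}\alpha_0(\kappa)\,\alpha_1(\beta)^6\), etc.) are not justified by the Proposition.

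With the correct \(\kappa_1=\kappa_2=\kappa\), the combination \(18^{2}\bigl(\alpha_0(\kappa_1)+\alpha_0(\kappa_2)+\alpha_0(\kappa_1)\alpha_0(\kappa_2)\bigr)\) equals \(K'\alpha_0(\kappa)\) and \(\hat K\) equals \(K\) on the nose, so the first two surviving terms of Proposition~\ref{proposition: new Z-LGT coupling upper bound} already sum to \(K'\alpha_0(\kappa)\bigl(1+KK'\alpha_0(\kappa)\bigr)\alpha_1(\beta)^6\) with no ``bounding up'' needed, and the boundary term is \(K\bigl(K'\alpha_0(\kappa)\bigr)^{\dist_1(e,\partial C_1(B_N))}\alpha_1(\beta)\), which tends to \(0\) as \(N\to\infty\). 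The rest of your argument is correct as written.
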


\begin{proof}
    Let \( N \geq 1 \) be large enough so that \( \support \gamma \subseteq C_1(B_N) \) and \( \dist_0(\support \gamma,\partial C_1(B_N))) \geq 8. \) 
    Then, by definition, we have
    \begin{equation*}
        \begin{split} 
        &\mathbb{E}_{N,\beta,\kappa} \bigl[L_\gamma(\sigma) \bigr]
        = 
        \mathbb{E}_{N,(\beta,\kappa),(\infty,\kappa)} \bigl[ L_\gamma(\sigma) \bigr]
        = 
        \mu_{N,\beta,\kappa} \times \mu_{N,\infty,\kappa} \bigl[ L_\gamma(\hat \sigma|_{E_{\hat \sigma,\hat \sigma'}} + \hat \sigma'|_{C_1(B_N)\smallsetminus E_{\hat \sigma,\hat \sigma'}}) \bigr].
        \end{split}
    \end{equation*} 
    On the event \( \support \gamma \cap E_{\hat \sigma,\hat \sigma'} = \emptyset, \) 
    we have
    \begin{equation*}
        \begin{split}
            &L_\gamma (\hat \sigma|_{E_{\hat \sigma,\hat \sigma'}} + \hat \sigma'|_{C_1(B_N)\smallsetminus E_{\hat \sigma,\hat \sigma'}})
            =
            L_\gamma( 0 + \hat \sigma'|_{C_1(B_N)\smallsetminus E_{\hat \sigma,\hat \sigma'}})
            =L_\gamma
            (\hat \sigma'|_{E_{\hat \sigma,\hat \sigma'}} + \hat \sigma'|_{C_1(B_N)\smallsetminus E_{\hat \sigma,\hat \sigma'}})
            =
            L_\gamma(\hat \sigma').
        \end{split}
    \end{equation*}
    As a consequence,
    \begin{equation*}
        \begin{split} 
            &
            \mu_{N,\beta,\kappa} \times \mu_{N,\infty,\kappa} \bigl[ L_\gamma(\hat \sigma|_{E_{\hat \sigma,\hat \sigma'}} + \hat \sigma'|_{C_1(B_N)\smallsetminus E_{\hat \sigma,\hat \sigma'}}) \bigr]
            \\&\qquad= 
            \mu_{N,\beta,\kappa} \times \mu_{N,\infty,\kappa} \bigl[ L_\gamma( \hat \sigma') \cdot \mathbb{1}_{\support \gamma \cap E_{\hat \sigma,\hat \sigma'}= \emptyset}\bigr]
            \\&\qquad\qquad+
            \mu_{N,\beta,\kappa} \times \mu_{N,\infty,\kappa} \bigl[ L_\gamma(\hat \sigma|_{E_{\hat \sigma,\hat \sigma'}} + \hat \sigma'|_{C_1(B_N)\smallsetminus E_{\hat \sigma,\hat \sigma'}}) \cdot \mathbb{1}_{\support \gamma \cap E_{\hat \sigma,\hat \sigma'}\neq \emptyset}  \bigr]
            \\&\qquad= 
            \mu_{N,\beta,\kappa} \times \mu_{N,\infty,\kappa} \bigl[ L_\gamma( \hat \sigma') \bigr]
            -
            \mu_{N,\beta,\kappa} \times \mu_{N,\infty,\kappa} \bigl[ L_\gamma( \hat \sigma') \cdot \mathbb{1}_{\support \gamma \cap E_{\hat \sigma,\hat \sigma'}\neq \emptyset}\bigr]
            \\&\qquad\qquad+
            \mu_{N,\beta,\kappa} \times \mu_{N,\infty,\kappa} \bigl[ L_\gamma(\hat \sigma|_{E_{\hat \sigma,\hat \sigma'}} + \hat \sigma'|_{C_1(B_N)\smallsetminus E_{\hat \sigma,\hat \sigma'}}) \cdot \mathbb{1}_{\support \gamma \cap E_{\hat \sigma,\hat \sigma'} \neq \emptyset } \bigr]
        \end{split}
    \end{equation*} 
    Since \( \rho \) is unitary and \( L_\gamma(\sigma) = \rho(\sigma(\gamma)) \) for any \( \sigma \in \Omega^1(B_N,G),\) it follows that
    \begin{equation*}
        \begin{split} 
        &\Bigl|\mathbb{E}_{N,\beta,\kappa} \bigl[L_\gamma(\sigma) \bigr]
        -
        \mathbb{E}_{N,\infty,\kappa} \bigl[ L_\gamma( \hat \sigma') \bigr] \Bigr|
        \\&\qquad \leq 
        2\mu_{N,\beta,\kappa} \times \mu_{N,\infty,\kappa} \pigl( \bigl\{ (\hat \sigma,\hat \sigma') \in \Omega^1(B_N,G)\times \Omega^1_0(B_N,G) \colon  \support \gamma \cap E_{\hat \sigma,\hat \sigma'}\neq \emptyset \bigr\} \pigr)  
        \\&\qquad \leq 
        2\sum_{e \in \gamma}\mu_{N,\beta,\kappa} \times \mu_{N,\infty,\kappa} \pigl( \bigl\{ (\hat \sigma,\hat \sigma') \in \Omega^1(B_N,G)\times \Omega^1_0(B_N,G) \colon  e \in  E_{\hat \sigma,\hat \sigma'}\neq \emptyset \bigr\} \pigr)
        \end{split}
        \end{equation*}
    
    By Lemma~\ref{lemma: Edsigmalemma}, if \( e \in C_1(B_N) \), \( \hat \sigma \in \Omega^1(B_N,G) \) and \( \hat \sigma' \in \Omega^1_0(B_N,G) \), then \( e \in E_{\hat \sigma,\hat \sigma'} \) if and only if \( d(\hat \sigma|_{\mathcal{C}_{\mathcal{G}(\hat \sigma,\hat \sigma')}(e)})\neq 0. \) On the other hand, if \( d(\hat \sigma|_{\mathcal{C}_{\mathcal{G}(\hat \sigma,\hat \sigma')}(e)})\neq 0, \) then we must have \( e \in \support \hat \sigma|_{\mathcal{C}_{\mathcal{G}(\hat \sigma,\hat \sigma')}(e)} \), implying in particular that \( -e \in \mathcal{C}_{\mathcal{G}(\hat \sigma,\hat \sigma')}(e) \), and hence \(|\mathcal{C}_{\mathcal{G}(\hat \sigma,\hat \sigma')}(e)| \geq 2. \) Applying Proposition~\ref{proposition: new Z-LGT coupling upper bound} with \( M =  M' = 1 \) and \( \kappa_1=\kappa_2 = \kappa,\) we thus obtain
    \begin{equation*}
        \begin{split}
            &\mu_{N,\beta,\kappa} \times \mu_{N,\infty,\kappa}\pigl( \bigl\{ (\hat \sigma,\hat \sigma') \in \Omega^1(B_N,G) \times \Omega^1_0(B_N,G) \colon e \in E_{\hat \sigma,\hat \sigma'} \bigr\} \pigr) 
            \\&\qquad\leq 
            K_4 \bigl( 1 + K_3 K_4\alpha_0(\kappa)\bigr) \alpha_0(\kappa)  \alpha_1(\beta)^{6}
            +
            K_3 \bigl( K_4 \alpha_0(\kappa) \bigr)^{\dist_1(e,\partial C_1(B_N))} \alpha_1(\beta).
        \end{split}
    \end{equation*}
    Combining the above equations and letting \( N \to \infty \) (using Proposition~\ref{proposition: limit exists} and Corollary~\ref{corollary: unitary gauge}), we obtain~\eqref{eq: short lines} as desired.
\end{proof}

\section{A decomposition of the coupled spin configuration}\label{sec: spin config decomposition}
 
The main result in this section is the following proposition, which gives a decomposition of \( \sigma \coloneqq \hat \sigma|_{E_{\sigma,\sigma'}} + \hat \sigma'|_{C_1(B_N)\smallsetminus E_{\hat \sigma,\hat \sigma'}}\) in terms of decompositions of \( \hat \sigma \) and \( \hat \sigma'. \)

\begin{proposition}\label{proposition: docomposition of the coupling}
    Let \( \hat \sigma  \in \Omega^1(B_N,G) \) and \( \hat \sigma' \in \Omega^1_0(B_N,G), \) let \( \hat \Sigma \) be a decomposition of \( \hat \sigma \) and \( \hat \Sigma' \) be a decomposition of \( \hat \sigma' \) (these are guaranteed to exist by Lemma~\ref{lemma: lemma sum of irreducible configurations}), and define
    \begin{equation*}
        \Sigma \coloneqq \bigl\{ \hathat \sigma  \in \hat \Sigma \colon \support  \hathat \sigma  \subseteq E_{\hat \sigma,\hat \sigma'} \bigr\}
    \end{equation*}
    and
    \begin{equation*}
        \Sigma' \coloneqq \bigl\{ \hathat \sigma  \in \hat \Sigma' \colon \support \hathat \sigma \subseteq C_1(B_N)\smallsetminus E_{\hat \sigma,\hat \sigma'} \bigr\}.
    \end{equation*}
    Then \( \Sigma \cup \Sigma' \) is a decomposition of  \( \sigma \coloneqq \hat \sigma|_{E_{\sigma,\sigma'}} + \hat \sigma'|_{C_1(B_N)\smallsetminus E_{\hat \sigma,\hat \sigma'}}.\) 
\end{proposition}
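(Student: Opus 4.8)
Write \( E \coloneqq E_{\hat\sigma,\hat\sigma'} = \mathcal{C}_{\mathcal{G}(\hat\sigma,\hat\sigma')}(S) \), where \( S \coloneqq \{ e \in \support\hat\sigma \colon d\hat\sigma|_{\pm\support\hat\partial e} \neq 0 \} \), so that \( \sigma = \hat\sigma|_E + \hat\sigma'|_{C_1(B_N)\smallsetminus E} \). The plan is to verify directly that \( \Sigma \cup \Sigma' \) satisfies the five defining properties of a decomposition listed in Lemma~\ref{lemma: lemma sum of irreducible configurations}. First I would record two preliminary facts: (a) since \( \hat\sigma'|_{C_1(B_N)\smallsetminus E}(e) = 0 \) for \( e \in E \) and \( \hat\sigma|_E(e) = 0 \) for \( e \notin E \), the form \( \sigma \) agrees with \( \hat\sigma \) on \( E \) and with \( \hat\sigma' \) on \( C_1(B_N)\smallsetminus E \); and (b) \( d\sigma = d\hat\sigma \). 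For (b), note \( d\sigma = d(\hat\sigma|_E) + d(\hat\sigma'|_{C_1(B_N)\smallsetminus E}) \); the first term equals \( d\hat\sigma \) by Lemma~\ref{lemma: closed is closed new}\ref{item: coupling eqs i} (applied with \( E_0 = \emptyset \), using \( E_{\hat\sigma,\hat\sigma'} = E_{\emptyset,\hat\sigma,\hat\sigma'} \) from~\eqref{eq: Esigmasigmadef}), and the second vanishes since \( \hat\sigma'|_{C_1(B_N)\smallsetminus E} \leq \hat\sigma' \) by Lemma~\ref{lemma: cluster is subconfig} and \( d\hat\sigma' = 0 \).

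The key structural step is the following dichotomy, which I would establish using Lemma~\ref{lemma: graph decomposition in coarser}. For each \( \hathat\sigma \in \hat\Sigma \) (which is non-trivial, irreducible, and \( \leq\hat\sigma \)), applying that lemma with the set \( S \) gives that either \( \hathat\sigma \leq \hat\sigma|_E \) or \( \hathat\sigma \leq \hat\sigma|_{C_1(B_N)\smallsetminus E} \); in the first case \( \support\hathat\sigma \subseteq E \) so \( \hathat\sigma \in \Sigma \), and in the second \( \support\hathat\sigma \cap E = \emptyset \). Symmetrically, since \( \mathcal{G}(\hat\sigma',\hat\sigma) = \mathcal{G}(\hat\sigma,\hat\sigma') \) (the defining adjacency condition is symmetric in the two configurations), applying Lemma~\ref{lemma: graph decomposition in coarser} to \( \hat\sigma',\hat\sigma \) with the set \( S \) shows that each \( \hathat\sigma \in \hat\Sigma' \) either has \( \support\hathat\sigma \subseteq E \), or has \( \support\hathat\sigma \subseteq C_1(B_N)\smallsetminus E \), i.e.\ \( \hathat\sigma \in \Sigma' \). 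From these dichotomies, property~\ref{lemma210property4} follows: since \( \hat\sigma = \sum_{\hathat\sigma \in \hat\Sigma}\hathat\sigma \) with disjoint supports, restricting to \( E \) annihilates exactly the \( \hathat\sigma \notin \Sigma \), giving \( \hat\sigma|_E = \sum_{\hathat\sigma\in\Sigma}\hathat\sigma \); likewise \( \hat\sigma'|_{C_1(B_N)\smallsetminus E} = \sum_{\hathat\sigma\in\Sigma'}\hathat\sigma \); and adding yields \( \sigma = \sum_{\hathat\sigma\in\Sigma\cup\Sigma'}\hathat\sigma \).

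The remaining properties are then short. Property~\ref{lemma210property1} (non-triviality and irreducibility) is inherited from \( \hat\Sigma,\hat\Sigma' \). For property~\ref{lemma210property2}, if \( \hathat\sigma\in\Sigma \) then \( \support\hathat\sigma\subseteq E \), so using fact (a) and \( \hathat\sigma\leq\hat\sigma \) we get \( \sigma|_{\support\hathat\sigma} = \hat\sigma|_{\support\hathat\sigma} = \hathat\sigma \), and using fact (b) and \( \hathat\sigma\leq\hat\sigma \) we get \( (d\sigma)|_{\support d\hathat\sigma} = (d\hat\sigma)|_{\support d\hathat\sigma} = d\hathat\sigma \); hence \( \hathat\sigma\leq\sigma \) by Definition~\ref{def: partial order}. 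If \( \hathat\sigma\in\Sigma' \), the first condition follows identically (with \( \hat\sigma' \) in place of \( \hat\sigma \)), while \( d\hathat\sigma = 0 \) (since \( \hathat\sigma\leq\hat\sigma' \) and \( d\hat\sigma' = 0 \)), so the coboundary condition is vacuous. Property~\ref{lemma210property3} holds because the supports within \( \Sigma \) (resp.\ \( \Sigma' \)) are disjoint as subcollections of \( \hat\Sigma \) (resp.\ \( \hat\Sigma' \)), while any \( \hathat\sigma\in\Sigma \) and \( \hathat\sigma''\in\Sigma' \) have supports inside \( E \) and \( C_1(B_N)\smallsetminus E \) respectively. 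Property~\ref{lemma210property5} holds because the \( d \)-supports within \( \Sigma \) are disjoint (subcollection of \( \hat\Sigma \)), and every element of \( \Sigma' \) has empty \( d \)-support. I expect the main obstacle to be property~\ref{lemma210property4}, specifically the careful bookkeeping in applying Lemma~\ref{lemma: graph decomposition in coarser} to the \( \hat\Sigma' \)-components via the symmetry of \( \mathcal{G} \); everything else is bookkeeping around Definition~\ref{def: partial order} and facts (a), (b).
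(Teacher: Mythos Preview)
Your proposal is correct and follows essentially the same architecture as the paper's proof: verify the five properties of a decomposition from Lemma~\ref{lemma: lemma sum of irreducible configurations}, with the dichotomy from Lemma~\ref{lemma: graph decomposition in coarser} powering property~\ref{lemma210property4}, and properties~\ref{lemma210property1},~\ref{lemma210property3},~\ref{lemma210property5} handled by the obvious inheritance and the containment in \(E\) versus its complement.

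The one place you diverge is in the verification of~\ref{lemma210property2}. The paper argues via a chain of \(\leq\)-inequalities: it invokes Lemma~\ref{lemma: E in coupling} to identify \(E_{\hat\sigma,\hat\sigma'} = E_{\sigma,\hat\sigma'}\), then uses Lemma~\ref{lemma: cluster is subconfig} to obtain \(\hathat\sigma \leq \hat\sigma|_{E} = \sigma|_{E_{\sigma,\hat\sigma'}} \leq \sigma\) and concludes by transitivity. You instead compute \(d\sigma = d\hat\sigma\) up front (your fact~(b)) and then check the two conditions in Definition~\ref{def: partial order} directly, which bypasses Lemma~\ref{lemma: E in coupling} altogether. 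Your route is a bit more elementary for this step and makes the role of \(d\hat\sigma' = 0\) explicit; the paper's route has the advantage of packaging everything through the existing \(\leq\)-machinery. Either way the argument goes through without difficulty.
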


\begin{proof}
    We need to show that~\ref{lemma210property1}--\ref{lemma210property5} of Lemma~\ref{lemma: lemma sum of irreducible configurations} holds, i.e. that
\begin{enumerate}[label=\textnormal{(\roman*)}]
        \item\label{Plemma210property1} if \( \hathat \sigma \in \Sigma \cup  \Sigma' \), then \( \hathat \sigma \) is non-trivial and irreducible,
        
        \item\label{Plemma210property2}  if \( \hathat \sigma \in \Sigma \cup  \Sigma' \), then  \( \hathat \sigma \leq \sigma \),
        
        \item\label{Plemma210property3} if \( \hathat \sigma_1,\hathat \sigma_2 \in \Sigma \cup \Sigma' \), then \( \hathat \sigma_1 \) and \( \hathat \sigma_2 \) have disjoint supports,
        
        \item\label{Plemma210property4} \( \sigma = \sum_{\smallhathat \sigma \in \Sigma \cup \Sigma'} \hathat \sigma \), and

         \item \label{Plemma210property5}
         if \( \hathat \sigma_1,\hathat \sigma_2 \in \Sigma \cup \Sigma' \), then \( d\hathat \sigma_1 \) and \( d\hathat \sigma_2 \) have disjoint supports,
    \end{enumerate} 
    
    We now show that (i)--(v) holds.
    \begin{enumerate}[label=(\roman*)]
    
        \item Since \( \hat\Sigma \) and \( \hat \Sigma' \) are decompositions of \( \hat \sigma \) and \( \hat \sigma'\) respectively, \ref{Plemma210property1} holds with \( \hat \Sigma \cup \hat \Sigma' \) replaced with \(  \Sigma \cup  \Sigma'. \) Since \(  \Sigma \cup  \Sigma' \subseteq \hat \Sigma \cup \hat \Sigma'\), the desired conclusion follows.
        
        \item Fix some \( \hathat \sigma \in \Sigma \). By the definition of \( \Sigma \), we have \( \support \hathat \sigma \subseteq E_{\hat \sigma, \hat \sigma'}, \) and hence \( \hathat \sigma = \hathat \sigma|_{E_{\hat \sigma,\hat \sigma'}} \). At the same time, since \( \Sigma \subseteq \hat \Sigma \) and \( \hat \Sigma \) is a decomposition of \( \hat \sigma \), we have \( \hathat \sigma \leq \hat \sigma. \) Finally, note that, by Lemma~\ref{lemma: E in coupling}, we have \( E_{\hat \sigma,\hat \sigma'} = E_{\sigma,\hat \sigma'}. \) By applying Lemma~\ref{lemma: cluster is subconfig} twice, we obtain
        \begin{equation*}
            \hathat \sigma = \hathat \sigma|_{E_{\hat \sigma,\hat \sigma'}} \leq \hat \sigma|_{E_{\hat \sigma,\hat \sigma'}} = \sigma|_{E_{\hat \sigma,\hat \sigma'}}
            =
            \sigma|_{E_{\sigma,\hat \sigma'}} \leq \sigma,
        \end{equation*}
        and hence \( \hathat \sigma \leq \sigma. \)
        Since proof in the case \( \hathat \sigma \in \Sigma' \) is analogous, we omit it here.
        
        \item Since \( \hat \Sigma \) is a decomposition of \( \hat \sigma \), for any distinct \( \hathat \sigma_1,\hathat \sigma_2 \in \Sigma  \subseteq \hat \Sigma\), \( \hathat \sigma_1 \) and \( \hathat \sigma_2 \) have disjoint supports.
        Analogously, since \( \hat \Sigma' \) is a decomposition of \( \hat \sigma' \), for any distinct \( \hathat \sigma_1,\hathat \sigma_2 \in \Sigma' \subseteq \hat \Sigma'\), \( \hathat \sigma_1 \) and \( \hathat \sigma_2 \) have disjoint supports.
        Finally, if \( \hathat \sigma_1 \in \Sigma \) and \( \hathat \sigma_2 \in \Sigma'' \), then, since \( \support \hathat \sigma_1 \subseteq E_{\sigma,\sigma'} \), \( \support \hathat \sigma_2 \subseteq C_1(B_N)\smallsetminus E_{\sigma,\sigma'}, \) and the sets \( E_{\sigma,\sigma'} \) and \( C_1(B_N)\smallsetminus E_{\sigma,\sigma'} \) are disjoint, it follows that \( \hathat \sigma_1  \) and \( \hathat \sigma_2 \) have disjoint supports. This concludes the proof of~\ref{Plemma210property3}.
        
        \item Since \( \hat \Sigma \) is a spin decomposition of \( \hat \sigma \), each \( \hathat \sigma \in \hat \Sigma \) is non-trivial and irreducible. Consequently, using Lemma~\ref{lemma: graph decomposition in coarser}, it follows that for each \( \hathat \sigma \in \hat \Sigma \),  we have either \( \support \hathat \sigma \subseteq E_{\hat \sigma,\hat \sigma'} \) or \( \support \hathat \sigma \subseteq C_1(B_N)\smallsetminus E_{\hat \sigma,\hat \sigma'}, \) and hence
        \begin{equation*}
            \hat \sigma|_{E_{\hat \sigma,\hat \sigma'}} = \biggl( \, \sum_{\smallhathat \sigma \in \hat \Sigma} \hathat \sigma \biggr)\Big|_{E_{\hat \sigma,\hat \sigma'}}
             = \sum_{\smallhathat \sigma \in \hat \Sigma} \hathat \sigma|_{E_{\hat \sigma,\hat \sigma'}} 
             = \sum_{\smallhathat \sigma \in  \Sigma} \hathat \sigma.
        \end{equation*}
        Completely analogously, we find that 
        \begin{equation*}
            \hat \sigma'|_{C_1(B_N)\smallsetminus E_{\hat \sigma,\hat \sigma'}} = \biggl( \, \sum_{\smallhathat \sigma \in \hat \Sigma'} \hathat \sigma \biggr)\Big|_{C_1(B_N)\smallsetminus E_{\hat \sigma,\hat \sigma'}}
             = \sum_{\smallhathat \sigma \in \hat \Sigma'} \hathat \sigma|_{E_{\hat \sigma,\hat \sigma'}} 
             = \sum_{\smallhathat \sigma \in \Sigma'} \hathat \sigma.
        \end{equation*}
        Combining the previous equations and using the definition of \( \sigma \), we obtain~\ref{Plemma210property4}.
        
        \item If \( \hathat \sigma \in \Sigma' \), then, since \( \Sigma' \subseteq \hat \Sigma' \) and \( \hat \Sigma' \) is a decomposition of \( \hat \sigma' \), we have \( \hathat \sigma \leq \hat \sigma' \). Since \( \hat \sigma' \in \Omega^1_0(B_N,G) \), we have \( d\hat \sigma' = 0, \) and hence \( d\hathat \sigma = 0. \) Consequently, the desired conclusion will follow if we can show that~\ref{Plemma210property5} holds with \( \Sigma \cup \Sigma' \) replaced with \( \Sigma. \)
        To see that this holds, let \( \hathat \sigma_1 ,\hathat \sigma_2 \in \Sigma \). Then, since \( \Sigma \subseteq \hat \Sigma \), we also have \( \hathat \sigma_1 ,\hathat \sigma_2 \in \hat \Sigma. \) Since \( \hat \Sigma\) is a decomposition of \( \hat \sigma, \) the 2-forms \( d\hathat \sigma_1\) and \( d\hathat \sigma_2 \) must have disjoint support.
        This concludes the proof of~\ref{Plemma210property5}.
    \end{enumerate}
\end{proof}

\section{Disturbing 1-forms}\label{sec: 1forms}

The main purpose of this section is to introduce the following definition.
\begin{definition}\label{def: disturbing}
    Let \( \sigma \in \Omega^1(B_N,G) \), and let \( \gamma \in C^1(B_N)\) be a path. If there is no  path \( \hat \gamma \in C^1(B_N)\) with \( \partial \hat \gamma = - \partial \gamma\) and 1-form \( \hathat \sigma \in \Omega^1(B_N,G)  \) such that 
    \begin{enumerate}[label=(\roman*)]
        \item \label{item: disturbing 1}\( d\hathat \sigma \leq d\sigma \)
        \item \label{item: disturbing 2}\( \sigma(\hat \gamma) = 0 \),
        \item \label{item: disturbing 3}\( \hathat \sigma(\gamma + \hat \gamma) = 0 \),  
        \item \label{item: disturbing 4}any vortex \( \nu \) in \( \sigma-\hathat \sigma \) is a minimal vortex centered around an edge in~\( \gamma- \gamma_c \) (see~\eqref{def: gammac} for a definition of \( \gamma_c \)), and
        \item \label{item: disturbing v} if \( d\sigma(p) = d\sigma(p') \) for all \( p,p' \in \hat \partial e, \) then \( d\hathat\sigma(p) =0\) for all \(p \in \hat\partial e, \) 
        \end{enumerate}
        then we say that \( \sigma \) \emph{disturbs} \( \gamma \).
\end{definition}

Note that if \( \gamma \in C^1(B_N) \) is a generalized loop and \( \sigma \in \Omega^1(B_N,G), \) then we can pick \( \hat \gamma = 0 \) in Definition~\ref{def: disturbing}, and hence, in this case, (ii) automatically holds.

The main reason for introducing the previous definition is Lemma~\ref{lemma: removing nondisturbing configurations} below. To simplify the notation in this lemma, we define
\begin{equation}\label{eq: def gamma'}
    \gamma'[e] \coloneqq (\gamma - \gamma_c)[e] \cdot \mathbb{1}\bigl( \exists p,p' \in \hat \partial e \colon d\sigma(p) \neq d\sigma(p') \bigr),\quad e \in C_1^+(B_N).
\end{equation}

\begin{lemma}\label{lemma: removing nondisturbing configurations}
    Let \( \sigma \in \Omega^1(B_N,G) \) and let \( \gamma \in C^1(B_N) \) be a path. For each \( e \in  \gamma \), fix one plaquette \( p_e \in  \hat \partial e. \) 
    Then, if \( \sigma \) does not disturb \( \gamma \), we have
    \begin{equation*}
        \sigma(\gamma) = \sum_{e \in  (\gamma-\gamma_c)- \gamma'} d\sigma(p_e).
    \end{equation*}
\end{lemma}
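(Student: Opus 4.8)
The statement says: if $\sigma$ does not disturb $\gamma$, then $\sigma(\gamma)$ equals the sum of $d\sigma(p_e)$ over a specific subset of corner-free edges of $\gamma$. The plan is to unpack the negation of ``$\sigma$ disturbs $\gamma$'': since $\sigma$ does not disturb $\gamma$, there exist a path $\hat\gamma$ with $\partial\hat\gamma = -\partial\gamma$ and a 1-form $\hathat\sigma \in \Omega^1(B_N,G)$ satisfying \ref{item: disturbing 1}--\ref{item: disturbing 4} of Definition~\ref{def: disturbing}. I will use these to rewrite $\sigma(\gamma)$. First, from \ref{item: disturbing 3} we have $\hathat\sigma(\gamma) = -\hathat\sigma(\hat\gamma)$, and combining with \ref{item: disturbing 2} ($\sigma(\hat\gamma) = 0$),
\begin{equation*}
    \sigma(\gamma) = \sigma(\gamma) - \sigma(\hat\gamma) = \sigma(\gamma + \hat\gamma) = \sigma(\gamma+\hat\gamma) - \hathat\sigma(\gamma+\hat\gamma) = (\sigma - \hathat\sigma)(\gamma + \hat\gamma).
\end{equation*}
So the problem reduces to evaluating $(\sigma-\hathat\sigma)$ on the closed chain $\gamma + \hat\gamma$ (closed because $\partial(\gamma+\hat\gamma) = \partial\gamma - \partial\gamma = 0$).

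Next I would use property \ref{item: disturbing 4}: every vortex in $\tau \coloneqq \sigma - \hathat\sigma$ is a minimal vortex centered at some edge in $\gamma - \gamma_c$. By Lemma~\ref{lemma: lemma sum of irreducible configurations} (decomposition) applied to $d\tau$ — noting $d\tau \leq d\sigma$ is closed with finite support so it decomposes into irreducible closed 2-forms, each of which is a vortex in $\tau$ by Definition~\ref{def: vortex} and Lemma~\ref{lemma: vortex transfer} — I get that $d\tau$ is a sum of minimal vortices, each centered at an edge $e \in \gamma - \gamma_c$, and by Lemma~\ref{lemma: minimal vortex II} each such minimal vortex centered at $e$ takes the constant value $d\tau(p) = d\sigma(p_e)$ (wait: one must check $d\tau$ and $d\sigma$ agree on $\hat\partial e$; since the vortex is in $\tau$ and $d\tau\le d\sigma$ this holds on the support of the vortex). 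Because $\gamma+\hat\gamma$ is a closed 1-chain, there is an oriented surface $q$ with $\partial q = \gamma + \hat\gamma$ (Lemma~\ref{lemma: oriented loops}), so by discrete Stokes $\tau(\gamma+\hat\gamma) = d\tau(q)$. Then I expand $d\tau(q) = \sum_p q[p]\, d\tau(p)$, group the plaquettes according to which minimal vortex they belong to, and for the vortex centered at edge $e$ the contribution is $d\sigma(p_e) \cdot \bigl(\sum_{p \in \hat\partial e} q[p]\bigr)$; but $\sum_{p \in \hat\partial e} q[p] = \hat\partial e$-pairing with $q$, which by definition of coboundary equals $(\partial q)[e] = (\gamma+\hat\gamma)[e]$.

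The remaining task is bookkeeping on which edges survive. The coefficient of $e$ in $\gamma+\hat\gamma$: for $e$ a center of a vortex in $\tau$ we have $e \in \support(\gamma-\gamma_c)$, and the claim reduces to showing $(\gamma+\hat\gamma)[e]$ equals $(\gamma-\gamma_c)[e] - \gamma'[e]$, i.e. $= (\gamma-\gamma_c)[e]$ when $\gamma'[e] = 0$ (equivalently, when $d\sigma$ is constant on $\hat\partial e$, in which case $d\tau$ restricted there is $0$ — a minimal vortex has a nonconstant $d\sigma$ on its center's coboundary by Lemma~\ref{lemma: minimal vortex II}, so such $e$ carries no vortex and contributes $0$). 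The main obstacle I anticipate is precisely this combinatorial accounting: carefully relating (a) the set of edges that are centers of minimal vortices in $\tau$, (b) the condition $\gamma'[e] \neq 0$ encoding ``$d\sigma$ is nonconstant on $\hat\partial e$'', (c) the possibility that $\hat\gamma$ contributes to the coefficient $(\gamma+\hat\gamma)[e]$ at a vortex center — which must be ruled out or absorbed, presumably using that $\hathat\sigma(\hat\gamma)$ portions and the structure of $\hat\gamma$ as a path disjoint from the ``disturbed'' region force $\hat\gamma[e] = 0$ at any such center $e$ — and (d) edges where $\tau$ has no vortex (so $d\tau(p_e) = 0$) can be freely dropped from the sum. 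Converting property~\ref{item: disturbing 4}'s ``centered around an edge in $\gamma - \gamma_c$'' into the exact index set $\support(\gamma-\gamma_c) - \gamma'$ is the crux; everything else (Stokes, decomposition, Lemma~\ref{lemma: minimal vortex II}) is machinery already available.
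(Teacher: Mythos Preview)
Your plan is essentially identical to the paper's proof: both reduce $\sigma(\gamma)$ to $(\sigma-\hathat\sigma)(\gamma+\hat\gamma)$ via \ref{item: disturbing 2}--\ref{item: disturbing 3}, apply Stokes to a spanning surface $q$, decompose $d(\sigma-\hathat\sigma)$ into minimal vortices via \ref{item: disturbing 4}, compute each vortex's contribution (the paper does this by writing $\omega=d(g\,\mathbb 1_a\,dx_j)$ and applying Stokes again, which is exactly your coboundary pairing), and then pass from the set of vortex centers to $(\gamma-\gamma_c)-\gamma'$ by arguing the extra edges contribute zero. Your flagged obstacle (c) about $\hat\gamma$ possibly contributing at a vortex center is a real point that the paper's written proof glosses over (it writes $\gamma$ where $\gamma+\hat\gamma$ is meant), so your caution there is warranted rather than a defect in your plan.
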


\begin{proof}
    Assume that \( \sigma \) does not disturb \( \gamma \). Then, by definition, there is \( \hat \gamma \) and \( \hathat \sigma \) which satisfies~\ref{item: disturbing 1}--\ref{item: disturbing 4} of~Definition~\ref{def: disturbing}. 
    To simplify notation, define \( \bar \sigma \coloneqq \sigma - \hathat \sigma. \) Then
    \begin{equation*}
        \begin{split}
            &\sigma(\gamma) 
            = \sigma(\gamma) + 0
            \overset{\mathrm{\ref{item: disturbing 2}}}{=} \sigma(\gamma) + \sigma(\hat \gamma)
            = \sigma(\gamma + \hat \gamma)
            = (\sigma-\hathat \sigma+\hathat \sigma)(\gamma + \hat \gamma) 
            = (\sigma-\hathat \sigma)(\gamma + \hat \gamma) + \hathat \sigma(\gamma + \hat \gamma)
            \\&\qquad \overset{\mathrm{\ref{item: disturbing 3}}}{=} (\sigma-\hathat \sigma)(\gamma + \hat \gamma) + 0
             = (\sigma-\hathat \sigma)(\gamma + \hat \gamma)
            =  \bar\sigma(\gamma+\hat \gamma).
        \end{split}
    \end{equation*}
    
    Since \( \partial (\gamma + \hat \gamma) = \partial \gamma + \partial \hat \gamma = \partial \gamma - \partial \gamma = 0 ,\) \( \gamma + \hat \gamma \) is a generalized loop.
    Let \( B \) be a cube of  width \( |\support(\gamma+ \hat \gamma)| \) which contains \( \gamma + \hat \gamma\). Since \( \gamma + \hat \gamma \subseteq C^1(B_N) \), such a cube exists. 
    Next, let \( q \) be an oriented surface  inside \( B \) such that \( \gamma + \hat\gamma\) is the boundary of \( q \). The existence of such a surface is guaranteed by Lemma~\ref{lemma: oriented loops}.
    
    By Lemma~\ref{lemma: lemma sum of irreducible configurations}, there is a set \( \Omega \subseteq \Omega_2^0(B_N,G) \) which is a decomposition of \( d\bar\sigma. \)
    Fix such a set \( \Omega \), and note that, by definition,  each \( \omega \in \Omega \) is a vortex in \( \bar\sigma. \)
    Let \( \Omega^q \) be the set of all \( \omega \in \Omega \) with \( \omega(q) \neq 0. \)
    Then, by the discrete Stokes' theorem, we have
    \begin{equation*}
        \bar\sigma(\gamma + \hat \gamma) = d\bar\sigma(q) =  \sum_{\omega \in \Omega^q} \omega(q) .
    \end{equation*}

    Now fix some \( \omega \in \Omega^q \). Since \( \omega(q) \neq 0 \), by~\ref{item: disturbing 4} and Lemma~\ref{lemma: minimal vortex II}, there must exist \( e \coloneqq \frac{\partial}{\partial x^j}\big|_a \in \Omega_1^+(B_N) \) and \( g \in G\smallsetminus \{ 0\} \) such that \( \gamma[e] = 1 \) and \( \omega = d(g \mathbb{1}_{a}\, d x_j).\) 
    Then, by definition, we have \( \omega(p_e) = g \), and since \( \omega \leq d\bar\sigma \) and \( g \neq 0 \), it follows that \( d\bar\sigma(p_e) = \omega(p_e) = g. \) 
    Since \( q \) is an oriented surface with boundary \( \gamma \), we thus have 
    \begin{equation*}
        \omega(q) = d(g\mathbb{1}_{a}\, dx_j)(q) = (g\mathbb{1}_{a}\, dx_j)(\gamma) = g = \omega(p_e) = d\bar\sigma(p_e).
    \end{equation*}
     
    Define
    \begin{equation*}
        \gamma_5[e] \coloneqq (\gamma - \gamma_c)[e] \cdot \mathbb{1}\bigl( \exists \omega \in \Omega^q \text{ such that\ } \support \omega = \hat \partial e \cup \hat \partial (-e)\bigr), \quad e \in C_1^+(B_N).
    \end{equation*}
    Then, since  minimal vortices around distinct edges in \( \gamma-\gamma_c \) have disjoint supports, it follows that 
    \begin{equation*}
        \sum_{\omega \in \Omega^q} \omega(q) = \sum_{e \in \gamma_5} d\bar\sigma(p_e).
    \end{equation*}
    Since, by assumption, we have \( d\hathat \sigma \leq d\sigma \), and \( d\bar\sigma = d\sigma-d\hathat \sigma \), it follows from Lemma~\ref{lemma: the blue lemma}\ref{property 4} that \(  d\bar\sigma \leq d\sigma \). 
    Using the definition of \( \gamma_5 \), it follows that for any \( e \in \gamma_5 \), we have \( d\bar\sigma(p_e) = d\sigma(p_e). \)
    Consequently,
    \begin{equation*}
        \sum_{e \in \gamma_5} d\bar\sigma(p_e)
         =  \sum_{e \in \gamma_5} d\sigma(p_e).
    \end{equation*}

    Now note that by the definition of \( \gamma', \) we have
    \begin{equation*} 
            \bigl( (\gamma-\gamma_c) - \gamma')[e] = (\gamma-\gamma_c)[e] \cdot \mathbb{1} \bigl( d\sigma(p) = d\sigma(p') \text{ for all } p,p' \in \hat \partial e   \bigr), \quad e \in C_1^+(B_N).
    \end{equation*}
    Since \( d\bar\sigma \leq d\sigma \), it follows that if \( e \in \gamma_5 \) then  \( e \in  (\gamma-\gamma_c) - \gamma'. \) Finally, we note that if  \( e \in (\gamma-\gamma_c) - \gamma' ,\) then \( d\hathat \sigma(p_e)=0\), and hence  \( d\bar \sigma(p_e)= d\sigma(p_e)=0. \) As a consequence,
    \begin{equation*}
        \sum_{e \in \gamma_5} d\sigma(p_e) = \sum_{e \in (\gamma-\gamma_c) - \gamma'} d\sigma(p_e).
    \end{equation*}
    By combining the previous equations, we obtain the desired conclusion.
\end{proof}

\section{Proof of the main result}\label{sec: proof of main result}

In this section, we will first give a proof of the following result, which is more general that Theorem~\ref{theorem: main result Z2}, and then show how this proof, with very small adjustments, implies Theorem~\ref{theorem: main result Z2}.

\begin{theorem}\label{theorem: main result}
    Let \( G = \mathbb{Z}_n \) for some \( n \geq 2 \), let \( \beta,\kappa \geq 0 \) satisfy~\ref{assumption: 3}, let \( \gamma \) be a path, and let  \( \gamma_0 \in C^1(B_N)\) be any path with \( \partial \gamma_0 = -\partial \gamma. \) Then
    \begin{equation} 
        \begin{split}
            &
            \Bigl| \bigl\langle L_\gamma(\sigma,\phi) \bigr\rangle_{\beta,\kappa,\infty} - \Theta_{\beta,\kappa}(\gamma) H_\kappa(\gamma) \Bigr|
            \\&\qquad\leq 
            K_6 
            \Biggl( \alpha_2(\beta,\kappa) + \sqrt{\frac{\max(1,|\support \gamma_c|)}{|\support \gamma|}} \Biggr)^{|\support (\gamma-\gamma_c)|/(|\support (\gamma-\gamma_c)|+2|\support \gamma| )},
        \end{split}
    \end{equation}
    where
    \begin{equation*}
        \Theta_{\beta,\kappa}(\gamma) \coloneqq \pigl\langle \,\prod_{e \in \gamma}\theta_{\beta,\kappa}\bigl( \sigma(e)-\phi(\partial e)\bigr) \pigr\rangle_{\infty,\kappa,\infty}, 
    \end{equation*}
    \begin{equation*}
        H_\kappa(\gamma) \coloneqq 
        \pigl\langle L_{\gamma} (\sigma,\phi)\pigr\rangle_{\infty,\kappa,\infty},
    \end{equation*} 
    \begin{equation}\label{eq: K6}
        \begin{split}
            &K_6 \coloneqq
            2^{2|\support \gamma|/(2|\support \gamma|+|\support (\gamma-\gamma_c)| )} 
            \\&\qquad\qquad \cdot
            \Biggr[ 
            \mathbb{1}(\partial \gamma \neq 0) \cdot  2K_3 K_4^8 \alpha_0(\kappa)^7  \sum_{e \in \gamma}
            \Bigl( 18^2\bigl(2+\alpha_0(\kappa)\bigr) \alpha_0(\kappa) \Bigr)^{\max(0,\dist_0(e,\support \gamma_0)-8)} 
            \\&\qquad\qquad\qquad\qquad \cdot \biggl(\frac{\alpha_1(\beta)}{\alpha_0(\beta)}\biggr)^6 \cdot  \frac{\alpha_2(\beta,\kappa)^6}{\alpha_5(\beta,\kappa)}
            \\&\qquad\qquad\qquad+  
             K_2  \cdot \frac{ \alpha_2(\beta,\kappa)^{6}}{ \alpha_5(\beta,\kappa)} 
            +
            K_3 K_4^2\, \alpha_0(\kappa)^{5/6} \cdot \biggl(\frac{\alpha_1(\beta)}{\alpha_0(\beta)}\biggr)^7 \cdot \frac{ \alpha_2(\beta,\kappa)^{6}}{ \alpha_5(\beta,\kappa)} 
            \\&\qquad\qquad\qquad+ 
            \frac{18^4 K_5 \alpha_2(\beta,\kappa)^{5}}{2} \cdot \Bigl(\frac{\alpha_1(\beta)}{\alpha_0(\beta)} \Bigr)^{12} \cdot \frac{ \alpha_2(\beta,\kappa)^{6} }{\alpha_5(\beta,\kappa)} 
            \\&\qquad\qquad\qquad+   
            \sqrt{\frac{2 K_8 \,     \alpha_0(\kappa)^5 \alpha_4(\beta,\kappa) \max\bigl(   \alpha_0(\kappa),  \alpha_1(\beta)^6\bigr)}{ \alpha_5(\beta,\kappa)}} 
            \cdot
            \sqrt{\biggl(\frac{\alpha_1(\beta)}{\alpha_0(\beta)}\biggr)^6}
            \cdot \sqrt{\frac{\alpha_2(\beta,\kappa)^6}{\alpha_5(\beta,\kappa)}}
            \\&\qquad\qquad\qquad+
            2\sqrt{\frac{2K_7 \, \alpha_0(\kappa)^8\alpha_4(\beta,\kappa) }{ \alpha_5(\beta,\kappa)}}
            \cdot
            \sqrt{\biggl(\frac{\alpha_1(\beta)}{\alpha_0(\beta)}\biggr)^6}
            \cdot \sqrt{\frac{\alpha_2(\beta,\kappa)^6}{\alpha_5(\beta,\kappa)}}
            \\&\qquad\qquad\qquad+
            \sqrt{12 K_2} \cdot \sqrt{\frac{ \alpha_2(\beta,\kappa)^6}{ \alpha_5(\beta,\kappa)} } \cdot \sqrt{\frac{ \alpha_3(\beta,\kappa)}{ \alpha_5(\beta,\kappa)} }  
            +
            \sqrt{\frac{K_{10}\, \alpha_0(\kappa)^8 \alpha_4(\beta,\kappa)}{\alpha_5(\beta,\kappa)}} 
            \\&\qquad\qquad\qquad+
            \sqrt{\frac{\alpha_3(\beta,\kappa)}{\alpha_5(\beta,\kappa)}  } 
            \Biggr]^{|\support (\gamma-\gamma_c)|/(|\support (\gamma-\gamma_c)| + 2|\support \gamma|)},
        \end{split}
    \end{equation} 
    where \( K_2 \) is given by~\eqref{eq: K2}, where \( K_3 \) and \( K_4 \) are given by~\eqref{eq: K3 and K4}, \(  K_5 \) is given by~\eqref{eq: K5}, \( K_7 \) is given by~\eqref{eq: K7}, \( K_8 \) is given by~\eqref{eq: K8 and K9}, and \( K_{10} \) is given by~\eqref{eq: K10}.
\end{theorem}

\begin{remark}\label{eq: Z2 constant in gen thm}
    Using the equations in the beginning of Section~\ref{sec: proof of main theorem Z2}, together with~\eqref{eq: alpha ration estimates}, one easily shows that if \( G = \mathbb{Z}_2 \), then
\begin{equation*}
        \begin{split}
            &K_6 =
            2^{2|\support \gamma|/(2|\support \gamma|+|\support (\gamma-\gamma_c)| )} 
            \\&\qquad\qquad \cdot
            \Biggr[ 
            2K_3 K_4 ^8\alpha_0(\kappa)^7  \sum_{e \in \gamma}
            \bigl( K_4 \alpha_0(\kappa) \bigr)^{\max(0,\dist_0(e,\gamma_0)-8)} 
            \\&\qquad\qquad\qquad+  
             K_2 
            +
            K_3 K_4^2 
            +
            \sqrt{ K_7 } 
            +
            \sqrt{ K_8 } 
            +
            \sqrt{K_{10}  } 
            +
            \sqrt{12 K_2}  
            +
            1
            \Biggr]^{|\support (\gamma-\gamma_c)|/(|\support (\gamma-\gamma_c)| + 2|\support \gamma|)},
        \end{split}
    \end{equation*}
\end{remark}

\subsection{A first application of the coupling}\label{sec: Ising split}

In this section, we split the expected value we are interested in into two parts, later corresponding to the two functions \( \Theta_{\beta,\kappa}(\gamma) \) and \( H_\kappa(\gamma) \) in Theorem~\ref{theorem: main result}. In order to do this, we first define three useful events;
\begin{equation}\label{eq: E1}
    \mathcal{E}_1  \coloneqq
    \bigl\{ (\hat \sigma,\hat \sigma' ) \in \Omega^1(B_N,G) \times \Omega^1_0(B_N,G) \colon
    \exists \text{ irreducible } \bar \sigma \leq \hat \sigma'|_{E_{\hat \sigma,\hat \sigma'}} \text{ that disturbs } \gamma
    \bigr\},
\end{equation} 
\begin{equation}\label{eq: E2}
    \mathcal{E}_2 \coloneqq 
    \bigl\{ (\hat \sigma,\hat \sigma' ) \in \Omega^1(B_N,G) \times \Omega^1_0(B_N,G) \colon \exists \text{ irreducible }\bar \sigma \leq \hat \sigma|_{E_{\hat \sigma,\hat \sigma'}} \text{ that disturbs }  \gamma \bigr\},
\end{equation} 
and
\begin{equation}\label{eq: E3}
    \begin{split}
        &\mathcal{E}_3 \coloneqq 
        \bigl\{ \sigma \in \Omega^1(B_N,G) \colon
        \exists e \in \gamma,\,  \tilde\sigma \leq  \sigma,\,  \tilde\sigma' \leq \sigma -\tilde\sigma \text{ s.t. }
        d\tilde \sigma|_{\pm \support \hat \partial e} \neq 0 \text{ and }   d\tilde \sigma'|_{\pm \support \hat \partial e} \neq 0 \bigr\}.
    \end{split}
\end{equation} %
We provide upper bounds of the probabilities of these events occurring in Section~\ref{sec: upper bounds on bad events}.

Using this notation, we have the following result, which is the main result of this section.
 
\begin{proposition}\label{proposition: Ising LGT split}
    Let \( \beta,\kappa \geq 0 \), and let \( \gamma \in C^1(B_N) \) be a path. For each \( e \in \gamma \), let \( p_e \in \hat \partial e. \)
    Then
    \begin{equation*}
        \begin{split}
            &\biggl| \mathbb{E}_{N,\beta,\kappa}\bigl[  L_\gamma(\sigma) \bigr]
            -
            \mathbb{E}_{N,\infty,\kappa} \bigl[ L_\gamma(\sigma) \bigr]
            \,
            \mathbb{E}_{N,\beta,\kappa} \Bigl[ \,  \prod_{e \in (\gamma - \gamma_c)- \gamma'}   \rho\bigl(d \sigma(p_e)\bigr) \Bigr] \biggr| 
            \\&\qquad\leq 2\mu_{N,(\beta,\kappa),(\infty,\kappa)} ( \mathcal{E}_1) + 2\mu_{N,(\beta,\kappa),(\infty,\kappa)} ( \mathcal{E}_2) + 2\mu_{N,(\beta,\kappa),(\infty,\kappa)} ( \mathcal{E}_3).
        \end{split}
    \end{equation*} 
\end{proposition}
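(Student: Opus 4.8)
The idea is to use the coupling $\mu_{N,(\beta,\kappa),(\infty,\kappa)}$ from Definition~\ref{def: the coupling} to write $\mathbb{E}_{N,\beta,\kappa}[L_\gamma(\sigma)]$ as an expectation over the pair $(\hat\sigma,\hat\sigma')$ of $L_\gamma\bigl(\hat\sigma|_{E_{\hat\sigma,\hat\sigma'}} + \hat\sigma'|_{C_1(B_N)\smallsetminus E_{\hat\sigma,\hat\sigma'}}\bigr)$, and then decompose $\sigma \coloneqq \hat\sigma|_{E_{\hat\sigma,\hat\sigma'}} + \hat\sigma'|_{C_1(B_N)\smallsetminus E_{\hat\sigma,\hat\sigma'}}$ using Proposition~\ref{proposition: docomposition of the coupling}. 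Recall $L_\gamma(\sigma) = \rho(\sigma(\gamma))$, and since $\rho$ is a one-dimensional representation, $\rho(\sigma(\gamma))$ factorizes over the pieces of any decomposition of $\sigma$. Split the decomposition $\Sigma \cup \Sigma'$ of $\sigma$ into the part $\Sigma'$ coming from $\hat\sigma'|_{C_1(B_N)\smallsetminus E_{\hat\sigma,\hat\sigma'}}$ (which is closed, being $\leq \hat\sigma'$) and the part $\Sigma$ coming from $\hat\sigma|_{E_{\hat\sigma,\hat\sigma'}}$.

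\textbf{Key steps.} First I would observe that on the complement of $\mathcal{E}_1 \cup \mathcal{E}_2$ none of the irreducible pieces $\bar\sigma$ of $\sigma$ that lie below $\hat\sigma'|_{E_{\hat\sigma,\hat\sigma'}}$ or $\hat\sigma|_{E_{\hat\sigma,\hat\sigma'}}$ disturb $\gamma$; by Proposition~\ref{proposition: docomposition of the coupling} these pieces are exactly the members of $\Sigma \cup \Sigma'$, so $\sigma$ itself does not disturb $\gamma$ (disturbing is witnessed by an irreducible subconfiguration, hence if no such piece disturbs $\gamma$ then neither does $\sigma$). Then Lemma~\ref{lemma: removing nondisturbing configurations} applies: $\sigma(\gamma) = \sum_{e \in \support(\gamma-\gamma_c)-\gamma'} d\sigma(p_e)$, so $L_\gamma(\sigma) = \prod_{e \in (\gamma-\gamma_c)-\gamma'} \rho(d\sigma(p_e))$. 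Next, on the complement of $\mathcal{E}_3$, for each $e$ contributing to this product the plaquette value $d\sigma(p_e)$ is ``carried by a single side'' of the coupling — i.e.\ for each such $e$, either $d(\hat\sigma|_{\mathcal{C}_{\mathcal{G}(\hat\sigma,\hat\sigma')}(e)})|_{\pm\support\hat\partial e}$ or $d(\hat\sigma'|_{\cdots})|_{\pm\support\hat\partial e}$ vanishes, so $d\sigma(p_e)$ equals a function of $\hat\sigma$ alone on a neighbourhood of $e$ (the $\hat\sigma'$ contribution being $0$ since $d\hat\sigma' = 0$ anyway). This identifies $\prod_{e\in(\gamma-\gamma_c)-\gamma'}\rho(d\sigma(p_e))$ with a function of $\hat\sigma$ alone off the bad events. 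Simultaneously, off the bad events $L_\gamma(\hat\sigma')$ — the ``Ising'' part — depends only on $\hat\sigma'$; since $\hat\sigma \sim \mu_{N,\beta,\kappa}$ and $\hat\sigma' \sim \mu_{N,\infty,\kappa}$ are \emph{independent} under the coupling, the two resulting expectations factorize, producing $\mathbb{E}_{N,\infty,\kappa}[L_\gamma(\sigma)] \cdot \mathbb{E}_{N,\beta,\kappa}[\prod_{e\in(\gamma-\gamma_c)-\gamma'}\rho(d\sigma(p_e))]$. Finally, bound the difference between the true expectation and this product by twice the total probability of $\mathcal{E}_1 \cup \mathcal{E}_2 \cup \mathcal{E}_3$ (the factor $2$ from $|\rho| \le 1$ and $|L_\gamma| \le 1$ giving $|A - B| \le 2$ on the bad event), and use a union bound to split it into the three stated terms.

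\textbf{Main obstacle.} The delicate point is the bookkeeping that shows, on $(\mathcal{E}_1\cup\mathcal{E}_2\cup\mathcal{E}_3)^c$, the quantity $\prod_{e\in(\gamma-\gamma_c)-\gamma'}\rho(d\sigma(p_e))$ coincides with the corresponding quantity built from $\hat\sigma$ alone — and hence that its coupled expectation equals $\mathbb{E}_{N,\beta,\kappa}[\prod_{e}\rho(d\sigma(p_e))]$, where now $\sigma$ denotes the genuine $\mu_{N,\beta,\kappa}$-distributed field. One has to check that the set of edges $e$ over which the product runs, as well as each plaquette value $d\sigma(p_e)$, is determined by $\hat\sigma$ restricted to a neighbourhood of $e$ that lies inside $E_{\hat\sigma,\hat\sigma'}$ (using that $d\sigma = d\hat\sigma$ on $E_{\hat\sigma,\hat\sigma'}$ by Lemma~\ref{lemma: closed is closed new} and that $\gamma'$ only selects corner-free edges across which $d\sigma$ actually jumps), and that the definition of $\gamma'$ together with the exclusion of $\mathcal{E}_3$ guarantees no ambiguity in assigning $d\sigma(p_e)$ to the $\hat\sigma$-side. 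Once this identification is in place, independence of $\hat\sigma$ and $\hat\sigma'$ and the replacement of the indicator $\mathbb{1}_{(\mathcal{E}_1\cup\mathcal{E}_2\cup\mathcal{E}_3)^c}$ by $1$ (at a cost controlled by the union bound) finishes the argument routinely.
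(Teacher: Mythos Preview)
Your plan has a genuine gap in how you use $\mathcal{E}_1$ and how the factor $L_\gamma(\hat\sigma')$ arises. Recall that $\mathcal{E}_1$ concerns irreducible $\bar\sigma \leq \hat\sigma'|_{E_{\hat\sigma,\hat\sigma'}}$, whereas the pieces in $\Sigma'$ lie below $\hat\sigma'|_{C_1(B_N)\smallsetminus E_{\hat\sigma,\hat\sigma'}}$. So on $(\mathcal{E}_1\cup\mathcal{E}_2)^c$ you have \emph{no} control over whether the members of $\Sigma'$ disturb $\gamma$; in fact, for an open $\gamma$ a closed irreducible piece $\hathat\sigma\in\Sigma'$ can very well disturb $\gamma$ (take $\bar\sigma=0$ in Definition~\ref{def: disturbing}: condition~(ii) may fail for every $\hat\gamma$). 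Hence your claim ``$\sigma$ itself does not disturb $\gamma$'' is not justified by $(\mathcal{E}_1\cup\mathcal{E}_2)^c$. And even if it were, Lemma~\ref{lemma: removing nondisturbing configurations} applied to the whole $\sigma$ would give $L_\gamma(\sigma)=\prod_e \rho(d\sigma(p_e))$, which is a function of $\hat\sigma$ alone (since $d\sigma=d\hat\sigma$) and therefore \emph{loses} the factor $L_\gamma(\hat\sigma')$ you later invoke. There is no mechanism in your outline that produces $L_\gamma(\hat\sigma')$.

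The paper's argument keeps the two factors separate from the start: $L_\gamma(\sigma)=L_\gamma\bigl(\sum_{\Sigma'}\bigr)\,L_\gamma\bigl(\sum_{\Sigma}\bigr)$. The event $\mathcal{E}_1$ is used to add back the ``missing'' pieces $\hat\Sigma'\smallsetminus\Sigma'$ (those supported in $E_{\hat\sigma,\hat\sigma'}$): on $\mathcal{E}_1^c$ these are closed and non-disturbing, hence contribute trivially by Lemma~\ref{lemma: removing nondisturbing configurations}, yielding $L_\gamma\bigl(\sum_{\Sigma'}\bigr)=L_\gamma(\hat\sigma')$. The event $\mathcal{E}_2$ lets you apply Lemma~\ref{lemma: removing nondisturbing configurations} to each $\hathat\sigma\in\Sigma$ \emph{individually}, turning $L_\gamma\bigl(\sum_{\Sigma}\bigr)$ into a product $\prod_{e\in(\gamma-\gamma_c)-\gamma'''}\rho(d\hat\sigma(p_e))$, where $\gamma'''$ is defined through the decomposition. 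Finally, $\mathcal{E}_3$ is not about ``which side carries $d\sigma(p_e)$'' (that is automatic since $d\sigma=d\hat\sigma$), but about showing $\gamma'''=\gamma'_{\hat\sigma}$: two distinct irreducible pieces of $\hat\sigma$ could each put a vortex near the same $e$, causing $\gamma'''$ to differ from $\gamma'$; this is exactly what $\mathcal{E}_3$ forbids. Once both factors are functions of $\hat\sigma$ and $\hat\sigma'$ separately, independence gives the product of expectations.
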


\begin{proof}
    Let \( \hat \sigma \in \Omega^1(B_N,G) \) and \( \hat \sigma' \in \Omega^1_0(B_N,G) \), and let
    \begin{equation*} 
            \sigma  \coloneqq \hat \sigma|_{E_{\hat \sigma,\hat \sigma'}}
            +
            \hat \sigma'|_{C_1(B_N) \smallsetminus E_{\hat \sigma,\hat \sigma'}}. 
    \end{equation*}
    Let \( \hat \Sigma \) be a decomposition of \( \hat \sigma, \) 
    let \( \hat \Sigma'\) be a decomposition of \( \hat \sigma' ,\) 
    and define 
    \begin{equation*}
        \Sigma \coloneqq \{ \hathat\sigma \in \hat \Sigma \colon \support \hathat\sigma \subseteq E_{\hat \sigma,\hat \sigma'} \},
    \end{equation*} 
    \begin{equation*}
        \Sigma' \coloneqq \{ \hathat\sigma \in \hat \Sigma' \colon \support \hathat\sigma \subseteq C_1(B_N) \smallsetminus E_{\hat\sigma,\hat\sigma'} \},
    \end{equation*} 
    \begin{equation*}
        \hat \Sigma_{bad} \coloneqq \{ \hathat\sigma \in \hat \Sigma \colon  \hathat\sigma \text{ disturbs } \gamma \},
    \end{equation*} 
    and
    \begin{equation*}
        \hat \Sigma_{bad}' \coloneqq \{ \hathat\sigma \in \hat \Sigma' \colon \hathat\sigma \text{ disturbs } \gamma \}.
    \end{equation*} 
    Note that these sets depend on \( \hat \sigma \) and \( \hat \sigma'. \)
    Note also that if  \( (\hat \Sigma' \smallsetminus \Sigma') \cap \hat \Sigma_{bad}' \neq \emptyset, \) then \( (\hat \sigma,\hat \sigma') \in \mathcal{E}_1 \), and if \( \Sigma  \cap \hat \Sigma_{bad} = \emptyset, \) then \( (\hat \sigma,\hat \sigma') \in \mathcal{E}_2 \).

    By Proposition~\ref{proposition: docomposition of the coupling}, \( \Sigma \cup \Sigma' \) is a decomposition of \( \sigma. \) This implies in particular that the 1-forms in \( \Sigma \cup \Sigma' \) have disjoint supports, and hence
    \begin{equation}\label{eq: spin decomposition equation}
        L_\gamma(\sigma)
        =
        L_\gamma\Bigl(\, \sum_{\smallhathat \sigma \in \Sigma}  \hathat \sigma+ \sum_{\smallhathat \sigma'\in \Sigma'} \hathat \sigma' \Bigr)
        =
        L_\gamma\Bigl(\, \sum_{\smallhathat \sigma \in \Sigma}  \hathat \sigma\Bigr) L_\gamma \Bigl( \, \sum_{\smallhathat \sigma'\in \Sigma'} \hathat \sigma' \Bigr).
    \end{equation}
    %
    %
    If \( (\hat \sigma,\hat \sigma') \notin\mathcal{E}_1 ,\) then \( (\hat \Sigma' \smallsetminus \Sigma') \cap \hat \Sigma_{bad}' = \emptyset. \) Since \( d\hathat \sigma = 0 \) for all \( \hathat \sigma \in \hat \Sigma',\) and hence, using Lemma~\ref{lemma: removing nondisturbing configurations}, it follows that, on this event, we have 
    
    \begin{equation*}
        L_\gamma \Bigl(  \sum_{\smallhathat \sigma \in \hat \Sigma'\smallsetminus \Sigma'} \hathat \sigma \Bigr) 
        =
        \rho\biggl( \Bigl(  \sum_{\smallhathat \sigma \in \hat \Sigma'\smallsetminus \Sigma'} \hathat \sigma \Bigr)(\gamma)\biggr)
        =
        \rho \Bigl(\sum_{\smallhathat \sigma \in \hat \Sigma'\smallsetminus \Sigma'} \hathat \sigma(\gamma)\Bigr)
        =
        \rho \Bigl(\sum_{\smallhathat \sigma \in \hat \Sigma'\smallsetminus \Sigma'} 0\Bigr)
        =
        1,
    \end{equation*}
    and hence
    \begin{equation*}
        L_\gamma\Bigl( \, \sum_{\smallhathat \sigma \in \Sigma'} \hathat \sigma \Bigr)
        =
        L_\gamma\Bigl( \, \sum_{\smallhathat \sigma \in \Sigma'} \hathat \sigma\Bigr) \cdot L_\gamma \Bigl(  \sum_{\smallhathat \sigma \in \hat \Sigma'\smallsetminus \Sigma'} \hathat \sigma \Bigr)
        =
        L_\gamma\Bigl( \, \sum_{\smallhathat \sigma \in \Sigma'} \hathat \sigma +  \sum_{\smallhathat \sigma \in \hat \Sigma'\smallsetminus \Sigma'} \hathat \sigma \Bigr)
        =
        L_\gamma(\hat \sigma').
    \end{equation*}
    In particular, this shows that
    \begin{equation}\label{eq: upper bound by event 1}
        \mathbb{E}_{N,(\beta,\kappa),(\infty,\kappa)}\Biggl( \biggl| L_\gamma\Bigl( \, \sum_{\smallhathat \sigma \in \Sigma'} \hathat \sigma \Bigr)
        -
        L_\gamma(\hat \sigma') \biggr| \Biggr) \leq 2 \mu_{N,(\beta,\kappa),(\infty,\kappa)}( \mathcal{E}_1).
    \end{equation}
    
    Next, note that since the 1-forms in \( \Sigma \) have disjoint supports, we have
    \begin{equation*}
        L_\gamma \Bigl(\,\sum_{\smallhathat \sigma \in \Sigma} \hathat \sigma \Bigr)
        =
        \rho \biggl(\Bigl(\,\sum_{\smallhathat \sigma \in \Sigma} \hathat \sigma \Bigr)(\gamma)\biggr)
        =
        \rho \Bigl(\,\sum_{\smallhathat \sigma \in \Sigma} \hathat \sigma(\gamma) \Bigr)  
        =
        \prod_{\smallhathat \sigma \in \Sigma} \rho\bigl(\hathat \sigma(\gamma) \bigr).
    \end{equation*}
    For \( \hathat \sigma \in \Omega^1(B_N,G), \) define
    \begin{equation*}
        \gamma'_{\smallhathat \sigma}[e] \coloneqq (\gamma-\gamma_c)[e] \cdot \mathbb{1}\bigl( \exists p,p' \in \hat \partial e \colon d\hathat \sigma(p) \neq d\hathat \sigma(p')\bigr),\quad e \in C_1^+(B_N).
    \end{equation*}
    %
    
    %
    If \( (\hat \sigma,\hat \sigma') \notin \mathcal{E}_2, \) then we have \( \Sigma \cap \hat \Sigma_{bad} = \emptyset. \) Consequently, for any \( \hathat \sigma \in \Sigma \) we can apply Lemma~\ref{lemma: removing nondisturbing configurations} to obtain 
    \begin{equation*} 
        \rho\bigl(\hathat \sigma (\gamma) \bigr)  
        =\prod_{e \in (\gamma-\gamma_c)- \gamma'_{\smallsmallhathat \sigma }} \rho\bigl(d\hathat \sigma (p_e)\bigr).
    \end{equation*} 
    If \( \hathat \sigma \in \hat \Sigma\smallsetminus \Sigma \), then \(d\hathat \sigma = 0. \)
    Consequently, if \( (\hat \sigma,\hat \sigma') \notin \mathcal{E}_2, \) then
    \begin{equation*}
        \prod_{\smallhathat \sigma \in \Sigma} \rho \bigl( \hathat \sigma(\gamma)\bigr) 
        = 
        \prod_{\smallhathat \sigma \in \Sigma}\prod_{e \in (\gamma-\gamma_c)- \gamma'_{\smallsmallhathat \sigma }} \rho\bigl(d\hathat \sigma (p_e)\bigr)
        = 
        \prod_{\smallhathat \sigma \in \hat \Sigma}\prod_{e \in (\gamma-\gamma_c)- \gamma'_{\smallsmallhathat \sigma }} \rho\bigl(d\hathat \sigma (p_e)\bigr).
    \end{equation*}
    We now make a few observations.
    \begin{itemize}
        \item  If \( \hathat \sigma \in \hat \Sigma \) satisfies \( d\hathat\sigma(p) \neq 0 \) for some \( e \in (\gamma-\gamma_c)- \gamma'_{\smallhathat \sigma} \) and \( p \in \hat \partial e \), then, since \( \hat \Sigma \) is a decomposition of \( \hat \sigma \), we must have \( d\hathat \sigma'(p) = 0 \) for all \( \hathat \sigma' \in \hat \Sigma \smallsetminus \{ \hathat \sigma\}. \)
        
        \item If \( \hathat \sigma \in \hat \Sigma, \) then, since \( \hat \Sigma \) is a decomposition of \( \hat \sigma \), we must have \( \hathat \sigma \leq \hat \sigma. \) Consequently, if \( d\hathat \sigma(p) \neq 0\) for some \( p \in C_2(B_N) \),  then  \( d\hathat \sigma(p) = d\hat \sigma(p) . \)
        \item If \( \hathat \sigma,\hathat \sigma' \in \hat \Sigma \) are distinct and \( e \in \gamma'_{\smallhathat \sigma}, \) then either \( e \in \gamma'_{\smallhathat \sigma'} \) or \( d\hathat\sigma(p) = 0 \) for all \( p \in \hat \partial e. \)
    \end{itemize}
    Define 
    \begin{equation*}
        \gamma'''[e] \coloneqq (\gamma-\gamma_c)[e] \cdot \mathbb{1}\bigl( \exists p,p' \in \hat \partial e,\, \hathat \sigma \in  \hat\Sigma \colon d\hathat\sigma(p) \neq d\hathat\sigma(p') \bigr),\quad e \in C_1^+(B_N).
    \end{equation*}
    Combining these observations, it follows that  
    \begin{equation*}
        \prod_{\smallhathat \sigma \in \hat \Sigma}\prod_{e \in (\gamma-\gamma_c)- \gamma'_{\smallsmallhathat \sigma }} \rho\bigl(d\hathat \sigma (p_e)\bigr)
        =
        \prod_{\smallhathat \sigma \in \hat \Sigma}\prod_{\substack{e \in (\gamma-\gamma_c)- \gamma'_{\smallsmallhathat \sigma }\mathrlap{\colon}\\ d\smallhathat\sigma(p_e) \neq 0}} \rho\bigl(d\hathat \sigma (p_e)\bigr)
        =
        \prod_{\smallhathat \sigma \in \hat \Sigma}\prod_{\substack{e \in (\gamma-\gamma_c)- \gamma'_{\smallsmallhathat \sigma }\mathrlap{\colon}\\ d\smallhathat\sigma(p_e) \neq 0}} \rho\bigl(d\hat \sigma (p_e)\bigr)
        =
        \prod_{e \in (\gamma-\gamma_c) - \gamma'''} \rho\bigl(d\hat \sigma (p_e)\bigr).
    \end{equation*}
    Combining the previous equations, it follows that if \( (\hat \sigma,\hat \sigma') \notin\mathcal{E}_2 \), we have
    \begin{equation*}
        L_\gamma\Bigl(\, \sum_{\smallhathat \sigma \in \Sigma}\hathat \sigma \Bigr)
        =
        \prod_{e \in (\gamma-\gamma_c)- \gamma'''}   \rho\bigl(d\hat\sigma(p_e)\bigr),
    \end{equation*} 
    and hence
    \begin{equation}\label{eq: upper bound by event 2}
        \mathbb{E}_{N,(\beta,\kappa),(\infty,\kappa)}\Biggl( \biggl| L_\gamma\Bigl(\, \sum_{\smallhathat \sigma \in \Sigma}\hathat \sigma \Bigr)
        -
        \prod_{e \in (\gamma-\gamma_c)- \gamma'''}   \rho\bigl(d\hat\sigma(p_e)\bigr)
        \biggr| \Biggr) \leq 2 \mu_{N,(\beta,\kappa),(\infty,\kappa)}( \mathcal{E}_2).
    \end{equation}

    We now argue that if \( \gamma'_{\hat \sigma} \neq \gamma''',\) then the event \( \mathcal{E}_3 \) must happen.
    To this end, first assume that \( e \in \gamma_{\hat \sigma}'. \) Then there is \( p,p' \in \hat \partial e \) with \( \hat \sigma (p) \neq \hat \sigma(p'). \) Without loss of generality, we can assume that \( \hat \sigma(p) \neq 0. \) 
    Since \( \hat \Sigma \) is a decomposition of \( \hat \sigma \), there is \( \hathat \sigma \in \hat \Sigma \) with \( \hathat \sigma \leq \hat \sigma \) such that \( d\hathat\sigma(p) = d\hat \sigma(p). \) Since \( \hathat \sigma \leq \hat \sigma, \) we must have either \( d\hathat \sigma(p') = d\hat \sigma(p')  \) or \( d\hathat \sigma(p') = 0. \) Using the assumption that \( d\hat \sigma(p) \neq 0, \) it follows that \( d\hathat \sigma(p) \neq d\hathat \sigma(p') \), and hence \( e \in \gamma''' .\)
    Now, instead assume that \( e \in \gamma'''-\gamma'. \) Then, since \( e \in \gamma''' \), there must exist \( p,p' \in \hat \partial e \) and \( \hathat \sigma \in \hat \Sigma \) such that \( d\hathat \sigma(p) \neq d\hathat\sigma(p').\) Without loss of generality, we can assume that \( d\hathat \sigma(p) \neq 0. \) Since \( \hathat \sigma \in \hat \Sigma, \) we have \( \hathat \sigma \leq \hat \sigma, \) and hence, since \( d\hathat \sigma(p) \neq 0, \) it follows that  \( d\hathat \sigma(p) = d\hat \sigma(p) \neq 0. \)
    Since \( \hathat \sigma \leq \hat \sigma, \) we must have either \( \hathat\sigma(p') = \hat\sigma(p') \) or \( \hathat\sigma(p') = 0. \) Since \( e \in \gamma', \) we  have \( d\hat \sigma(p) = d\hat \sigma(p'), \) and hence, since \( d\hat \sigma(p) = d \hathat\sigma(p)\) and \( d\hathat \sigma(p) \neq d \hathat\sigma(p'),\) we conclude that \( d\hathat\sigma(p') = 0. \)
    Since \( d\hat \sigma(p') \neq 0 \) and \( d\hathat \sigma(p') = 0, \) there must exist \( \hathat \sigma' \in \hat \Sigma \smallsetminus \{ \hathat \sigma \} \) such that \( d\hathat \sigma'(p') = d\hat \sigma(p')\neq 0. \)
    To sum up, we have showed that if \( e \in \gamma'''- \gamma, \) then there are distinct \( \hathat \sigma,\hathat\sigma' \in \hat \Sigma \) such that \( \hathat \sigma(p) \neq 0 \) and \( \hathat \sigma'(p') \neq 0. \) Since \( \hathat \sigma,\hathat \sigma' \in \hat\Sigma\) are distinct, using Lemma~\ref{lemma: minimal vortex I} we conclude that if \( \gamma'_{\hat \sigma} \neq \gamma''' \), then \( \mathcal{E}_3 \) holds.
    Consequently, 
    \begin{equation}\label{eq: upper bound by event 3}
        \mathbb{E}_{N,(\beta,\kappa),(\infty,\kappa)}\Biggl( \biggl| \prod_{e \in (\gamma-\gamma_c)- \gamma'''}   \rho\bigl(d\hat\sigma(p_e)\bigr)
        -
        \prod_{e \in (\gamma-\gamma_c)- \gamma'}   \rho\bigl(d\hat\sigma(p_e)\bigr)
        \biggr| \Biggr) \leq 2 \mu_{N,\beta,\kappa}( \mathcal{E}_3).
    \end{equation} 
     
    Combining~\eqref{eq: spin decomposition equation},~\eqref{eq: upper bound by event 1},~\eqref{eq: upper bound by event 2},~and~\ref{eq: upper bound by event 3} we obtain
    \begin{equation*}
        \begin{split}
            &\biggl| \mathbb{E}_{N,\beta,\kappa}\bigl[  L_\gamma(\sigma) \bigr]
            -
            \mathbb{E}_{N,\infty,\kappa} \bigl[ L_\gamma(\sigma) \bigr]
            \,
            \mathbb{E}_{N,\beta,\kappa} \Bigl[ \,  \prod_{e \in (\gamma-\gamma_c)- \gamma'}   \rho\bigl(d\sigma(p_e)\bigr) \Bigr] \biggr| 
            \\&\qquad= 
            \biggl| 
            \mathbb{E}_{N,(\beta,\kappa),(\infty,\kappa)}\Bigl[
            L_\gamma \pigl( \, \sum_{\smallhathat \sigma'\in \Sigma'} \hathat \sigma' \pigr)
            L_\gamma\pigl(\, \sum_{\smallhathat \sigma \in \Sigma}  \hathat \sigma\pigr) 
            -
            L_\gamma(\hat \sigma')  
            \prod_{e \in (\gamma-\gamma_c)- \gamma'}   \rho\bigl(d\hat\sigma(p_e)\bigr)
            \Bigr]
            \biggr|
            \\&\qquad\leq
            \mathbb{E}_{N,(\beta,\kappa),(\infty,\kappa)}\biggl[\Bigl| 
            L_\gamma \pigl( \, \sum_{\smallhathat \sigma'\in \Sigma'} \hathat \sigma' \pigr)
            L_\gamma\pigl(\, \sum_{\smallhathat \sigma \in \Sigma}  \hathat \sigma\pigr) 
            -
            L_\gamma(\hat \sigma')  
            \prod_{e \in (\gamma-\gamma_c)- \gamma'''}   \rho\bigl(d\hat\sigma(p_e)\bigr) 
            \Bigr|
            \biggr]
            \\&\qquad\leq
            \mathbb{E}_{N,(\beta,\kappa),(\infty,\kappa)}\biggl[\Bigl| 
            L_\gamma \pigl( \, \sum_{\smallhathat \sigma'\in \Sigma'} \hathat \sigma' \pigr)
            -
            L_\gamma(\hat \sigma')
            \Bigr|
            \biggr]
            \\&\qquad\qquad+ 
            \mathbb{E}_{N,(\beta,\kappa),(\infty,\kappa)}\biggl[\Bigl| 
            L_\gamma\pigl(\, \sum_{\smallhathat \sigma \in \Sigma}  \hathat \sigma\pigr) 
            -  
            \prod_{e \in (\gamma-\gamma_c)- \gamma'''}   \rho\bigl(d\hat\sigma(p_e)\bigr) 
            \Bigr|
            \biggr]
            \\&\qquad\qquad+ 
            \mathbb{E}_{N,(\beta,\kappa),(\infty,\kappa)}\biggl[\Bigl| 
            \prod_{e \in (\gamma-\gamma_c)- \gamma'''}   \rho\bigl(d\hat\sigma(p_e)\bigr) 
            -  
            \prod_{e \in (\gamma-\gamma_c)- \gamma'}   \rho\bigl(d\hat\sigma(p_e)\bigr) 
            \Bigr|
            \biggr]
            \\&\qquad\leq 2 \mu_{N,(\beta,\kappa),(\infty,\kappa)}( \mathcal{E}_1) + 2 \mu_{N,(\beta,\kappa),(\infty,\kappa)}( \mathcal{E}_2) + 2 \mu_{N,\beta,\kappa}( \mathcal{E}_3).
        \end{split}
    \end{equation*}
    This concludes the proof.
\end{proof}

\subsection{A resampling trick}\label{sec: resampling}

Recall that given a path \( \gamma \) and \( \sigma \in \Omega^1(B_N,G),\) we have let 
\begin{equation*}
    \gamma'[e] = (\gamma-\gamma_c)[e] \cdot \mathbb{1}\bigl( \exists p,p' \in \hat \partial e  \colon d\sigma(p) \neq d\sigma(p') \bigr),\quad e \in C_1(B_N),
\end{equation*} 
In this section, we describe a resampling trick, first introduced (in a different setting) in~\cite{c2019}.

\begin{proposition}[Proposition~10.1 in~\cite{flv2021}]\label{proposition: resampling in main proof}
    Let \( \beta,\kappa \geq 0 \), and let \( \gamma \in C^1(B_N)\) be a path such that \(  \dist_0(\gamma,\partial C_1(B_N))\geq 8\). For each \( e \in \gamma \), fix one plaquette \( p_e \in \hat \partial e \). Then 
    \begin{equation}\label{eq: resampling}
        \mathbb{E}_{N,\beta, \kappa}\biggl[ \rho \Bigl( \sum_{e \in (\gamma-\gamma_c) - \gamma'} d\sigma(p_e) \Bigr) \biggr] 
        =
        \mathbb{E}_{N,\beta, \kappa} \Bigl[ \, \prod_{ e \in (\gamma-\gamma_c) - \gamma'}  \theta_{\beta,\kappa} \bigl( \sigma(e) - d\sigma (p_e) \bigr) \Bigr]. 
    \end{equation}   
\end{proposition}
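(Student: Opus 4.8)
\emph{Proof proposal.} The plan is to prove the identity directly at finite \( N \) (the hypothesis \( \dist_0(\gamma,\partial C_1(B_N))\ge 8 \) is exactly what we need and no infinite-volume limit is involved), by a \emph{conditional resampling} of the edge values \( \sigma(e) \) for \( e\in\support(\gamma-\gamma_c) \). The point that makes everything work is that the edges in \( \support(\gamma-\gamma_c) \) have pairwise disjoint coboundaries, and each is disjoint from the coboundary of every edge of \( \gamma \): indeed, if \( e\in\support(\gamma-\gamma_c) \) and \( \support\hat\partial e\cap\support\hat\partial(\pm e')\ne\emptyset \) for some \( e'\in\support\gamma \) with \( e'\ne\pm e \), then \( e \) would be a corner edge (cf.\ the discussion preceding~\eqref{def: gammac}), contradicting \( e\in\support(\gamma-\gamma_c) \). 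Moreover, since \( \gamma \) stays far from \( \partial C_1(B_N) \), each such \( \support\hat\partial e \) consists of exactly six interior (positively oriented) plaquettes.

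First I would set up the conditioning. Let \( E^{\ast}\coloneqq\bigcup_{e\in\support(\gamma-\gamma_c)}\{e,-e\} \) and let \( \mathcal{F} \) be the \( \sigma \)-algebra generated by \( \sigma|_{C_1(B_N)\smallsetminus E^{\ast}} \) under \( \mu_{N,\beta,\kappa} \). Because \( \mu_{N,\beta,\kappa}(\sigma)\propto\prod_{e'}\varphi_\kappa(\sigma(e'))\prod_p\varphi_\beta(d\sigma(p)) \) and the six-plaquette coboundaries \( \support\hat\partial e \), \( e\in\support(\gamma-\gamma_c) \), are pairwise disjoint, the conditional law of \( \{\sigma(e)\}_{e\in\support(\gamma-\gamma_c)} \) given \( \mathcal{F} \) is a product measure, with \( \sigma(e)=h \) receiving conditional weight proportional to \( \varphi_\kappa(h)^2\prod_{p\in\support\hat\partial e}\varphi_\beta(d\sigma(p))^2 \) (the squares coming from the two orientations of each edge and plaquette).

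Next I would analyse a single edge \( e\in\support(\gamma-\gamma_c) \). Orient the six plaquettes of \( \support\hat\partial e \) coherently, so that \( e \) appears with coefficient \( +1 \) in each boundary (this is automatic for the chosen \( p_e \)); then \( d\sigma(p)=\sigma(e)+s_{e,p} \), where \( s_{e,p} \) depends only on the three edges of \( \partial p \) other than \( e \), and none of those lies in \( E^{\ast} \) by the corner-edge property, so each \( s_{e,p} \) is \( \mathcal{F} \)-measurable. Hence the event \( \{e\in(\gamma-\gamma_c)-\gamma'\} \) — which is precisely the event that the six numbers \( s_{e,p} \) coincide — is \( \mathcal{F} \)-measurable, and on it the common value \( s_e \) and \( \sigma(e)-d\sigma(p_e)=-s_e \) are \( \mathcal{F} \)-measurable as well. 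On this event, using that the six plaquettes split three–three according to the sign with which \( e \) enters their boundary together with \( \varphi_\beta(x)=\varphi_\beta(-x) \) from~\eqref{eq: phi is symmetric}, \( \prod_{p\in\support\hat\partial e}\varphi_\beta(d\sigma(p))^2=\varphi_\beta(\sigma(e)+s_e)^{12} \), so the conditional weight of \( \sigma(e)=h \) is proportional to \( \varphi_\kappa(h)^2\varphi_\beta(h+s_e)^{12} \). The substitution \( g=h+s_e \), together with \( \rho(h)=\rho(g)\overline{\rho(s_e)} \) and \( \lvert\rho(s_e)\rvert=1 \), identifies \( \mathbb{E}_{N,\beta,\kappa}[\rho(\sigma(e))\mid\mathcal{F}]=\overline{\rho(s_e)}\,\theta_{\beta,\kappa}(-s_e) \), with \( \theta_{\beta,\kappa} \) as in~\eqref{eq: def of thetag}, whence
\begin{equation*}
    \mathbb{E}_{N,\beta,\kappa}\bigl[\rho(d\sigma(p_e))\mid\mathcal{F}\bigr]
    =\rho(s_e)\,\mathbb{E}_{N,\beta,\kappa}\bigl[\rho(\sigma(e))\mid\mathcal{F}\bigr]
    =\theta_{\beta,\kappa}(-s_e)
    =\theta_{\beta,\kappa}\bigl(\sigma(e)-d\sigma(p_e)\bigr).
\end{equation*}
Since \( (\gamma-\gamma_c)-\gamma' \) is \( \mathcal{F} \)-measurable and the values \( \{\sigma(e)\} \) are conditionally independent given \( \mathcal{F} \), multiplying this identity over \( e\in(\gamma-\gamma_c)-\gamma' \) and taking \( \mathbb{E}_{N,\beta,\kappa}[\,\cdot\,] \) gives~\eqref{eq: resampling}.

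The hard part will be the orientation bookkeeping in the third step: one must verify that the six interior plaquettes around an interior edge really do split three–three by the sign of the edge in their boundaries, so that evenness of \( \varphi_\beta \) collapses the product into the single factor \( \varphi_\beta(\sigma(e)+s_e)^{12} \) with exactly the exponent that appears in the definition of \( \theta_{\beta,\kappa} \); and one must make sure that the disjointness of coboundaries coming from the corner-edge definition is genuinely strong enough both to factorize the conditional law and to guarantee \( \mathcal{F} \)-measurability of \( \gamma' \) and of \( \sigma(e)-d\sigma(p_e) \). The distance hypothesis is used only to ensure the relevant coboundaries contain a full six plaquettes rather than being truncated by \( \partial C_1(B_N) \).
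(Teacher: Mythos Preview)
Your argument is correct and is precisely the resampling trick the paper invokes (citing \cite[Proposition~10.1]{flv2021}, which in turn adapts the idea from~\cite{c2019}): one conditions on all edge variables outside \( \support(\gamma-\gamma_c) \), uses the corner-edge definition to get disjoint coboundaries and hence conditional product structure, and computes the single-edge conditional expectation to recover \( \theta_{\beta,\kappa} \). Your orientation bookkeeping is right—the six positively oriented plaquettes in \( \support\hat\partial e \) do split three–three by the sign of \( e \) in their boundaries, and evenness of \( \varphi_\beta \) then collapses the product to \( \varphi_\beta(\sigma(e)+s_e)^{12} \); the distance hypothesis is indeed only used to guarantee a full coboundary (in fact a weaker condition would suffice here, but \( \dist_0\ge 8 \) is the paper's standing assumption).
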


For a proof of Proposition~\ref{proposition: resampling in main proof}, we refer the reader to~\cite[Proposition~10.1]{flv2021}.  In~\cite[Proposition 10.1]{flv2021}, \( \gamma \) is assumed to be a generalized loop rather than a path as in Proposition~\ref{proposition: resampling in main proof}. However, since the proofs in the two cases are identical, we do not include a proof here.

\subsection{A second application of the coupling}\label{sec:applyingcoupling}

In this section, we take the next step towards the proof of Theorem~\ref{theorem: main result}, by giving an upper bound on the distance between the right hand side of~\eqref{eq: resampling} and \( \Theta'_{N,\beta,\kappa}(\gamma). \) To this end, using the notation of Section~\ref{sec: couplings}, we now introduce a few additional useful events which will be used to express the upper bound in the Proposition~\ref{proposition: E} below.

Given an edge \( e \in C_1(B_N) \), let
\begin{equation}\label{eq: E4}
    \begin{split} 
        &\mathcal{E}_4(e) \coloneqq \bigl\{ (\sigma,\sigma')\in\Omega^1(B_N,G)\times \Omega^1_0(B_N,G) \colon
        e \in E_{\sigma,\sigma'} \text{ and }\sigma'(e) \neq 0 \bigr\},
    \end{split}
\end{equation}
\begin{equation}\label{eq: E5}
    \begin{split}
        &\mathcal{E}_5(e) \coloneqq \bigl\{ (\sigma,\sigma') \in \Omega^1(B_N,G) \times \Omega^1_0(B_N,G) \colon 
        \exists e' \in E_{\sigma,\sigma'} \text{ s.t. } \hat \partial e' \cap \hat \partial e \neq \emptyset
        \\&\qquad\qquad\qquad 
         \text{ and } \exists g \in G\smallsetminus \{ 0 \} \text{ s.t. } \sigma(e) - d\sigma(p) = g \, \forall p \in \hat \partial e \bigr\},
    \end{split}
\end{equation}
\begin{equation}\label{eq: E6}
    \mathcal{E}_6(e) \coloneqq \bigl\{ \sigma' \in \Omega^1_0(B_N,G) \colon
     \sigma'(e) \neq 0 \bigr\},
\end{equation}
\begin{equation}\label{eq: E7}
    \mathcal{E}_7(e) \coloneqq \bigl\{ (\sigma,\sigma') \in \Omega^1(B_N,G) \times \Omega^1_0(B_N,G) \colon
     \exists p,p' \in \hat \partial e \text{ s.t. } d\sigma(p) \neq d\sigma(p') \}.
\end{equation}

\begin{proposition}\label{proposition: E}
    Let \( \beta,\kappa \geq 0 \), and let \( \gamma \in C^1(B_N)\) be a path such that for all \( e \in \gamma \),  \( p \in \hat \partial e \) and \( p' \in \partial C_2(B_N), \) we have \( \support \partial p \cap \support \hat \partial p' = \emptyset. \)
    Then
    \begin{equation} \label{eq: E}
        \begin{split}
            &\biggl| 
            \mathbb{E}_{N,\beta, \kappa} \Bigl[ \, \prod_{ e \in (\gamma-\gamma_c) - \gamma'}  \theta_{\beta,\kappa} \bigl( \sigma(e) - d\sigma(p_e) \bigr) \Bigr]
            - \Theta_{N,\beta,\kappa}(\gamma)
            \biggr| 
            \\&\qquad \leq 
            2\sqrt{2  \alpha_4(\beta,\kappa) \sum_{e \in \gamma} \mu_{N,(\beta,\kappa),(\infty,\kappa)}\bigl(\mathcal{E}_5(e)\bigr)}
            +
            4\sqrt{2  \alpha_4(\beta,\kappa) \sum_{e \in \gamma} \mu_{N,(\beta,\kappa),(\infty,\kappa)}\bigl(\mathcal{E}_4(e)\bigr)}
            \\&\qquad\qquad+
            2\sqrt{2   \alpha_4(\beta,\kappa) \sum_{e \in \gamma_c} \mu_{N,\infty,\kappa}\bigl(\mathcal{E}_6(e)\bigr)}
             + 
             2\sqrt{2|\support \gamma_c| \alpha_3(\beta,\kappa)} 
            \\&\qquad\qquad+ 
             2\sqrt{2 \alpha_3(\beta,\kappa) \sum_{e \in \gamma} \mu_{N,(\beta,\kappa),(\infty,\kappa)} \bigl(\mathcal{E}_7(e)\bigr)}.
        \end{split}
    \end{equation}
\end{proposition}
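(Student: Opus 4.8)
The plan is to bound the difference
\[
\Bigl|
\mathbb{E}_{N,\beta,\kappa}\Bigl[\prod_{e\in(\gamma-\gamma_c)-\gamma'}\theta_{\beta,\kappa}\bigl(\sigma(e)-d\sigma(p_e)\bigr)\Bigr]
-\Theta_{N,\beta,\kappa}(\gamma)
\Bigr|
\]
by passing from the left-hand product, which is indexed by the random set \((\gamma-\gamma_c)-\gamma'\) and has arguments \(\sigma(e)-d\sigma(p_e)\), to the clean product \(\prod_{e\in\gamma}\theta_{\beta,\kappa}(\sigma(e)-\phi(\partial e))\) defining \(\Theta_{N,\beta,\kappa}(\gamma)\). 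There are three sources of discrepancy that I would peel off one at a time. First, the index set differs: \((\gamma-\gamma_c)-\gamma'\) omits the corner edges \(\gamma_c\) and omits edges \(e\) where \(d\sigma\) is non-constant on \(\hat\partial e\); the former is controlled by \(\mathcal{E}_6(e)\) and \(\mathcal{E}_7(e)\) together with the crude bound \(|\theta_{\beta,\kappa}(g)-\theta_{\beta,\kappa}(0)|\le\alpha_4(\beta,\kappa)\) and \(|1-\theta_{\beta,\kappa}(g)|\le\alpha_3(\beta,\kappa)\) — this is where the \(|\support\gamma_c|\alpha_3\), the \(\mathcal{E}_6\) and the \(\mathcal{E}_7\) terms come from. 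Second, for the edges that remain, the argument \(\sigma(e)-d\sigma(p_e)\) must be compared to what it should be after gauge-fixing \(\phi\): by Corollary~\ref{corollary: unitary gauge} we may work with \(L_\gamma(\sigma)=\rho(\sigma(\gamma))\) and \(\Theta_{N,\beta,\kappa}(\gamma)=\mathbb{E}_{N,\infty,\kappa}[\prod_e\theta_{\beta,\kappa}(\sigma(e))]\), so the real task is to show \(\sigma(e)-d\sigma(p_e)\) agrees with the corresponding quantity in an independent \(\mu_{N,\infty,\kappa}\)-copy, on the complement of the events where the cluster of \(e\) in the coupling is bad. That is what \(\mathcal{E}_4(e)\) (the \(\mu_{N,\infty,\kappa}\)-copy is non-trivial at \(e\)) and \(\mathcal{E}_5(e)\) (a neighbouring cluster forces \(\sigma(e)-d\sigma(p_e)\) to be a nonzero constant that cannot be absorbed) are designed to capture.

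Concretely I would run the coupling \(\mu_{N,(\beta,\kappa),(\infty,\kappa)}\) from Definition~\ref{def: the coupling}: write \(\sigma=\hat\sigma|_{E_{\hat\sigma,\hat\sigma'}}+\hat\sigma'|_{C_1(B_N)\setminus E_{\hat\sigma,\hat\sigma'}}\) with \(\hat\sigma\sim\mu_{N,\beta,\kappa}\), \(\hat\sigma'\sim\mu_{N,\infty,\kappa}\) independent. On the event that none of \(\mathcal{E}_4(e),\mathcal{E}_5(e)\) occur for \(e\in\gamma\), each factor \(\theta_{\beta,\kappa}(\sigma(e)-d\sigma(p_e))\) equals \(\theta_{\beta,\kappa}(\hat\sigma'(e))\) (using that outside \(E_{\hat\sigma,\hat\sigma'}\) we have \(\sigma=\hat\sigma'\), that there \(d\sigma=0\) so \(d\sigma(p_e)=0\), and that \(\mathcal{E}_5\) rules out the problematic neighbouring-cluster case), so the two products coincide term by term after the index-set corrections above. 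The error is then the probability of the bad events times the relevant oscillation of \(\theta_{\beta,\kappa}\); but instead of a direct \(L^1\) bound I would exploit that \(\theta_{\beta,\kappa}\) has modulus at most one and use the Cauchy–Schwarz/second-moment device (as in the analogous estimates of~\cite{flv2021}): write the difference of products telescopically, estimate each summand by \(\alpha_4(\beta,\kappa)\) (or \(\alpha_3(\beta,\kappa)\)) times an indicator, and then apply Cauchy–Schwarz over the sum of indicators against the constant-\(1\) bound — this produces exactly the square-root shape \(\sqrt{2\alpha_4\sum_e\mu(\mathcal{E}_j(e))}\) with the stated numerical constants \(2,4,2,2,2\).

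The main obstacle will be the bookkeeping in the second step: one has to check carefully that on \(\bigl(\bigcup_e(\mathcal{E}_4(e)\cup\mathcal{E}_5(e)\cup\mathcal{E}_7(e))\bigr)^c\) the substitution \(\sigma(e)-d\sigma(p_e)\rightsquigarrow\hat\sigma'(e)\) is exact for every \(e\in(\gamma-\gamma_c)-\gamma'\), that the edges of \(\gamma_c\) and of \(\gamma'\) are precisely the ones accounted for by \(\mathcal{E}_6\), \(\mathcal{E}_7\) and the brute \(|\support\gamma_c|\alpha_3\) term, and that the marginal of \(\hat\sigma'\) is genuinely \(\mu_{N,\infty,\kappa}\) so that \(\mathbb{E}[\prod_e\theta_{\beta,\kappa}(\hat\sigma'(e))]=\Theta_{N,\beta,\kappa}(\gamma)\) by~\eqref{eq: ThetaN} and Corollary~\ref{corollary: unitary gauge}. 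The hypothesis that \(\support\partial p\cap\support\hat\partial p'=\emptyset\) for all \(p\in\hat\partial e\), \(e\in\gamma\), and \(p'\in\partial C_2(B_N)\) is what guarantees we never meet a boundary plaquette, so that Lemma~\ref{lemma: minimal vortex I} and the distance estimates apply and the events \(\mathcal{E}_4,\dots,\mathcal{E}_7\) have the right structure; I would state this reduction as a short lemma and then the inequality~\eqref{eq: E} follows by assembling the five error contributions.
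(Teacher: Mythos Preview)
Your proposal is correct and follows essentially the same approach as the paper: couple via \(\mu_{N,(\beta,\kappa),(\infty,\kappa)}\), identify the edges on which \(\sigma(e)-d\sigma(p_e)=\sigma'(e)\) exactly, and control the remaining discrepancies by a square-root trick. The paper makes this precise by introducing an intermediate set \(\gamma''\supseteq\gamma'\) (those \(e\in\gamma-\gamma_c\) for which some \(e'\in E_{\sigma,\sigma'}\) has \(\hat\partial e'\cap\hat\partial e\neq\emptyset\)), factoring the product over \((\gamma-\gamma_c)-\gamma''\), \(\gamma''-\gamma'\), \(\gamma_c\), and \(\gamma'\), and then applying Lemma~\ref{lemma: Chatterjees trick abs} (which directly gives the \(2\sqrt{2ab}\) shape) rather than Cauchy--Schwarz; the coefficient \(4\) on the \(\mathcal{E}_4\) term arises because \(\{e\in\gamma':\sigma'(e)\neq 0\}\subseteq\{e\in\gamma'':\sigma'(e)\neq 0\}\), so this event is used twice.
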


For the proof of Proposition~\ref{proposition: E} we need a two lemmas from~\cite{flv2021}, which we now recall. 

\begin{lemma}[Lemma~11.2 in~\cite{flv2021}]\label{lemma: Chatterjees inequality ii}
    Assume that \( z_1,z_2,z_1',z_2' \in \mathbb{C} \) are such that \( |z_1|,|z_2|,|z_1'|,|z_2'| \leq 1 \). Then 
    \begin{equation*}
        |z_1z_2-z_1'z_2'| \leq |z_1-z_1'| + |z_2-z_2'|.
    \end{equation*} 
\end{lemma}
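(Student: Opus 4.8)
\textbf{The final statement to prove is Lemma~\ref{lemma: Chatterjees inequality ii}.}

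The plan is to prove the inequality $|z_1 z_2 - z_1' z_2'| \leq |z_1 - z_1'| + |z_2 - z_2'|$ for complex numbers with modulus at most $1$ by the standard ``add and subtract a cross term'' trick. First I would write
\begin{equation*}
    z_1 z_2 - z_1' z_2' = z_1 z_2 - z_1' z_2 + z_1' z_2 - z_1' z_2' = (z_1 - z_1') z_2 + z_1'(z_2 - z_2').
\end{equation*}
Then I would apply the triangle inequality to the right-hand side, obtaining
\begin{equation*}
    |z_1 z_2 - z_1' z_2'| \leq |z_1 - z_1'|\,|z_2| + |z_1'|\,|z_2 - z_2'|.
\end{equation*}
Finally I would use the hypotheses $|z_2| \leq 1$ and $|z_1'| \leq 1$ to bound the two factors $|z_2|$ and $|z_1'|$ by $1$, which yields $|z_1 z_2 - z_1' z_2'| \leq |z_1 - z_1'| + |z_2 - z_2'|$, as desired.

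There is essentially no obstacle here: the lemma is an elementary estimate, and the only mild subtlety is choosing which of the two intermediate cross terms to insert (I chose $z_1' z_2$, which uses $|z_2|\le 1$ and $|z_1'|\le 1$; inserting $z_1 z_2'$ instead would use $|z_1|\le 1$ and $|z_2'|\le 1$ — either works). The hypotheses $|z_1| \leq 1$ and $|z_2'| \leq 1$ are in fact not needed for the version I wrote, but are harmless to assume. I would present this as a three-line computation without further commentary.
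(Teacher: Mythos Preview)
Your proof is correct and is the standard elementary argument. Note that the paper itself does not include a proof of this lemma; it is simply quoted from~\cite{flv2021}, so there is nothing to compare against beyond observing that your three-line computation is exactly the expected one.
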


\begin{lemma}[Lemma~11.3 in~\cite{flv2021}]  \label{lemma: Chatterjees trick abs} 
    Let $a,b>0$.  Assume that \( A \subseteq C_1(B_N) \) is a random set with \( \mathbb{E}[|A|] \leq a \),  and that
    \begin{enumerate}[label=\textnormal{(\roman*)}]
        \item \( X_e \in \mathbb{C} \) and \( |X_e| \leq 1 \) for all \( e \in C_1(B_N) \), and \label{item: Chatterjees trick abs assump 1}
        \item there exists a \( c \in [-1,1] \) such that \( |X_e-c| \leq b \) for all \( e \in C_1(B_N) \).\label{item: Chatterjees trick abs assump 2}
    \end{enumerate} 
    Then
    \begin{equation*} 
        \mathbb{E}\biggl[ \Bigl| \prod_{e \in A} c -  \prod_{e \in A} X_e \Bigr|  \biggr] 
        \leq
        2\sqrt{2ab} .
    \end{equation*}
\end{lemma}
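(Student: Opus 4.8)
The plan is to combine two elementary, outcome-by-outcome estimates on $\bigl|\prod_{e\in A}c - \prod_{e\in A}X_e\bigr|$ --- one that is linear in $|A|$ and comes from a telescoping identity for products, and one that is a uniform bound by $2$ --- and then to interpolate between them by splitting on the size of $|A|$ and applying Markov's inequality. This is precisely Chatterjee's threshold trick, and optimizing the threshold is what produces the stated constant $2\sqrt{2}$.

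First I would record the telescoping identity. Writing $A=\{e_1,\dots,e_n\}$ with $n=|A|$ in any fixed order, one has
\[
  \prod_{k=1}^n c - \prod_{k=1}^n X_{e_k}
  = \sum_{k=1}^n c^{\,k-1}\,\bigl(c-X_{e_k}\bigr)\prod_{j=k+1}^n X_{e_j}.
\]
Taking absolute values and using assumption~\ref{item: Chatterjees trick abs assump 1} (so that $|c|\le 1$ gives $|c^{k-1}|\le 1$, and $|X_{e_j}|\le 1$ gives $\bigl|\prod_{j>k}X_{e_j}\bigr|\le 1$) together with assumption~\ref{item: Chatterjees trick abs assump 2} (so that $|c-X_{e_k}|\le b$), every summand is at most $b$, yielding the surely-valid bound $\bigl|\prod_{e\in A}c-\prod_{e\in A}X_e\bigr|\le |A|\,b$; the case $A=\emptyset$ is trivial since both products are the empty product $1$. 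Separately, since $|c|\le 1$ and $|X_e|\le 1$, each product has modulus at most $1$, so $\bigl|\prod_{e\in A}c-\prod_{e\in A}X_e\bigr|\le 2$ holds surely as well.

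Next I would fix a real threshold $T>0$ and split the expectation according to whether $|A|\le T$ or $|A|>T$. On $\{|A|\le T\}$ I apply the telescoping bound, giving a contribution at most $Tb$; on $\{|A|>T\}$ I apply the trivial bound $2$ and control its probability by Markov's inequality, $\mathbb{P}(|A|>T)\le \mathbb{E}[|A|]/T\le a/T$ (valid since $|A|\ge 0$). This gives
\[
  \mathbb{E}\Bigl[\,\Bigl|\,\textstyle\prod_{e\in A}c-\prod_{e\in A}X_e\,\Bigr|\,\Bigr]
  \le Tb+\frac{2a}{T}.
\]
Finally I optimize over $T$: the right-hand side is minimized at $T=\sqrt{2a/b}$, where $Tb=\sqrt{2ab}$ and $2a/T=\sqrt{2ab}$, so the sum equals $2\sqrt{2ab}$, as claimed.

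There is no deep obstacle here; the only point demanding care is that the two competing estimates $|A|\,b$ and $2$ must hold surely (for every configuration), so that the split on $\{|A|\le T\}$ versus $\{|A|>T\}$ is legitimate --- both estimates are deterministic in the configuration, and the sole probabilistic input is Markov's inequality applied to the nonnegative random variable $|A|$, which places no integrality constraint on $T$. The balancing of the two terms $Tb$ and $2a/T$, each equal to $\sqrt{2ab}$ at the optimum, is exactly what pins the final constant to $2\sqrt{2}$.
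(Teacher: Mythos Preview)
Your proof is correct and follows the standard ``Chatterjee threshold trick'' argument. Note, however, that this paper does not actually prove the lemma: it is merely quoted as Lemma~11.3 from \cite{flv2021}, so there is no in-paper proof to compare against. Your argument --- telescoping to get the surely-valid bound $|A|\,b$, combining with the trivial bound $2$, splitting on a threshold $T$, applying Markov's inequality to $|A|$, and optimizing $T=\sqrt{2a/b}$ --- is exactly the intended one and matches the proof given in the cited reference.
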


\begin{proof}[Proof of Proposition~\ref{proposition: E}]
    Recall the coupling \( (\sigma,\sigma') \sim \mu_{N,(\beta,\kappa),(\infty,\kappa)} \) between \( \sigma \sim \mu_{N,\beta,\kappa} \) and \( \sigma' \sim \mu_{N,\infty,\kappa} \) described in Section~\ref{sec: LGT-Z coupling}, and the set \( E_{\sigma,\sigma'} \)  defined in~\eqref{eq: Esigmasigmadef}. Since \( \mu_{N,(\beta,\kappa),(\infty,\kappa)} \) is a coupling of \( \mu_{N,\beta,\kappa} \) and \( \mu_{N,\infty,\kappa} \), we have
    \begin{equation*}
        \begin{split}
        &\mathbb{E}_{N,\beta, \kappa} \Bigl[\, \prod_{ e \in (\gamma-\gamma_c) - \gamma'}  \theta_{\beta,\kappa} \bigl(  \sigma(e) - d\sigma(p_e) \bigr) \bigr]
        -
        \mathbb{E}_{N,\infty,\kappa} \Bigl[ \, \prod_{e \in \gamma } \theta_{\beta,\kappa}\bigl(\sigma(e)\bigr) \Bigr]
        \\&\qquad =  
        \mathbb{E}_{N,(\beta,\kappa),(\infty,\kappa)} \Bigl[\, \prod_{ e \in (\gamma-\gamma_c) - \gamma'}  \theta_{\beta,\kappa}\bigl( \sigma(e) - d\sigma(p_e) \bigr)
        -
        \prod_{e \in \gamma } \theta_{\beta,\kappa}\bigl(\sigma'(e)\bigr) \Bigr].
        \end{split}
    \end{equation*}
    
    Given \( (\sigma,\sigma') \in \Omega^1(B_N,G) \times \Omega^1_0(B_N,G) \), define 
    \begin{equation*}
        \gamma''[e] \coloneqq (\gamma-\gamma_c)[e] \cdot \mathbb{1} \bigl(\exists e' \in E_{\sigma,\sigma'} \colon \hat \partial e' \cap \hat \partial e \neq \emptyset \bigr), \quad e \in C_1(B_N)^+.
    \end{equation*}
    In other words, \( \gamma' \) is the indicator function for all edges is \( \gamma-\gamma_c \) that is adjacent to some edge in \( E_{\sigma,\sigma'}. \)
    By Lemma~\ref{lemma: properties of coupling measure}, if \( e \in (\gamma-\gamma_c) - \gamma'' \), then \( \sigma(e') = \sigma'(e') \) whenever \( e' \in \partial p \) for some \( p \in \hat \partial e \), and  hence  \( d\sigma(p_e) = d\sigma'(p_e) = 0 \). In particular, this implies that if \( e \in (\gamma-\gamma_c) - \gamma'', \) then 
    \begin{equation}\label{eq: uncoupled edges eq}
        \sigma(e)- d\sigma(p_e)  = \sigma'(e)- d\sigma'(p_e)   = \sigma'(e)- 0 =  \sigma'(e).
    \end{equation}
    By the definition of \(\gamma'\), if \(e' \in \gamma'\) then there exists \( p' \in \hat \partial e' \) and  \(e'' \in \partial p' \) such that \(e'' \in \{ e''' \in \support{\sigma} \colon d\sigma|_{\pm \support \hat \partial e'''} \neq 0 \} \subseteq E_{\sigma, \sigma'}\). Consequently, there is \( e'' \in E_{\sigma, \sigma'} \) such that \( \hat \partial e'' \cap \hat \partial e \neq \emptyset. \) 
    Hence, if \( e \in \gamma' \) then \( e\in  \gamma'', \) and it follows that
    \begin{equation*}
        \begin{split}
            &\prod_{ e \in (\gamma-\gamma_c) - \gamma'}  \theta_{\beta,\kappa} \bigl( \sigma(e)- d\sigma(p_e) \bigr)
            =
            \prod_{ e \in (\gamma-\gamma_c) - \gamma''}  \theta_{\beta,\kappa} \bigl(\sigma(e)- d\sigma(p_e) \bigr)
            \prod_{ e \in \gamma''  - \gamma'}  \theta_{\beta,\kappa} \bigl( \sigma(e)- d\sigma(p_e) \bigr)
            \\&\qquad \overset{\eqref{eq: uncoupled edges eq}}{=}
            \prod_{ e \in (\gamma-\gamma_c) - \gamma'}  \theta_{\beta,\kappa}\bigl(\sigma'(e)\bigr)
            \prod_{ e \in \gamma'' - \gamma'}  \theta_{\beta,\kappa} \bigl(\sigma(e)- d\sigma(p_e) \bigr).
        \end{split}
    \end{equation*} 
    Consequently, using Lemma~\ref{lemma: properties of coupling measure}, we have
    \begin{equation*}
        \begin{split}
            &
            \prod_{ e \in (\gamma-\gamma_c) - \gamma'}  \theta_{\beta,\kappa} \bigl( \sigma(e)- d\sigma(p_e) \bigr)
            -  
            \prod_{ e \in \gamma  }  \theta_{\beta,\kappa} \bigl( \sigma'(e) \bigr)
            \\&\qquad=
            \prod_{ e \in (\gamma-\gamma_c) - \gamma''}  \theta_{\beta,\kappa} \bigl(\sigma(e)\bigr)
            \prod_{ e \in \gamma''  - \gamma'}  \theta_{\beta,\kappa} \bigl( \sigma(e)- d\sigma(p_e) \bigr)  
            \\&\qquad\qquad-
            \prod_{ e \in (\gamma-\gamma_c) - \gamma''  }  \theta_{\beta,\kappa} \bigl(\sigma'(e)\bigr)
            \prod_{ e \in \gamma''  - \gamma'  }  \theta_{\beta,\kappa} (\sigma'(e))
            \prod_{ e \in \gamma_c + \gamma' }  \theta_{\beta,\kappa} (\sigma'(e)) 
            \\&\qquad=
            \prod_{ e \in (\gamma-\gamma_c) - \gamma''}  \theta_{\beta,\kappa} (\sigma'(e))
            \biggl( \, \prod_{ e \in \gamma''  - \gamma'}  \theta_{\beta,\kappa} \bigl( \sigma(e)- d\sigma(p_e) \bigr)  
            -
            \prod_{ e \in \gamma''  - \gamma'  }  \theta_{\beta,\kappa} (\sigma'(e)) \biggr)
            \\&\qquad\qquad
            +
            \prod_{ e \in (\gamma-\gamma_c) - \gamma'  }  \theta_{\beta,\kappa} \bigl(\sigma'(e)\bigr) 
            \Bigl( 1-\prod_{ e \in \gamma_c  }  \theta_{\beta,\kappa}(\sigma'(e))\Bigr)
            \\&\qquad\qquad+
            \prod_{ e \in (\gamma-\gamma_c) - \gamma'  }  \theta_{\beta,\kappa} \bigl(\sigma'(e)\bigr) \prod_{ e \in \gamma_c  }  \theta_{\beta,\kappa}\bigl(\sigma'(e)\bigr)
            \Bigl( 1-\prod_{ e \in  \gamma' }  \theta_{\beta,\kappa}\bigl(\sigma'(e)\bigr)\Bigr)
            .
        \end{split}
    \end{equation*}
    Combining the above equations, we find that
    \begin{equation*}
        \begin{split}
            &
            \mathbb{E}_{N,\beta, \kappa} \Bigl[\, \prod_{ e \in (\gamma-\gamma_c) - \gamma'}  \theta_{\beta,\kappa} \bigl(  \sigma(e) - d\sigma(p_e) \bigr) \bigr]   - \mathbb{E}_{N,\infty,\kappa} \Bigl[ \,  \prod_{ e \in \gamma  }  \theta_{\beta,\kappa} \bigl(\sigma(e)\bigr) \Bigr] 
            \\&\qquad=  \mathbb{E}_{N,(\beta,\kappa),(\infty,\kappa)}  \biggl[\, \prod_{ e \in (\gamma-\gamma_c) - \gamma''}  \theta_{\beta,\kappa} (\sigma'(e))
            \biggl( \, \prod_{ e \in \gamma''  - \gamma'}  \theta_{\beta,\kappa} \bigl( \sigma(e)- d\sigma(p_e) \bigr)  
            -
            \prod_{ e \in \gamma''  - \gamma'  }  \theta_{\beta,\kappa} (\sigma'(e)) \biggr)\biggr]
            \\&\qquad\qquad+
            \mathbb{E}_{N,(\beta,\kappa),(\infty,\kappa)}  \biggl[\, \prod_{ e \in (\gamma-\gamma_c) - \gamma'  }  \theta_{\beta,\kappa} (\sigma'(e)) 
            \Bigl( 1-\prod_{ e \in \gamma_c  }  \theta_{\beta,\kappa}(\sigma'(e))\Bigr) \biggr]
           \\&\qquad\qquad +
            \mathbb{E}_{N,(\beta,\kappa),(\infty,\kappa)}  \biggl[\,\prod_{ e \in (\gamma-\gamma_c) - \gamma'}  \theta_{\beta,\kappa} (\sigma'_e) \prod_{ e \in \gamma_c  }  \theta_{\beta,\kappa}(\sigma'(e))
            \Bigl( 1-\prod_{ e \in  \gamma' }  \theta_{\beta,\kappa}(\sigma'(e))\Bigr)\bigg].
        \end{split}
    \end{equation*}
    Now note that
    \begin{align*}
        & \prod_{ e \in \gamma''  - \gamma'}  \theta_{\beta,\kappa} \bigl( \sigma(e)- d\sigma(p_e) \bigr)  
            -
            \prod_{ e \in \gamma''  - \gamma'}  \theta_{\beta,\kappa} (\sigma'(e))
            \\&\qquad=
            \prod_{\substack{ e \in \gamma''  - \gamma' \mathrlap{\colon}\\ \sigma(e) - d\sigma(p_e)=0}}  \theta_{\beta,\kappa} ( 0 ) 
            \biggl( \, \prod_{\substack{ e \in \gamma''  - \gamma' \mathrlap{\colon}\\ \sigma(e) - d\sigma(p_e)\neq 0}}   \theta_{\beta,\kappa} \bigl( \sigma(e)- d\sigma(p_e) \bigr)  -\prod_{\substack{ e \in \gamma''  - \gamma' \mathrlap{\colon}\\ \sigma(e) - d\sigma(p_e)\neq 0}} \theta_{\beta,\kappa}(0)\biggr) 
            \\&\qquad\qquad+
            \prod_{\substack{ e \in \gamma''  - \gamma' \mathrlap{\colon}\\ \sigma'(e)=0  }}  \theta_{\beta,\kappa} (0) \biggl(\,\prod_{\substack{ e \in \gamma''  - \gamma' \mathrlap{\colon}\\ \sigma'(e)\neq 0  }}  \theta_{\beta,\kappa} (0)
            - \prod_{\substack{ e \in \gamma''  - \gamma' \mathrlap{\colon}\\ \sigma'(e)\neq 0  }}  \theta_{\beta,\kappa} (\sigma'(e))\biggr),
    \end{align*}
    and similarly, that
    \begin{equation*}
        1 - \prod_{e \in \gamma_c} \theta_{\beta,\kappa}(\sigma'(e))
        =
        \bigl( 1 - \theta_{\beta,\kappa}(0)^{|\support \gamma_c|}\bigr)
        + \prod_{\substack{e \in \gamma_c \mathrlap{\colon}\\ \sigma'(e)=0}} \theta_{\beta,\kappa}(0) \biggl( \prod_{\substack{e \in \gamma_c \mathrlap{\colon}\\ \sigma'(e)\neq0}} \theta_{\beta,\kappa}(0)-\prod_{\substack{e \in \gamma_c \mathrlap{\colon}\\ \sigma'(e)\neq0}} \theta_{\beta,\kappa}(\sigma'(e)) \biggr)
    \end{equation*}
    and
    \begin{equation*}
        1 - \prod_{e \in \gamma'} \theta_{\beta,\kappa}(\sigma'(e))
        =
        \bigl( 1 - \theta_{\beta,\kappa}(0)^{|\support \gamma'|}\bigr)
        + 
        \prod_{\substack{e \in \gamma' \mathrlap{\colon}\\ \sigma'(e)=0}} \theta_{\beta,\kappa}(0) \biggl( \prod_{\substack{e \in \gamma' \mathrlap{\colon}\\ \sigma'(e)\neq0}} \theta_{\beta,\kappa}(0)-\prod_{\substack{e \in \gamma' \mathrlap{\colon}\\ \sigma'(e)\neq0}} \theta_{\beta,\kappa}(\sigma'(e)) \biggr).
    \end{equation*}
    Consequently, by applying the triangle inequality and noting that, since \( \rho \) is unitary, we have \( |\theta_{\beta,\kappa}(g)| \leq 1 \) for all \( g \in G \), we obtain
    \begin{equation}\label{eq: eq before claims ii}
        \begin{split} 
            &\biggl| \mathbb{E}_{N,\beta, \kappa} \Bigl[\, \prod_{ e \in (\gamma-\gamma_c) - \gamma'}  \theta_{\beta,\kappa} \bigl(  \sigma(e) - d\sigma(p_e) \bigr) \bigr]    - \mathbb{E}_{N,\infty,\kappa} \Bigl[ \,  \prod_{ e \in \gamma  }  \theta_{\beta,\kappa} \bigl(\sigma(e)\bigr) \Bigr]  \biggr|
            \\&\qquad\leq 
            \mathbb{E}_{N,(\beta,\kappa),(\infty,\kappa)}  \biggl[ 
            \Bigl|  \prod_{\substack{ e \in \gamma''  - \gamma' \mathrlap{\colon} \\ \sigma(e)- d\sigma(p_e) \neq 0}}   \theta_{\beta,\kappa} \bigl(\sigma(e)- d\sigma(p_e) \bigr)   - \prod_{\substack{ e \in \gamma''  - \gamma' \mathrlap{\colon} \\ \sigma(e)- d\sigma(p_e) \neq 0}} \theta_{\beta,\kappa}(0) 
            \Bigr|\biggr]
            \\&\qquad\qquad
            +
          \mathbb{E}_{N,(\beta,\kappa),(\infty,\kappa)}  \biggl[  
            \Bigl|  \prod_{\substack{ e \in \gamma''\smallsetminus \gamma' \mathrlap{\colon} \\ \sigma'(e) \neq 0}}   \theta_{\beta,\kappa}(0)  - \prod_{\substack{ e \in \gamma''\smallsetminus \gamma' \mathrlap{\colon} \\ \sigma'(e) \neq 0}}  \theta_{\beta,\kappa} \bigl(\sigma'(e)\bigr)   \Bigr|\biggr]
             \\&\qquad\qquad+   
           \mathbb{E}_{N,(\beta,\kappa),(\infty,\kappa)}  \biggl[ 
            \Bigl|  \prod_{\substack{e \in \gamma_c \mathrlap{\colon}\\ \sigma'(e)\neq 0}} \theta_{\beta,\kappa}(0)-\prod_{\substack{e \in \gamma_c \mathrlap{\colon}\\ \sigma'(e)\neq0}} \theta_{\beta,\kappa}\bigl(\sigma'(e)\bigr) \Bigr|\biggr]
             + 
             \bigl( 1 - \theta_{\beta,\kappa}(0)^{|\support \gamma_c|}\bigr) 
             \\&\qquad\qquad+   
           \mathbb{E}_{N,(\beta,\kappa),(\infty,\kappa)}  \biggl[ 
            \Bigl|  \prod_{\substack{e \in \gamma' \mathrlap{\colon}\\ \sigma'(e)\neq 0}} \theta_{\beta,\kappa}(0)-\prod_{\substack{e \in \gamma' \mathrlap{\colon}\\ \sigma'(e)\neq 0}} \theta_{\beta,\kappa}\bigl(\sigma'(e)\bigr) \Bigr|\biggr]
             + 
             \bigl( 1 - \theta_{\beta,\kappa}(0)^{|\support \gamma'|}\bigr) .
        \end{split}
    \end{equation}
    We now use Lemma~\ref{lemma: Chatterjees trick abs} to obtain upper bounds for each of the terms on the right-hand side of the previous equation.

    \begin{sublemma}
        \begin{equation}\label{eq: gamma'' bound}
            \mathbb{E}_{N,(\beta,\kappa),(\infty,\kappa)} \Bigl[ \bigl| \{ e \in \gamma'' \colon \sigma'(e) \neq 0 \} \bigr| \Bigr] 
            \leq 
            \sum_{e \in\gamma} \mu_{N,(\beta,\kappa),(\infty,\kappa)} \bigl( \mathcal{E}_4(e) \bigr).
        \end{equation} 
    \end{sublemma}
    
    \begin{subproof}
        For any \( e \in \gamma'' \), by definition, 
        there is at least one \(  e' \in E_{\sigma,\sigma'} \) such that \( {\hat \partial e' \cap \hat \partial e \neq \emptyset.} \) Consequently, by the definition of \( E_{\sigma,\sigma'}, \) if \( \sigma'(e) \neq 0 \), we \( e \in E_{\sigma,\sigma'} \).  
        From this it follows that
        \begin{equation*}
            \begin{split}
                & \mathbb{E}_{N,(\beta,\kappa),(\infty,\kappa)} \Bigl[ \bigl| \{ e \in \gamma'' \colon \sigma'(e) \neq 0 \} \bigr| \Bigr] 
                \\&\qquad= 
                \sum_{e \in\gamma''} \mu_{N,(\beta,\kappa),(\infty,\kappa)} \Bigl( \bigl\{ (\sigma,\sigma')\in\Omega^1(B_N,G)\times \Omega^1_0(B_N,G) \colon \sigma'(e) \neq 0 \bigr\} \Bigr)
                \\&\qquad=
                \sum_{e \in\gamma''} \mu_{N,(\beta,\kappa),(\infty,\kappa)} \Bigl( \bigl\{ (\sigma,\sigma')\in\Omega^1(B_N,G)\times \Omega^1_0(B_N,G) \colon e \in E_{\sigma,\sigma'} \text{ and }\sigma'(e) \neq 0  \bigr\} \Bigr)
                \\&\qquad=
                \sum_{e \in\gamma''} \mu_{N,(\beta,\kappa),(\infty,\kappa)} \bigl( \mathcal{E}_4(e) \bigr)
                \leq
                \sum_{e \in\gamma} \mu_{N,(\beta,\kappa),(\infty,\kappa)} \bigl( \mathcal{E}_4(e) \bigr),
            \end{split}
        \end{equation*} 
        which is the desired conclusion.
    \end{subproof}
    
    \begin{sublemma}
        \begin{equation}
            \mathbb{E}_{N,(\beta,\kappa),(\infty,\kappa)}\Bigl[ \bigl| \{ e \in \gamma'' - \gamma' \colon \sigma(e) - d\sigma(p_e) \neq 0 \} \bigr|\Bigr] 
            \leq 
            \sum_{e \in \gamma}\mu_{N,(\beta,\kappa),(\infty,\kappa)}\bigl( \mathcal{E}_5(e)\bigr).
        \end{equation}
    \end{sublemma}
    
    \begin{subproof}
        Note fist that by definition, for any \( e \in \gamma '' - \gamma', \) we have
        \begin{equation*}
             \bigl\{ (\sigma,\sigma') \in \Omega^1(B_N,G) \times \Omega^1_0(B_N,G) \colon \sigma(e) - d\sigma(p_e) \neq 0  \bigr\} = \mathcal{E}_5(e).
        \end{equation*}
        Consequently,
        \begin{equation*}
            \begin{split}
                & \mathbb{E}_{N,(\beta,\kappa),(\infty,\kappa)}\Bigl[ \bigl| \{ e \in \gamma'' - \gamma' \colon \sigma(e) - d\sigma(p_e) \neq 0 \} \bigr|\Bigr] 
                \\&\qquad=
                \sum_{e \in \gamma''  - \gamma'}\mu_{N,(\beta,\kappa),(\infty,\kappa)}\Bigl( \bigl\{ (\sigma,\sigma') \in \Omega^1(B_N,G) \times \Omega^1_0(B_N,G) \colon \sigma(e) - d\sigma(p_e) \neq 0  \bigr\}\Bigr)
                \\&\qquad=
                \sum_{e \in \gamma''  - \gamma'}\mu_{N,(\beta,\kappa),(\infty,\kappa)}\Bigl( \mathcal{E}_5^4\Bigr) 
                \leq
                \sum_{e \in \gamma}\mu_{N,(\beta,\kappa),(\infty,\kappa)}\bigl( \mathcal{E}_5(e)\bigr),
            \end{split}
        \end{equation*} 
        which is the desired conclusion.
    \end{subproof}
    Next, by definition, we have
    \begin{equation*}
        \mathbb{E}_{N,(\beta,\kappa),(\infty,\kappa)} \Bigl[ \bigl| \{ e \in \gamma_c \colon \sigma'(e) \neq 0 \} \bigr| \Bigr] = \sum_{e \in \gamma_c} \mu_{N,\infty,\kappa} \bigl( \mathcal{E}_6(e) \bigr)
    \end{equation*}
    and
    \begin{equation*}
        \begin{split}
            &\mathbb{E}_{N,\beta,\kappa}\bigl[|\support \gamma'|\bigr]
            \\&\qquad= \sum_{e \in \gamma\smallsetminus \gamma_c} \mu_{N,(\beta,\kappa),(\infty,\kappa)} \Bigl( \bigl\{ (\sigma,\sigma') \in \Omega^1(B_N,G)\times \Omega^1_0(B_N,G) \colon \exists p,p' \in \hat \partial e \text{ s.t. } d\sigma(p) \neq d\sigma(p') \bigr\} \Bigr)
            \\&\qquad = \sum_{e \in \gamma\smallsetminus \gamma_c} \mu_{N,(\beta,\kappa),(\infty,\kappa)} \bigl( \mathcal{E}_7(e)\bigr)
            \leq \sum_{e \in \gamma} \mu_{N,
            (\beta,\kappa),(\infty,\kappa)} \bigl( \mathcal{E}_7(e) \bigr).
    \end{split}
    \end{equation*}
    Finally, since for any \( e \in \gamma' \) we also have \( e \in  \gamma'' \), we have
    \begin{equation*}
        \mathbb{E}_{N,(\beta,\kappa),(\infty,\kappa)}\Bigl[\bigl|\{e \in \gamma' \colon \sigma'(e) \neq 0\}\bigr|\Bigr] \leq \mathbb{E}_{N,(\beta,\kappa),(\infty,\kappa)}\Bigl[\bigl|\{e \in \gamma'' \colon \sigma'(e) \neq 0\}\bigr|\Bigr],
    \end{equation*}
    the right-hand side of which we have given an upper bound for in~\eqref{eq: gamma'' bound},

    Applying Lemma~\ref{lemma: Chatterjees trick abs} to the terms in~\eqref{eq: eq before claims ii}, we thus obtain
    \begin{equation*}
        \begin{split} 
            &\biggl| \mathbb{E}_{N,\beta, \kappa} \Bigl[\, \prod_{ e \in (\gamma-\gamma_c) - \gamma'}  \theta_{\beta,\kappa} \bigl(  \sigma(e) - d\sigma(p_e) \bigr) \bigr]    - \mathbb{E}_{N,\infty,\kappa} \Bigl[ \,  \prod_{ e \in \gamma  }  \theta_{\beta,\kappa} \bigl(\sigma(e)\bigr) \Bigr]  \biggr|
            \\&\qquad\leq 
            2\sqrt{2 \max_{g \in G} \bigl|\theta_{\beta,\kappa}(g) - \theta_{\beta,\kappa}(0)\bigr| \sum_{e \in \gamma} \mu_{N,(\beta,\kappa),(\infty,\kappa)}\bigl(\mathcal{E}_5(e)\bigr)}
            \\&\qquad\qquad
            +
            4\sqrt{2 \max_{g \in G} \bigl|\theta_{\beta,\kappa}(g) - \theta_{\beta,\kappa}(0)\bigr| \sum_{e \in \gamma} \mu_{N,(\beta,\kappa),(\infty,\kappa)}\bigl(\mathcal{E}_4(e)\bigr)}
            \\&\qquad\qquad+
            2\sqrt{2 \max_{g \in G} \bigl|\theta_{\beta,\kappa}(g) - \theta_{\beta,\kappa}(0)\bigr| \sum_{e \in \gamma_c} \mu_{N,\infty,\kappa}\bigl(\mathcal{E}_6(e)\bigr)}
            \\&\qquad\qquad
             + 
             2\sqrt{2|\support \gamma_c| \, \bigl|1 - \theta_{\beta,\kappa}(0) \bigr|}
             + 
             2\sqrt{2 \bigl| 1 - \theta_{\beta,\kappa}(0)| \sum_{e \in \gamma} \mu_{N,(\beta,\kappa),(\infty,\kappa)} \bigl(\mathcal{E}_7(e)\bigr)}.
        \end{split}
    \end{equation*}
    Recalling the definitions of \( \alpha_3(\beta,\kappa) \) and \( \alpha_4(\beta,\kappa) \), we obtain the desired conclusion.
\end{proof}

\subsection{Upper bounds on events}\label{sec: upper bounds on bad events}

In this section we provide upper bounds on the events \( \mathcal{E}_1, \)  \( \mathcal{E}_2, \) and \( \mathcal{E}_3, \) defined in Section~\ref{sec: Ising split}, and the events \( \mathcal{E}_4(e), \) \( \mathcal{E}_5(e), \) \( \mathcal{E}_6(e), \) and \( \mathcal{E}_7(e), \) from Section~\ref{sec:applyingcoupling}.

\begin{proposition}\label{proposition: E1}
    Let \( \beta,\kappa \geq 0 \) be such that ~\ref{assumption: 3} hold, let \( \gamma\in C^1(B_N)  \) be a path with \( \dist_0\bigl(\support \gamma,\partial C_1(B_N)\bigr) \geq 8\), let \( \gamma_0 \in C^1(B_N)\) be any path such that \( \partial \gamma_0 = -\partial \gamma, \)  and let \( \mathcal{E}_1 \) be given by~\eqref{eq: E1}. Then
    \begin{equation*}\label{eq: E1 bound}
        \begin{split} 
            \mu_{N,(\beta,\kappa),(\infty,\kappa)}( \mathcal{E}_1) 
            &\leq 
            \mathbb{1}(\partial \gamma \neq 0) \Bigl( K_3 K_4^8\alpha_0(\kappa)^8\alpha_1(\beta)^6 \sum_{e \in \gamma}
            \bigl( K_4 \alpha_0(\kappa) \bigr)^{\max(0,\dist_0(e,\support \gamma_0 )-8)}
            \\&\hspace{13em}+
            K_3 |\support \gamma|\bigl( K_4 \alpha_0(\kappa) \bigr)^{\dist_1(\support \gamma ,\partial C_1(B_N))}\Bigr) ,
        \end{split}
    \end{equation*}
    where \( K_3 \) and \( K_4 \) are given in~\eqref{eq: K3 and K4}.
\end{proposition}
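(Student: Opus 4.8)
The aim is to bound the probability of $\mathcal{E}_1$, the event that in the coupled pair $(\hat\sigma,\hat\sigma')$ there is an irreducible $\bar\sigma\le \hat\sigma'|_{E_{\hat\sigma,\hat\sigma'}}$ disturbing $\gamma$. The first observation is that if $\partial\gamma=0$ then such a $\bar\sigma$ cannot disturb $\gamma$: one can always take $\hat\gamma=0$ and $\hathat\sigma=0$ in Definition~\ref{def: disturbing}, since $d\bar\sigma=0$ (as $\bar\sigma\le\hat\sigma'\in\Omega^1_0$), $\bar\sigma(\gamma)$ can be absorbed — actually here we use that a generalized loop never needs to be disturbed by a closed form in the precise sense of Definition~\ref{def: disturbing}. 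Hence the indicator $\mathbb 1(\partial\gamma\ne 0)$ appears and we may assume $\partial\gamma\ne 0$ from now on. Next I would reduce to a union bound over edges: if $\bar\sigma\le\hat\sigma'|_{E_{\hat\sigma,\hat\sigma'}}$ is irreducible and disturbs $\gamma$, then in particular $\support\bar\sigma\subseteq E_{\hat\sigma,\hat\sigma'}$, so $\support\bar\sigma$ meets $E_{\hat\sigma,\hat\sigma'}=\mathcal{C}_{\mathcal G(\hat\sigma,\hat\sigma')}\big(\{e:d\hat\sigma|_{\pm\support\hat\partial e}\ne0\}\big)$ and, being connected in $\bar{\mathcal G}$ (Lemma~\ref{lemma: irreducible to connected}) and disjoint from $\support d\hat\sigma'=\emptyset$, it must be a closed irreducible $1$-form whose support is linked through $\mathcal G(\hat\sigma,\hat\sigma')$ to the ``vortex edges'' of $\hat\sigma$. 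The key structural input is that an irreducible closed $1$-form $\bar\sigma$ that disturbs a path $\gamma$ with $\partial\gamma\ne 0$ must have support ``reaching'' either from a vortex of $\hat\sigma$ to $\support\gamma$, or from $\support\gamma$ out towards $\support\gamma_0$ (which shares the boundary $\partial\gamma$), or out to $\partial C_1(B_N)$; this is where the two terms in the bound come from.

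\textbf{Key steps.} (1) Dispose of the case $\partial\gamma=0$ as above, introducing $\mathbb 1(\partial\gamma\ne0)$. (2) For $\partial\gamma\ne0$: given that $\bar\sigma$ is closed, irreducible, disturbs $\gamma$, and $\support\bar\sigma\subseteq E_{\hat\sigma,\hat\sigma'}$, apply Lemma~\ref{lemma: small 1forms} to get $|(\support\bar\sigma)^+|\ge 8$ (once we know $\support\bar\sigma$ contains no boundary cells, which holds since the relevant edges are far from $\partial C_1(B_N)$ by the $\dist_0$ hypothesis), OR $\bar\sigma$ touches the boundary, contributing the $(K'\alpha_0(\kappa))^{\dist_1(\support\gamma,\partial C_1(B_N))}$ term via Proposition~\ref{proposition: ZZ upper bound} applied with $E_0$ a neighborhood of $\support\gamma$. (3) In the bulk case, because $\bar\sigma$ disturbs $\gamma$ and is closed, it cannot be ``resolved'' by a $\hathat\sigma$ with $d\hathat\sigma\le d\hat\sigma'=0$, i.e. $\hathat\sigma=0$, so conditions (ii)–(iv) of Definition~\ref{def: disturbing} fail for $\hathat\sigma=0$ and every path $\hat\gamma$ with $\partial\hat\gamma=-\partial\gamma$ of minimal length; analyzing the failure of (iii), $\bar\sigma(\gamma+\hat\gamma)\ne 0$ for the geodesic $\hat\gamma$ between the endpoints of $\gamma$, forces $\support\bar\sigma$ to link $\support\gamma$ to $\support\gamma_0$, hence $|\support\bar\sigma|\ge \dist_0(e,\support\gamma_0)$ for some $e\in\gamma$ in its support — but also $|\support\bar\sigma|\ge 8$ always, which is why the exponent is $\max(0,\dist_0(e,\support\gamma_0)-8)$ and the prefactor carries $\alpha_0(\kappa)^8\alpha_1(\beta)^6$ (the $\alpha_1(\beta)^6$ coming from the fact that $e\in E_{\hat\sigma,\hat\sigma'}$ requires a genuine vortex of $\hat\sigma$ nearby, which has $\ge 6$ plaquettes by Lemma~\ref{lemma: minimal vortex I}). (4) Apply Proposition~\ref{proposition: new Z-LGT coupling upper bound} (or Proposition~\ref{proposition: ZZ upper bound}) with the appropriate $M,M',E_0$ to convert each ``linking cluster of size $\ge m$'' statement into $K(K'\alpha_0(\kappa))^m$ geometric bounds, then sum over $e\in\gamma$ and over the geometric series in $m$.

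\textbf{Main obstacle.} The delicate part is Step (3): translating ``$\bar\sigma$ disturbs $\gamma$'' (a negation of an existential statement over all admissible $(\hat\gamma,\hathat\sigma)$) into a concrete geometric lower bound on $|\support\bar\sigma|$ in terms of $\dist_0(e,\support\gamma_0)$. One must argue that if $\support\bar\sigma$ does \emph{not} reach from an edge of $\gamma$ either to $\support\gamma_0$ or to $\partial C_1(B_N)$, then one can \emph{choose} a short $\hat\gamma$ (routed along $\gamma_0$ away from $\support\bar\sigma$) and $\hathat\sigma=0$ satisfying (i)–(iv), contradicting the hypothesis that $\sigma$ disturbs $\gamma$. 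Care is needed because $\bar\sigma$ is only a piece of $\hat\sigma'|_{E_{\hat\sigma,\hat\sigma'}}$, so one also has to track why membership in $E_{\hat\sigma,\hat\sigma'}$ (not merely in $\support\hat\sigma'$) is what produces the extra vortex factor $\alpha_1(\beta)^6$; this uses Lemma~\ref{lemma: Edsigmalemma} together with Lemma~\ref{lemma: minimal vortex I}. Once the geometry is pinned down, the probabilistic estimates are a routine application of Propositions~\ref{proposition: new Z-LGT coupling upper bound} and~\ref{proposition: ZZ upper bound} followed by summing geometric series.
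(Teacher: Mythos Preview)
Your overall strategy matches the paper's: show that a closed irreducible $\bar\sigma\leq\hat\sigma'|_{E_{\hat\sigma,\hat\sigma'}}$ disturbing $\gamma$ forces $\partial\gamma\neq 0$, pick $e\in\support\gamma\cap\support\bar\sigma$, deduce a lower bound on the cluster $\mathcal C_{\mathcal G(\hat\sigma,\hat\sigma')}(e)$ together with $d(\hat\sigma|_{\mathcal C})\neq 0$, then union bound and apply Proposition~\ref{proposition: new Z-LGT coupling upper bound}. Two points in your write-up are misattributed, though, and would cause trouble if you tried to carry them out as stated.

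First, in Step~(3) you say ``analyzing the failure of (iii)'' with $\hathat\sigma=0$. With $\hathat\sigma=0$ condition~(iii) of Definition~\ref{def: disturbing} is automatic ($0$ evaluated on anything is $0$); it is condition~(ii), namely $\bar\sigma(\hat\gamma)=0$, that must fail for every admissible $\hat\gamma$. This is exactly what forces $\support\bar\sigma$ to intersect \emph{every} path $\hat\gamma$ with $\partial\hat\gamma=-\partial\gamma$, in particular $\gamma_0$; combined with $\bar\sigma\in\Omega^1_0$ and Lemma~\ref{lemma: small 1forms} this gives $|(\support\bar\sigma)^+|\geq\max(\dist_0(e,\support\gamma_0),8)$ (this is Lemma~\ref{lemma: last new lemma}). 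Also note $d\hathat\sigma=0$ does not imply $\hathat\sigma=0$, though choosing $\hathat\sigma=0$ is enough.

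Second, the boundary term $K|\support\gamma|(K'\alpha_0(\kappa))^{\dist_1(\support\gamma,\partial C_1(B_N))}$ does \emph{not} come from $\bar\sigma$ touching $\partial C_1(B_N)$ --- under the hypothesis $\dist_0(\support\gamma,\partial C_1(B_N))\ge 8$ that scenario is already absorbed into the size bound. The boundary term arises inside Proposition~\ref{proposition: new Z-LGT coupling upper bound} from the case $M'\in\{1,\dots,5\}$, i.e.\ when the vortex of $\hat\sigma$ attached to the cluster has fewer than six plaquettes and hence (by Lemma~\ref{lemma: minimal vortex I}) must sit at $\partial B_N$, forcing the cluster to reach from $e$ to the boundary. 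In particular the correct tool is Proposition~\ref{proposition: new Z-LGT coupling upper bound} (applied with $\kappa_1=\kappa_2=\kappa$, $M=\max(\dist_0(e,\support\gamma_0),8)$, $M'=1$), not Proposition~\ref{proposition: ZZ upper bound}, which concerns the $(\infty,\kappa),(\infty,\kappa)$ coupling and carries no $\alpha_1(\beta)$ factor.
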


\begin{proposition}\label{proposition: E2}
    Let \( \beta,\kappa \geq 0 \) be such that~\ref{assumption: 3} hold, let \( \gamma \in C^1(B_N) \) be a path such that \( e \in \gamma \) and \( p \in \hat \partial e \) we have \( \dist_0(\support \partial p,\partial C_1(B_N))) \geq 8,\) let \( \gamma_0 \in C^1(B_N)\) be any path such that \( \partial \gamma_0 = -\partial \gamma, \) and let \( \mathcal{E}_2 \) be given by~\eqref{eq: E2}. Then
    \begin{equation*}\label{eq: E2 bound}
        \begin{split}
            &\mu_{N,(\beta,\kappa),(\infty,\kappa)}( \mathcal{E}_2) 
            \\&\qquad\leq  
            \mathbb{1}(\partial \gamma \neq 0)\cdot K_3 K_4^8\alpha_0(\kappa)^8\alpha_1(\beta)^6 \sum_{e \in \gamma}
            \bigl( K_4 \alpha_0(\kappa) \bigr)^{\max(0,\dist_0(e,\support \gamma_0)-8)}
            \\&\qquad\qquad+
            K_2 |\support \gamma_c|\alpha_2(\beta,\kappa)^{6}
            +
            K_3 K_4^2 |\support \gamma| \alpha_0(\kappa)^2   \alpha_1(\beta)^{7}  
            \\&\qquad\qquad+
            4K_3 |\support \gamma|\bigl( K_4 \alpha_0(\kappa) \bigr)^{\dist_1(\support \gamma ,\partial C_1(B_N))}
            ,
        \end{split}
    \end{equation*}
    where \( K_2 \) is defined in~\eqref{eq: K2}, and \( K_3 \) and \( K_4 \) are given in~\eqref{eq: K3 and K4}.
\end{proposition}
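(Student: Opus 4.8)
The plan is to run the argument in parallel with the proof of Proposition~\ref{proposition: E1}, the only difference being that a sub-configuration $\bar\sigma$ of $\hat\sigma$ may now carry vortices, which will produce the two additional terms $K''|\support\gamma_c|\alpha_2(\beta,\kappa)^6$ and $K(K')^2|\support\gamma|\alpha_0(\kappa)^2\alpha_1(\beta)^7$. First I would fix $N$ large enough that $\support\gamma\subseteq C_1(B_N)$ and $\dist_0(\support\partial p,\partial C_1(B_N))\geq8$ for all $e\in\gamma$ and $p\in\hat\partial e$; by Proposition~\ref{proposition: limit exists} it suffices to bound $\mu_{N,(\beta,\kappa),(\infty,\kappa)}(\mathcal{E}_2)$ for such $N$, where $(\hat\sigma,\hat\sigma')$ is distributed according to the coupling of Definition~\ref{def: the coupling}, so that $E_{\hat\sigma,\hat\sigma'}=\mathcal{C}_{\mathcal{G}(\hat\sigma,\hat\sigma')}(\{e\in\support\hat\sigma\colon d\hat\sigma|_{\pm\support\hat\partial e}\neq0\})$.

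The crux is a structural dichotomy: I want to show that if $\bar\sigma\leq\hat\sigma|_{E_{\hat\sigma,\hat\sigma'}}$ is non-trivial, irreducible, and disturbs $\gamma$, then at least one of the following holds: \textnormal{(a)} $\bar\sigma$ supports a minimal vortex centered at an edge of $\support\gamma_c$; \textnormal{(b)} $\bar\sigma$ supports a vortex that is not a minimal vortex, and $\support\bar\sigma$ lies in a cluster of $\mathcal{G}(\hat\sigma,\hat\sigma')$ meeting an edge of $\support\gamma$; \textnormal{(c)} $d\bar\sigma=0$ (so $\bar\sigma$ is closed and irreducible, hence has $\geq8$ edges by Lemma~\ref{lemma: small 1forms}), $\partial\gamma\neq0$, and $\support\bar\sigma$ meets both $\support\gamma$ and $\support\gamma_0$, or meets $\support\gamma$ and reaches $\partial C_1(B_N)$. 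To establish this I would decompose $d\bar\sigma$ into vortices (Lemma~\ref{lemma: lemma sum of irreducible configurations}) and separate the minimal vortices centered at non-corner edges of $\gamma$ (the ``admissible'' ones, which condition~\ref{item: disturbing 4} of Definition~\ref{def: disturbing} permits with $\hathat\sigma=0$) from the rest. A minimal vortex centered at an edge $e_0\notin\support\gamma$ can be absorbed by the single-edge $1$-form supplied by Lemma~\ref{lemma: minimal vortex II}: then $d\hathat\sigma\leq d\bar\sigma$, condition~\ref{item: disturbing 3} reduces to $\hathat\sigma[e_0]\bigl(\gamma[e_0]+\hat\gamma[e_0]\bigr)=0$ with $\gamma[e_0]=0$, and $\hat\gamma$ may be routed around $\support\bar\sigma$; this absorption fails exactly when $e_0$ is a corner edge, giving~\textnormal{(a)}. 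A non-minimal vortex cannot be absorbed by a single-edge form (Lemma~\ref{lemma: minimal vortex II}), and if it lies near $\gamma$ its absorption is obstructed, which is~\textnormal{(b)}. When all vortices of $\bar\sigma$ are admissible one takes $\hathat\sigma=0$, so ``disturbs $\gamma$'' reduces to the nonexistence of $\hat\gamma$ with $\partial\hat\gamma=-\partial\gamma$ and $\bar\sigma(\hat\gamma)=0$; choosing $\hat\gamma=\gamma_0$ and arguing as for Proposition~\ref{proposition: E1} (closed forms on a box are exact, Lemma~\ref{lemma: poincare}) forces~\textnormal{(c)}, while vortices far from $\gamma$ are always absorbable and play no role.

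Given the dichotomy I would estimate each case by a union bound and Proposition~\ref{proposition: new Z-LGT coupling upper bound} (available under~\ref{assumption: 3} with $\kappa_1=\kappa_2=\kappa$). For~\textnormal{(a)}: for each $e\in\support\gamma_c$, a minimal vortex of $\hat\sigma$ centered near $e$ forces a cluster with $\bigl|\support d(\hat\sigma|_{\mathcal{C}(e')})\bigr|\geq12$ (Lemma~\ref{lemma: minimal vortex I}), so Proposition~\ref{proposition: new Z-LGT coupling upper bound} with $M=1$, $M'=6$ (equivalently Proposition~\ref{proposition: alternative plaquette bound}), after simplifying the constant via $\alpha_2(\beta,\kappa)=\alpha_0(\beta)\alpha_0(\kappa)^{1/6}$ and the definition of $K''$, yields at most $K''\alpha_2(\beta,\kappa)^6$ per corner edge, hence $K''|\support\gamma_c|\alpha_2(\beta,\kappa)^6$. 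For~\textnormal{(b)}: for each $e\in\support\gamma$, a non-minimal vortex forces $\bigl|\support d(\hat\sigma|_{\mathcal{C}(e)})\bigr|\geq14$ and, by Lemma~\ref{lemma: 6 plaquettes per edge}, $|\mathcal{C}(e)|\geq4$, so Proposition~\ref{proposition: new Z-LGT coupling upper bound} with $M=2$, $M'=7$ gives at most $K(K')^2\alpha_0(\kappa)^2\alpha_1(\beta)^7$ per edge, i.e.\ $K(K')^2|\support\gamma|\alpha_0(\kappa)^2\alpha_1(\beta)^7$, plus a boundary remainder $K|\support\gamma|(K'\alpha_0(\kappa))^{\dist_1(\support\gamma,\partial C_1(B_N))}$. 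For~\textnormal{(c)}: this is exactly the situation in the proof of Proposition~\ref{proposition: E1}; using that $\bar\sigma$ lies in $E_{\hat\sigma,\hat\sigma'}$ (so its cluster carries a vortex, contributing $\alpha_1(\beta)^6$), has $\geq8$ edges, and reaches $\support\gamma_0$ or the boundary, a union bound over $e\in\support\gamma$ and cluster sizes (Lemma~\ref{lemma: from walks to sets}) with Proposition~\ref{proposition: new Z-LGT coupling upper bound} reproduces $\mathbb{1}(\partial\gamma\neq0)K(K')^8\alpha_0(\kappa)^8\alpha_1(\beta)^6\sum_{e\in\gamma}(K'\alpha_0(\kappa))^{\max(0,\dist_0(e,\support\gamma_0)-8)}$ together with a further $K|\support\gamma|(K'\alpha_0(\kappa))^{\dist_1(\support\gamma,\partial C_1(B_N))}$. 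Summing the three cases, collecting the boundary remainders into $4K|\support\gamma|(K'\alpha_0(\kappa))^{\dist_1(\support\gamma,\partial C_1(B_N))}$, and letting $N\to\infty$ gives the stated estimate.

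The main obstacle I expect is the structural dichotomy of the second paragraph: one must argue that an irreducible $\bar\sigma$ failing \emph{all} of~\ref{item: disturbing 1}--\ref{item: disturbing 4} of Definition~\ref{def: disturbing} is forced into one of types~\textnormal{(a)}--\textnormal{(c)}. This requires an explicit construction of the auxiliary pair $(\hat\gamma,\hathat\sigma)$ --- choosing $\hathat\sigma$ to absorb precisely the removable minimal vortices while keeping $d\hathat\sigma\leq d\bar\sigma$, and routing $\hat\gamma$ with boundary $-\partial\gamma$ around $\support\bar\sigma$ --- together with careful bookkeeping of the orientations of the centering edges and of how $\support\hathat\sigma$ meets $\gamma$, all of which is controlled by the minimal-support lemmas (Lemmas~\ref{lemma: minimal vortex I},~\ref{lemma: minimal vortex II},~\ref{lemma: small 1forms}). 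Once that is in place the probabilistic estimates are routine, exactly as in the proof of Proposition~\ref{proposition: E1}.
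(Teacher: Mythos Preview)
Your proposal is essentially the same argument as the paper's, and the probabilistic estimates you extract from Proposition~\ref{proposition: new Z-LGT coupling upper bound} and Proposition~\ref{proposition: alternative plaquette bound} for each case match the paper's exactly. The paper organizes the dichotomy into four cases rather than three: (1) every $\hat\gamma$ with $\partial\hat\gamma=-\partial\gamma$ meets $\support\bar\sigma$; (2) $\bar\sigma$ has a minimal vortex at some $e'\in\gamma_c$; (3) $\bar\sigma$ has a vortex with $|(\support\nu)^+|>6$; (4) $\bar\sigma$ has a vortex touching $\partial B_N$. Your (a) is the paper's (2), your (b) together with its boundary remainder is the paper's (3)+(4), and your (c) is the paper's (1); the only structural difference is that the paper does not require $d\bar\sigma=0$ in case~(1), whereas your statement of~(c) does.

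That last point is a small inconsistency in your write-up rather than a real gap: you yourself note that when all vortices of $\bar\sigma$ are admissible (minimal and centered on $\gamma-\gamma_c$) one takes $\hathat\sigma=0$ and is ``forced into~(c)'', but such a $\bar\sigma$ need not satisfy $d\bar\sigma=0$. The paper handles this by simply stating case~(1) without the closedness hypothesis and invoking Lemma~\ref{lemma: last new lemma} for the size bound $|\mathcal{C}(e)|\geq 2\max(\dist_0(e,\support\gamma_0),8)$; note that the $\alpha_1(\beta)^6$ factor there comes not from $\bar\sigma$ itself but from the fact that $\support\bar\sigma\subseteq E_{\hat\sigma,\hat\sigma'}$, so the ambient cluster $\mathcal{C}_{\mathcal{G}(\hat\sigma,\hat\sigma')}(e)$ automatically carries a vortex of $\hat\sigma$. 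If you drop the clause $d\bar\sigma=0$ from your~(c) and keep the rest of your argument, your proof lines up with the paper's.
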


\begin{proposition}\label{proposition: E3}
    Let \( \beta,\kappa \geq 0 \) be such that~\ref{assumption: 3} hold, let \( \gamma \in C^1(B_N) \) be a path such for all \( e \in \gamma \) and \( p \in \hat \partial e \) we have  \( \dist_0(\support \partial p,\partial C_1(B_N)) \geq 8 ,\)  and let \( \mathcal{E}_3 \) be defined by~\eqref{eq: E3}. Then
    \begin{equation}\label{eq: coupling and conditions 6}
        \mu_{N,\beta,\kappa}(\mathcal{E}_3) \leq 18^4 K_5 |\support \gamma| \alpha_0(\kappa)^2 \alpha_1(\beta)^{12}
    \end{equation}
    where \( K_5 \) is given by~\eqref{eq: K5}.
\end{proposition}

\begin{proposition}[Proposition~7.10 in~\cite{flv2021}]\label{proposition: E4}
    Let \( \beta,\kappa \geq 0 \) be such that~\ref{assumption: 3} holds, let \( e \in C_1(B_N) \) be such that the support of \(   \hat \partial e \) contains no boundary plaquettes of \( B_N, \) and let \( \mathcal{E}_4(e) \) be given by~\eqref{eq: E4}.
    Then
    \begin{equation*}
        \mu_{N,(\beta,\kappa),(\infty,\kappa)}\bigl( \mathcal{E}_4(e) \bigr) 
        \leq
        K_7  \alpha_0(\kappa)^9\alpha_1(\beta)^6
        +
        K_3  \bigl( K_4 \alpha_0(\kappa)  \bigr)^{\dist_1(e,\partial C_1(B_N))} , 
    \end{equation*}
    where
    \begin{equation}\label{eq: K7}
        K_7 
        \coloneqq 
        18^{10} K_3  \Bigl( 2^8 \alpha_0(\kappa)^{-1} \pigl( \bigl( 1+\alpha_0(\kappa)/2 \bigr)^{8} - 1 \pigr) + 2^8
        K_4 \Bigr),
    \end{equation}
    and \( K_3 \) and \( K_4 \) are given by~\eqref{eq: K3 and K4}.
\end{proposition}

\begin{proposition}[Proposition~7.12 in~\cite{flv2021}]\label{proposition: E5}
    Let \( \beta,\kappa \geq 0 \) be such that~\ref{assumption: 3} holds. Next, let \( e \in C_1(B_N) \) be such that the support of \(   \hat \partial e \) contains no boundary plaquettes of \( B_N, \) and let \( \mathcal{E}_5(e) \) be given by~\eqref{eq: E5}.
    Then
    \begin{equation*}
        \mu_{N,(\beta,\kappa),(\infty,\kappa)}\bigl( \mathcal{E}_5(e) \bigr) \leq 
        K_8  \alpha_1(\beta)^6 \alpha_0(\kappa)^6 \max\bigl(   \alpha_0(\kappa),  \alpha_1(\beta)^6\bigr)
            +
           K_9  \bigl( K_4 \alpha_0(\kappa)  \bigr)^{\dist_1(e,\partial C_1(B_N))} , 
    \end{equation*} 
    where
    \begin{equation}\label{eq: K8 and K9}
        K_8 \coloneqq   
        2K_3  \cdot 18^{8} (18^2+1) \bigl(2 + \alpha_0(\kappa) \bigr)^{7} 
        , \quad 
        K_9 \coloneqq K_3  \Bigl( 1 + \pigl( \bigl(2 + \alpha_0(\kappa) \bigr) \alpha_0(\kappa)\pigr)^{-1}\Bigr),
    \end{equation}
    and \( K_3 \) and \( K_4 \) are given by~\eqref{eq: K3 and K4}.
\end{proposition}

\begin{proposition}[Proposition~7.14 in~\cite{flv2021}]\label{proposition: E6}
    Let \( \beta,\kappa \geq 0 \) be such that~\ref{assumption: 3} holds, let \( e \in C_1(B_N) \) be such that \( \dist_0(\support \partial p,\partial C_1(B_N)) \geq 8 \) for all \( p \in \hat \partial e, \) and let \( \mathcal{E}_6(e) \) be given by~\eqref{eq: E6}.
    Then
    \begin{equation}\label{eq: minimal Ising}
        \mu_{N,\infty,\kappa}   \bigl( \mathcal{E}_6(e) \bigr) 
        \leq 
        K_{10} \alpha_0(\kappa)^8,
    \end{equation}
    where
    \begin{equation}\label{eq: K10}
        K_{10} \coloneqq 18^{13} \bigl(1-18^{2}  \alpha_0(\kappa)\bigr)^{-1}.
    \end{equation}
\end{proposition}

\begin{proposition}\label{proposition: E7}
    Let \( \beta,\kappa \geq 0 \) be such that~\ref{assumption: 3} holds, let \( e \in C_1(B_N) \) be such that for all \( p \in \hat \partial e \), the support of \( \partial p \) contains no boundary edges of \( B_N, \) and let \( \mathcal{E}_7(e) \) be given by~\eqref{eq: E7}. Then
    \begin{equation*}
        \mu_{N,(\beta,\kappa),(\infty,\kappa)}\bigl( \mathcal{E}_7(e) \bigr) \leq 6 K_2 \, \alpha_2(\beta,\kappa)^6,
    \end{equation*}
    where  \( K_2\) is given by~\eqref{eq: K2}.
\end{proposition}

Before we provide proofs of Propositions~\ref{proposition: E1},~\ref{proposition: E2},~\ref{proposition: E3},~and ~\ref{proposition: E7}, we state and prove the following lemma, which will be useful in these proofs.

\begin{lemma}\label{lemma: last new lemma}
    Let \( \gamma \in C^1(B_N) \) be a path with \( \partial \gamma \neq 0 \) and \( \dist_0(\support \gamma,\partial C_1(B_N))) \geq 8, \) let \( \sigma \in \Omega^1_0(B_N,G) \) be such that the support of any path \( \hat \gamma \in C^1(B_N) \) with \( \partial \hat \gamma = - \partial \gamma \) intersects \( \support \sigma. \) Then, for any \( e \in \support \sigma \) and any path \( \gamma_0 \in C^1(B_N)\) such that \( \partial \gamma_0 = -\partial \gamma, \) we have
    \begin{equation*}
        \bigl|(\support \sigma)^+ \bigr| \geq \max\bigl(\dist_0(e,\support \gamma_0),8\bigr).
    \end{equation*}
\end{lemma}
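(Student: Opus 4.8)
The statement decomposes into two lower bounds on $|(\support\sigma)^+|$: the bound $|(\support\sigma)^+|\geq 8$ and the bound $|(\support\sigma)^+|\geq\dist_0(e,\support\gamma_0)$. The first is immediate from Lemma~\ref{lemma: small 1forms}: since $\sigma\in\Omega^1_0(B_N,G)$ is non-trivial (it contains $e$ in its support) and, by the hypothesis $\dist_0(\support\gamma,\partial C_1(B_N))\geq 8$ together with the fact that $\support\sigma$ must meet the support of a path connecting the endpoints of $\gamma$, one checks that $\hat\partial e$ contains no boundary cells of $B_N$; hence $|(\support\sigma)^+|\geq 8$. So the whole content is the bound $|(\support\sigma)^+|\geq\dist_0(e,\support\gamma_0)$, and the plan is to produce, from the closed $1$-form $\sigma$, a closed $1$-form whose support is a connected set in $\mathcal{G}_{\sigma}$ that meets $\support\gamma_0$, and then invoke the definition of $\dist_0$.

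First I would fix $e\in\support\sigma$ and pass to the cluster $\tilde\sigma\coloneqq\sigma|_{\mathcal{C}_{\mathcal{G}(\sigma)}(e)}$. By Lemma~\ref{lemma: cluster is subconfig} we have $\tilde\sigma\leq\sigma$, and since $\sigma\in\Omega^1_0(B_N,G)$ (so $d\sigma=0$) Lemma~\ref{lemma: closed is closed new}\ref{item: coupling eqs ii} (or directly Lemma~\ref{lemma: the blue lemma}\ref{property 5}) gives $d\tilde\sigma=0$, i.e. $\tilde\sigma\in\Omega^1_0(B_N,G)$. The key claim is that $\support\tilde\sigma$ must intersect $\support\gamma_0$. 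To see this, suppose toward a contradiction that $\support\tilde\sigma\cap\support\gamma_0=\emptyset$. The idea is to modify $\gamma_0$ along the cluster $\mathcal{C}_{\mathcal{G}(\sigma)}(e)$ so as to produce a path $\hat\gamma$ with $\partial\hat\gamma=-\partial\gamma$ that avoids $\support\sigma$ entirely, contradicting the hypothesis on $\sigma$. Concretely: write $\sigma$ via Lemma~\ref{lemma: lemma sum of irreducible configurations} as a sum of irreducible closed forms; by Lemma~\ref{lemma: graph decomposition in coarser}, each summand lies either entirely inside $\mathcal{C}_{\mathcal{G}(\sigma)}(e)$ or entirely outside it, so $\sigma=\tilde\sigma+\sigma|_{C_1(B_N)\smallsetminus\mathcal{C}_{\mathcal{G}(\sigma)}(e)}$ is a decomposition into two closed forms with disjoint supports and disjoint "graph neighborhoods". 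Then the hypothesis "every path $\hat\gamma$ with $\partial\hat\gamma=-\partial\gamma$ meets $\support\sigma$" must be witnessed either by $\support\tilde\sigma$ or by $\support(\sigma-\tilde\sigma)$: more precisely, if some path $\hat\gamma$ avoided $\support\tilde\sigma$, one could further reroute it to avoid $\support(\sigma-\tilde\sigma)$ as well using the fact that $\sigma-\tilde\sigma$ is closed and disjoint from the $e$-cluster — but here I should instead argue directly that the hypothesis applied to $\sigma$ forces $\support\tilde\sigma$ to be a "topological obstruction" and hence to meet any chain bounding $-\partial\gamma$, which is exactly $\gamma_0$.

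The cleanest route avoids rerouting arguments: I would argue that if $\support\tilde\sigma\cap\support\gamma_0=\emptyset$, then $\tilde\sigma(\gamma_0)$ is well-defined and equals $0$, whereas pairing a closed $1$-form against a chain $\gamma_0$ with $\partial\gamma_0=-\partial\gamma$ and comparing against the loop $\gamma+\gamma_0$ one deduces that $\tilde\sigma$ would not obstruct a path from $-\partial\gamma$; more usefully, since $\tilde\sigma$ is closed on the box $B$ enclosing everything, by the Poincaré lemma (Lemma~\ref{lemma: poincare}, for $k=1$ on a suitable sub-box where $\tilde\sigma$ vanishes on the boundary) we can write $\tilde\sigma=d\eta$ for some $\eta\in\Omega^0$, and then $\tilde\sigma$ pairs to zero against any closed loop, so the "obstruction" provided by $\sigma$ must come from $\sigma-\tilde\sigma$, i.e. every path $\hat\gamma$ with $\partial\hat\gamma=-\partial\gamma$ meets $\support(\sigma-\tilde\sigma)$; iterating the cluster decomposition over all of $\support\sigma$ one peels off every connected component and reaches a contradiction unless one component — namely the one containing $e$ — already meets $\support\gamma_0$. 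Once $\support\tilde\sigma\cap\support\gamma_0\neq\emptyset$ is established, note that $\mathcal{C}_{\mathcal{G}(\sigma)}(e)=\support\tilde\sigma\cup(-\support\tilde\sigma)$ is a connected set in $\mathcal{G}(\sigma)$ (Lemma~\ref{lemma: irreducible to connected} applied componentwise, or directly from the cluster definition) that contains $e$ and meets $\support\gamma_0$; hence by the definition of $\dist_0$ (with $E_0\coloneqq\support\gamma_0$) we get $|\mathcal{C}_{\mathcal{G}(\sigma)}(e)|\geq 2\dist_0(e,\support\gamma_0)$, i.e. $|(\support\tilde\sigma)^+|\geq\dist_0(e,\support\gamma_0)$, and since $\tilde\sigma\leq\sigma$ this yields $|(\support\sigma)^+|\geq\dist_0(e,\support\gamma_0)$. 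Combining with $|(\support\sigma)^+|\geq 8$ finishes the proof.

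\textbf{Main obstacle.} The delicate point is the claim that $\support\tilde\sigma$ (the $e$-cluster part of $\sigma$) must intersect $\support\gamma_0$; the "reroute the path through the cluster complement" argument needs to be made precise without circularity. I expect the right formalization is: among all connected components of $\support\sigma$ in $\mathcal{G}(\sigma)$, at least one must meet $\support\gamma_0$ — because if none did, then since each component is a closed form one could successively homotope any path realizing $-\partial\gamma$ across each component (each being null-homologous, being $d$ of a $0$-form vanishing outside a small box) to produce $\hat\gamma$ with $\partial\hat\gamma=-\partial\gamma$ disjoint from $\support\sigma$, contradicting the hypothesis; and then one must further argue that it is precisely the component containing the fixed $e$ that meets $\support\gamma_0$, which may require observing that $\support\gamma_0$ being connected (a path) forces the intersection to occur in a single component, or else choosing $e$ in that component — but the statement fixes $e$ arbitrarily in $\support\sigma$, so the argument must show that \emph{every} component of $\support\sigma$ meets $\support\gamma_0$. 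This last point is the real crux and will likely use that $\gamma+\gamma_0$ is a generalized loop bounding a surface $q$ (Lemma~\ref{lemma: oriented loops}), so that each irreducible closed summand $\omega\leq\sigma$ satisfies $\omega(q)=\sigma(\text{boundary stuff})$ and must "link" $\partial\gamma$, forcing its support to meet $\support\gamma_0$; pinning down that linking statement cleanly is where the work lies.
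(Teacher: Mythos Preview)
Your proposal is far more elaborate than the paper's proof, which is three sentences. The paper argues: since $\partial\gamma\neq 0$, the hypothesis applied to $\hat\gamma=\gamma_0$ itself shows $\support\gamma_0\cap\support\sigma\neq\emptyset$; in particular $\sigma$ is non-trivial, so Lemma~\ref{lemma: small 1forms} gives $|(\support\sigma)^+|\geq 8$; and then ``using the definition of $\gamma_0$, the desired conclusion follows'' --- meaning that $(\sigma,0)$ is itself a witness in the minimum defining $\dist_0(e,\support\gamma_0)$, since the cluster $\mathcal{C}_{\mathcal{G}(\sigma)}(e)$ meets $\support\gamma_0$ and has size at most $2|(\support\sigma)^+|$.

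You correctly put your finger on the one subtle point: for that last step one needs the connected component of $e$ in $\mathcal{G}(\sigma)$, not just $\support\sigma$, to meet $\support\gamma_0$. The paper's one-line finish tacitly uses that $\support\sigma$ is connected in $\mathcal{G}(\sigma)$, i.e.\ that $\sigma$ is irreducible (Lemma~\ref{lemma: irreducible to connected}). And indeed both places where the lemma is invoked --- the proofs of Propositions~\ref{proposition: E1} and~\ref{proposition: E2} --- apply it only to an irreducible closed $1$-form $\hathat\sigma'$. So the obstacle you flag is genuine for the lemma as literally stated, but vanishes in every application.

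Your detour through iterated cluster peeling, the Poincar\'e lemma, oriented surfaces, and a linking argument is aimed at proving that \emph{every} component of $\support\sigma$ meets $\support\gamma_0$. That is neither what the paper does nor what is needed. The clean resolution is to add ``irreducible'' to the hypotheses (or note it holds at the point of use): then Lemma~\ref{lemma: irreducible to connected} makes $\support\sigma$ connected, the cluster of $e$ is all of $\support\sigma$, and $\dist_0(e,\support\gamma_0)\leq\frac{1}{2}|\mathcal{C}_{\mathcal{G}(\sigma)}(e)|=|(\support\sigma)^+|$ in one line.
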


\begin{proof}
    Since \( \partial \gamma \neq 0 \) and the support of any path \( \hat \gamma \in C^1(B_N) \) with \( \partial \hat \gamma = - \partial \gamma \) intersects \( \support \sigma, \) we must have \( \support \sigma \neq \emptyset. \) Since \( \sigma \in \Omega^1_0(B_N,G), \) it thus follows from Lemma~\ref{lemma: small 1forms} that
    \begin{equation*}
        \bigl|(\support \sigma)^+ \bigr| \geq 8.
    \end{equation*}
    Using the definition of \( \gamma_0, \) the desired conclusion now follows.
\end{proof}

\begin{proof}[Proof of Proposition~\ref{proposition: E1}]
    Assume first that \( (\hat \sigma, \hat \sigma') \in \mathcal{E}_1. \) Then there exists an irreducible {\(1\)-form} \(  \hathat\sigma' \leq \hat \sigma'|_{E_{\hat \sigma,\hat \sigma'}} \) that disturbs \( \gamma. \)

    By Lemma~\ref{lemma: cluster is subconfig}, we have \( \hat \sigma'|_{E_{\hat \sigma,\hat \sigma'}}  \leq  \hat \sigma' \), and hence, since \(   \hathat \sigma' \leq \hat \sigma'|_{E_{\hat \sigma,\hat \sigma'}} \), it follows from Lemma~\ref{lemma: the blue lemma}\ref{property 3} that \(   \hathat \sigma' \leq \hat \sigma'. \) Since \( \hat \sigma' \in \Omega^1_0(B_N,G) \), we have \( d\hat \sigma' = 0, \) and hence we conclude that \( d \hathat \sigma' = 0. \)

    Since \( \hathat \sigma' \) disturbs \( \gamma, \) the set \( \support \gamma \cap \support \hathat \sigma' \) must be non-empty (since otherwise,  we could let \( \hat \gamma = -\gamma \) and \( \bar \sigma = \hathat \sigma' \) in Definition~\ref{def: disturbing}). Fix some edge \( e \in \support \gamma \cap \support \hathat \sigma'. \) 

    Since \(  \hathat\sigma' \leq \hat \sigma'|_{E_{\hat \sigma,\hat \sigma'}}, \) we have \( \support  \hathat \sigma' \subseteq E_{\hat \sigma,\hat \sigma'}.\)
    Using the definition of \( E_{\hat \sigma,\hat \sigma'} \), we conclude that \( d\bigl( \hat \sigma|_{ \mathcal{C}_{\mathcal{G}(\hat \sigma,\hat \sigma')}(\support \smallhathat \sigma')} \bigr)\neq 0. \)
    Since \( \hathat \sigma' \) is irreducible and \( e \in \support \hathat \sigma', \)  Lemma~\ref{lemma: irreducible to connected} implies that \( \mathcal{C}_{\mathcal{G}(\hat \sigma,\hat \sigma')}(\support \hathat \sigma') = \mathcal{C}_{\mathcal{G}(\hat \sigma,\hat \sigma')}(e), \) and hence 
    \( d\bigl( \hat \sigma|_{ \mathcal{C}_{\mathcal{G}(\hat \sigma,\hat \sigma')}(e)} \bigr)\neq 0. \)
    Since \( \hathat \sigma'\) is irreducible, satisfies \( d\hathat \sigma' = 0\), and disturbs \( \gamma \), the support of any path \( \hat \gamma \in C^1(B_N) \) with \( \partial \hat \gamma = -\partial \gamma\) must intersect the support of \( \hathat \sigma' \). 
    This implies in particular that we must have \( \partial \gamma \neq 0. \) Applying Lemma~\ref{lemma: last new lemma}, we thus obtain \( \bigl|(\support \hathat \sigma')^+ \bigr| \geq \max\bigl(\dist_0(e,\support \gamma_0),8\bigr),\) and  consequently \( \bigl|\mathcal{G}_{\mathcal{G}(\hat \sigma,\hat \sigma')}(e) \bigr| \geq 2\max\bigl(\dist_0(e,\support \gamma_0),8 \bigr). \)
    To sum up, we have showed that
    \begin{equation*}
        \begin{split}
            &\mu_{N,(\beta,\kappa),((\infty,\kappa))}(\mathcal{E}_1) 
            \\&\qquad \leq 
            \mathbb{1}(\partial \gamma \neq 0) \,
            \mu_{N,\beta,\kappa} \times \mu_{N,\infty,\kappa}  \Bigl( \pigl\{ (\hat \sigma,\hat \sigma') \in \Omega^1(B_N,G) \times \Omega^1_0(B_N,G) \colon \exists e \in \gamma \text{ such that }
            \\&\hspace{11em}
            d\bigl( \hat \sigma|_{ \mathcal{C}_{\mathcal{G}(\hat \sigma,\hat \sigma')}(e)} \bigr)\neq 0 \text{ and } |\mathcal{C}_{\mathcal{G}(\hat \sigma,\hat \sigma')}(e)| \geq 2 \max\bigl(\dist_0(e,\support \gamma_0),8 \bigr) \pigr\}\Bigr)
            \\&\qquad \leq 
            \mathbb{1}(\partial \gamma \neq 0) \,
            \sum_{e \in \gamma}
            \mu_{N,\beta,\kappa} \times \mu_{N,\infty,\kappa}  \Bigl( \pigl\{ (\hat \sigma,\hat \sigma') \in \Omega^1(B_N,G) \times \Omega^1_0(B_N,G)\colon  
            \\[-1.5ex]&\hspace{11em}
            d\bigl( \hat \sigma|_{ \mathcal{C}_{\mathcal{G}(\hat \sigma,\hat \sigma')}(e)} \bigr)\neq 0 \text{ and } |\mathcal{C}_{\mathcal{G}(\hat \sigma,\hat \sigma')}(e)| \geq 2 \max\bigl(\dist_0(e,\support \gamma_0),8 \bigr)\pigr\}\Bigr)
        \end{split}
    \end{equation*}
    Applying Proposition~\ref{proposition: new Z-LGT coupling upper bound} with \( M = \max\bigl(\dist_0(e,\support \gamma_0),8\bigr) \) and \( M' = 1 \) for each \( e \in \gamma \), we obtain~\eqref{eq: E1 bound} as desired. 
\end{proof}

\begin{proof}[Proof of Proposition~\ref{proposition: E2}]
    Assume first that \( (\hat \sigma, \hat \sigma') \in \mathcal{E}_2. \) Then there exists an irreducible 1-form \(  \hathat \sigma' \leq \hat \sigma|_{E_{\hat \sigma,\hat \sigma'}} \) that disturbs \( \gamma. \) 
    Since \( \hathat \sigma' \) disturbs \( \gamma, \) the set \( \support \gamma \cap \support \hathat \sigma' \) must be non-empty (since otherwise,  we could let \( \hat \gamma = -\gamma \) and \( \bar \sigma = \hathat \sigma' \) in Definition~\ref{def: disturbing}). Fix some edge \( e \in \support \gamma \cap \support \hathat \sigma'. \)  
    By definition, we must have \( \support \hathat \sigma' \subseteq E_{\hat \sigma,\hat \sigma'}.\)
    Using the definition of \( E_{\hat \sigma,\hat \sigma'} \), we see that \( d\bigl( \hat \sigma|_{ \mathcal{C}_{\mathcal{G}(\hat \sigma,\hat \sigma')}(\support \smallhathat \sigma')} \bigr)\neq 0. \) Since \( e \in \support \hathat \sigma' \) and \( \hathat \sigma' \) is irreducible, it follows from Lemma~\ref{lemma: irreducible to connected} that \( \mathcal{C}_{\mathcal{G}(\hat \sigma,\hat \sigma')}(\support  \smallhathat \sigma') = \mathcal{C}_{\mathcal{G}(\hat \sigma,\hat \sigma')}(e), \) and hence 
    \( d\bigl( \hat \sigma|_{ \mathcal{C}_{\mathcal{G}(\hat \sigma,\hat \sigma')}(e)} \bigr)\neq 0. \)

    Since \( \hathat \sigma' \) is irreducible and disturbs \( \gamma \), we must be in one of the following four cases. %
    \begin{enumerate}[label=(\arabic*)]
        \item \( \partial \gamma \neq 0, \) and all paths \( \hat \gamma \in C^1(B_N) \) with \( \partial \hat \gamma = - \partial \gamma \) intersects the support of \( \hathat \sigma' \).
        In this case, by Lemma~\ref{lemma: last new lemma}, we must have \( |\support \hathat \sigma' | \geq 2\max\bigl(\dist_0(e,\support \gamma_0),8\bigr),\) and consequently, \( \bigl|\mathcal{G}_{\mathcal{G}(\hat \sigma,\hat \sigma')}(e) \bigr| \geq 2\max\bigl(\dist_0(e,\support \gamma_0),8 \bigr). \)
        
        \item \( \hathat \sigma' \) contains a minimal vortex centered around some edge \( e' \in \gamma_c.\) Since \( \hathat \sigma' \leq \hat \sigma|_{E_{\hat \sigma,\hat \sigma'}} \) by definition, and \( \hat \sigma|_{E_{\hat \sigma,\hat \sigma'}} \leq \hat \sigma \) by Lemma~\ref{lemma: cluster is subconfig}, is follows from Lemma~\ref{lemma: vortex transfer}, applied twice, that \( \hat \sigma \) also contains a minimal vortex centered around some edge \( e' \in \gamma_c.\)
        
        \item \( |(\support d\hathat \sigma')^+|>6. \) In this case, by the same argument as above, we must have both \( {\bigl| \support d\hat \sigma|_{\mathcal{C}_{\mathcal{G}(\hat \sigma,\hat \sigma')}(e)} \geq 2 \cdot 7 }\) and \( |\mathcal{C}_{\mathcal{G}(\hat \sigma,\hat \sigma')}(e)| \geq 2 \cdot 2. \)
        
        \item \( \hathat \sigma' \) supports a vortex \( \nu \) with support at the boundary of \( B_N. \) In this case, by the same argument as above,  we must have  \( \bigl| \mathcal{C}_{\mathcal{G}(\hat \sigma,\hat \sigma')}(e) \bigr| \geq 2 \cdot \dist_1(e,\partial C_1(B_N)). \)
    \end{enumerate} 
    Consequently, we have showed that
    \begin{equation*}
        \begin{split}
            &\mu_{N,(\beta,\kappa),(\infty,\kappa)}( \mathcal{E}_1) 
            \\&\qquad \leq 
            \mathbb{1}(\partial \gamma \neq 0)\,\mu_{N,\beta,\kappa} \times \mu_{N,\infty,\kappa}  \Bigl( \pigl\{ (\hat \sigma,\hat \sigma') \in \Omega^1(B_N,G) \times \Omega^1_0(B_N,G) \colon \exists e \in \gamma \text{ such that }
            \\&\hspace{13em}
            d\bigl( \hat \sigma|_{ \mathcal{C}_{\mathcal{G}(\hat \sigma,\hat \sigma')}(e)} \bigr)\neq 0 \text{ and } |\mathcal{C}_{\mathcal{G}(\hat \sigma,\hat \sigma')}(e)| \geq 2 \max\bigl(\dist_0(e,\support \gamma_0),8 \bigr) \pigr\}\Bigr)
            \\&\qquad\qquad + \mu_{N,\beta,\kappa}\Bigl( \pigl\{ \hat \sigma \in \Omega^1(B_N,G) \colon \exists  e' \in \gamma_c \text{ and } \nu \leq \hat \sigma \text{ s.t. } \nu \text{ is a minimal vortex around } e' \pigr\}\Bigr)
            \\&\qquad\qquad+
            \mu_{N,\beta,\kappa} \times \mu_{N,\infty,\kappa}  \Bigl( \pigl\{ (\hat \sigma,\hat \sigma') \in \Omega^1(B_N,G) \times \Omega^1_0(B_N,G) \colon \exists e \in \gamma \text{ such that }
            \\&\hspace{13em}
            \bigl| d\bigl( \hat \sigma|_{ \mathcal{C}_{\mathcal{G}(\hat \sigma,\hat \sigma')}(e)} \bigr)\bigr| \geq 2 \cdot 7 \text{ and } |\mathcal{C}_{\mathcal{G}(\hat \sigma,\hat \sigma')}(e)| \geq 2\cdot 2 \pigr\}\Bigr)
            \\&\qquad\qquad + 
            \mu_{N,\beta,\kappa} \times \mu_{N,\infty,\kappa}  \Bigl( \pigl\{ (\hat \sigma,\hat \sigma') \in \Omega^1(B_N,G) \times \Omega^1_0(B_N,G) \colon \exists e \in \gamma \text{ such that }
            \\&\hspace{13em}
            d\bigl( \hat \sigma|_{ \mathcal{C}_{\mathcal{G}(\hat \sigma,\hat \sigma')}(e)} \bigr) \neq 0 \text{ and } |\mathcal{C}_{\mathcal{G}(\hat \sigma,\hat \sigma')}(e)| \geq 2 \dist_1(e,\partial C_1(B_N)) \pigr\}\Bigr).
        \end{split}
    \end{equation*}
    By first applying union bounds to all terms, and then using Proposition~\ref{proposition: new Z-LGT coupling upper bound} to upper bound the first, third, and fourth term and Proposition~\ref{proposition: alternative plaquette bound} to upper bound the second term, we obtain
    \begin{equation*}
        \begin{split}
            &\mu_{N,(\beta,\kappa),(\infty,\kappa)}( \mathcal{E}_2) 
            \\&\qquad\leq  
            \mathbb{1}(\partial \gamma \neq 0)\cdot K_3 K_4^8 \alpha_0(\kappa)^8\alpha_1(\beta)^6 \sum_{e \in \gamma}
            \bigl( K_4 \alpha_0(\kappa) \bigr)^{\max(0,\dist_0(e,\support \gamma_0)-8)}
            \\&\qquad\qquad+
            K_3 |\support \gamma|\bigl( K_4 \alpha_0(\kappa) \bigr)^{\dist_0(\support \gamma ,\partial C_1(B_N))}
            +
            K_2 |\support \gamma_c|\alpha_2(\beta,\kappa)^{6}
            \\&\qquad\qquad+K_3 K_4^2 |\support \gamma| \alpha_0(\kappa)^2   \alpha_1(\beta)^{7} 
            \\&\qquad\qquad+
            K_3 |\support \gamma|\bigl( K_4 \alpha_0(\kappa) \bigr)^{\dist_1(\support \gamma,\partial C_1(B_N))} \alpha_1(\beta)^{6}
            \\&\qquad\qquad+
            2K_3|\support \gamma|\bigl( K_4 \alpha_0(\kappa) \bigr)^{\dist_1(\support \gamma,\partial C_1(B_N))}.
        \end{split}
    \end{equation*}
    Simplifying this expression, and noting that by definition, we have \( \alpha_1(\beta)\leq 1 \), we obtain~\eqref{eq: E2 bound} as desired. This concludes the proof.
\end{proof}

\begin{proof}[Proof of Proposition~\ref{proposition: E3}]
    Assume first that \( \sigma \in \mathcal{E}_3, \) and let \( \Sigma \) be a decomposition of \( \sigma. \) Further, let \(e \in \gamma, \) \( p,p' \in \hat \partial e ,\) and \( \hathat\sigma,\hathat\sigma' \in \Sigma \) be such that \( \hathat \sigma \neq \hathat \sigma', \)  \( d\hathat \sigma(p) \neq 0,\) and \( d\hathat \sigma'(p') \neq 0 .\) Without loss of generality we can assume that \( e \in C_1(B_N)^+. \) By Lemma~\ref{lemma: minimal vortex I}, we must then have \( \bigl|(\support d\hathat \sigma)^+\bigr|\geq 6 \) and \( \bigl|(\support d\hathat \sigma')^+\bigr|\geq 6, \) and hence \( \bigl|\bigl(\support d(\hathat \sigma + \hathat\sigma')\bigr)^+\bigr| \geq 12. \)
    
    Define \( E_e \coloneqq \bigl\{ e' \in C_1(B_N) \colon \hat \partial e' \cap \hat \partial e \neq \emptyset \bigr\}. \) 
    Since  \( d\hathat \sigma (p) \neq 0 \), there must exist \( e' \in \partial p \) such that \( \hathat \sigma(e') \neq 0 \). Since \( \hathat \sigma \leq \sigma \), it follows that \( \sigma(e') \neq 0 \). Since \( e' \in \partial p \subseteq E_e \), it follows that \( \mathcal{C}_{\mathcal{G}(\sigma,0)}(E_e) \) is non-empty. Moreover, since \( \hathat \sigma \) is irreducible, using Lemma~\ref{lemma: graph decomposition in coarser i}, it follows that 
    \( \support \hathat \sigma \subseteq \mathcal{C}_{\mathcal{G}(\sigma,0)}(E_e). \) Completely analogously, we also obtain \( \support \hathat \sigma' \subseteq \mathcal{C}_{\mathcal{G}(\sigma,0)}(E_e). \) Since \( d\hathat \sigma(p),d\hathat\sigma'(p')\neq 0 \), by Lemma~\eqref{lemma: minimal vortex I}, we must have \( \bigl|(\support d\hathat \sigma)^+\bigr|,\bigl|(\support d\hathat\sigma')^+\bigr|\geq 6, \) and hence \( |\support d\sigma|_{\mathcal{C}_{\mathcal{G}(\sigma,0)}(E_e)} )^+|\geq 2\cdot 6 = 12. \) Using Lemma~\ref{lemma: 6 plaquettes per edge}, it follows that \( \bigl|(\mathcal{C}_{\mathcal{G}(\sigma,0)}(E_e) )^+\bigr|\geq 12/6=2. \) 
    Combining these observations and using a union bound, we see that
    \begin{equation*}
        \begin{split}
            &\mu_{N,\beta,\kappa} (\mathcal{E}_3)
            \\&\qquad \leq
            \sum_{e \in \gamma}
            \mu_{N,\beta,\kappa}   \Bigl( \pigl\{ \sigma \in \Omega^1(B_N,G)  \colon   
            |\mathcal{C}_{\mathcal{G}( \sigma,0)}(E_e)| \geq 2\cdot 2, \text{ and }\bigl| \support d\bigl( \sigma|_{ \mathcal{C}_{\mathcal{G}( \sigma,0)}(E_e)} \bigr) \bigr| \geq 2\cdot 12
            \pigr\}\Bigr).
        \end{split}
    \end{equation*} 
    Applying Proposition~\ref{proposition: before E3} with \( M = 2 \) and \( M' = 12, \) we obtain~\eqref{eq: coupling and conditions 6} as desired. 
\end{proof}

\begin{proof}[Proof of Proposition~\ref{proposition: E7}]
    Recall that 
    \begin{equation*}
        \begin{split} 
            &\mathcal{E}_7(e) = \bigl\{ (\sigma,\sigma') \in \Omega^1(B_N,G) \times \Omega^1_0(B_N,G) \colon
            \exists p,p' \in \hat \partial e \text{ s.t. } d\sigma(p) \neq d\sigma(p') \}.
        \end{split}
    \end{equation*}
    On this event, there must exist some \( p \in \hat \partial e \) with \( d\sigma(p) \neq 0. \) Since \( |\hat \partial e| = 6, \) together with a union bound, the desired conclusion follows from Proposition~\ref{proposition: alternative plaquette bound}.
\end{proof}

\subsection{A second version of our main result}\label{sec: second version of main result}

In this section, we prove a second version of our main result, by giving a refinement of Proposition~\ref{proposition: short lines}. While the error term in Proposition~\ref{proposition: short lines} corresponds to the probability of the event that no cluster in \( \mathcal{G}(\hat \sigma,\hat \sigma') \) both intersects \( \gamma \) and at the same time supports a vortex, the error term in Proposition~\ref{proposition: first version of main result} below essentially corresponds to the probability that no cluster in \( \mathcal{G}(\hat \sigma,\hat \sigma') \) both intersects \( \gamma \) and at the same time supports a non-minimal vortex.

\begin{proposition}\label{proposition: first version of main result}
    Let \( \beta,\kappa \geq 0 \) be such that~\ref{assumption: 3} hold, let \( \gamma \in C^1(B_N)\) be a path such that \( \dist_0(\support \gamma,\partial C_1(B_N))\geq 8\) and such that for each \( e \in \gamma \) the support of \( \hat \partial e \) contains no boundary plaquettes of \( B_N, \) and let \( \gamma_0 \in C^1(B_N)\) be any path such that \( \partial \gamma_0 = -\partial \gamma. \)
    Then
    \begin{equation}\label{eq: first part of main result}
        \begin{split}
            &\Bigl|\mathbb{E}_{N,\beta,\kappa} \bigl[L_\gamma(\sigma)\bigr]-\mathbb{E}_{N,\infty,\kappa} \bigl[L_\gamma(\sigma)\bigr] \,\Theta_{N,\beta,\kappa}(\gamma)
            \Bigr| 
            \\&\qquad\leq 
            2K_{11} |\support \gamma|\alpha_2(\beta,\kappa)^6 
            + 
           2K_{12}  \sqrt{2|\support \gamma| \alpha_2(\beta,\kappa)^6},
        \end{split}
    \end{equation}
    where
    \begin{align}
        & \label{eq: K11}
        K_{11} \coloneqq 
        \mathbb{1}(\partial \gamma \neq 0) \cdot \frac{2K_3 K_4^8\alpha_0(\kappa)^8\alpha_1(\beta)^6  \sum_{e \in \gamma}
        \bigl( K_4 \alpha_0(\kappa) \bigr)^{\max(0,\dist_0(e,\support \gamma_0)-8)}}{|\support \gamma|\alpha_2(\beta,\kappa)^6}
        \\\nonumber&\qquad+ 
        \frac{4K_3 \bigl( K_4 \alpha_0(\kappa) \bigr)^{\dist_1(\support \gamma ,\partial C_1(B_N))}}{\alpha_2(\beta,\kappa)^6}  
        \\\nonumber&\qquad+
        \frac{ K_2 \, |\support \gamma_c|}{|\support \gamma| }
        +
        \frac{K_3 K_4^2\, \alpha_0(\kappa)^2 \alpha_1(\beta)^{7}}{\alpha_2(\beta,\kappa)^6}
        +
        \frac{18^4 K_5 \alpha_0(\kappa)^2 \alpha_1(\beta)^{12}}{2\alpha_2(\beta,\kappa)^6}
        \\\nonumber&\qquad+
        \sqrt{\frac{2 K_8 \,     \alpha_0(\kappa)^6\alpha_1(\beta)^6 \alpha_4(\beta,\kappa) \max\bigl(   \alpha_0(\kappa),  \alpha_1(\beta)^6\bigr)}{|\support \gamma| \alpha_2(\beta,\kappa)^{12}}}
        +
        \sqrt{\frac{2 K_9 \, \alpha_4(\beta,\kappa)  
        \bigl( K_3 \alpha_0(\kappa)  \bigr)^{\dist_1(\support \gamma,\partial C_1(B_N))} }{|\support \gamma| \alpha_2(\beta,\kappa)^{12}}}
        \\ \nonumber
        &\qquad+
        2\sqrt{\frac{2K_7 \, \alpha_0(\kappa)^9\alpha_1(\beta)^6\alpha_4(\beta,\kappa) }{|\support \gamma| \alpha_2(\beta,\kappa)^{12}}}
        +
        2\sqrt{\frac{2 K_3\, \alpha_4(\beta,\kappa)   
        \bigl( K_4 \alpha_0(\kappa)  \bigr)^{\dist_1(\support \gamma,\partial C_1(B_N))}}{|\support \gamma| \alpha_2(\beta,\kappa)^{12}}}
        \\\nonumber
        &\qquad+ 
        \sqrt{\frac{12 K_2 \,  \alpha_2(\beta,\kappa)^6 \alpha_3(\beta,\kappa)}{|\support \gamma| \alpha_2(\beta,\kappa)^{12}} },
        \\[1ex]\label{eq: K12}
        &
        K_{12} \coloneqq 
        \sqrt{\frac{K_{10}\, |\support \gamma_c|\alpha_0(\kappa)^8 \alpha_4(\beta,\kappa)}{|\support \gamma|\alpha_2(\beta,\kappa)^6}}
        +
        \sqrt{\frac{|\support \gamma_c|\alpha_3(\beta,\kappa)}{|\support \gamma|\alpha_2(\beta,\kappa)^6}  }.
    \end{align}
    \( K_2 \) is given by~\eqref{eq: K2}, \( K_3 \) and \( K_4 \) are given by~\eqref{eq: K3 and K4}, \( K_5 \) is given by~\eqref{eq: K5}, \( K_7 \) is given by~\eqref{eq: K7}, \( K_8 \) and \( K_9 \) are given by~\eqref{eq: K8 and K9}, and \( K_{10} \) is given by~\eqref{eq: K10}.
\end{proposition}

\begin{proof}
    By using first the definition of \( \Theta_{N,\beta,\kappa}(\gamma), \) and then the triangle inequality, we see that
    \begin{equation*}
        \begin{split}
            &\Bigl|\mathbb{E}_{N,\beta,\kappa} \bigl[L_\gamma(\sigma)\bigr]-\mathbb{E}_{N,\infty,\kappa} \bigl[L_\gamma(\sigma)\bigr] \Theta_{N,\beta,\kappa}(\gamma)\Bigr|
            \\&\qquad=
            \Bigl|\mathbb{E}_{N,\beta,\kappa} \bigl[L_\gamma(\sigma)\bigr]-
            \mathbb{E}_{N,\infty,\kappa} \bigl[ L_\gamma(\sigma) \bigr]
            \mathbb{E}_{N,\infty,\kappa} \Bigl[ \, \prod_{e \in \gamma } \theta_{\beta,\kappa}(\sigma_e) \Bigr] \Bigr| 
            \\&\qquad\leq 
            \biggl| \mathbb{E}_{N,\beta,\kappa} \bigl[ L_\gamma(\sigma) \bigr]
            -
            \mathbb{E}_{N,\infty,\kappa} \bigl[ L_\gamma(\sigma) \bigr]
            \,
            \mathbb{E}_{N,\beta,\kappa} \Bigl[ \,  \prod_{e \in (\gamma-\gamma_c)- \gamma'}   \rho\bigl(d\sigma(p_e)\bigr) \Bigr] \biggr|
            \\&\qquad\qquad+
            \Bigl| \mathbb{E}_{N,\infty,\kappa} \bigl[ L_\gamma(\sigma) \bigr] \Bigr| 
            \cdot \biggl| \mathbb{E}_{N,\beta,\kappa} \Bigl[ \,  \prod_{e \in (\gamma-\gamma_c)- \gamma'}   \rho\bigl(d\sigma(p_e)\bigr) \Bigr]
            -
            \mathbb{E}_{N,\beta, \kappa} \Bigl[ \, \prod_{ e \in (\gamma-\gamma_c) - \gamma'}  \theta_{\beta,\kappa} \bigl( \sigma(e) - d\sigma (p_e) \bigr) \Bigr]\biggr|\\&\qquad\qquad+
            \Bigl| \mathbb{E}_{N,\infty,\kappa} \bigl[ L_\gamma(\sigma) \bigr] \Bigr| 
            \cdot \biggl| 
            \mathbb{E}_{N,\beta, \kappa} \Bigl[ \, \prod_{ e \in (\gamma-\gamma_c) - \gamma'}  \theta_{\beta,\kappa} \bigl( \sigma(e) - d\sigma (p_e) \bigr) \Bigr]
            -
            \mathbb{E}_{N,\infty,\kappa} \Bigl[ \, \prod_{e \in \gamma } \theta_{\beta,\kappa}\bigl(\sigma(e)\bigr) \Bigr]
            \biggr|.
        \end{split}
    \end{equation*}
    Since \( |L_\gamma(\sigma)| \leq 1,  \) we can apply Proposition~\ref{proposition: Ising LGT split}, Proposition~\ref{proposition: resampling in main proof} and Proposition~\ref{proposition: E} in order to obtain 
     \begin{equation*} 
        \begin{split}
            &\Bigl|\mathbb{E}_{N,\beta,\kappa} \bigl[L_\gamma(\sigma)\bigr]- \mathbb{E}_{N,\infty,\kappa} \bigl[ L_\gamma(\sigma) \bigr] \mathbb{E}_{N,\infty,\kappa} \Bigl[ \, \prod_{e \in \gamma } \theta_{\beta,\kappa}(\sigma_e) \Bigr] \Bigr| 
            \\&\qquad \leq  
            2\mu_{N,(\beta,\kappa),(\infty,\kappa)} ( \mathcal{E}_1) + 2\mu_{N,(\beta,\kappa),(\infty,\kappa)} ( \mathcal{E}_2)
            +
            2\mu_{N,\beta,\kappa}(\mathcal{E}_3)
            \\&\qquad\qquad+
            2\sqrt{2  \alpha_4(\beta,\kappa) \sum_{e \in \gamma} \mu_{N,(\beta,\kappa),(\infty,\kappa)}\bigl(\mathcal{E}_5(e)\bigr)}
            +
            4\sqrt{2  \alpha_4(\beta,\kappa) \sum_{e \in \gamma} \mu_{N,(\beta,\kappa),(\infty,\kappa)}\bigl(\mathcal{E}_4(e)\bigr)}
            \\&\qquad\qquad+
            2\sqrt{2   \alpha_4(\beta,\kappa) \sum_{e \in \gamma_c} \mu_{N,\infty,\kappa}\bigl(\mathcal{E}_6(e)\bigr)}
            + 
             2\sqrt{2|\support \gamma_c| \alpha_3(\beta,\kappa)} 
            \\&\qquad\qquad+ 
             2\sqrt{2 \alpha_3(\beta,\kappa) \sum_{e \in \gamma} \mu_{N,(\beta,\kappa),(\infty,\kappa)} \bigl(\mathcal{E}_7(e)\bigr)}.
        \end{split}
    \end{equation*}
    Inserting the upper bounds from Proposition~\ref{proposition: E1},  Proposition~\ref{proposition: E2},  Proposition~\ref{proposition: E3}, Proposition~\ref{proposition: E4}, Proposition~\ref{proposition: E5}, Proposition~\ref{proposition: E6}, and Proposition~\ref{proposition: E7}, and using the inequality \( \sqrt{a+b} \leq \sqrt{a} + \sqrt{b},\) we obtain~\eqref{eq: first part of main result} as desired.
\end{proof}

\subsection{An upper bound}

The following result generalizes \cite[Lemma~7.12]{c2019} and \cite[Lemma~3.3]{flv2020}, and is completely analogous to Lemma~12.3 in~\cite{flv2021}.
  
\begin{proposition}[Lemma~12.3 in~\cite{flv2021}]\label{proposition: upper bound} 
    Let \( \beta,\kappa \geq 0 \), and let \( \gamma \in C^1(B_N)\) be a path such that no edge in \( \gamma \) is in the boundary of \( B_N. \) Then
    \begin{equation*}
        \Bigl|\mathbb{E}_{N,\beta,\kappa} \bigl[ L_\gamma(\sigma) \bigr]\Bigr| \leq   
        \exp\bigl( - |\support (\gamma-\gamma_c)| \, \alpha_5(\beta,\kappa) \bigr).
    \end{equation*} 
\end{proposition}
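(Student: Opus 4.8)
The plan is to prove this by a one‑edge‑at‑a‑time conditioning (martingale) argument, in the spirit of the analogous statements \cite[Lemma~7.12]{c2019}, \cite[Lemma~3.3]{flv2020} and \cite[Lemma~12.3]{flv2021}. Recall that \( L_\gamma(\sigma)=\rho(\sigma(\gamma))=\prod_{e}\rho\bigl(\gamma[e]\sigma(e)\bigr) \), the product being over the positively oriented edges \( e\in\support\gamma \), and that \( \rho \) is a one‑dimensional unitary representation, so each factor has modulus \(1\). Enumerate the positively oriented edges of \( \support(\gamma-\gamma_c) \) as \( e_1,\dots,e_m \), where \( m=|\support(\gamma-\gamma_c)| \).

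First I would record the conditional law of a single edge. Fix \(j\) and let \( \mathcal{F}_j \) be the \(\sigma\)-algebra generated by \(\sigma\) restricted to \( C_1(B_N)\setminus\{e_j,-e_j\} \). Since no edge of \(\gamma\) lies in the boundary of \(B_N\), the six non-oriented plaquettes whose boundary contains \(e_j\) all lie in \(C_2(B_N)\), and the only factors of the density \( \varphi_{\beta,\kappa}(\sigma)=\varphi_\kappa(\sigma)\varphi_\beta(d\sigma) \) (see~\eqref{eq: mubetakappaphi}) depending on \(\sigma(e_j)\) are \( \varphi_\kappa(\sigma(e_j))^2 \) and, for those six plaquettes \(p\), the factors \( \varphi_\beta(d\sigma(p))^2=\varphi_\beta(\sigma(e_j)+g_p)^2 \) for suitable \( g_p\in G \) (using \( \varphi_\beta(g)=\varphi_\beta(-g) \) from~\eqref{eq: phi is symmetric} to absorb signs). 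Hence, writing \(g=\sigma(e_j)\) and relabelling \(g_1,\dots,g_6\),
\[
h_j:=\mathbb{E}_{N,\beta,\kappa}\bigl[\rho\bigl(\gamma[e_j]\,\sigma(e_j)\bigr)\mid\mathcal{F}_j\bigr]
=\frac{\sum_{g\in G}\rho\bigl(\gamma[e_j]\,g\bigr)\,\varphi_\kappa(g)^2\prod_{k=1}^6\varphi_\beta(g+g_k)^2}{\sum_{g\in G}\varphi_\kappa(g)^2\prod_{k=1}^6\varphi_\beta(g+g_k)^2}.
\]
Since \( \gamma[e_j]\in\{-1,1\} \) and \( \varphi_\kappa,\varphi_\beta \) are invariant under \( g\mapsto -g \), the substitution \( g\mapsto -g \) in the case \( \gamma[e_j]=-1 \) shows that \( |h_j| \) equals the modulus of the same ratio with \( \rho(\gamma[e_j]g) \) replaced by \( \rho(g) \), and therefore \( |h_j|\le 1-\alpha_5(\beta,\kappa) \) by the definition~\eqref{eq: alpha5} of \( \alpha_5 \) as a minimum over all \( g_1,\dots,g_6\in G \). (One also notes \( \alpha_5(\beta,\kappa)\in[0,1] \): the modulus of such a ratio of \(\rho\)-averages against nonnegative weights is \(\le1\) since \( |\rho|\equiv1 \).) Observe that \(h_j\) depends on \(\sigma\) only through \( \{d\sigma(p):p\in\hat\partial e_j\} \), i.e.\ through \(\sigma\) on \( \bigl(\bigcup_{p\in\hat\partial e_j}\support\partial p\bigr)\setminus\{e_j,-e_j\} \).

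The key structural point — and the only place the removal of corner edges is used — is that \(h_i\) is \( \mathcal{F}_j \)-measurable for every \( i\ne j \): if \(e_j\) belonged to \( \support\partial p \) for some \( p\in\hat\partial e_i \), then \( p\in\hat\partial e_i \) and \( p\in\pm\hat\partial e_j \) with \( e_j\in\support\gamma \), so \(e_i\) would be a corner edge of \(\gamma\), contradicting \( e_i\in\support(\gamma-\gamma_c) \). With this I would iterate: set \( F_0:=\rho(\sigma(\gamma)) \), and at step \(j\) write \( F_{j-1}=h_1\cdots h_{j-1}\prod_{e\in\support\gamma\setminus\{e_1,\dots,e_{j-1}\}}\rho(\gamma[e]\sigma(e)) \); by the previous paragraph all of \( h_1,\dots,h_{j-1} \), all remaining Wilson‑line factors for \( e_{j+1},\dots,e_m \), and the factors for \( \support\gamma_c \) are \( \mathcal{F}_j \)-measurable, so conditioning on \( \mathcal{F}_j \) replaces the \(e_j\)-factor by \(h_j\) and \( \mathbb{E}[F_{j-1}]=\mathbb{E}[F_j] \). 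After \(m\) steps \( \support\gamma\setminus\{e_1,\dots,e_m\}=\support\gamma_c \), so
\[
\bigl|\mathbb{E}_{N,\beta,\kappa}[L_\gamma(\sigma)]\bigr|
=\Bigl|\mathbb{E}_{N,\beta,\kappa}\Bigl[h_1\cdots h_m\!\!\prod_{e\in\support\gamma_c}\!\!\rho\bigl(\gamma[e]\sigma(e)\bigr)\Bigr]\Bigr|
\le\bigl(1-\alpha_5(\beta,\kappa)\bigr)^{m},
\]
using \( |h_j|\le 1-\alpha_5(\beta,\kappa) \) and \( |\rho|\equiv1 \) pointwise; the claim then follows from \( (1-x)^m\le e^{-mx} \) with \( x=\alpha_5(\beta,\kappa)\ge0 \) and \( m=|\support(\gamma-\gamma_c)| \).

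The main obstacle is not a computation but the bookkeeping in the iteration: one must ensure that, at each stage, the already‑integrated factors \(h_i\) and the remaining Wilson‑line factors are measurable with respect to the \(\sigma\)-algebra one is about to condition on. This is exactly governed by the locality of \(\hat\partial e\) (each edge meets only six plaquettes, hence only eighteen other edges) together with the corner‑edge observation above; verifying the conditional‑density factorization, and that the hypothesis "no edge of \(\gamma\) is a boundary edge of \(B_N\)" indeed guarantees that all six relevant plaquettes are present in \(C_2(B_N)\), are the remaining routine points.
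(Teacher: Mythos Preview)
Your proposal is correct and is precisely the one-edge-at-a-time conditioning argument underlying \cite[Lemma~12.3]{flv2021} (which the paper cites without reproducing the proof): compute the conditional expectation of the single factor \(\rho(\gamma[e_j]\sigma(e_j))\) given the rest, bound its modulus by \(1-\alpha_5(\beta,\kappa)\) via~\eqref{eq: alpha5}, and iterate over the non-corner edges, using that \(e_i\in\support(\gamma-\gamma_c)\) forces \(h_i\) to be \(\mathcal F_j\)-measurable for \(i\neq j\). The bookkeeping and the use of the ``no boundary edge'' hypothesis are exactly as you describe.
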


\begin{remark}
    In Lemma~12.3 in~\cite{flv2021}, \( \gamma \) is assumed to be a generalized loop, rather than a path. However, since the proof is identical in the two cases, we do not include a proof here.
\end{remark}

\subsection{A proof of Theorem~\ref{theorem: main result}}

In this section, we give a proof of Theorem~\ref{theorem: main result}. Before we give this proof, we recall the following lemma from~\cite{flv2021}.

\begin{lemma}[Lemma 8.2~in \cite{flv2021}]\label{lemma: theta inequalities Dplusii}
    Let \( \beta,\kappa \geq 0 \), and for each \( g \in G \), let \( j_g >0 \) be given. Further, let \( j \coloneqq \sum_{g \in G} j_g \). Then
    \begin{equation*}
        \Bigl| \prod_{g \in G} \theta_{\beta,\kappa}(g)^{j_g} \Bigr| \leq e^{-j \alpha_5(\beta,\kappa)}.
    \end{equation*}
\end{lemma}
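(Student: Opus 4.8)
The plan is to deduce everything from the pointwise estimate $|\theta_{\beta,\kappa}(g)| \le 1 - \alpha_5(\beta,\kappa)$, valid for every $g \in G$, and then multiply.

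First I would unfold the definition~\eqref{eq: def of thetag} of $\theta_{\beta,\kappa}(\hat g)$ and carry out the substitution $h = g + \hat g$ in both the numerator and the denominator. Since $\beta,\kappa$ are finite, all the factors $\varphi_\beta(\cdot)$ and $\varphi_\kappa(\cdot)$ are strictly positive, so no denominator vanishes. Because $\rho$ is a one-dimensional representation we have $\rho(h - \hat g) = \rho(h)\rho(-\hat g)$, so the scalar $\rho(-\hat g)$ factors out of the numerator, giving
\[
    \theta_{\beta,\kappa}(\hat g) = \rho(-\hat g)\cdot \frac{\sum_{h \in G}\rho(h)\,\varphi_\beta(h - \hat g)^{12}\,\varphi_\kappa(h)^2}{\sum_{h \in G}\varphi_\beta(h - \hat g)^{12}\,\varphi_\kappa(h)^2}.
\]

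Next I would observe that $\varphi_\beta(h - \hat g)^{12} = \prod_{k=1}^{6}\varphi_\beta(h + g_k)^2$ for the choice $g_1 = \dots = g_6 = -\hat g$, so that the ratio on the right-hand side is precisely one of the quantities over which the minimum in~\eqref{eq: alpha5} defining $\alpha_5(\beta,\kappa)$ is taken; hence its modulus is at most $1 - \alpha_5(\beta,\kappa)$. Since $\rho$ is unitary, $|\rho(-\hat g)| = 1$, and therefore $|\theta_{\beta,\kappa}(\hat g)| \le 1 - \alpha_5(\beta,\kappa)$. (That $\alpha_5(\beta,\kappa) \in [0,1]$, and hence that $1 - \alpha_5(\beta,\kappa) \ge 0$, follows by applying the triangle inequality to the numerator of each of these ratios, using $|\rho(g)| = 1$.)

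Finally, since $j_g > 0$ for every $g$, the map $t \mapsto t^{j_g}$ is monotone on $[0,\infty)$, so
\[
    \Bigl| \prod_{g \in G}\theta_{\beta,\kappa}(g)^{j_g}\Bigr| = \prod_{g\in G}|\theta_{\beta,\kappa}(g)|^{j_g} \le \prod_{g\in G}\bigl(1 - \alpha_5(\beta,\kappa)\bigr)^{j_g} = \bigl(1 - \alpha_5(\beta,\kappa)\bigr)^{j} \le e^{-j\,\alpha_5(\beta,\kappa)},
\]
using $1 - x \le e^{-x}$. The only mildly delicate point — and there is no genuine obstacle beyond it — is the bookkeeping in the change of variables together with the observation that the exponent $12$ in the definition of $\theta_{\beta,\kappa}$ splits as six copies of a square, which is exactly what is needed to match the shape of the expression in the definition of $\alpha_5$.
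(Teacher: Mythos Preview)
Your proof is correct. The paper does not give its own proof of this lemma (it is recalled verbatim from \cite{flv2021}), but your argument is exactly the natural one: the change of variables $h=g+\hat g$ together with $\rho(h-\hat g)=\rho(h)\rho(-\hat g)$ identifies $\theta_{\beta,\kappa}(\hat g)$, up to the unimodular factor $\rho(-\hat g)$, with one of the ratios appearing in the minimum defining $\alpha_5(\beta,\kappa)$, after which $(1-\alpha_5)^j\le e^{-j\alpha_5}$ finishes the job.
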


\begin{proof}[Proof of Theorem~\ref{theorem: main result}]
    Let \( N \) be sufficiently large, so that \( \dist_0(\support \gamma,\partial C_1(B_N))\geq 8, \) and so that for each \( e \in \gamma, \) the support of \( \hat \partial e \) contains no boundary plaquettes of \( B_N. \)
    Then the assumptions of Proposition~\ref{proposition: first version of main result} holds, and hence
    \begin{equation*}
        \begin{split}
            &\Bigl|\mathbb{E}_{N,\beta,\kappa} \bigl[L_\gamma(\sigma)\bigr]- \mathbb{E}_{N,\infty,\kappa} \bigl[ L_\gamma(\sigma) \bigr] \Theta_{N,\beta,\kappa}(\gamma) \Bigr|
            \\&\qquad\leq   
            B \cdot 2|\support \gamma|\alpha_5(\beta,\kappa) 
            + 
            B' \cdot 2\sqrt{2|\support \gamma| \alpha_5(\beta,\kappa)}
        \end{split}
    \end{equation*} 
    where
    \begin{equation}\label{eq: BB prime} 
            B \coloneqq K_{11}\alpha_2(\beta,\kappa)^6/\alpha_5(\beta,\kappa) \quad \text{and} \quad
            B' \coloneqq K_{12} \sqrt{\alpha_2(\beta,\kappa)^6/\alpha_5(\beta,\kappa)}, 
    \end{equation}
    where \( K_{11} \) and \( K_{12} \) are given in~\eqref{eq: K11} and~\eqref{eq: K11} respectively.
    Using that for \( x>0 \), we have  \( x \leq e^x, \) and \( 2\sqrt{x}\leq e^x\),  it follows that
    \begin{equation} \label{eq: combined bounds 002}    
        \Bigl|\mathbb{E}_{N,\beta,\kappa} \bigl[L_\gamma(\sigma)\bigr]-\mathbb{E}_{N,\infty,\kappa} \bigl[L_\gamma(\sigma)\bigr] \Theta_{N,\beta,\kappa} (\gamma)  \Bigr| 
        \leq 
        (B+B') e^{2 |\support \gamma| \alpha_5(\beta,\kappa) }.
    \end{equation} 
    
    Now recall that, by Proposition~\ref{proposition: upper bound}, we have \( \pigl|\mathbb{E}_{N,\beta,\kappa} \bigl[ L_\gamma(\sigma) \bigr] \pigr| \leq   e^{-   |\support \gamma - \gamma_c| \alpha_5(\beta,\kappa)}.\) 
    By using the triangle inequality and applying Lemma~\ref{lemma: theta inequalities Dplusii}, it follows that
    \begin{equation}\label{eq: second part of main result}
        \begin{split}
            &\Bigl|\mathbb{E}_{N,\beta,\kappa} \bigl[L_\gamma(\sigma)\bigr]-\mathbb{E}_{N,\infty,\kappa} \bigl[L_\gamma(\sigma)\bigr]  \Theta_{N,\beta,\kappa}(\gamma) \Bigr| \\&\qquad\leq
            \pigl|\mathbb{E}_{N,\beta,\kappa} \bigl[L_\gamma(\sigma)\bigr]\pigr|
            +
            \pigr|\mathbb{E}_{N,\infty,\kappa} \bigl[L_\gamma(\sigma)\bigr] \pigr|
            \cdot
            \bigl| \Theta_{N,\beta,\kappa}(\gamma)
             \bigr|\\&\qquad \leq   
            e^{-  |\support \gamma-\gamma_c | \alpha_5(\beta,\kappa)}+ 1\cdot e^{ -  |\support \gamma| \alpha_5(\beta,\kappa)}
            \leq
            2  e^{ -  |\support \gamma-\gamma_c | \alpha_5(\beta,\kappa)}.
        \end{split}
    \end{equation}
    Combining~\eqref{eq: combined bounds 002} and~\eqref{eq: second part of main result}, we obtain
    \begin{equation}\label{eq: almost last equation}  
        \begin{split}
            &\Bigl|\mathbb{E}_{N,\beta,\kappa}[ L_\gamma(\sigma)] -  \mathbb{E}_{N,\infty,\kappa}[ L_\gamma(\sigma)] \Theta_{N,\beta,\kappa}(\gamma) \Bigr|^{1 + 2|\support \gamma|/|\support (\gamma-\gamma_c)|}
             \\&\qquad\leq
             2^{2|\support \gamma|/|\support(\gamma-\gamma_c)|}(B+B').
        \end{split}
    \end{equation}
    Recalling Proposition~\ref{proposition: unitary gauge one dim} and Proposition~\ref{proposition: limit exists}, and letting \( N \to \infty \), the desired conclusion thus follows from~\eqref{eq: almost last equation} after simplification.  
\end{proof}

\subsection{Simplifications for rectangular paths and \texorpdfstring{\( G = \mathbb{Z}_2 \)}{G = Z2}} 

The purpose of this section is to establish the tools we need in order make the small adjustments to the proof of Theorem~\ref{theorem: main result} needed to instead obtain Theorem~\ref{theorem: main result Z2}.

In order to simplify notations, for \( \beta,\kappa \geq 0 \) and a path \( \gamma, \) we define
\begin{equation*}
    \Theta'_{N,\beta,\kappa}(\gamma) 
    \coloneqq
    e^{-2|\support \gamma|e^{-24\beta-4\kappa}\bigl(1+(e^{8\kappa}-1)|\support \gamma|^{-1}\sum_{e \in \gamma}\mathbb{E}_{N,\infty,\kappa}[\mathbb{1}_{\sigma(e)=1}]\bigr)}.
\end{equation*}

The main result in this section is the following proposition.
\begin{proposition}\label{proposition: rectangular Z2}
    Let \( \beta,\kappa \geq 0 \) be such that~\ref{assumption: 3} and \( 6\beta>\kappa \) both hold, let \( \gamma \) be a path along the boundary of a rectangle with side lengths \( \ell_1 \) and \( \ell_2 \) which is  such that \( \dist_0(\support \gamma,\partial C_1(B_N))\geq 8, \) and let \( G = \mathbb{Z}_2.\)
    Then
    \begin{equation}\label{eq: rectangular Z2}
        \begin{split}
            &\pigl| \Theta_{N,\beta,\kappa}(\gamma) 
            -
            \Theta'_{N,\beta,\kappa}(\gamma)
            \pigr| 
            \leq 
            2\sqrt[3]{\frac{K_{13}\alpha_2(\beta,\kappa)}{|\support \gamma|}} 
            + 
            K_{14}
            |\support \gamma|\alpha_2(\beta,\kappa)^{12}
            ,
        \end{split}
    \end{equation}
    where
         \begin{equation}\label{eq: K13}
        \begin{split}
            &K_{13} \coloneqq \biggl(
             4 + 4K_1 K_4^8  \alpha_0(\kappa)^{4} 
            + 
            4K_3 K_4^4 \alpha_2(\beta,\kappa)^6 \cdot |\support \gamma|    (K_4 \alpha_0(\kappa))^{\min(\ell_1,\ell_2)-4} 
            \\&\qquad\qquad+ 
            4K_3 K_4^4 \alpha_2(\beta,\kappa)^6 \cdot 32(K_4 \alpha_0(\kappa))^{4}
            + 
            4K_3 K_4^4 \alpha_2(\beta,\kappa)^6 \cdot
            \frac{4(K_4 \alpha_0(\kappa))^5}{1-K_4 \alpha_0(\kappa)}  \biggr),
        \end{split}
    \end{equation} 
    \begin{equation}\label{eq: K14}
        K_{14} \coloneqq 4\pigl( 
        1
        +
        K_1(\infty,\kappa) K_4^8
        \bigl(\alpha_0(\kappa ) \bigr)^4 \pigr),
    \end{equation}
    \( K_1 \) is given by~\eqref{eq: K1}, \( K_3 \) and  \( K_4 \) are given by~\eqref{eq: K3 and K4}.
\end{proposition}

The second result which we will state and prove in this section is the following proposition, which will be used to simplify the error term in the proof of Theorem~\ref{theorem: main result Z2}. 
\begin{proposition}\label{proposition: square sum term in constant}
    Let \( \beta,\kappa \geq 0 \) be such that~\ref{assumption: 3} holds, and let \( \gamma \) be an open path along the boundary of a rectangle with side lengths \( \ell_1 \) and \( \ell_2.\)  Then there is a path \( \gamma_0 \in C^1(B_N)\) such that \( \partial \gamma_0 = -\partial \gamma \) and
    \begin{equation*} 
        \sum_{e \in \gamma} \pigl( K_4 \alpha_0(\kappa) \pigr)^{\max(0,\dist_0(e, \support \gamma_0)-8)} \leq 
        16
        +
        \frac{2K_4 \alpha_0(\kappa)}{1-K_4 \alpha_0(\kappa)}
        +
        \frac{|\support \gamma|}{2} \pigl( K_4 \alpha_0(\kappa) \pigr)^{\max(0,\min(\ell_1,\ell_2)-7)},
    \end{equation*}
    where \( K_4 \) is given by~\eqref{eq: K3 and K4}.
\end{proposition}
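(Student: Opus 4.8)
The plan is to choose a specific path $\gamma_0$ running ``just outside'' the rectangular path $\gamma$ (translated one lattice step in a direction transverse to $\gamma$, or more precisely running parallel to the two long sides of the rectangle so that it bounds, together with $\gamma$, a thin strip), in such a way that $\partial\gamma_0 = -\partial\gamma$ and that every edge $e$ of $\gamma$ is at controlled $\dist_0$-distance from $\support\gamma_0$. The key geometric point is the following: for a rectangular path with side lengths $\ell_1,\ell_2$, most edges $e$ of $\gamma$ are ``far'' from the endpoints of $\gamma$ (when $\gamma$ is open, the two endpoints sit on one of the sides), and $\dist_0(e,\support\gamma_0)$ is essentially governed either by the distance from $e$ to the nearest endpoint of $\gamma$ along the boundary, or by $\min(\ell_1,\ell_2)$ (the ``short-way-around'' distance for edges that are forced to connect up to $\gamma_0$ by going the short way across the rectangle). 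Since $\dist_0$ is bounded below by a clustering/connectivity lower bound (recall that by Lemma~\ref{lemma: small 1forms}, if $e\notin E$ then $\dist_0(e,E)\geq 8$), this means $\dist_0(e,\support\gamma_0) - 8$ grows linearly as $e$ moves away from the endpoints.

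First I would fix $\gamma_0$ and split $\support\gamma$ into three groups of edges: (i) a bounded number of ``near-endpoint'' edges — I expect at most $16$ of them, namely those within distance roughly $8$ of an endpoint of $\gamma$ — for which I simply bound $(K'\alpha_0(\kappa))^{\max(0,\dist_0(e,\support\gamma_0)-8)} \leq 1$, contributing the constant $16$; (ii) the edges lying along the two sides emanating from the endpoints, ordered by their distance along $\gamma$ to the nearest endpoint, for which $\dist_0(e,\support\gamma_0)-8$ is at least that distance, so the sum over them is dominated by $2\sum_{k\geq 1}(K'\alpha_0(\kappa))^k = \dfrac{2K'\alpha_0(\kappa)}{1-K'\alpha_0(\kappa)}$ (the factor $2$ for the two endpoints, using assumption~\ref{assumption: 3} to ensure $K'\alpha_0(\kappa)<1$ so the geometric series converges); and (iii) all remaining edges — at most $|\support\gamma|/2$ of them, in terms of positively oriented edges — each of which is forced to have $\dist_0(e,\support\gamma_0) \geq \min(\ell_1,\ell_2) - 7$ (they cannot reach $\support\gamma_0$ without either traversing a short side of the rectangle or going most of the way around), contributing at most $\dfrac{|\support\gamma|}{2}(K'\alpha_0(\kappa))^{\max(0,\min(\ell_1,\ell_2)-7)}$. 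Adding the three contributions yields exactly the claimed bound.

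The main obstacle I anticipate is purely combinatorial/geometric bookkeeping: one must verify carefully that the chosen $\gamma_0$ really does satisfy $\partial\gamma_0 = -\partial\gamma$ while still being a valid path in $C^1(B_N)$ (it should be the ``complementary arc'' closing up $\gamma$ around the rectangle, or a short bridge plus a long detour — here the two sides of the rectangle can be taken comparable is \emph{not} needed, only that $\gamma$ is rectangular), and that the distance lower bounds $\dist_0(e,\support\gamma_0)\geq(\text{distance to endpoint})$ and $\dist_0(e,\support\gamma_0)\geq \min(\ell_1,\ell_2)-7$ hold for the respective edge groups. The constant $7$ (rather than $8$) and the constant $16$ are artifacts of the $\dist_0\geq 8$ threshold and the width-one strip between $\gamma$ and $\gamma_0$; getting these exactly right requires tracking which oriented edges of the boundary are ``corner'' edges and how $\hat\partial e$ interacts with $\support\gamma_0$. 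Once the geometry is pinned down, the estimate is an immediate three-way split plus one convergent geometric series, so no delicate analysis is involved beyond invoking assumption~\ref{assumption: 3} for convergence.
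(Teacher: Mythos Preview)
Your three-way split and the geometric-series bookkeeping are exactly what the paper does, but your opening choice of $\gamma_0$ is wrong and would make the argument collapse. A path ``translated one lattice step transverse to $\gamma$'' and bounding a thin strip together with $\gamma$ sits at bounded $\dist_0$-distance from \emph{every} edge of $\gamma$ (essentially $\dist_0(e,\support\gamma_0)=8$ for all $e$), so the exponent $\max(0,\dist_0-8)$ is $0$ everywhere and the sum is $|\support\gamma|$, not the claimed bound. Moreover such a $\gamma_0$ does not have $\partial\gamma_0=-\partial\gamma$ without extra bridging edges.

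The correct choice---which you do mention in your obstacles paragraph---is the one the paper takes: let $\gamma_0$ be the \emph{complementary arc} on the same rectangle, so that $\gamma+\gamma_0$ is the full rectangular loop. With this $\gamma_0$, labelling the edges of $\gamma$ as $e_1,\dots,e_{|\support\gamma|}$ in order, the paper verifies
\[
\dist_0(e_j,\support\gamma_0)\ \geq\ \max\bigl(8,\ \min(j,\ |\support\gamma|-j+1,\ \ell_1,\ \ell_2)+1\bigr),
\]
i.e.\ the distance is governed either by how far $e_j$ is along $\gamma$ from the nearer endpoint, or by the short side of the rectangle. Plugging this in and splitting exactly as you describe---at most $16$ terms bounded by $1$, two geometric tails $2\sum_{j\ge 9}(K'\alpha_0(\kappa))^{j-8}$, and at most $|\support\gamma|/2$ terms with exponent $\min(\ell_1,\ell_2)-7$---gives the stated inequality. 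So once you commit to the complementary-arc $\gamma_0$, your plan coincides with the paper's proof; just drop the ``parallel translate'' description entirely.
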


Before we give a proofs of Proposition~\ref{proposition: rectangular Z2} and Proposition~\ref{proposition: square sum term in constant}, we will state and prove a few useful lemmas. For these lemmas, it will be useful to note that when \( G = \mathbb{Z}_2 \) and \( \rho(G) = \{1,-1\} \), then
\begin{equation}\label{eq: theta for Z2}
    \theta_{\beta,\kappa}(0) = \frac{1-e^{-24\beta-4\kappa}}{1+e^{-24\beta-4\kappa}}
    \quad \text{and} \quad\theta_{\beta,\kappa}(1) = \frac{1-e^{-24\beta+4\kappa}}{1+e^{-24\beta+4\kappa}}.
\end{equation}
From this, it in particular follows that when \( 6\beta> \kappa, \) then \( \theta_{\beta,\kappa}(0),\theta_{\beta,\kappa} >0. \)
Next, we recall from Section~12.2 in~\cite{flv2021}, that when \( G = \mathbb{Z}_2 \) and \( \rho(G) = \{1,-1 \}, \) we have
\begin{equation}\label{eq: alphas for Z2}
    \begin{split}
        &  \alpha_0(r) = \alpha_1(r) = \varphi_r(1)^2 = e^{-4r},
        \qquad    \alpha_2(\beta,\kappa) = e^{-4(\beta+\kappa/6)},
        \\
        &
        \alpha_3(\beta,\kappa) = 
        1 - \theta_{\beta,\kappa}(0)  =
        \frac{ 2e^{-24\beta-4\kappa}}{1 + e^{-24\beta-4\kappa}},
        \qquad
        \alpha_5(\beta,\kappa) = 
        1 - \theta_{\beta,\kappa}(0)  =
        \frac{ 2e^{-24\beta-4\kappa}}{1 + e^{-24\beta-4\kappa}},
        \\
        &    \alpha_4(\beta,\kappa) = 
        \theta_{\beta,\kappa}(0) -  \theta_{\beta,\kappa}(1) 
        = 
        \frac{2 e^{-24\beta} (e^{4\kappa}-e^{-4\kappa})}{(1 + e^{-24\beta-4\kappa})(1 + e^{-24\beta+4\kappa})}.
    \end{split}
\end{equation}

\begin{proof}[Proof of Proposition~\ref{proposition: square sum term in constant}]
    Choose \( \gamma_0 \) so that \( \gamma+ \gamma_0 \) is a generalized loop along the boundary of a rectangle with side lengths \( \ell_1 ,\ell_2 \geq 2. \) 
    Let \( e_1, e_2, \dots, e_{|\support \gamma|} \) be the edges in \( \gamma, \) labelled according to their order in the path \( \gamma. \) Then, for any \( j \in \{ 1,2, \dots, |\support \gamma|\}, \) one verifies that (see Figure~\ref{fig: distance to gap})
    \begin{equation*}
        \dist_0(e,\support \gamma_0) \geq \max(8,\dist_1(e,\support \gamma_0)) \geq \max\pigl(8,\min\bigl(j, |\support \gamma|-j+1,\ell_1,\ell_2\bigr)+1\pigr).
    \end{equation*}
    Using this inequality, we obtain
    \begin{equation*}
        \begin{split}
            &\sum_{e \in \gamma} \pigl( K_4 \alpha_0(\kappa) \pigr)^{\max(0,\dist_0(e, \support \gamma_0)-8)} 
            \leq 
            \sum_{e \in \gamma} \pigl( K_4 \alpha_0(\kappa) \pigr)^{\max(0,\min(j,|\support \gamma|-j+1,\ell_1,\ell_2)+1-8)} 
            \\&\qquad\leq 
            16
            +
            2\sum_{j=9}^\infty \pigl( K_4 \alpha_0(\kappa) \pigr)^{j-8} 
            +
            \frac{|\support \gamma|}{2} \pigl( K_4 \alpha_0(\kappa) \pigr)^{\max(0,\min(\ell_1,\ell_2)-7)}.
        \end{split}
    \end{equation*}
    Evaluating the geometric sum above, we obtain the desired conclusion.
\end{proof}

\begin{figure}[htp]
    \centering 
    \begin{subfigure}[b]{0.45\textwidth}
        \centering
        \begin{tikzpicture}[scale=0.5]
            
            \draw[detailcolor00,-{Straight Barb[length=1.5mm,color=detailcolor00!40!black]}] (5.45,0) -- (5.5,0);
            \draw[detailcolor00] (1,5) -- (0,5) -- (0,0) -- (11,0)  -- (11,1);
            \fill[detailcolor04] (1,5) circle (4pt);
            \fill[detailcolor04] (11,1) circle (4pt);
        \end{tikzpicture}
    \caption{The open path \( \gamma. \)}
    \end{subfigure}
    \hfil
    \begin{subfigure}[b]{0.45\textwidth}
        \centering
        \begin{tikzpicture}[scale=0.5]
            \draw[white] (0,0) -- (11,0);
            \draw[white,-{Straight Barb[length=1.5mm,color=white]}] (5.45,0) -- (5.5,0);
                
            \draw[detailcolor00,-{Straight Barb[length=1.5mm,color=detailcolor00!40!black]}] (5.55,5) -- (5.5,5);
            \draw[detailcolor00] (11,1) -- (11,5) -- (1,5);
            \fill[detailcolor04] (1,5) circle (4pt);
            \fill[detailcolor04] (11,1) circle (4pt);  
        \end{tikzpicture}
        \caption{The open path \( \gamma_0 \)}
    \end{subfigure}
        
    \vspace{2ex}
    \begin{subfigure}[b]{0.45\textwidth}
        \centering
        \begin{tikzpicture}[scale=0.5]
            \draw[detailcolor00, opacity = 0.3] (0,0) -- (11,0) -- (11,5) -- (0,5) -- (0,0); 
            \fill[detailcolor04, opacity=0.3] (1,5) circle (4pt);
            \fill[detailcolor04, opacity=0.3] (11,1) circle (4pt);  \draw[detailcolor00, opacity=0.4,-{Straight Barb[length=1.5mm,color=detailcolor00!40!black]}] (5.55,5) -- (5.5,5);\draw[detailcolor00, opacity=0.4,-{Straight Barb[length=1.5mm,color=detailcolor00!40!black]}] (5.45,0) -- (5.5,0);
                
            \foreach \y in {0,0.5,...,1} 
                \draw[detailcolor00!20!black] (9,\y) -- (9.5,\y);
            \foreach \x in {9.5,10,...,11} 
                \draw[detailcolor00!20!black] (\x,1) -- (\x,1.5);  
        \end{tikzpicture}
    \caption{Given an edge \( e_j \in \gamma,\) we draw the support of a 1-form \( \sigma  \) with \( {\support \gamma_0 \cap \mathcal{C}_{\mathcal{G}(\sigma)}(e_j) \neq \emptyset} \) which minimizes \( |\support \sigma|. \)}
    \label{subfig: not opposite}
    \end{subfigure}
    \hfil
    \begin{subfigure}[b]{0.45\textwidth}
        \centering
        \begin{tikzpicture}[scale=0.5]
            \draw[detailcolor00, opacity = 0.3] (0,0) -- (11,0) -- (11,5) -- (0,5) -- (0,0); 
            \fill[detailcolor04, opacity=0.3] (1,5) circle (4pt);
            \fill[detailcolor04, opacity=0.3] (11,1) circle (4pt);  \draw[detailcolor00, opacity=0.4,-{Straight Barb[length=1.5mm,color=detailcolor00!40!black]}] (5.55,5) -- (5.5,5);\draw[detailcolor00, opacity=0.4,-{Straight Barb[length=1.5mm,color=detailcolor00!40!black]}] (5.45,0) -- (5.5,0);
                
            \foreach \y in {0,0.5,...,5} 
                \draw[detailcolor00!20!black] (2,\y) -- (2.5,\y); 
        \end{tikzpicture}
        \caption{Given an edge \( e_{j'} \in \gamma,\) we draw the support of a 1-form \( \sigma \) with \( {\support \gamma_0 \cap \mathcal{C}_{\mathcal{G}(\sigma)}(e_{j'}) \neq \emptyset} \) which minimizes \( |\support \sigma|. \)}
        \label{subfig: opposite}
    \end{subfigure}
    \caption{In the figures above, we illustrate the setting of the proof of Proposition~\ref{proposition: square sum term in constant}. Note in particular that for the edge \( e_j \) in~\subref{subfig: not opposite}, we have \( \dist_1(e_j,\support \gamma_0) = |\support \gamma|-j+2,\) and for the edge \( e_{j'} \) in~\subref{subfig: opposite}, we have \( \dist_1(e_{j'},\support \gamma_0) = \min(\ell_1,\ell_2)+1.\)}
        \label{fig: distance to gap}
\end{figure}

We now proceed to the proof of Proposition~\ref{proposition: rectangular Z2}. Before we give this proof, we will state and prove a few lemmas.
To simplify the notation in these lemmas, when \( 6\beta> \kappa \) and \( \sigma \sim\mu_{N,\infty,\kappa},\) we define the following random variable.
\begin{equation}\label{def: Upsilon}
    \Upsilon_{\beta,\kappa}(\gamma) \coloneqq |\support \gamma|^{-1}\sum_{e \in \gamma} \log \theta_{\beta,\kappa}\bigl(\sigma(e)\bigr).
\end{equation}

\begin{lemma}\label{lemma: uniform WLLN}
    Let \( \beta,\kappa \geq 0 \) be such that~\ref{assumption: 3} and \( 6\beta> \kappa \) both hold, let \( \gamma \) be path along a rectangle with side lengths \( \ell_1 \) and \( \ell_2\) and such that \( \dist_0(\gamma,\partial C_1(B_N))\geq 8, \) and let \( G = \mathbb{Z}_2. \)
    Then, for any \( \varepsilon > 0 ,\) we have
    \begin{equation*}
        \begin{split}
            & \mu_{N,\infty,\kappa}\Bigl( \pigl|\Upsilon_{\beta,\kappa}(\gamma) - \mathbb{E}_{N,\infty,\kappa} \bigl[ \Upsilon_{\beta,\kappa}(\gamma) \bigr] \pigr| \geq \varepsilon\Bigr) 
            \leq  
            \frac{K_{13}\alpha_2(\beta,\kappa)}{\varepsilon^2|\support \gamma|},
        \end{split}
    \end{equation*} 
    where \( K_{13} \) is given by~\eqref{eq: K13}.
\end{lemma}

\begin{remark}
    The idea of the proof of Lemma~\ref{lemma: uniform WLLN} is essentially to use the weak law of large numbers for correlated random variables with exponential decay. For this to approach to work, we need the loop to be "smooth" enough for the sum of the covariances of all pairs of edges in \( \gamma \) to be finite. The reason for working with rectangular loops is that in this case, it is relatively easy to show that this holds. However, with small modifications, the conclusion of this lemma holds for more general classes of loops as well, as long as the path \( \gamma \) do not have too many corners.
\end{remark}
    
\begin{proof}[Proof of Lemma~\ref{lemma: uniform WLLN}]
    Fix some \( \varepsilon > 0. \) 
    By Chebyshev's inequality, we have
    \begin{equation*}
        \begin{split}
            &\varepsilon^2 \mu_{N,\infty,\kappa}\Bigl( \pigl|\Upsilon_{\beta,\kappa}(\gamma) - \mathbb{E}_{N,\infty,\kappa} \bigl[ \Upsilon_{\beta,\kappa}(\gamma) \bigr] \pigr| \geq \varepsilon\Bigr) \leq  \Var_{N,\infty,\kappa} \bigl[ \Upsilon_{\beta,\kappa}(\gamma) \bigr]
            \\&\qquad = \sum_{e,e' \in \gamma}\Cov_{N,\infty,\kappa} \bigl( \Upsilon_{\beta,\kappa}(e),\Upsilon_{\beta,\kappa}(e') \bigr)
            \\&\qquad =  |\support \gamma|^{-2}  \sum_{e,e' \in \gamma}\Cov_{N,\infty,\kappa} \pigl(  \log \theta_{\beta,\kappa}\bigl(\sigma(e)\bigr),\log \theta_{\beta,\kappa}\bigl(\sigma(e')\bigr) \pigr).
        \end{split}
    \end{equation*}
    By combining Proposition~\ref{proposition: coupling covariance}, applied with \( f_0 = \log \theta_{\beta,\kappa}\bigl( \sigma(e)\bigr) \) and \( f_1 = \log \theta_{\beta,\kappa}\bigl( \sigma(e')\bigr) , \) and Proposition~\ref{proposition: ZZ upper bound}, it follows that
    \begin{equation*}
        \begin{split}
            &\sum_{e,e' \in \gamma}\Cov_{N,\infty,\kappa} \pigl( \log \theta_{\beta,\kappa}\bigl(\sigma(e)\bigr),\log \theta_{\beta,\kappa}\bigl(\sigma(e')\bigr) \pigr)
            \\&\qquad\leq 
            \sum_{e \in \support \gamma} \Var_{N,\beta,\kappa}\pigl( \log \theta_{\beta,\kappa}\bigl( \sigma(e)\bigr) \pigr)
            + 
            2 \bigl\| \log \theta_{\beta,\kappa} \bigr\|_\infty^2 \sum_{e,e' \in \support \gamma\colon e \neq e'} K_3 (K_4 \alpha_0(\kappa))^{\dist_0(e,e')}.
        \end{split}
    \end{equation*} 
    Since \( 0 \leq \theta_{\beta,\kappa}(1) \leq \theta_{\beta,\kappa}(0) \leq 1 \) for all \( \beta,\kappa \geq 0, \) we have
    \begin{equation*}
        \begin{split}
            &\bigl\| \log \theta_{\beta,\kappa} \bigr\|_\infty
            \leq 
            \bigl| \log \theta_{\beta,\kappa}(1) \bigr|
            \leq 
            \bigl| \log \frac{1 - \varphi_\beta(1)^{12}\varphi_\kappa(1)^{-2}}{1 + \varphi_\beta(1)^{12}\varphi_\kappa(1)^{-2}}\bigr|
            \leq  
            \bigl| \log e^{ -2 \varphi_\beta(1)^{12}\varphi_\kappa(1)^{-2}} \bigr|
            \leq  
            2 \varphi_\beta(1)^{12}\varphi_\kappa(1)^{-2}.
        \end{split}
    \end{equation*}
    Next, recall that, by Proposition~\eqref{proposition: new Z-LGT coupling upper bound}, applied with \( M = 1, \) \( M' = 0,\) \( \beta = \kappa_1 =  \infty, \) and \( \kappa_2= \kappa, \) for any edge \( e \in \gamma, \) we have
    \begin{equation*}
        \begin{split}
            &
            \mu_{N,\infty,\kappa}   \pigl( \bigl\{ \hat \sigma  \in \Omega^1_0(B_N,G)\colon  
            \sigma(e) \neq 0 \bigr\}\pigr)
            \leq 
            K_1(\infty,\kappa)
            \bigl(K_4 \alpha_0(\kappa ) \bigr)^8.
        \end{split}
    \end{equation*} 
    Consequently, for any \( e \in \gamma, \) we have 
    \begin{equation*}
    \begin{split}
        &\Var_{N,\infty,\kappa}\pigl( \log \theta_{\beta,\kappa}\bigl( \sigma(e)\bigr) \pigr)
        \leq 
        \mathbb{E}_{N,\infty,\kappa}\Bigl[ \pigl( \log \theta_{\beta,\kappa}\bigl( \sigma(e)\bigr) \pigr)^2 \Bigr]
        \\&\qquad\leq 
        \pigl( \log \theta_{\beta,\kappa}\bigl( 0 \bigr) \pigr)^2
        +
        K_1 \bigl(K_4 \alpha_0(\kappa)\bigr)^8 \pigl( \log \theta_{\beta,\kappa}\bigl( 1\bigr) \pigr)^2
        \\&\qquad\leq 
        4\varphi_\beta(1)^{24} \varphi_\kappa(1)^{4}
        +
        K_1 \bigl(K_4\alpha_0(\kappa)\bigr)^8 \cdot 4 \varphi_\beta(1)^{24} \varphi_\kappa(1)^{-4}
        \\&\qquad\leq  \varphi_\beta(1)^{12} \varphi_\kappa(1)^{2} \bigl( 4 
        +
        4K_1 K_4^8  \alpha_0(\kappa)^{4}  \bigr).
        \end{split}
    \end{equation*}
    Finally, note that
    \begin{equation*}
        \begin{split}
            & \sum_{e,e' \in \support \gamma\colon e \neq e'} (K_4\alpha_0(\kappa))^{\dist_0(e,e')}
            \leq
            \sum_{e,e' \in \support \gamma_R \colon e \neq e'} (K_4\alpha_0(\kappa))^{\dist_0(e,e')}
            \\&\qquad\leq
            |\support \gamma| \Bigl( |\support \gamma| /2 \cdot  (K_4\alpha_0(\kappa))^{\min(\ell_1,\ell_2)} + 2\sum_{j=1}^\infty (K_4\alpha_0(\kappa))^{\min(j,8)} \Bigr) 
            \\&\qquad\leq
            |\support \gamma| \Bigl( |\support \gamma| /2 \cdot  (K_4 \alpha_0(\kappa))^{\min(\ell_1,\ell_2)} + 16(K_4 \alpha_0(\kappa))^{8}
            +
            \frac{2(K_4 \alpha_0(\kappa))^9}{1-K_4 \alpha_0(\kappa)}  \Bigr) .
        \end{split}
    \end{equation*}
    Combining the above equations and recalling that when \( G = \mathbb{Z}_2, \) we have \( \alpha_0(\kappa) = \varphi_\kappa(1)^2 \) and \( \alpha_2(\beta,\kappa)^6 = \alpha_0(\beta)^6 \alpha_0(\kappa)^6 = \varphi_\beta(1)^{12}\varphi_\kappa(1)^{2},\) we finally obtain 
    \begin{equation*}
        \begin{split}
            &\varepsilon^2|\support \gamma| \mu_{N,\infty,\kappa}\Bigl( \pigl|\Upsilon_{\beta,\kappa}(\gamma) - \mathbb{E}_{N,\infty,\kappa} \bigl[ \Upsilon_{\beta,\kappa}(\gamma) \bigr] \pigr| \geq \varepsilon\Bigr) 
            \\&\qquad \leq  
             \alpha_2(\beta,\kappa)^6 \bigl( 4 + 4K_1 K_4^8  \alpha_0(\kappa)^{4}  \bigr)
            \\&\qquad\qquad+ 
            4K_3 K_4^4 \alpha_2(\beta,\kappa)^{12} \Bigl( |\support \gamma|    (K_4 \alpha_0(\kappa))^{\min(\ell_1,\ell_2)-4} + 32(K_4 \alpha_0(\kappa))^{4}
            +
            \frac{4(K_4\alpha_0(\kappa))^5}{1-K_4\alpha_0(\kappa)}  \Bigr).
        \end{split}
    \end{equation*}
    Rearranging this equation, the desired conclusion now immediately follows. This concludes the proof.
\end{proof}

\begin{lemma}\label{lemma: WLLN application}
    Let \( \beta,\kappa \geq 0 \) be such that~\ref{assumption: 3} and \( 6\beta>\kappa\) both hold, let \( \gamma \) be path along the boundary of a rectangle with side lengths \( \ell_1 \) and \( \ell_2, \) and let \( G = \mathbb{Z}_2. \)
    Then 
    \begin{equation*}
        \begin{split}
            &\pigl| \Theta_{N,\beta,\kappa}(\gamma) -   e^{|\support \gamma|  \mathbb{E}_{N,\infty,\kappa} [ \Upsilon_{\beta,\kappa}(\gamma)]}  \pigr| 
            \leq 
            2\sqrt[3]{\frac{K_{13}\alpha_2(\beta,\kappa)}{|\support \gamma|}},
        \end{split}
    \end{equation*}
    where \( K_{13} = K_{13}(\ell_1,\ell_2) \) is given by~\eqref{eq: K13}.
\end{lemma}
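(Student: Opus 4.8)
The plan is to read the left-hand side as a weak-law-of-large-numbers statement. Since $\prod_{e\in\gamma}\theta_{\beta,\kappa}(\sigma(e)) = \exp\bigl(|\support\gamma|\,\Upsilon_{\beta,\kappa}(\gamma)\bigr)$, we have $\Theta_{N,\beta,\kappa}(\gamma) = \mathbb{E}_{N,\infty,\kappa}\bigl[e^{|\support\gamma|\,\Upsilon_{\beta,\kappa}(\gamma)}\bigr]$, and $e^{|\support\gamma|\,\mathbb{E}_{N,\infty,\kappa}[\Upsilon_{\beta,\kappa}(\gamma)]}$ is its ``geometric-mean surrogate.'' Because $6\beta>\kappa$, \eqref{eq: theta for Z2} gives $\theta_{\beta,\kappa}(0),\theta_{\beta,\kappa}(1)\in(0,1]$, so $\log\theta_{\beta,\kappa}(\sigma(e))\le 0$ for every $e$, and consequently both $\Upsilon_{\beta,\kappa}(\gamma)$ and $\mathbb{E}_{N,\infty,\kappa}[\Upsilon_{\beta,\kappa}(\gamma)]$ are nonpositive.

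First I would record the elementary inequality that for real numbers $a,b\le 0$ one has $|e^{a}-e^{b}|\le\min\bigl(1,\,|a-b|\bigr)$: the bound by $1$ is trivial since $e^a,e^b\in(0,1]$, and the bound by $|a-b|$ follows from the mean value theorem, the relevant intermediate exponential being $\le 1$. Applying this with $a=|\support\gamma|\,\Upsilon_{\beta,\kappa}(\gamma)$ and $b=|\support\gamma|\,\mathbb{E}_{N,\infty,\kappa}[\Upsilon_{\beta,\kappa}(\gamma)]$ and taking expectations yields $\pigl|\Theta_{N,\beta,\kappa}(\gamma)-e^{|\support\gamma|\,\mathbb{E}_{N,\infty,\kappa}[\Upsilon_{\beta,\kappa}(\gamma)]}\pigr|\le\mathbb{E}_{N,\infty,\kappa}\bigl[\min\bigl(1,\,|\support\gamma|\,|\Upsilon_{\beta,\kappa}(\gamma)-\mathbb{E}_{N,\infty,\kappa}[\Upsilon_{\beta,\kappa}(\gamma)]|\bigr)\bigr]$. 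Then I would split the expectation according to whether $|\Upsilon_{\beta,\kappa}(\gamma)-\mathbb{E}_{N,\infty,\kappa}[\Upsilon_{\beta,\kappa}(\gamma)]|$ is at most, or exceeds, a threshold $t>0$: on the first event the integrand is at most $|\support\gamma|\,t$, while by Lemma~\ref{lemma: uniform WLLN} the second event has probability at most $K'''''\alpha_2(\beta,\kappa)/(t^2|\support\gamma|)$, where the integrand is at most $1$. This gives a bound of the form $|\support\gamma|\,t+K'''''\alpha_2(\beta,\kappa)/(t^2|\support\gamma|)$; optimizing the threshold $t$ to balance the two terms (and using that the whole expression is trivially $\le 1$ to absorb the regime where the volume factor dominates) produces a bound $C\bigl(K'''''\alpha_2(\beta,\kappa)/|\support\gamma|\bigr)^{1/3}$, and keeping track of constants — together with the fact that $\mathbb{E}_{N,\infty,\kappa}[\Upsilon_{\beta,\kappa}(\gamma)]$ is itself of order $\alpha_2(\beta,\kappa)^6$, which keeps all exponential prefactors uniformly bounded in $|\support\gamma|$ — gives the stated constant $2$.

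The delicate point is exactly this last calibration: one has to play the factor $|\support\gamma|$ produced by exponentiating an empirical average off against the smallness of the fluctuations furnished by Lemma~\ref{lemma: uniform WLLN}, and to land precisely on the $\sqrt[3]{\,\cdot\,}$ rate one must be careful about which crude bound ($1$ versus $|\support\gamma|\,t$) to deploy on which event, as well as about the uniform control of $e^{|\support\gamma|\,\mathbb{E}_{N,\infty,\kappa}[\Upsilon_{\beta,\kappa}(\gamma)]}$. It is here, rather than in mere boundedness of $\theta_{\beta,\kappa}$, that the subcriticality assumption~\ref{assumption: 3} genuinely enters — through the variance estimate $\Var_{N,\infty,\kappa}[\Upsilon_{\beta,\kappa}(\gamma)]\lesssim K'''''\alpha_2(\beta,\kappa)/|\support\gamma|$ underlying Lemma~\ref{lemma: uniform WLLN}, which in turn rests on the exponential decay of the cluster sizes in $\mathcal{G}(\sigma)$ under $\mu_{N,\infty,\kappa}$.
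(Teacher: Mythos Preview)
Your approach is exactly the paper's: rewrite \(\Theta_{N,\beta,\kappa}(\gamma)=\mathbb{E}_{N,\infty,\kappa}\bigl[e^{|\support\gamma|\,\Upsilon_{\beta,\kappa}(\gamma)}\bigr]\), use that \(\Upsilon\le 0\), split at a threshold, and control the large-deviation event with Lemma~\ref{lemma: uniform WLLN}. The gap is in the optimization. Balancing the two terms in \(|\support\gamma|\,t + K'''''\alpha_2(\beta,\kappa)/(t^2|\support\gamma|)\) forces \(t^3=2K'''''\alpha_2(\beta,\kappa)/|\support\gamma|^2\), and the resulting value is of order \(\bigl(K'''''\alpha_2(\beta,\kappa)\cdot|\support\gamma|\bigr)^{1/3}\), not \(\bigl(K'''''\alpha_2(\beta,\kappa)/|\support\gamma|\bigr)^{1/3}\): you are off by a factor \(|\support\gamma|^{2/3}\) in the wrong direction. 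Appealing to the trivial bound \(\le 1\) does not help, since one still needs the quantity to be \emph{small}; in fact the best one can extract from \(\mathbb{E}\bigl[\min(1,|\support\gamma|\,|\Upsilon-\mathbb{E}\Upsilon|)\bigr]\) together with the Chebyshev-type tail from Lemma~\ref{lemma: uniform WLLN} is of order \(\min\bigl(1,\sqrt{K'''''\alpha_2(\beta,\kappa)\,|\support\gamma|}\,\bigr)\), which again grows in \(|\support\gamma|\). The remark about \(\mathbb{E}[\Upsilon]\) being \(O(\alpha_2^6)\) is beside the point: since \(\Upsilon\le 0\) the exponential prefactors are already \(\le 1\); the damaging factor \(|\support\gamma|\) comes from the mean-value theorem, not from any exponential.

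For comparison, the paper takes the same split but on the good event \(\{|\Upsilon-\mathbb{E}\Upsilon|<\varepsilon\}\) asserts \(\bigl|e^{|\support\gamma|\Upsilon}-e^{|\support\gamma|\mathbb{E}\Upsilon}\bigr|\le|\Upsilon-\mathbb{E}\Upsilon|<\varepsilon\), i.e.\ it drops precisely the factor \(|\support\gamma|\) that you (correctly, by the mean-value theorem) retained; with that bound the choice \(\varepsilon=(K'''''\alpha_2/|\support\gamma|)^{1/3}\) balances the two contributions and gives the stated constant \(2\). So the discrepancy between your argument and the paper's lies entirely in this one inequality on the good event---and it is worth noting that \(\bigl|e^{na}-e^{nb}\bigr|\le|a-b|\) is \emph{not} true for general \(a,b\le 0\) (take \(b=0\), \(a\) small negative, \(n\) large), so the paper's step deserves scrutiny as well.
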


\begin{proof}
    Recall the definition of \( \Upsilon_{\beta,\kappa} (\gamma) \) from~\eqref{def: Upsilon}, and note that
    \begin{equation*} 
        \begin{split}
            &\Theta_{N,\beta,\kappa}(\gamma) 
            = 
            \mathbb{E}_{N,\infty,\kappa} \Bigl[ \prod_{e \in \gamma} \theta_{\beta,\kappa}\bigl(\sigma(e)\bigr)\Bigr] 
            = 
            \mathbb{E}_{N,\infty,\kappa} \Bigl[  e^{\sum_{e \in \gamma} \log \theta_{\beta,\kappa}\bigl(\sigma(e)\bigr)}\Bigr] 
            = 
            \mathbb{E}_{N,\infty,\kappa} \Bigl[  e^{|\support \gamma| \Upsilon_{\beta,\kappa}(\gamma)}\Bigr]. 
        \end{split}
    \end{equation*} 
    Consequently, 
    \begin{equation*}
        \begin{split} 
            &\biggl| Theta_{N,\beta,\kappa}(\gamma)  -   e^{|\support \gamma|  \mathbb{E}_{N,\infty,\kappa}[\Upsilon_{\beta,\kappa}(\gamma)]} \biggr|
            \\&\qquad=
            \biggl| \mathbb{E}_{N,\infty,\kappa} \Bigl[  e^{|\support \gamma|  \Upsilon_{\beta,\kappa}(\gamma)} - e^{|\support \gamma|  \mathbb{E}_{N,\infty,\kappa}[\Upsilon_{\beta,\kappa}(\gamma)]}\Bigr] \biggr|
            \\&\qquad\leq
             \mathbb{E}_{N,\infty,\kappa} \biggl[  \Bigl| e^{|\support \gamma|  \Upsilon_{\beta,\kappa}(\gamma)} - e^{|\support \gamma|  \mathbb{E}_{N,\infty,\kappa}[\Upsilon_{\beta,\kappa}(\gamma)]}\Bigr| \biggr].
        \end{split}
    \end{equation*}

    Next, note that since \( \rho \) is unitary, we have \( |\theta_{\beta,\kappa}(g)|\leq 1 \) for all \( g \in G ,\) and hence \(  \Upsilon_{\beta,\kappa}(\gamma) \leq 0. \) 
    Now fix some \( \varepsilon > 0 . \)
    On the event \( \bigl|\Upsilon_{\beta,\kappa}(\gamma) - \mathbb{E}_{N,\beta,\kappa}[\Upsilon_{\beta,\kappa}(\gamma) ]\bigr|\geq\varepsilon, \) since \( \Upsilon_{\beta,\kappa}(\gamma)\leq 0 ,\) we must have
    \begin{equation*}
        \Bigl| e^{|\support \gamma|  \Upsilon_{\beta,\kappa}(\gamma)} - e^{|\support \gamma|  \mathbb{E}_{N,\infty,\kappa}[\Upsilon_{\beta,\kappa}(\gamma)]}\Bigr| \leq 1.
    \end{equation*}
    On the other hand, on the event \( \bigl|\Upsilon_{\beta,\kappa}(\gamma) - \mathbb{E}_{N,\beta,\kappa}[\Upsilon_{\beta,\kappa}(\gamma) ]\bigr|<\varepsilon, \) since \( \Upsilon_{\beta,\kappa}(\gamma)\leq 0 ,\) we have
    \begin{equation*}
        \begin{split}
            &\Bigl| e^{|\support \gamma|  \Upsilon_{\beta,\kappa}(\gamma)} - e^{|\support \gamma|  \mathbb{E}_{N,\infty,\kappa}[\Upsilon_{\beta,\kappa}(\gamma)]}\Bigr|
            \leq
            \Bigl| \Upsilon_{\beta,\kappa}(\gamma) - \mathbb{E}_{N,\infty,\kappa}[\Upsilon_{\beta,\kappa}(\gamma)]\Bigr|< \varepsilon.
        \end{split}
    \end{equation*}
    Using Lemma~\ref{lemma: uniform WLLN} with \( \varepsilon = \bigl( K_{13}\alpha_2(\beta,\kappa)|\support \gamma|^{-1} \bigr)^{1/3},\) we obtain the desired conclusion. 
\end{proof}

\begin{lemma}\label{lemma: theta estimates}
    Let \( \beta,\kappa \geq 0  \) be such that \( 6\beta > \kappa, \) and let \( G = \mathbb{Z}_2.\) Then
    \begin{equation*}
        \bigl| \theta_{\beta,\kappa}(0) - e^{-2e^{-24\beta-4\kappa}}  \bigr| \leq 4 \bigl(e^{-24\beta-4\kappa } \bigr)^2 \qquad \text{and} \qquad \pigl| \theta_{\beta,\kappa}(1) - e^{-2e^{-24\beta+4\kappa}} \bigr| \leq 4 \bigl(e^{-24\beta+4\kappa } \bigr)^2.
    \end{equation*}
\end{lemma}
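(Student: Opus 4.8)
The plan is to reduce both inequalities to a single one‑variable estimate and then prove that estimate by an elementary convexity/integration argument. By \eqref{eq: theta for Z2}, writing $t_0 \coloneqq e^{-24\beta-4\kappa}$ and $t_1 \coloneqq e^{-24\beta+4\kappa}$, we have $\theta_{\beta,\kappa}(0) = \tfrac{1-t_0}{1+t_0}$ and $\theta_{\beta,\kappa}(1) = \tfrac{1-t_1}{1+t_1}$, while the two target exponentials are exactly $e^{-2t_0}$ and $e^{-2t_1}$. The hypothesis $6\beta > \kappa$ gives $0 < t_1 < 1$, and a fortiori $0 < t_0 < t_1 < 1$, so in both cases the relevant parameter lies in $(0,1)$. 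Hence it suffices to establish the single statement: for every $t \in [0,1]$,
\begin{equation*}
    \Bigl| \tfrac{1-t}{1+t} - e^{-2t} \Bigr| \leq 4 t^2 ,
\end{equation*}
and then to apply it with $t = t_0$ and with $t = t_1$.

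To prove this, first note that since $1+t \geq 1$ on $[0,1]$,
\begin{equation*}
    \Bigl| \tfrac{1-t}{1+t} - e^{-2t} \Bigr| = \frac{\bigl|(1-t) - (1+t)e^{-2t}\bigr|}{1+t} \leq \bigl|(1-t) - (1+t)e^{-2t}\bigr| ,
\end{equation*}
so it is enough to bound $h(t) \coloneqq (1-t) - (1+t)e^{-2t}$. The key computation is that $h(0) = 0$, $h'(t) = -1 + (1+2t)e^{-2t}$ with $h'(0) = 0$, and $h''(t) = -4t e^{-2t}$. Thus $h'' \leq 0$ on $[0,\infty)$, so $h'$ is nonincreasing and therefore $h' \leq h'(0) = 0$ there; hence $h$ is nonincreasing with $h \leq h(0) = 0$ on $[0,\infty)$. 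Writing $h(t) = \int_0^t \!\int_0^s h''(u)\,du\,ds$ (valid since $h(0) = h'(0) = 0$) and using $e^{-2u} \leq 1$, we get
\begin{equation*}
    |h(t)| = -h(t) = \int_0^t \!\int_0^s 4u e^{-2u}\, du\, ds \leq \int_0^t 2 s^2 \, ds = \frac{2 t^3}{3} \leq 4 t^2
\end{equation*}
for $t \in [0,1]$ (in fact for all $t \leq 6$), which is the claimed inequality. Substituting $t_0$ and $t_1$ back in then yields the two estimates of the lemma.

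I do not expect any real obstacle here: the statement is a routine explicit estimate. The only points requiring a little care are recording the sign of $h$ (so that the double‑integral representation of $|h(t)|$ is the correct one) and noticing that the constant $4$ in the statement is very generous — the argument above actually delivers $\tfrac23 t^3$ — so no sharp bounds on $t$ or on $e^{-2t}$ are needed, and the case $6\beta > \kappa$ is used only to ensure $t_0, t_1 \in (0,1)$.
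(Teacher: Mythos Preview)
Your proof is correct, but it takes a different route from the paper's. The paper inserts the linear approximation $1-2t$ and uses the triangle inequality,
\[
\Bigl|\tfrac{1-t}{1+t}-e^{-2t}\Bigr|\le \Bigl|\tfrac{1-t}{1+t}-(1-2t)\Bigr|+\bigl|(1-2t)-e^{-2t}\bigr|\le 2t^2+2t^2=4t^2,
\]
appealing only to the standard bounds $\tfrac{1-t}{1+t}-(1-2t)=\tfrac{2t^2}{1+t}$ and $|e^{-x}-(1-x)|\le x^2/2$. You instead analyze $h(t)=(1-t)-(1+t)e^{-2t}$ directly via its second derivative and a double integral, which is a bit more work but yields the sharper bound $\tfrac{2}{3}t^3$ (reflecting the fact that $\tfrac{1-t}{1+t}$ and $e^{-2t}$ agree to second order). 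Either approach is entirely adequate here; the paper's is quicker to write down, while yours explains why the constant $4$ is so generous.
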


\begin{proof}
    For the first inequality, note that
    \begin{equation*}
    \begin{split} 
        &\bigl| \theta_{\beta,\kappa}(0) - e^{-2e^{-24\beta-4\kappa}}  \bigr|
        =
        \bigl| \frac{1 - e^{-24\beta-4\kappa}}{1 + e^{-24\beta-4\kappa}} - e^{-2e^{-24\beta-4\kappa}}  \bigr|
        \\&\qquad\leq
        \bigl| \frac{1 - e^{-24\beta-4\kappa}}{1 + e^{-24\beta-4\kappa}} - (1 - 2e^{-24\beta-4\kappa}) \bigr|
        + 
        \bigl| (1 - 2e^{-24\beta-4\kappa}) - e^{-2e^{-24\beta-4\kappa}} \bigr) \bigr|
        \\&\qquad\leq
        2(e^{-24\beta-4\kappa})^2
        + 
        \bigl( 2e^{-24\beta-4\kappa})^2/2
        =
        4(e^{-24\beta-4\kappa})^2.
    \end{split}
    \end{equation*}
    The second inequality follows analogously. 
\end{proof}

\begin{proof}[Proof of Proposition~\ref{proposition: rectangular Z2}]
    By definition, we have
    \begin{equation*}
        \begin{split}
            &\mathbb{E}_{N,\infty,\kappa} [ \Upsilon_{\beta,\kappa}(\gamma)]
            =
            \mathbb{E}_{N,\infty,\kappa} \pigl[ |\support \gamma|^{-1}\sum_{e \in \gamma} \log \theta_{\beta,\kappa}\bigl(\sigma(e)\bigr) \pigr]
            =
            |\support \gamma|^{-1}\sum_{e \in \gamma} 
            \mathbb{E}_{N,\infty,\kappa} \pigl[ \log \theta_{\beta,\kappa}\bigl(\sigma(e)\bigr) \pigr]
            \\&\qquad=
            |\support \gamma|^{-1}\sum_{e \in \gamma} 
            \mathbb{E}_{N,\infty,\kappa} \pigl[ \sum_{g \in G} \log \mathbb{1}_{\sigma(e)=g}\theta_{\beta,\kappa}(g)  \pigr]
            =
            \sum_{g \in G} 
            \log\theta_{\beta,\kappa}(g)  |\support \gamma|^{-1}\sum_{e \in \gamma} 
            \mathbb{E}_{N,\infty,\kappa} \pigl[  \mathbb{1}_{\sigma(e)=g} \pigr].
        \end{split}
    \end{equation*}
    Consequently, by Lemma~\ref{lemma: WLLN application},
    \begin{equation*}
        \begin{split}
            &\pigl| \Theta_{N,\beta,\kappa}(\gamma) -  \theta_{\beta,\kappa}(0)^{\sum_{e \in \gamma} 
            \mathbb{E}_{N,\infty,\kappa} [ \mathbb{1}_{\sigma(e)=0} ]} \theta_{\beta,\kappa}(1)^{\sum_{e \in \gamma} 
        \mathbb{E}_{N,\infty,\kappa} [ \mathbb{1}_{\sigma(e)=1} ]}  \pigr| 
            \leq 
            2\sqrt[3]{\frac{K_{13}\alpha_2(\beta,\kappa)}{|\support \gamma|}}.
        \end{split}
    \end{equation*} 
    Next, by Lemma~\ref{lemma: Chatterjees inequality ii}, we have 
    \begin{equation*}
        \begin{split}
            &
        \Bigl| \theta_{\beta,\kappa}(0)^{\sum_{e \in \gamma} 
        \mathbb{E}_{N,\infty,\kappa} [ \mathbb{1}_{\sigma(e)=0} ]} \theta_{\beta,\kappa}(1)^{\sum_{e \in \gamma} 
        \mathbb{E}_{N,\infty,\kappa} [ \mathbb{1}_{\sigma(e)=1} ]} 
        \\&\qquad\qquad - e^{-2e^{-24\beta-4\kappa}\sum_{e \in \gamma}\mathbb{E}_{N,\infty,\kappa}[\mathbb{1}_{\sigma(e)=0}]-2e^{-24\beta+4\kappa}\sum_{e \in \gamma}\mathbb{E}_{N,\infty,\kappa}[\mathbb{1}_{\sigma(e)=1}]} \Bigr| 
        \\&\qquad\leq 
        \sum_{e \in \gamma} 
        \mathbb{E}_{N,\infty,\kappa} [ \mathbb{1}_{\sigma(e)=0} ]\cdot \bigl| \theta_{\beta,\kappa}(0) - e^{-2e^{-24-4\kappa}} \bigr|
        +
        \sum_{e \in \gamma} 
        \mathbb{E}_{N,\infty,\kappa} [ \mathbb{1}_{\sigma(e)=1} ]\cdot \bigl| \theta_{\beta,\kappa}(1) - e^{-2e^{-24+4\kappa}} \bigr|.
        \end{split}
    \end{equation*}
    By combining Lemma~\ref{lemma: theta estimates} with Proposition~\ref{proposition: new Z-LGT coupling upper bound}, applied with \( M = 1, \) \( M' = 0,\) \( \beta = \kappa_1 =  \infty, \) and \( \kappa_2= \kappa, \) we can bound the previous equation from above by
    \begin{equation*}
        \begin{split}
            &
            |\support \gamma| \cdot 4(e^{-24\beta-4\kappa})^2
            +
            K_1(\infty,\kappa)
            \bigl(K_4 \alpha_0(\kappa ) \bigr)^8 |\support \gamma| \cdot 4(e^{-24\beta+4\kappa})^2
            .
        \end{split}
    \end{equation*}
    Combining the previous equations, we thus obtain 
    \begin{equation*}
        \begin{split}
            &\pigl| \Theta_{N,\beta,\kappa}(\gamma) 
            -
            e^{-2e^{-24\beta-4\kappa}\sum_{e \in \gamma}\mathbb{E}_{N,\infty,\kappa}[\mathbb{1}_{\sigma(e)=0}]-2e^{-24\beta+4\kappa}\sum_{e \in \gamma}\mathbb{E}_{N,\infty,\kappa}[\mathbb{1}_{\sigma(e)=1}]}
            \pigr| 
            \\&\qquad\leq 
            2\sqrt[3]{\frac{K_{13}\alpha_2(\beta,\kappa)}{|\support \gamma|}}
            +
            |\support \gamma| \cdot 4(e^{-24\beta-4\kappa})^2
            +
            K_1(\infty,\kappa)
            \bigl(K_4 \alpha_0(\kappa ) \bigr)^8 |\support \gamma| \cdot 4(e^{-24\beta+4\kappa})^2
            .
        \end{split}
    \end{equation*}
    Rearranging this equation, and recalling from~\eqref{eq: alphas for Z2} that \( e^{-24\beta-4\kappa} = \alpha_2(\beta,\kappa)^6 \) and \( \alpha_0(\kappa) = e^{-4\kappa}, \) we obtain~\eqref{eq: rectangular Z2} as desired.
\end{proof}

\subsection{A proof of Theorem~\ref{theorem: main result Z2}}\label{sec: proof of main theorem Z2}

We now provide a proof of Theorem~\ref{theorem: main result Z2}. Since this proof is very similar to the proof of Theorem~\ref{theorem: main result}, we will refer to this proof in order to avoid repetition.

\begin{proof}[Proof of Theorem~\ref{theorem: main result Z2}]  
    Let \( N \) be sufficiently large so that \( \dist_0(\gamma,\partial B_N) \geq 8\) and so that for each \( e \in \gamma, \) \( \hat \partial e \) contains no boundary plaquettes of \( C_2(B_N). \)
    Using~\eqref{eq: theta for Z2}, it follows that if $\beta$ and $\kappa$ satisfy the assumptions of Theorem~\ref{theorem: main result Z2}, then~\ref{assumption: 3} hold. 
    By combining Proposition~\ref{proposition: first version of main result} and Proposition~\ref{eq: rectangular Z2}, using that \( \pigl| \mathbb{E}_{N,\infty,\kappa}\bigl[ L_\gamma(\sigma) \bigr]\pigr| \leq 1 ,\) we obtain
    \begin{equation*}
        \begin{split}
            &\Bigl|\mathbb{E}_{N,\beta,\kappa} \bigl[L_\gamma(\sigma)\bigr]- \mathbb{E}_{N,\infty,\kappa} \bigl[ L_\gamma(\sigma) \bigr] \Theta'_{N,\beta,\kappa}(\gamma) \Bigr|
            \\&\qquad\leq \Bigl|\mathbb{E}_{N,\beta,\kappa} \bigl[L_\gamma(\sigma)\bigr]- \mathbb{E}_{N,\infty,\kappa} \bigl[ L_\gamma(\sigma) \bigr] \Theta_{N,\beta,\kappa}(\gamma) \Bigr|
            +
            \Bigl| \Theta_{N,\beta,\kappa}(\gamma) - \Theta'_{N,\beta,\kappa}(\gamma)\Bigr|
            \\&\qquad\leq   
            \Bigl(B+\frac{K_{14}
             \alpha_2(\beta,\kappa)^{12}}{2\alpha_5(\beta,\kappa)}\Bigr) \cdot 2|\support \gamma|\alpha_5(\beta,\kappa) 
            + 
            B' \cdot 2\sqrt{2|\support \gamma| \alpha_5(\beta,\kappa)}
            \\&\qquad\qquad+
            \sqrt[3]{\frac{K_{13}\alpha_2(\beta,\kappa)}{|\support \gamma|^2 \alpha_5(\beta,\kappa)}} \cdot 2\sqrt[3]{|\support \gamma|\alpha_5(\beta,\kappa)}
             ,
        \end{split}
    \end{equation*}
    where \( B \) and \( B' \) are given in~\eqref{eq: BB prime}. 
    Using that for \( x>0 \), we have  \( x \leq e^x, \) \( 2\sqrt{x}\leq e^x,\) and \( 2\sqrt[3]{x}\leq e^x\), it follows that
    \begin{equation*}
        \begin{split}
            &\Bigl|\mathbb{E}_{N,\beta,\kappa} \bigl[L_\gamma(\sigma)\bigr]-\mathbb{E}_{N,\infty,\kappa} \bigl[L_\gamma(\sigma)\bigr] \Theta'_{N,\beta,\kappa}(\gamma) \Bigr| 
            \\&\qquad\leq 
            \Bigl(B
            +
            \frac{K_{14} \alpha_2(\beta,\kappa)^{12}}{2\alpha_5(\beta,\kappa)}
            +
            B'
            +
            \sqrt[3]{\frac{K_{13}\alpha_2(\beta,\kappa)}{|\support \gamma|^2 \alpha_5(\beta,\kappa)}}
            \Bigr) e^{2 |\support \gamma| \alpha_5(\beta,\kappa) }.
        \end{split}
    \end{equation*}
    Combining this inequality with~\eqref{eq: second part of main result}, we obtain
    \begin{equation}\label{eq: almost last equation ii}  
        \begin{split}
            &\Bigl|\mathbb{E}_{N,\beta,\kappa}[ L_\gamma(\sigma)] -  \mathbb{E}_{N,\infty,\kappa}[ L_\gamma(\sigma)] \mathbb{E}_{N,\infty,\kappa} \Bigl[ \, \prod_{e \in \gamma } \theta_{\beta,\kappa}(\sigma_e) \Bigr] \Bigr|^{1 + 2|\support \gamma|/|\support (\gamma-\gamma_c)|}
            \\&\qquad\leq
            2^{2|\support \gamma|/|\support(\gamma-\gamma_c)|}
            \Bigl(K_{11}
            +
            \frac{K_{14} \alpha_2(\beta,\kappa)^{12}}{2\alpha_5(\beta,\kappa)}
            +
            K_{12}
            +
             \sqrt[3]{\frac{K_{13}\alpha_2(\beta,\kappa)}{|\support \gamma|^2 \alpha_5(\beta,\kappa)}}
            \Bigr).
        \end{split}
    \end{equation}

    Now recall~\eqref{eq: alphas for Z2}, and note that these expression imply that
    \begin{equation}\label{eq: alpha ration estimates}
        \frac{\alpha_2(\beta,\kappa)^6}{\alpha_5(\beta,\kappa)}  \leq 1
        \quad  \text{and} \quad 
        \frac{  \alpha_4(\beta,\kappa) \alpha_0(\kappa)^2}{\alpha_5(\beta,\kappa)} \leq 1.
    \end{equation} 
    Using these equations and inequalities and Proposition~\ref{proposition: square sum term in constant}, it follows that
    \begin{equation*}
        \begin{split}
            &B+\frac{K_{14} \alpha_2(\beta,\kappa)^{12}}{2\alpha_5(\beta,\kappa)}+B' +\sqrt[3]{\frac{K_{13}\alpha_2(\beta,\kappa)}{|\support \gamma|^2 \alpha_5(\beta,\kappa)}}
            \\&\qquad\leq
            \mathbb{1}(\partial \gamma \neq 0)\cdot \frac{2K_3 K_4^8\alpha_0(\kappa)^7}{\sqrt{|\support \gamma|}} 
            \\&\qquad\qquad\qquad \cdot \pigl( 16
        +
        \frac{2K_4 \alpha_0(\kappa)}{1-K_4 \alpha_0(\kappa)}
        +
        \frac{|\support \gamma|}{2} \pigl( K_4 \alpha_0(\kappa) \pigr)^{\max(0,\max(0,\min(\ell_1,\ell_2)-7))} \pigr) \cdot \sqrt{\frac{1}{|\support \gamma|}}
            \\&\qquad\qquad+ 
            \frac{4K_3 \bigl( K_4 \alpha_0(\kappa) \bigr)^{\dist_1(\support \gamma ,\partial C_1(B_N))}}{\alpha_5(\beta,\kappa)}  
            +
             K_2  \cdot\frac{|\support \gamma_c|}{|\support \gamma|}
            \\&\qquad\qquad+
            K_3 K_4^2\, \alpha_0(\kappa)^{5/6}  \cdot \alpha_2(\beta,\kappa)
            +
            \frac{18^4 K_5 \alpha_2(\beta,\kappa)^{5}}{2} \cdot \alpha_2(\beta,\kappa)
            \\&\qquad\qquad+ 
            \sqrt{\frac{2 K_9 \, \alpha_4(\beta,\kappa) 
            \bigl( K_4 \alpha_0(\kappa)  \bigr)^{\dist_1(\support \gamma,\partial C_1(B_N))} }{|\support \gamma| \alpha_5(\beta,\kappa)^2}}
            + 
            2\sqrt{\frac{2 K_3 \, \alpha_4(\beta,\kappa)   
            \bigl( K_4 \alpha_0(\kappa)  \bigr)^{\dist_1(\support \gamma,\partial C_1(B_N))}}{|\support \gamma| \alpha_5(\beta,\kappa)^2}}
            \\&\qquad\qquad+ 
            \Biggl( 
            \sqrt{ 2 K_8 \,     \alpha_0(\kappa)^3  \max\bigl(   \alpha_0(\kappa),  \alpha_1(\beta)^6\bigr)}
            +
            2\sqrt{ 2K_7 \, \alpha_0(\kappa)^6}
            +
            \sqrt{ 12 K_2} 
            \Biggr)
            \cdot \sqrt{\frac{1}{|\support \gamma|}}
            \\&\qquad\qquad+
            2^{-1}K_{14} \alpha_2(\beta,\kappa)^{5}  \cdot \alpha_2(\beta,\kappa)
            +
            \bigr(\sqrt{K_{10}\, \alpha_0(\kappa)^6 } 
            +
            1 \bigr)\cdot \sqrt{\frac{|\support \gamma_c|}{|\support \gamma|}}
            +
            \sqrt[6]{\frac{K_{13}^2}{|\support \gamma|}} \cdot \sqrt{\frac{1}{|\support \gamma|}}.
        \end{split}
    \end{equation*}

    Now note that since \( \gamma \) is a path along the boundary of a rectangle with side lengths \( \ell_1,\ell_2 \geq 2, \) we must have \( |\support \gamma_c| \leq 8. \) Since~\ref{assumption: 3} holds, we must have \( 2\alpha_0(\kappa) \leq K_4 \alpha_0(\kappa)\leq 1, \) and since \( G = \mathbb{Z}_2 \), we have \( \alpha_2(\beta,\kappa),\alpha_0(\beta)\leq 1. \)
    Recalling Proposition~\ref{proposition: unitary gauge one dim} and Proposition~\ref{proposition: limit exists}, letting \( N \to \infty ,\) and simplifying, we thus obtain
    \begin{equation*}
        \begin{split}
            &\pigl|\bigl\langle L_\gamma(\sigma,\phi) \bigr\rangle_{\beta,\kappa,\infty} -  \bigl\langle L_\gamma(\sigma,\phi)\bigr\rangle_{\infty,\kappa,\infty}
            \Theta'_{\beta,\kappa}(\gamma) \bigr|^{1 + 2|\support \gamma|/|\support (\gamma-\gamma_c)|}
            \\[0.5ex]&\qquad\leq
            2^{2|\support \gamma|/|\support(\gamma-\gamma_c)|}
            K_{15}
            \pigl( \alpha_2(\beta,\kappa) + |\support \gamma|^{-1/2}\pigr),
        \end{split}
    \end{equation*}
    where
    \begin{equation}\label{eq: K15}
    \begin{split}
        &K_{15} \coloneqq
            \mathbb{1}(\partial \gamma \neq 0) \cdot \Bigl(32K_3 K_4
            +
            \frac{4K_3 K_4^9 \alpha_0(\kappa)^8 }{1-K_4 \alpha_0(\kappa)}  
            +
            K_3 K_4 |\support \gamma|^{1/2} \pigl( K_4 \alpha_0(\kappa) \pigr)^{\min(\ell_1,\ell_2)} \Bigr)
            \\&\qquad+
            8 K_2 
            +
            K_3 K_4^2
            +
            18^4 K_5/2
            +  
            \sqrt{ K_8}
            +
            \sqrt{ K_7}
            +
            \sqrt{ 12 K_2 }  
            +
            K_{14}/2
            +
            \sqrt{8K_{10}} 
            +
            \sqrt{8}
            +
            \sqrt[3]{K_{13}} .
    \end{split}
    \end{equation}
    Next, note that since \( \gamma \) is a path along the boundary of some rectangle with side lengths \( \ell_1,\ell_2 \geq 2, \) we have \( |\support \gamma_c| \leq 8. \) Since, by assumption, we have \( |\support \gamma| \geq 24, ,\) it follows that \( |\support \gamma_c|/|\support \gamma| \leq 1/3,\) and hence
    \begin{equation*}
        \frac{1}{4} \leq \frac{1}{1+2|\support \gamma|/(|\support \gamma|-|\support \gamma_c|)} \leq \frac{1}{3}.
    \end{equation*} 
    If we in addition have \( \alpha_2(\beta,\kappa)+ \sqrt{\max(1,|\support \gamma_c|)/|\support \gamma|}\leq 1 \), then it follows that
    \begin{equation}
            \Bigl|\bigl\langle L_\gamma(\sigma,\phi) \bigr\rangle_{\beta,\kappa,\infty} - \bigl\langle L_\gamma(\sigma,\phi) \bigr\rangle_{\infty,\kappa,\infty} \Theta'_{\beta,\kappa}(\gamma) \Bigr|
            \leq
            2^{1-\frac{1}{4}}
            \cdot 
            \label{eq: Z2 main result before simplification}  K_{15}^{1/3} 
            \cdot \Bigl( \alpha_2(\beta,\kappa) + \sqrt{1 /|\support \gamma|}\Bigr)^{\frac{1}{4}}.
    \end{equation}  
   Since \( |\rho(g)| = 1 \) for all $g \in G$, we always have
    \begin{equation*}
        \begin{split}
            &\pigl|\bigl\langle L_\gamma(\sigma,\phi) \bigr\rangle_{\beta,\kappa,\infty} - \bigl\langle L_\gamma(\sigma,\phi) \bigr\rangle_{\infty,\kappa,\infty} \Theta'_{\beta,\kappa}(\gamma) \pigr|\leq 2.
        \end{split}
    \end{equation*}  
    Consequently, if  \( \alpha_2(\beta,\kappa) + \sqrt{|\support \gamma_c|/|\support \gamma|}\geq 1 \), then~\eqref{eq: Z2 main result before simplification} automatically holds. If we let
    \begin{equation}\label{eq: constant in main theorem Z2}
        \begin{split}
            &K_0 \coloneqq 
            2^{\frac{3}{4}} K_{15}^{1/3},
        \end{split}
    \end{equation} 
    we thus obtain~\eqref{eq: main result Z2}. This completes the proof of Theorem~\ref{theorem: main result Z2}. 
\end{proof}

\end{document}